\documentclass[10pt,reqno]{amsart}

\usepackage{graphicx}
\usepackage{times}
\usepackage[colorlinks=true,linkcolor=blue,citecolor=blue]{hyperref}%
\usepackage{dsfont}

\usepackage{xcolor}
\usepackage{stmaryrd}
\usepackage{pdfrender,xcolor}
\usepackage{tikz}
\usepackage{pgfplots}
\usetikzlibrary{decorations.pathmorphing}

\newtheorem{theorem}{Theorem}[section]
\newtheorem{lemma}[theorem]{Lemma}
\newtheorem{corollary}[theorem]{Corollary}
\newtheorem{prop}[theorem]{Proposition}
\theoremstyle{definition}
\newtheorem{definition}[theorem]{Definition}
\theoremstyle{remark}
\newtheorem{remark}[theorem]{Remark}
\numberwithin{equation}{section}

\usepackage[top=1.2in, bottom=1.2in, left=1.2in, right=1.2in]{geometry}

\usepackage[scr]{rsfso}
\usepackage[english]{babel}
\usepackage[utf8x]{inputenc}
\usepackage{fancyhdr}
\usepackage{amsmath}
\usepackage{amsthm}
\usepackage{amssymb}
\usepackage{eucal}
\usepackage{comment}
\usepackage{enumitem}
\setlist{leftmargin=*}
\usepackage[integrals]{wasysym}

\newcommand\nc{\newcommand}
\nc{\on}{\operatorname}
\nc{\E}{\mathbb{E}}
\nc{\R}{\mathbb R}
\nc{\C}{\mathbb C}
\nc{\Q}{\mathbb Q}
\nc{\Z}{\mathbb Z}
\nc{\N}{\mathbb N}
\nc{\F}{\mathbb F}
\nc{\T}{\mathbb{T}}
\nc{\wt}{\widetilde}
\nc{\ol}{\overline}
\nc{\short}[3]{0 \longrightarrow #1 \longrightarrow #2 \longrightarrow #3 \longrightarrow 0}
\nc{\pd}[2]{\frac{\partial #1}{\partial #2}}
\nc{\rnc}{\renewcommand}
\nc{\e}{\varepsilon}
\nc{\DMO}{\DeclareMathOperator}
\nc{\grad}{\nabla}
\nc{\fsp}{}
\nc{\fspp}{}

\nc{\X}{X}
\rnc{\t}{t}
\nc{\x}{x}
\nc{\y}{y}
\nc{\s}{s}
\nc{\z}{z}
\nc{\w}{w}
\rnc{\r}{r}
\rnc{\a}{a}
\rnc{\b}{b}
\rnc{\k}{k}
\nc{\open}{\mathrm{open}}
\rnc{\leq}{\leqslant}
\rnc{\geq}{\geqslant}
\rnc{\d}{\mathrm{d}}
\rnc{\O}{\mathrm{O}}
\newenvironment{nouppercase}{%
  \renewcommand{\uppercasenonmath}[1]{}}{}
\pagestyle{plain}

\title{\fsp\Large {KPZ equation from ASEP plus general speed-change drift}}

\author{Kevin Yang}

\usepackage{setspace}
\begin{document}
\setstretch{1.0}
\fsp
\raggedbottom
\begin{nouppercase}
\maketitle
\end{nouppercase}
\begin{center}
\today
\end{center}

\begin{abstract}
\fspp We derive the KPZ equation as a continuum limit of height functions in asymmetric simple exclusion processes with drift that depends on the local particle configuration. To our knowledge, it is a first such result for a class of particle systems without duality or explicit invariant measures. The tools developed in this paper consist of estimates on the corresponding Kolmogorov equations, giving a more robust proof of the Boltzmann-Gibbs principle. These tools are not exclusive to KPZ.
\end{abstract}

{\hypersetup{linkcolor=blue}
\setcounter{tocdepth}{1}
\tableofcontents}

\allowdisplaybreaks
\section{Introduction}\label{section:intro}
The \emph{Kardar-Parisi-Zhang} (KPZ) equation, introduced in \cite{KPZ}, is conjectured \cite{C11,Q} to be a universal scaling limit for stochastic growth fluctuations (e.g. burning fronts, bacterial growth, crack formation, etc.). It is written as follows, in which $\mathbb{T}:=\R/\Z$ and $\overline{\mathfrak{d}}\in\R$ is deterministic and $\xi$ is a space-time white noise:
\begin{align}
\partial_{\t}\mathbf{h}_{\t,\x}&=\tfrac12\partial_{\x}^{2}\mathbf{h}_{\t,\x}-\tfrac12|\partial_{\x}\mathbf{h}_{\t,\x}|^{2}+\overline{\mathfrak{d}}\partial_{\x}\mathbf{h}_{\t,\x}+\xi_{\t,\x}, \quad (\t,\x)\in[0,\infty)\times\mathbb{T}.\label{eq:kpz}
\end{align}
There are several equivalent solutions (e.g. regularity structures \cite{Hai13,Hai14}, paracontrolled distributions \cite{GIP,GP2}, energy solutions \cite{GJ15,GP}, renormalization group \cite{D,K}); we take the classical Cole-Hopf solution \cite{BG} in this paper, i.e. we set $\mathbf{h}:=-\log\mathbf{Z}$, where $\mathbf{Z}$ solves the \emph{stochastic heat equation} (SHE):
\begin{align}
\partial_{\t}\mathbf{Z}_{\t,\x}&=\tfrac12\partial_{\x}^{2}\mathbf{Z}_{\t,\x}+\overline{\mathfrak{d}}\partial_{\x}\mathbf{Z}_{\t,\x}-\mathbf{Z}_{\t,\x}\xi_{\t,\x}.\label{eq:she}
\end{align}
The original paper \cite{KPZ} argues for universality of \eqref{eq:kpz} via heuristics for a stochastic Hamilton-Jacobi equation model of random interfaces. These heuristics have been made rigorous in the important work \cite{HQ} and in several extensions \cite{HS,HX,KWX,KZ}. But stochastic Hamilton-Jacobi equations are (in part) simplifications of interacting particle systems, and a ``Big Picture Question" \cite{KPZAIM} on \eqref{eq:kpz} asks for a rigorous derivation of \eqref{eq:kpz} from interacting particle systems. This question is (essentially entirely) open except when the invariant measures are explicit and simple (which is a very rare situation). See Section \ref{subsection:background} for further discussion. So, deriving \eqref{eq:kpz} from interacting particle systems whose invariant measures are unknown is open yet \emph{fundamental} and \emph{basic}. We refer to Section \ref{subsubsection:hompersp} for more explanation on particle systems as a generalization of stochastic Hamilton-Jacobi equations, and the derivation of \eqref{eq:kpz} from particle systems needing techniques going beyond the reach of regularity structures.

In this paper, we derive KPZ from a large class of particle systems whose invariant measures are completely unknown. We focus on a class of \emph{driven lattice gases}, given by the asymmetric simple exclusion process (ASEP) in \cite{BG} but with additional drift in the random walks that depends only on the local particle configuration. These are the subject of a ``Big Picture Question" from \cite{KPZAIM} on the derivation of KPZ from particle systems; see \cite{KL} and Part II of \cite{S} for more motivation and history. For a clear exposition, we will assume that the drift induces only nearest-neighbor jumps; see Figure \ref{figure:model}. However, our methods should apply to models that are \emph{not} absolutely continuous with respect to ASEP. (For example, we can take drifts that induce cluster dynamics, where groups of particles move together, or finite-range jumps; see Figures \ref{figure:ex1}-\ref{figure:ex2}. Both have a lot of history in statistical mechanics \cite{MY} and KPZ universality \cite{DT,GJ17,QS,Y,KPZAIM}.)
\begin{figure}[h]
\begin{tikzpicture}[scale=1.5, 
                    squigarrow/.style={->,decorate,decoration={snake,amplitude=.5mm,segment length=2mm,post length=1mm}}]
    \draw[->] (0,0) -- (8,0);

    \foreach \x in {0,1,2,...,7} {
        \draw (\x,0.1) -- (\x,-0.1);
    }

    \foreach \x in {1,2,5} {
        \fill[blue] (\x,0.2) circle (0.05);
    }
    
    \draw[->, blue] (1.1,0.2) -- (1.9,0.2);
    
    \draw[red, thick] (1.3,0.35) -- (1.7,0.05);
    \draw[red, thick] (1.3,0.05) -- (1.7,0.35);
    
    \draw[->, blue] (5.1,0.2) -- (5.9,0.2);
    
    \draw[->, blue] (4.9,0.2) -- (4.1,0.2);
    
    \draw[squigarrow, teal, bend right=45] (5,0.2) to (4,0.2);
    \draw[squigarrow, magenta, bend left=45] (5,0.2) to (6,0.2);
    
\end{tikzpicture}
\caption{The ASEP as in \cite{BG} with symmetric speed {\small$\frac12N^{2}$}, asymmetric speed {\small$\frac12N^{3/2}$} (biased to the left), and an additional asymmetry (i.e. ``drift") with speed of order $N^{\alpha}$ depicted by $\rightsquigarrow$. The speed of the left- and right-squiggly arrows are different, hence the different coloring and hence a drift. The exact drift speed depends only on the local particle configuration.}\label{figure:model}
\end{figure}
\begin{figure}[h]
\begin{tikzpicture}[scale=1.5, 
                    squigarrow/.style={->,decorate,decoration={snake,amplitude=.5mm,segment length=2mm,post length=1mm}}]
    \draw[->] (0,0) -- (8,0);

    \foreach \x in {0,1,2,...,7} {
        \draw (\x,0.1) -- (\x,-0.1);
    }

    \foreach \x in {1,2,5} {
        \fill[blue] (\x,0.2) circle (0.05);
    }
    
    \draw[->, blue] (1.1,0.2) -- (1.9,0.2);
    
    \draw[red, thick] (1.3,0.35) -- (1.7,0.05);
    \draw[red, thick] (1.3,0.05) -- (1.7,0.35);
    
    \draw[->, blue] (5.1,0.2) -- (5.9,0.2);
    
    \draw[->, blue] (4.9,0.2) -- (4.1,0.2);
    
    \draw[squigarrow, teal, bend right=45] (5,0.2) to (3,0.2);
    \draw[squigarrow, magenta, bend left=45] (5,0.2) to (7,0.2);
    
\end{tikzpicture}
\caption{The model as in Figure \ref{figure:model}, but the drift induces jumps of length $k>1$.}\label{figure:ex1}
\end{figure}
\begin{figure}[h]
\begin{tikzpicture}[scale=1.5, 
                    squigarrow/.style={->,decorate,decoration={snake,amplitude=.5mm,segment length=2mm,post length=1mm}}]
    \draw[->] (0,0) -- (8,0);

    \foreach \x in {0,1,2,...,7} {
        \draw (\x,0.1) -- (\x,-0.1);
    }

    \foreach \x in {1,2,5,6} {
        \fill[blue] (\x,0.2) circle (0.05);
    }
    
    \draw[->, blue] (1.1,0.2) -- (1.9,0.2);
    
    \draw[red, thick] (1.3,0.35) -- (1.7,0.05);
    \draw[red, thick] (1.3,0.05) -- (1.7,0.35);
    
    \draw[squigarrow, teal, bend right=45] (5,0.2) to (3,0.2);
    \draw[squigarrow, teal, bend right=45] (6,0.2) to (4,0.2);
    
    \draw[thick, red, rounded corners] (4.9,0.3) rectangle (6.1,0.1);
    
\end{tikzpicture}
\caption{An example of ``cluster dynamics". For simplicity, we did not draw ASEP-moves for particles in the red box (even though they are present). The additional drift moves particles in the red box together to the left at a speed depending on the local particle configuration.}\label{figure:ex2}
\end{figure}

Let us now describe precisely the class of particle systems we will analyze. (A formulation via infinitesimal generator is given in Section \ref{section:model}.) It turns out to be convenient to introduce these as the following Markovian spin dynamics on the state space $\{\pm1\}^{\mathbb{T}_{N}}$, where $\mathbb{T}_{N}=\Z/N\Z\simeq\{0,\ldots,N-1\}$ is a discrete torus of size $N\gg1$. 
\begin{enumerate}
\item First, throughout this paper, we will denote elements in $\{\pm1\}^{\mathbb{T}_{N}}$ by {\small$\eta=(\eta_{\x})_{\x\in\mathbb{T}_{N}}$}, where {\small$\eta_{\x}\in\{\pm1\}$} for all $\x$. Next, let $\mathfrak{d}:\{\pm1\}^{\mathbb{T}_{N}}\to\R$ be any fixed uniformly bounded \emph{local} ``driving" function, i.e. there is a neighborhood of $0\in\mathbb{T}_{N}$ of size independent of $N$ such that $\mathfrak{d}[\eta]$ depends only on $\eta_{\x}\in\{\pm1\}$ for $\x$ in said neighborhood for any $\eta\in\{\pm1\}^{\mathbb{T}_{N}}$. Also, for $\z\in\Z$, let {\small$\tau_{\x}\eta=(\eta_{\x+\z})_{\x}$} be the configuration $\eta$ shifted by $\z$.
\item Fix any $\x\in\mathbb{T}_{N}$. Suppose the configuration at $\{\x,\x+1\}$ is given by $\{+1,-1\}$, i.e. $\eta_{\x}=1$ and $\eta_{\x+1}=-1$. We swap spins $\{+1,-1\}\mapsto\{-1,+1\}$ via Poisson clock of speed {\small$\frac12N^{2}-\frac12N^{3/2}-\frac12N^{\alpha}\mathfrak{d}[\tau_{\x}\eta]$}. If $\eta_{\x}=-1$ and $\eta_{\x+1}=1$, then the speed of swapping $\{-1,+1\}\mapsto\{+1,-1\}$ is {\small$\frac12N^{2}+\frac12N^{3/2}+\frac12N^{\alpha}\mathfrak{d}[\tau_{\x}\eta]$}.
\item Let {\small$\t\mapsto\eta_{\t}$} denote the configuration evolving according to these dynamics. We define the associated \emph{height function} $\mathbf{h}^{N}:[0,\infty)\times\mathbb{T}_{N}\to\R$ by the rules {\small$\mathbf{h}^{N}_{0,0}=0$} and {\small$\mathbf{h}^{N}_{\t,\x}-\mathbf{h}^{N}_{\t,\x-1}=N^{-1/2}\eta_{\t,\x}$}. A more explicit formula is given in Section \ref{section:model}. We assume that there are $N/2$-many particles on $\mathbb{T}_{N}$ (and thus that $N$ is even) to make sure that $\mathbf{h}^{N}$ is a periodic function on $\mathbb{T}_{N}$.
\end{enumerate}
The prediction \cite{HQ,HS,HX,KWX,KZ} is that $\mathbf{h}^{N}\to\mathrm{KPZ}$ as $N\to\infty$, and that the trichotomy below holds.
\begin{enumerate}
\item If $\alpha\in[\frac12,1)$, then $\mathfrak{d}[\cdot]$ contributes to the average growth-speed of $\mathbf{h}^{N}$ (i.e. the renormalization term \eqref{eq:renorm}).
\item If $\alpha\in[1,\frac32)$, then $\mathfrak{d}[\cdot]$ also affects the coefficient $\overline{\mathfrak{d}}$ in \eqref{eq:kpz}.
\item If $\alpha=\frac32$, then $\mathfrak{d}[\cdot]$ also affects the quadratic coefficient in \eqref{eq:kpz}. 
\end{enumerate}
{\emph{In this paper, we show that for $\alpha=1$, the prediction above is true}}. (Since transport operators are asymmetric objects in the PDE world, the scaling exponent $\alpha=1$ is perhaps the first natural choice to pursue.) A precise statement of our result can be found in Theorem \ref{theorem:main}, and to our knowledge, it is the \emph{first} result for \emph{general} $\mathfrak{d}[\cdot]$. 
\subsection{Background}\label{subsection:background}
If $\mathfrak{d}[\cdot]\equiv0$, then Theorem \ref{theorem:main} is the work \cite{BG} of Bertini-Giacomin. In this case, $\exp\{-\mathbf{h}^{N}\}$ satisfies an on-the-nose discretization of \eqref{eq:she}, which dramatically simplifies the proof. This property is true for a small handful of particle systems as well, whose KPZ equation limits were established in \cite{CGST,CST,CT,CTIn}. 

For driving functions {\small$\mathfrak{d}_{\mathrm{gc}}$} satisfying {\small$\mathfrak{d}_{\mathrm{gc}}[\eta](\eta_{1}-\eta_{0})=\mathfrak{w}[\tau_{1}\eta]-\mathfrak{w}[\eta]$} for a local function $\mathfrak{w}$, i.e. the ``gradient condition", Theorem \ref{theorem:main} was shown in \cite{GJ15,GJS15,Y23}. This assumption gives that i.i.d. product Bernoulli measures on $\{\pm1\}^{\mathbb{T}_{N}}$ are invariant measures for {\small$\eta_{\t}$}; this is key to \cite{GJ15,GJS15,Y23}, and nothing was known without it. But the gradient condition is very restrictive, and there does not seem to be any \emph{natural} reason to want to assume it. (We also re-emphasize that the lack of explicit invariant measures is exactly our main technical challenge.)
\subsubsection{Homogenization and the stochastic-analytic perspective}\label{subsubsection:hompersp}
Although product Bernoulli measures are not invariant for general driving functions $\mathfrak{d}[\cdot]$, an inspection of Theorem \ref{theorem:main} shows that the coefficients in the limit KPZ equation are still computed via product Bernoulli measures. This is surprising, as it is usually the invariant measure that determines contributions of local functions in the large-$N$ limit in a particle system \cite{GPV}. \emph{One way to make sense of this heuristically starts with the following lattice stochastic differential equation}:
\begin{align}
\d\mathbf{h}^{N}_{\t,\x}&=\tfrac12N^{2}\Delta\mathbf{h}^{N}_{\t,\x}\d\t+\tfrac12N^{\frac32}\grad_{-}\mathbf{h}^{N}_{\t,\x}\grad_{+}\mathbf{h}^{N}_{\t,\x}\d\t+N^{\alpha-\frac12}\mathfrak{y}[\tau_{\x}\eta_{\t}]\d\t+\d\mathbf{b}^{N}_{\t,\x}. \label{eq:hfhj}
\end{align}
Above, $\Delta$ is discrete Laplacian on $\mathbb{T}_{N}$ and $\nabla_{\pm}$ is discrete gradient in the forwards direction (for $+$) or backwards direction (for $-$). The term $\mathbf{b}^{N}$ is a martingale in $\t$ for every $\x$ (i.e. a noise term), and $\mathfrak{y}$ is an explicit function of the particle system depending only on $\mathfrak{d}$. Since {\small$\tau_{\x}\eta_{\t}$} is a discrete gradient of $\mathbf{h}^{N}$, \eqref{eq:hfhj} is a lattice stochastic Hamilton-Jacobi equation, and {\small$\mathfrak{y}[\tau_{\x}\eta_{\t}]$} is a ``first-order" term. Because lower-order terms generally do not affect the local behavior in a parabolic equation, it is perhaps plausible that {\small$\mathfrak{y}[\tau_{\x}\eta_{\t}]$} does not affect the large-$N$ behavior of local functions. This is basically the goal of regularity structures in \cite{Hai13,Hai14,HQ,HS,HX,KWX,KZ}, which can be seen as constructing a ``good coupling" between the equation and its linear part. 

However, applying regularity structures and similar couplings to remove the $\mathfrak{d}[\cdot]$-drift in the context of particle systems is an outstanding open problem \cite{KPZAIM}. One difficulty is that $\mathbf{b}^{N}$ is a nonlinear function of the gradient of $\mathbf{h}^{N}$; it knows if a jump is allowed because of exclusion. \emph{Thus, instead of using regularity structures, we develop a new method that works for nonlinear noise}.

Lastly, let us note that the nonlinear noise is \emph{far from being a technical issue}. Indeed, it turns out that $\mathbf{b}^{N}\approx\xi$ in a weak sense after averaging in space and time. This gives a map from particle systems to Hamilton-Jacobi equations defined by replacing {\small$\mathbf{b}^{N}\rightsquigarrow\xi$}, and \cite{HQ,HS,HX,KWX,KZ} tell us how to go from Hamilton-Jacobi to KPZ. It is then natural to ask if the following diagram commutes:
\vspace{5pt}
\begin{center}
\begin{tikzpicture}
\node[draw, rectangle, thick, text width=2.5cm, align=center] (box1) at (0,0) {Particle systems};

\node[draw, rectangle, thick, text width=4cm, align=center] (box2) at (0,-1.5) {Stochastic Hamilton-Jacobi};

\node[draw, rectangle, thick, text width=2cm, align=center] (box3) at (5,-1.5) {KPZ equation};

\draw[->, thick, shorten >=2mm, shorten <=2mm] (box1.south) -- (box2.north); 
\draw[->, thick, shorten >=2mm, shorten <=2mm] (box2.east) -- (box3.west);  
\draw[->, thick, shorten >=2mm, shorten <=2mm, bend left=15] (box1.east) -- (box3.north west); 
\end{tikzpicture}
\end{center}
\vspace{5pt}
It turns out that this \emph{cannot} commute! Indeed, the coefficients in the limiting KPZ equation \eqref{eq:kpz}, when derived from Hamilton-Jacobi, are Gaussian expectations \cite{HQ,HX}, whereas the coefficients in Theorem \ref{theorem:main} are Bernoulli expectations. In particular, the (model-dependent) invariant measures are \emph{crucial}.
\subsubsection{The methods in this paper}\label{subsubsection:papermethod}
To prove homogenization of {\small$\mathfrak{y}[\tau_{\x}\eta_{\t}]$} in \eqref{eq:hfhj} (and therefore compute the constants in the limiting KPZ equation), we develop a new method that consists of three major steps.
\begin{enumerate}
\item To replace the contribution of $\mathfrak{y}[\cdot]$ by a first derivative, we crucially observe that it is enough to average over \emph{local} space-time scales. Thus, we reduce to particle dynamics localized in space to a ``small torus".
\item Show that the product Bernoulli measures are ``stable" or ``almost invariant" for the aforementioned \emph{localized} dynamics. This ``almost-invariance" is proved via estimates on the Kolmogorov forward equation associated to the localized dynamics; to accomplish this, we use Davies' method for $\mathrm{L}^{p}$-bounds on parabolic equations, whose application to KPZ and non-reversible particle systems seems new.
\item Use these $\mathrm{L}^{p}$-estimates to prove a \emph{Kipnis-Varadhan inequality} and \emph{Boltzmann-Gibbs principle} for the localized system. These are powerful bounds on space-time averages of local functions \cite{KL}. The classical proofs use invariant measures crucially, while the proofs in this paper seem to be more robust in this sense. 
\end{enumerate}
This is perhaps our main technical selling point, especially given a surge of interest in perturbations of exclusion processes \cite{F,FMST1,FMST2,JL,JM}. Finally, because of \eqref{eq:hfhj}, our work extends \cite{HQ,HS,HX,KWX,KZ} to nonlinear noises. [Although not KPZ-related, we mention \cite{JL,JM}, which derive hydrodynamic scaling limits and their fluctuation theory from perturbations of the \emph{symmetric} simple exclusion process. Both works require either smoothness of the drift (i.e. replacing $\mathfrak{d}[\tau_{\x}\eta]$ by a deterministic smooth function of $\x$) or smallness of the perturbation (e.g. of order $\log N$, thus very far from $N^{\alpha}$). Moreover, \cite{JL,JM} work in analytically weak topologies (while analyzing \eqref{eq:kpz}-\eqref{eq:she} require estimates in fairly strong $\mathscr{C}^{\upsilon}$-topologies). In particular, this work closes a significant technical gap in what available methods were able to achieve.]
\subsection{The rest of the predicted trichotomy}\label{subsection:scaling}
Our result Theorem \ref{theorem:main} establishes, for the first time, a part of the trichotomy mentioned earlier. However, we believe that it is possible to extend the methods in this paper to cover all $\alpha<5/4$ in the trichotomy; we give the key details in Remark \ref{remark:kvim}, and we refrain from a complete proof in order to give the clearest possible presentation of our work.

At $\alpha=5/4$, however, there appears to be a \emph{phase transition}. In particular, it is not even clear if the renormalization constant $\mathscr{R}_{N}$ from \eqref{eq:renorm} can be computed using only product Bernoulli expectations. (We note that the renormalization constant $\mathscr{R}_{N}$, which gives the global growth speed of $\mathbf{h}^{N}$, gets a contribution from the large-$N$ limit of $\mathfrak{y}[\cdot]$ in \eqref{eq:hfhj}; see \eqref{eq:renorm}.) Indeed:
\begin{itemize}
\item For any $\alpha$, we must compute the large-$N$ asymptotics of some local function of order $N^{\alpha-1/2}$; see \eqref{eq:hfhj}.
\item The driving function $N^{\alpha}\mathfrak{d}[\cdot]$ induces a perturbation in the generator (and invariant measures) of relative size $\mathrm{O}(N^{\alpha-2})$. Thus, when we compute the limiting contribution of a local function of order $N^{\alpha-1/2}$, we expect in general a total change of $\mathrm{O}(N^{\alpha-1/2}N^{\alpha-2})=\mathrm{O}(N^{2\alpha-5/2})$. This persists as soon as $\alpha\geq5/4$.
\end{itemize}
The above suggests that homogenization of a general local function of order $N^{\alpha-1/2}$ cannot be computed only with product Bernoulli measures if $\alpha\geq5/4$. (For example, the {\small$\mathrm{ASEP}(\mathrm{q},\mathrm{j})$} and {\small$\mathrm{ASIP}(\mathrm{q},\mathrm{k})$} particle systems in \cite{CGRS} have a similar tunable drift strength of {\small$\approx N^{2}\mathrm{q}$}, where {\small$\mathrm{q}=\mathrm{O}(N^{\alpha-2})$}; if one puts these particle systems on the lattice {\small$\{1,\ldots,N\}$} without periodic boundary conditions, then the invariant measures are exactly inhomogeneous product Bernoulli measures, and the Bernoulli parameter depends on space and differs between adjacent points by order {\small$\mathrm{q}$}. Because we only need the local dynamics as discussed in Section \ref{subsubsection:papermethod}, the lack of periodic boundary conditions is not so much the issue. Thus, this confirms the second bullet point above, albeit for a different model but one that also belongs to the KPZ universality class, at least in the case of {\small$\mathrm{ASEP}(\mathrm{q},\mathrm{j})$} \cite{CST}.) At the technical level, the main issue for $\alpha\geq5/4$ is that steps (1)-(3) in Section \ref{subsubsection:papermethod} work for all $\alpha\leq3/2$. (So, our methods are perhaps still useful for deriving \eqref{eq:kpz} for $\alpha\leq3/2$; see Remark \ref{remark:kvim}.) However, they are ``incompatible", i.e.:
\begin{itemize}
\item Step (1) in Section \ref{subsubsection:papermethod} lets us reduce to local tori of length scales that are bounded below by some $\ell_{1}$.
\item Steps (2)-(3) in Section \ref{subsubsection:papermethod} succeed for local tori of length scales that are bounded above by some $\ell_{2}$.
\item When $\alpha\geq5/4$, we can only choose $\ell_{1},\ell_{2}$ so that $\ell_{2}\ll\ell_{1}$, i.e. there is no local tori that works for all steps.
\end{itemize}
\subsection{Acknowledgements}
The author is supported in part by the NSF under Grant No. DMS-2203075.
%
%
%
\section{The model and main results}\label{section:model}
We now present {\small$\eta^{}_{\t}$} as a Markov process on $\{\pm1\}^{\mathbb{T}_{N}}$; this will both give a precise construction of the particle system and be important in our analysis of it later on. The construction is equivalent to the random walk description from the introduction (for $\alpha=1$ therein, which is the setting we specialize to for the rest of this paper). The generator of this Markov process is given by {\small$\mathscr{L}^{}_{N}:=\mathscr{L}_{N,\mathrm{S}}+\mathscr{L}^{}_{N,\mathrm{A}}$}, where
\begin{align}
\mathscr{L}_{N,\mathrm{S}}&:=\tfrac12N^{2}\sum_{\x,\x+1\in\mathbb{T}_{N}}\mathscr{L}_{\x}\nonumber\\
\mathscr{L}^{}_{N,\mathrm{A}}&:=\tfrac12N^{\frac32}\sum_{\x,\x+1\in\mathbb{T}_{N}}\left(\mathbf{1}_{\substack{\eta_{\x}=-1\\\eta_{\x+1}=1}}-\mathbf{1}_{\substack{\eta_{\x}=1\\\eta_{\x+1}=-1}}\right)\left(1+N^{-\frac12}\mathfrak{d}[\tau_{\x}\eta]\right)\mathscr{L}_{\x},\label{eq:generator}
\end{align}
where $\mathscr{L}_{\x}$ is the generator for speed $1$ symmetric simple exclusion on the bond $\{\x,\x+1\}$ and $\mathfrak{d}:\{\pm1\}^{\mathbb{T}_{N}}\to\R$ is a fixed local function. Precisely, for any function $\mathfrak{f}:\{\pm1\}^{\mathbb{T}_{N}}\to\R$, we have $\mathscr{L}_{\x}\mathfrak{f}[\eta]=\mathfrak{f}[\eta^{\x,\x+1}]-\mathfrak{f}[\eta]$, where {\small$\eta^{\x,\x+1}_{\z}=\eta_{\z}$} if $\z\neq\x,\x+1$, and where {\small$\eta^{\x,\x+1}_{\x}=\eta_{\x+1}$} and {\small$\eta^{\x,\x+1}_{\x+1}=\eta_{\x}$}. (In words, {\small$\eta^{\x,\x+1}$} is the configuration $\eta$ by swapping $\eta_{\x},\eta_{\x+1}$.)

Let us give a more explicit formula for the height function $\mathbf{h}^{N}$. For any $\t\geq0$, let {\small$\mathbf{h}^{N}_{\t,0}$} be {\small$2N^{-1/2}$} times the net flux of particles across $0$ by time $\t$, i.e. the number of particles that have gone from $0\mapsto N-1$ minus the number of particles that have gone from $N-1\mapsto0$. Next, given $(\t,\x)\in[0,\infty)\times\mathbb{T}_{N}$, in which we identify $\mathbb{T}_{N}:=\{1,\ldots,N\}$ (to be explained immediately below \eqref{eq:hf}), we define
\begin{align}
\mathbf{h}^{N}_{\t,\x}=\mathbf{h}^{N}_{\t,0}+N^{-\frac12}\sum_{\y=1}^{\x}\eta_{\t,\y}.\label{eq:hf}
\end{align}
(We assume throughout that there are $N/2$ particles on $\mathbb{T}_{N}$, so that {\small$\mathbf{h}^{N}_{\t,0}=\mathbf{h}^{N}_{\t,N}$}, i.e. it does not matter if we identify $\mathbb{T}_{N}\simeq\{1,\ldots,N\}$ or $\mathbb{T}_{N}\simeq\{0,\ldots,N-1\}$ in the above.) We also define a microscopic analog of \eqref{eq:she} (often called the \emph{Gartner transform} \cite{G}):
\begin{align}
\mathbf{Z}^{N}_{\t,\x}:=\exp\{-\mathbf{h}^{N}_{\t,\x}+\mathscr{R}_{N}\t\}, \quad(\t,\x)\in[0,\infty)\times\mathbb{T}_{N}.\label{eq:gartner}
\end{align}
The term {\small$\mathscr{R}_{N}:=\frac12N-\frac{1}{24}+\mathscr{R}_{2}$} is the \emph{renormalization constant}, where {\small$\mathscr{R}_{2}=N^{1/2}\mathscr{R}_{2,1}+\mathscr{R}_{2,2}+\mathscr{R}_{2,3}$} with each term defined as follows. Let $\E^{\sigma}$ be expectation with respect to product measure on {\small$\{\pm1\}^{\mathbb{T}_{N}}$} such that {\small$\E^{\sigma}\eta_{\x}=\sigma$} for all $\x$. We first set {\small$\mathscr{R}_{2,1}:=\frac12\E^{0}\{\mathfrak{d}[\eta](1-\eta_{0}\eta_{1})\}$}; the quantity {\small$\mathfrak{d}[\eta](1-\eta_{0}\eta_{1})$} is often called the \emph{flux} associated to $\mathfrak{d}$. Next, we define {\small$\mathscr{R}_{2,2}:=-\overline{\mathfrak{d}}/2$}, where
\begin{align}
\overline{\mathfrak{d}}:=\tfrac12\partial_{\sigma}\E^{\sigma}\{\mathfrak{d}[\eta](1-\eta_{0}\eta_{1})\}|_{\sigma=0}.\label{eq:dcoeff}
\end{align}
The expectation on the RHS of \eqref{eq:dcoeff} is a function of $\sigma$; we differentiate in $\sigma$ and then set $\sigma=0$. Next, we define a more technical renormalization (akin to the $\frac{1}{24}$ in \cite{BG}) below, in which we use the notation $\wt{\mathfrak{s}}$ that is defined in \eqref{eq:wtsterm}:
\begin{align*}
\mathscr{R}_{2,3}:=\E^{0}\wt{\mathfrak{s}}-\tfrac12\E^{0}\left\{\mathfrak{d}[\eta][\eta_{1}-\eta_{0}]\right\}.
\end{align*}
Ultimately, the renormalization constant is equal to
\begin{align}
&\mathscr{R}_{N}=\tfrac12N-\tfrac{1}{24}+N^{\frac12}\mathscr{R}_{2,1}+\mathscr{R}_{2,2}+\mathscr{R}_{2,3}\label{eq:renorm}\\
&=\tfrac12N-\tfrac{1}{24}+\tfrac12N^{\frac12}\E^{0}\{\mathfrak{d}[\eta](1-\eta_{0}\eta_{1})\}-\tfrac12\overline{\mathfrak{d}}-\tfrac12\E^{0}\left\{\mathfrak{d}[\tau_{-2\mathfrak{l}_{\mathfrak{d}}}\eta](1-\eta_{-2\mathfrak{l}_{\mathfrak{d}}}\eta_{-2\mathfrak{l}_{\mathfrak{d}}+1})\right\}-\tfrac12\E^{0}\left\{\mathfrak{d}[\eta][\eta_{1}-\eta_{0}]\right\}.\nonumber
\end{align}
We introduce a last easily removable assumption (as explained below); it makes computations more efficient:
\begin{align}
\partial_{\sigma}^{2}\E^{\sigma}\{\mathfrak{d}[\eta](1-\eta_{0}\eta_{1})\}|_{\sigma=0}=0.\label{eq:assumequad}
\end{align}
To remove the assumption \eqref{eq:assumequad}, we can subtract from $\mathfrak{d}$ an appropriate constant, as {\small$\partial_{\sigma}^{2}\E^{\sigma}\eta_{0}\eta_{1}=\partial_{\sigma}^{2}\sigma^{2}=2$}. In particular, the constant part of the asymmetry in the particle system becomes $N^{3/2}+\zeta N=N^{3/2}(1+\zeta N^{-1/2})$ for a deterministic $\zeta\in\R$. Thus, we are left with the same model but with a constant speed asymmetry perturbed by $\mathrm{o}(1)$. But similar to \cite{BG} (and essentially every other derivation of \eqref{eq:kpz}) our proof of Theorem \ref{theorem:main} extends to any constant speed asymmetry (and is continuous with respect to this asymmetry speed). It is only for clarity of presentation that we do not keep track of this additional $\zeta N^{-1/2}$-perturbation in the constant-speed part of the asymmetry. (In any case, \eqref{eq:assumequad} and the additional $\zeta N^{-1/2}$ perturbation do not change whether we have explicit invariant measures, so this is besides the main point of this paper.)
\begin{theorem}\label{theorem:main}
\fsp Suppose the function {\small$\mathrm{X}\mapsto\mathbf{h}^{N}_{0,N\mathrm{X}}$}, if we extend it from $N^{-1}\mathbb{T}_{N}$ to $\mathbb{T}$ by linear interpolation, is deterministic and converges to some limit {\small$\mathbf{h}_{0,\cdot}\in\mathscr{C}^{\upsilon}(\mathbb{T})$} in the {\small$\mathscr{C}^{\upsilon}(\mathbb{T})$}-topology for all {\small$\upsilon<1/2$}. Then the function {\small$(\t,\mathrm{X})\mapsto\mathbf{Z}^{N}_{\t,N\X}-\mathscr{R}_{N}\t$}, if we extend it from $N^{-1}\mathbb{T}_{N}$ to $\mathbb{T}$ by linear interpolation, converges to the solution of \eqref{eq:she} with initial data {\small$\exp\{-\mathbf{h}_{0,\cdot}\}$}. The convergence is in law in the space {\small$\mathscr{D}([0,1],\mathscr{C}(\mathbb{T}))$}.
\end{theorem}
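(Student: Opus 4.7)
The plan is to follow the Bertini--Giacomin framework \cite{BG}, adapted to the variable drift: derive a discrete stochastic equation for $\mathbf{Z}^{N}$ via Dynkin's formula, isolate a discrete Laplacian, a first-order transport, a martingale, and a collection of local-function error terms, and then invoke the Boltzmann--Gibbs principle (Theorem \ref{theorem:bgp}) together with Proposition \ref{prop:hl} to send the errors to zero. Tightness of $(\mathbf{Z}^{N})$ followed by an identification step then pins down the limit as the Cole--Hopf/SHE solution, and uniqueness in law of \eqref{eq:she} finishes the job.

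\textbf{Discrete SPDE and book-keeping of $\mathscr{R}_{N}$.} The first step is to compute $(\partial_{\t} - \mathscr{L}_{N})\mathbf{Z}^{N}_{\t,\x}$ from the definition \eqref{eq:gartner}. The key simplification is that $\eta_{\z}^{2} = 1$ truncates the Taylor expansion of $\exp(\cdot)$ under any single spin swap exactly, so the exponential effectively acts polynomially. The $\tfrac12 N^{2}\mathscr{L}_{N,\mathrm{S}}$ piece yields $\tfrac12\Delta^{N}\mathbf{Z}^{N}$ plus a martingale, and the $\tfrac12 N^{3/2}$ constant asymmetry, balanced against the $\tfrac12 N - \tfrac{1}{24}$ part of $\mathscr{R}_{N}$, is the classical \cite{BG} computation, contributing an additional martingale. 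The novel contribution comes from the $\tfrac12 N\mathfrak{d}[\tau_{\x}\eta]$ asymmetry: after Taylor expansion it produces terms of size $N^{1/2}$ (cancelled by $N^{1/2}\mathscr{R}_{2,1}$ plus a martingale), terms of order $1$ whose density derivative at $\sigma=0$ yields precisely $\overline{\mathfrak{d}}\nabla^{N}\mathbf{Z}^{N} + \mathscr{R}_{2,2}\mathbf{Z}^{N}$ via \eqref{eq:dcoeff}, and cross-bond corrections absorbed by $\mathscr{R}_{2,3}$. Assumption \eqref{eq:assumequad} removes a would-be quadratic-in-$\sigma$ residue; the shift $\tau_{-2\mathfrak{l}_{\mathfrak{d}}}\eta$ in $\mathscr{R}_{2,3}$ arises because the support of $\mathfrak{d}[\tau_{\x}\eta]$ must be translated off the spin-swap site when symmetrizing a bond flux.

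\textbf{Reduction, tightness, identification.} What remains after this book-keeping is a sum of terms of the form $\mathbf{Z}^{N}\times(\text{mean-zero local function of }\eta)$. Theorem \ref{theorem:bgp} replaces each local function by its mesoscopic block average which, after centering at density $\sigma = 0$, either vanishes or is already absorbed into the transport coefficient by construction of $\overline{\mathfrak{d}}$ and $\mathscr{R}_{2,\cdot}$; the cubic and higher Taylor remainders are lower order and handled by Proposition \ref{prop:hl}. This puts the equation in the form $\d\mathbf{Z}^{N}_{\t,\x} = \tfrac12\Delta^{N}\mathbf{Z}^{N}_{\t,\x}\,\d\t + \overline{\mathfrak{d}}\nabla^{N}\mathbf{Z}^{N}_{\t,\x}\,\d\t + \mathbf{Z}^{N}_{\t,\x}\,\d M^{N}_{\t,\x} + \mathrm{o}(1)$. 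Tightness of $(\mathbf{Z}^{N})$ in $\mathscr{D}([0,1],\mathscr{C}(\mathbb{T}))$ I would then obtain from the discrete Duhamel representation, heat-kernel moment bounds of \cite{BG} type, Gr\"onwall against the error estimates, and the $\mathscr{C}^{\upsilon}$ initial regularity hypothesis for uniform positive moments of $\mathbf{Z}^{N}$. To identify any subsequential limit with \eqref{eq:she}, I would show that the predictable quadratic variation of $M^{N}$, tested against a smooth function $\varphi$, converges to the SHE noise bracket $\int_{0}^{\cdot}\int_{\mathbb{T}}|\mathbf{Z}_{\s,\X}|^{2}\varphi(\X)^{2}\,\d\X\,\d\s$.

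\textbf{Main obstacle.} Granting Theorem \ref{theorem:bgp} and Proposition \ref{prop:hl}, the delicate part of the argument for Theorem \ref{theorem:main} itself is the precise matching of $\mathscr{R}_{2,1},\mathscr{R}_{2,2},\mathscr{R}_{2,3}$ and the shift $\tau_{-2\mathfrak{l}_{\mathfrak{d}}}$ with the Taylor-expansion output, so that every surviving error is exactly a mean-zero local function that the Boltzmann--Gibbs machinery can digest. The genuinely hard mathematical obstacle, however, lies in proving Theorem \ref{theorem:bgp}: since no explicit invariant measure is available for the dynamics, the classical Kipnis--Varadhan route fails, and one must instead localize to a mesoscopic torus, use a Davies-type $\mathrm{L}^{p}$ bound on the localized Kolmogorov forward equation to show that product Bernoulli measures are near-stationary there, and then run a Kipnis--Varadhan inequality for the localized generator. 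That is where all standard invariant-measure-based methods break down and where the bulk of the paper's effort resides.
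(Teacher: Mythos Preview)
Your high-level outline (discrete SPDE $\to$ Boltzmann--Gibbs replacement $\to$ tightness $\to$ martingale-problem identification) matches the paper's architecture, and your description of how $\mathscr{R}_{2,1},\mathscr{R}_{2,2},\mathscr{R}_{2,3}$ and the shift by $2\mathfrak{l}_{\mathfrak{d}}$ arise is accurate. But there is a genuine structural gap in how you propose to close the argument.

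You write that tightness will come from ``heat-kernel moment bounds of \cite{BG} type, Gr\"onwall against the error estimates.'' The paper explicitly rules this out: the main error term is $N^{1/2}\overline{\mathfrak{q}}[\tau_{\cdot}\eta_{\s}]\mathbf{Z}^{N}_{\s,\cdot}$, and a direct moment/Gr\"onwall loop on $\mathbf{Z}^{N}$ does not close because the $N^{1/2}$ is too large to absorb without first knowing that $\mathbf{Z}^{N}$ is bounded. Conversely, Theorem~\ref{theorem:bgp} as stated does \emph{not} control $\int\mathrm{e}^{[\t-\s]\mathscr{T}_{N}}[N^{1/2}\overline{\mathfrak{q}}\,\mathbf{Z}^{N}]\d\s$; it controls the same expression with $\mathbf{Z}^{N}$ replaced by the \emph{stopped} process $\mathbf{Y}^{N}=\mathbf{Z}^{N}\mathbf{1}_{\t\leq\mathfrak{t}_{\mathrm{stop}}}$, where $\mathfrak{t}_{\mathrm{stop}}$ enforces a priori $\mathrm{L}^{\infty}$ and H\"older bounds on $\mathbf{Z}^{N}$. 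So you cannot simply ``grant Theorem~\ref{theorem:bgp}'' and plug it into a moment scheme for $\mathbf{Z}^{N}$: the input to Theorem~\ref{theorem:bgp} already presupposes the very control you are trying to establish.

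The paper breaks this circularity with a stopping-time bootstrap that your proposal omits entirely. One introduces auxiliary processes $\mathbf{U}^{N}$ (solving the $\mathbf{Z}^{N}$ equation but with $\mathbf{Y}^{N}$ in all error terms) and $\mathbf{Q}^{N}$ (with the error terms deleted). Theorem~\ref{theorem:bgp} and Proposition~\ref{prop:hl} show $\mathbf{U}^{N}-\mathbf{Q}^{N}\to0$ (Proposition~\ref{prop:uq}); classical \cite{BG}-type estimates apply to $\mathbf{Q}^{N}$ because its equation has no large error (Proposition~\ref{prop:qconvergence}); and since $\mathbf{U}^{N}=\mathbf{Z}^{N}$ up to $\mathfrak{t}_{\mathrm{stop}}$, the resulting bounds for $\mathbf{U}^{N}$ are strictly \emph{better} than the stopping-time thresholds, which forces $\mathfrak{t}_{\mathrm{stop}}=1$ with high probability (Proposition~\ref{prop:tst=1}). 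This self-improvement is the missing idea. A secondary inaccuracy: Proposition~\ref{prop:hl} does not handle ``cubic Taylor remainders'' but the order-$1$ fluctuating terms $\mathfrak{s},\mathfrak{g}$; the genuinely lower-order pieces $\mathfrak{b}_{1},\mathfrak{b}_{2}$ are dispatched by deterministic heat-kernel estimates and need no probabilistic input.
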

The terminal time of $1$ is unimportant; any finite, deterministic time is okay. The deterministic assumption on the initial data is not important either; we can always condition on the initial data. 
\subsection{Outline of the paper}\label{subsection:outline}
For a detailed outline and an overview of the main technical steps, see Section \ref{section:outline}.
\begin{itemize}
\item In Section \ref{section:notation}, we give a list of notation for anything that is used anywhere beyond where it is introduced.
\item In Sections \ref{section:mainproof}, \ref{section:bgp}, and \ref{section:propproofs}, we reduce the proof of Theorem \ref{theorem:main} to a key estimate (the \emph{Boltzmann-Gibbs principle} in Theorem \ref{theorem:bgp}) and a weaker but similar estimate (see Proposition \ref{prop:hl}).
\item In Section \ref{section:tools}, we give the tools to prove the Boltzmann-Gibbs principle. It is the technical heart of this paper.
\item In Section \ref{section:bgphlproof}, we prove the remaining key inputs (Theorem \ref{theorem:bgp} and Proposition \ref{prop:hl}). 
\end{itemize}
\section{Notation}\label{section:notation}
\begin{itemize}
\item We use standard big-O notation. We write $a=\mathrm{O}(b)$ if $|a|\leq C|b|$ for a uniformly bounded $C>0$. We write $a\lesssim b$ to mean $a=\mathrm{O}(b)$ and $a\gtrsim b$ to mean $b\lesssim a$. We write $a=\mathrm{o}(b)$ to mean $|a|\leq c_{N}|b|$, where $c_{N}\to0$.
\item For any function $\varphi:\mathbb{T}_{N}\to\R$ and any $\mathfrak{l}\in\Z$, we define the discrete spatial gradient of length-scale $\mathfrak{l}$ to be $\grad^{\mathbf{X}}_{\mathfrak{l}}\varphi_{\x}:=\varphi_{\x+\mathfrak{l}}-\varphi_{\x}$. We also define the discrete Laplacian to be $\Delta:=\grad^{\mathbf{X}}_{1}\grad^{\mathbf{X}}_{-1}$, and next, we define the operator $\mathscr{T}_{N}:=\frac12N^{2}\Delta{{}-}\overline{\mathfrak{d}}N\grad_{-1}^{\mathbf{X}}$, where $\overline{\mathfrak{d}}$ is from \eqref{eq:dcoeff}.
\item For function $\psi:[0,1]\to\R$ and $\s\in\R$, we let the time-gradient of time-scale $\s$ be {\small$\grad^{\mathbf{T}}_{\s}\psi_{\t}:=\psi_{[1\wedge(\t+\s)]\vee0}-\psi_{\t}$}. The use of  $[1\wedge(\t+\s)]\vee0$ is meant to keep the time-variable inside the interval $[0,1]$.
\item Here, we record important length-scales and exponents used throughout the paper. First, $\e_{\mathrm{ap}},\e_{\mathrm{reg}}>0$ are small and fixed, and they satisfy $\e_{\mathrm{reg}}>C\e_{\mathrm{ap}}$ for some large constant $C=\mathrm{O}(1)$. We also define the length-scale {\small$\mathfrak{l}_{\mathrm{reg}}:=N^{1/3+\e_{\mathrm{reg}}}$}.
\end{itemize}
%
%
%
\section{Outline of the main arguments}\label{section:outline}
We now give a heuristic overview of the arguments needed to prove Theorem \ref{theorem:main}. The high-level approach is similar to that of \cite{BG,DT,Y23} and many other works. In particular, the first step is computing a stochastic evolution equation for $\mathbf{Z}^{N}$; this is done in Lemma \ref{lemma:mshe} below. This produces a microscopic version of the limit SPDE \eqref{eq:she}. However, in this microscopic SHE, there are additional error terms whose treatment is very difficult and requires very new ideas and tools. Treating said error terms is the focus for almost all of this paper. (While the high-level approach is similar to that of \cite{Y23}, many important and difficult technical refinements of the analysis done in that paper, e.g. of averaging scales, are needed here, ultimately as a result of the lack of explicit invariant measures.)

The main error term is given by {\small$(\t,\x)\mapsto N^{1/2}\overline{\mathfrak{q}}[\tau_{\x}\eta_{\t}]\mathbf{Z}^{N}_{\t,\x}$}, where $\overline{\mathfrak{q}}$ is given in \eqref{eq:overlineqterm}. (The key properties of $\overline{\mathfrak{q}}$ will be explained shortly.) We explain how this error term is treated below.

The particle system wants to replace $\overline{\mathfrak{q}}$ by an ``ergodically-averaged" version since it is evolving very quickly at local scales. In particular, for every length-scale $\mathfrak{l}$, there is a ``homogenization" $\mathsf{E}[\mathfrak{l}]$ of $\overline{\mathfrak{q}}$; it is defined precisely in \eqref{eq:canexp}. \emph{The key property of $\overline{\mathfrak{q}}$ and $\mathsf{E}[\mathfrak{l}]$ is $|\mathsf{E}[\mathfrak{l}]|\lesssim\mathfrak{l}^{-3/2}$} in some sense, at least up to small powers of $N$. (Proving an estimate of this type is nontrivial, but the goal of this section is to give heuristics.) This property depends on \eqref{eq:assumequad}. Thus, we want to replace $\overline{\mathfrak{q}}\rightsquigarrow\mathsf{E}[\mathfrak{l}]$ with $\mathfrak{l}\gg N^{1/3}$ to beat the $N^{1/2}$-factor in {\small$(\t,\x)\mapsto N^{1/2}\overline{\mathfrak{q}}[\tau_{\x}\eta_{\t}]\mathbf{Z}^{N}_{\t,\x}$}.

Following \cite{CYau,GJ15,GJS15,SX}, we will instead consider a sequence of length-scales $\ell_{\k+1}=\ell_{\k}N^{\e}$ for small $\e>0$, and we control the terms {\small$(\t,\x)\mapsto N^{1/2}\mathsf{R}[\k\mapsto\k+1]\mathbf{Z}^{N}_{\t,\x}$}, where $\mathsf{R}[\k\mapsto\k+1]=\mathsf{E}[\ell_{\k+1}]-\mathsf{E}[\ell_{\k}]$. Because we integrate $\mathsf{R}[\k\mapsto\k+1]$ in space-time against the heat kernel and $\mathbf{Z}^{N}$, both of which have ``nice" space-time regularity, we can replace $\mathsf{R}[\k\mapsto\k+1]$ by its space-time average on a small length-scale $\ell_{\mathrm{av}}$ and time-scale  $\tau_{\mathrm{av}}$. (Technically, the regularity of $\mathbf{Z}^{N}$ is not good enough to justify replacing $\mathsf{R}[\k\mapsto\k+1]$ by a space-average, but space-gradients of $\mathbf{Z}^{N}$ are explicit functions of the particle system, so we are just left with lower-order errors.)

We must now bound the space-time average of $N^{1/2}\mathsf{R}[\k\mapsto\k+1]$ with length-scale $\ell_{\mathrm{av}}$ and time-scale $\tau_{\mathrm{av}}$. \emph{One of our main technical contributions is bounding said space-time average by {\small$\lesssim N^{1/2}(N^{2}\tau_{\mathrm{av}})^{-1/2}\ell_{\mathrm{av}}^{-1/2}$}} in a sufficiently strong sense \emph{for all $\k$}. \emph{This is where a lack of explicit invariant measures poses the biggest technical challenge, and this is what the method described Section \ref{subsubsection:papermethod} is ultimately developed to establish.} (Indeed, the usual proof of this bound, as in \cite{GJ15,GJS15,Y23}, uses invariant measures in a crucial way via the \emph{Kipnis-Varadhan inequality}.) In any case, we can take $\tau_{\mathrm{av}}\gg N^{-4/3}$ and $\ell_{\mathrm{av}}\gg N^{1/3}$, which are well below macroscopic scales, so that our bound becomes {\small$N^{1/2}(N^{2}\tau_{\mathrm{av}})^{-1/2}\ell_{\mathrm{av}}^{-1/2}\ll N^{-1/2}N^{2/3}N^{-1/6}=1$} and is therefore small.

This is the vague idea in proving Theorem \ref{theorem:main}. There are other important details, however, to take care of. For example, we need to make sure that our estimates for the error {\small$(\t,\x)\mapsto N^{1/2}\overline{\mathfrak{q}}[\tau_{\x}\eta_{\t}]\mathbf{Z}^{N}_{\t,\x}$} are in a sufficiently strong topology to be able to rigorously forget it when computing the large-$N$ limit of $\mathbf{Z}^{N}$. We also need strong enough space-time regularity for $\mathbf{Z}^{N}$. Sections \ref{section:mainproof}, \ref{section:bgp}, and \ref{section:propproofs} address these points. Next, our square-root cancellation bound for $(\ell_{\mathrm{av}},\tau_{\mathrm{av}})$-scale averages also requires knowing that the distribution of the particle system, when localized to such scales, is sufficiently close to some convex combination of product Bernoulli measures. (Indeed, it is only then would stability of said measures, as in Section \ref{subsubsection:papermethod}, under the driven process be useful.) This is treated in Section \ref{section:tools}, as is the key square-root cancellation bound for space-time averages that we mentioned above (and that we described in Section \ref{subsubsection:papermethod}). Section \ref{section:bgphlproof} deals with making precise everything else in this section.
%
%
%
\section{Proof of Theorem \ref{theorem:main}}\label{section:mainproof}
\subsection{Stochastic evolution equation for $\mathbf{Z}^{N}$}
We first introduce terms which appear in the evolution equation for $\mathbf{Z}^{N}$ in Lemma \ref{lemma:mshe} below. Recall that $\E^{\sigma}$ is expectation with respect to product measure $\mathbb{P}^{\sigma}$ on $\{\pm1\}^{\mathbb{T}_{N}}$ such that {\small$\E^{\sigma}\eta_{\x}=\sigma$} for all $\x$. We define 
\begin{align}
\mathfrak{q}[\eta]&:=\tfrac12\mathfrak{d}[\eta](1-\eta_{0}\eta_{1})\label{eq:qterm}\\
\wt{\mathfrak{q}}[\eta]&:=\mathfrak{q}[\tau_{-2\mathfrak{l}_{\mathfrak{d}}}\eta],\label{eq:wtqterm}\\
\overline{\mathfrak{q}}[\eta]&:=\wt{\mathfrak{q}}[\eta]-\E^{0}\{\wt{\mathfrak{q}}\}-\overline{\mathfrak{d}}\eta_{0}=\wt{\mathfrak{q}}[\eta]-\E^{0}\{{\mathfrak{q}}\}-\overline{\mathfrak{d}}\eta_{0},\label{eq:overlineqterm}\\
\wt{\mathfrak{s}}[\eta]&:=-\wt{\mathfrak{q}}[\eta]\sum_{\y=0}^{2\mathfrak{l}_{\mathfrak{d}}-1}\eta_{-\y},\label{eq:wtsterm}\\
\mathfrak{s}[\eta]&:=\wt{\mathfrak{s}}[\eta]-\E^{0}\wt{\mathfrak{s}},\label{eq:sterm}\\
\mathfrak{g}[\eta]&:=\tfrac12\mathfrak{d}[\eta](\eta_{1}-\eta_{0})-\E^{0}\{\tfrac12\mathfrak{d}[\eta](\eta_{1}-\eta_{0})\}.\label{eq:gterm}
\end{align}
Only $\overline{\mathfrak{q}}$ and $\mathfrak{s}$ and $\mathfrak{g}$ appear in the equation for $\mathbf{Z}^{N}$, but we list the ingredients needed to construct these terms above for convenience. Let us now explain each term above. As we mentioned before \eqref{eq:dcoeff}, $\mathfrak{q}$ is the microscopic flux associated to $\mathfrak{d}$, and $\mathfrak{l}_{\mathfrak{d}}$ is a ``support length" of $\mathfrak{d}$, i.e. the smallest positive integer for which $\mathfrak{d}[\eta]$ depends only on $\eta_{\x}$ for $|\x|\leq\mathfrak{l}_{\mathfrak{d}}$. The role of {\small$\wt{\mathfrak{q}}$} is technical; it shifts $\mathfrak{q}$ to depend only on $\eta_{\x}$ for $\x\leq0$, and {\small$\wt{\mathfrak{s}}$} turns out to be the cost in doing this shift. The term $\overline{\mathfrak{q}}$ is important; it subtracts from {\small$\wt{\mathfrak{q}}$} its $\E^{0}$-expectation, which is compensated for in \eqref{eq:renorm}, and a linear statistic that compensates the limiting $\overline{\mathfrak{d}}\partial_{\x}$ operator in \eqref{eq:she}. \eqref{eq:sterm} is a correction of {\small$\wt{\mathfrak{s}}$}, and \eqref{eq:gterm} is another error term.
\begin{lemma}\label{lemma:mshe}
\fsp With notation defined afterwards, we have 
\begin{align*}
\mathbf{Z}^{N}_{\t,\x}&={\textstyle\int_{0}^{\t}}\mathscr{T}_{N}\mathbf{Z}^{N}_{\s,\x}\d\s+{\textstyle\int_{0}^{\t}}\mathbf{Z}^{N}_{\s,\x}\d\xi^{N}_{\s,\x}-N^{\frac12}{\textstyle\int_{0}^{\t}}\overline{\mathfrak{q}}[\tau_{\x}\eta_{\s}]\mathbf{Z}^{N}_{\s,\x}\d\s-{\textstyle\int_{0}^{\t}}\mathfrak{s}[\tau_{\x}\eta_{\s}]\mathbf{Z}^{N}_{\s,\x}\d\s\\
&+N^{-\frac12}{\textstyle\int_{0}^{\t}}\mathfrak{b}_{1}[\tau_{\x}\eta_{\s}]\mathbf{Z}^{N}_{\s,\x}\d\s+{\textstyle\int_{0}^{\t}}\mathfrak{g}[\tau_{\x}\eta_{\s}]\mathbf{Z}^{N}_{\s,\x}\d\s+N^{\frac12}{\textstyle\int_{0}^{\t}}\grad_{-2\mathfrak{l}_{\mathfrak{d}}}^{\mathbf{X}}\left\{\mathfrak{b}_{2}[\tau_{\x}\eta_{\s}]\mathbf{Z}^{N}_{\s,\x}\right\}\d\s,
\end{align*}
which we write in the following differential shorthand:
\begin{align}
\d\mathbf{Z}^{N}_{\t,\x}&=\mathscr{T}_{N}\mathbf{Z}^{N}_{\t,\x}\d\t+\mathbf{Z}^{N}_{\t,\x}\d\xi^{N}_{\t,\x}-N^{\frac12}\overline{\mathfrak{q}}[\tau_{\x}\eta_{\t}]\mathbf{Z}^{N}_{\t,\x}\d\t-\mathfrak{s}[\tau_{\x}\eta_{\t}]\mathbf{Z}^{N}_{\t,\x}\d\t+\mathfrak{g}[\tau_{\x}\eta_{\t}]\mathbf{Z}^{N}_{\t,\x}\d\t\label{eq:msheIa}\\
&+N^{-\frac12}\mathfrak{b}_{1}[\tau_{\x}\eta_{\t}]\mathbf{Z}^{N}_{\t,\x}\d\t+N^{\frac12}\grad_{-2\mathfrak{l}_{\mathfrak{d}}}^{\mathbf{X}}\left\{\mathfrak{b}_{2}[\tau_{\x}\eta_{\t}]\mathbf{Z}^{N}_{\t,\x}\right\}\d\t.\label{eq:msheIb}
\end{align}
%
\begin{itemize}
\item The functions $\mathfrak{b}_{1},\mathfrak{b}_{2}:\{\pm1\}^{\mathbb{T}_{N}}\to\R$ are uniformly bounded in $N,\eta$.
\item For any integer $\mathfrak{l}\geq0$ and $\varphi:\mathbb{T}_{N}\to\R$, let the space gradient of length $\mathfrak{l}$ be {\small$\grad_{\mathfrak{l}}^{\mathbf{X}}\varphi_{\x}:=\varphi_{\x+\mathfrak{l}}-\varphi_{\x}$}. 
\item We define $\Delta:=\grad_{1}^{\mathbf{X}}\grad_{-1}^{\mathbf{X}}$ to be the discrete Laplacian, and $\mathscr{T}_{N}:=\frac12N^{2}\Delta{{}-}\overline{\mathfrak{d}}N\grad_{-1}^{\mathbf{X}}$.
\item The process {\small$\t\mapsto\xi^{N}_{\t,\x}$} is a compensated Poisson process for all $\x\in\mathbb{T}_{N}$. Precisely, we have 
\begin{align*}
\d\xi^{N}_{\t,\x}&=\left\{\mathrm{e}^{2N^{-1/2}}-1\right\}\mathbf{1}_{\eta_{\t,\x}=1}\mathbf{1}_{\eta_{\t,\x+1}=-1}\left\{\d\mathscr{Q}^{\mathrm{S},\to}_{\t,\x}-\tfrac12N^{2}\d\t\right\}\\
&+\left\{\mathrm{e}^{-2N^{-1/2}}-1\right\}\mathbf{1}_{\eta_{\t,\x}=-1}\mathbf{1}_{\eta_{\t,\x+1}=1}\left\{\d\mathscr{Q}^{\mathrm{S},\leftarrow}_{\t,\x}-\tfrac12N^{2}\d\t\right\}\\
&-\left\{\mathrm{e}^{2N^{-1/2}}-1\right\}\mathbf{1}_{\eta_{\t,\x}=1}\mathbf{1}_{\eta_{\t,\x+1}=-1}\left\{\d\mathscr{Q}^{\mathrm{A},\to}_{\t,\x}-\left(\tfrac12N^{\frac32}+\tfrac12N\mathfrak{d}[\tau_{\x}\eta_{\t}]\right)\d\t\right\}\\
&+\left\{\mathrm{e}^{-2N^{-1/2}}-1\right\}\mathbf{1}_{\eta_{\t,\x}=-1}\mathbf{1}_{\eta_{\t,\x+1}=1}\left\{\d\mathscr{Q}^{\mathrm{A},\leftarrow}_{\t,\x}-\left(\tfrac12N^{\frac32}+\tfrac12N\mathfrak{d}[\tau_{\x}\eta_{\t}]\right)\d\t\right\}.
\end{align*}
In the above formula, {\small$\t\mapsto\mathscr{Q}^{\mathrm{S},\to}_{\t,\x}$} and {\small$\t\mapsto\mathscr{Q}^{\mathrm{S},\leftarrow}_{\t,\x}$} are Poisson process of speed {\small$\frac12N^{2}$}. The processes {\small$\t\mapsto\mathscr{Q}^{\mathrm{A},\to}_{\t,\x}$} and {\small$\t\mapsto\mathscr{Q}^{\mathrm{A},\leftarrow}_{\t,\x}$} are Poisson processes with speed {\small$\frac12N^{3/2}+\frac12N\mathfrak{d}[\tau_{\x}\eta_{\t}]$}. All Poisson processes are independent.
\end{itemize}
\end{lemma}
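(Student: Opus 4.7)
The plan is to apply the Dynkin formula for jump processes to the Gartner transform $\mathbf{Z}^{N}_{\t,\x}=\exp\{-\mathbf{h}^{N}_{\t,\x}+\mathscr{R}_{N}\t\}$. This decomposes the dynamics as
\begin{align*}
\d\mathbf{Z}^{N}_{\t,\x}=\mathscr{R}_{N}\mathbf{Z}^{N}_{\t,\x}\,\d\t+\mathscr{L}_{N}\mathbf{Z}^{N}_{\t,\x}\,\d\t+\d\mathcal{M}^{N}_{\t,\x},
\end{align*}
where $\mathcal{M}^{N}$ is the purely-discontinuous martingale produced by the Poisson clocks. The task is to match the drift against the right-hand side of \eqref{eq:msheIa}-\eqref{eq:msheIb} and to identify $\d\mathcal{M}^{N}_{\t,\x}=\mathbf{Z}^{N}_{\t,\x}\d\xi^{N}_{\t,\x}$.

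\textbf{Computing the generator.} Since $\mathbf{h}^{N}_{\t,\x}-\mathbf{h}^{N}_{\t,\x-1}=N^{-1/2}\eta_{\t,\x}$, the only event which changes $\mathbf{Z}^{N}_{\t,\x}$ by a factor depending on $\x$ is a swap at bond $\{\x,\x+1\}$: a right-swap (only when $\eta_{\x}=1,\eta_{\x+1}=-1$) multiplies $\mathbf{Z}^{N}_{\t,\x}$ by $\mathrm{e}^{2N^{-1/2}}$, a left-swap (only when $\eta_{\x}=-1,\eta_{\x+1}=1$) by $\mathrm{e}^{-2N^{-1/2}}$. Using the rates in Section \ref{section:model} and writing $\mathbf{1}_{+-}:=\mathbf{1}_{\eta_{\x}=1,\eta_{\x+1}=-1}$ and $\mathbf{1}_{-+}:=\mathbf{1}_{\eta_{\x}=-1,\eta_{\x+1}=1}$,
\begin{align*}
\frac{\mathscr{L}_{N}\mathbf{Z}^{N}_{\t,\x}}{\mathbf{Z}^{N}_{\t,\x}}&=\bigl(\tfrac{1}{2}N^{2}-\tfrac{1}{2}N^{3/2}-\tfrac{1}{2}N\mathfrak{d}[\tau_{\x}\eta]\bigr)\mathbf{1}_{+-}\bigl(\mathrm{e}^{2N^{-1/2}}-1\bigr)\\
&\quad+\bigl(\tfrac{1}{2}N^{2}+\tfrac{1}{2}N^{3/2}+\tfrac{1}{2}N\mathfrak{d}[\tau_{\x}\eta]\bigr)\mathbf{1}_{-+}\bigl(\mathrm{e}^{-2N^{-1/2}}-1\bigr).
\end{align*}
I would then Taylor-expand $\mathrm{e}^{\pm 2N^{-1/2}}-1$ to order $N^{-2}$, use the identities $\mathbf{1}_{+-}+\mathbf{1}_{-+}=\tfrac{1}{2}(1-\eta_{\x}\eta_{\x+1})$ and $\mathbf{1}_{+-}-\mathbf{1}_{-+}=\tfrac{1}{2}(\eta_{\x}-\eta_{\x+1})$, and sort the resulting monomials by order in $N$.

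\textbf{Matching each drift term.} The $\mathfrak{d}$-free pieces (pure ASEP) combine, after invoking $\mathbf{Z}^{N}_{\t,\x+1}/\mathbf{Z}^{N}_{\t,\x}=\cosh(N^{-1/2})-\eta_{\x+1}\sinh(N^{-1/2})$ and $\mathbf{Z}^{N}_{\t,\x-1}/\mathbf{Z}^{N}_{\t,\x}=\cosh(N^{-1/2})+\eta_{\x}\sinh(N^{-1/2})$ (valid since $\eta\in\{\pm1\}$), to produce $\tfrac{1}{2}N^{2}\Delta\mathbf{Z}^{N}_{\t,\x}$ plus a constant matching the $\tfrac{1}{2}N-\tfrac{1}{24}$ inside $\mathscr{R}_{N}$; this is the \cite{BG} computation. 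The $\mathfrak{d}$-dependent pieces produce the novel terms, which I would handle in three moves. First, the leading $\mathfrak{d}$-contribution is $-N^{1/2}\mathfrak{q}[\tau_{\x}\eta]$ with $\mathfrak{q}$ from \eqref{eq:qterm}; translate its support by writing $\mathfrak{q}[\tau_{\x}\eta]=\widetilde{\mathfrak{q}}[\tau_{\x+2\mathfrak{l}_{\mathfrak{d}}}\eta]$ as in \eqref{eq:wtqterm}, and pay the translation cost by a telescoping expansion using $\mathbf{Z}^{N}_{\t,\x+k}/\mathbf{Z}^{N}_{\t,\x}=1+\O(N^{-1/2})$; this cost splits into the local term $\widetilde{\mathfrak{s}}[\tau_{\x}\eta]$ of \eqref{eq:wtsterm} plus the discrete-gradient correction $N^{1/2}\nabla^{\mathbf{X}}_{-2\mathfrak{l}_{\mathfrak{d}}}(\mathfrak{b}_{2}\mathbf{Z}^{N}_{\t,\x})$. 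Second, centre $\widetilde{\mathfrak{q}}$ at its $\E^{0}$-mean and subtract its linear-in-$\eta_{0}$ component $\overline{\mathfrak{d}}\eta_{0}$ (with $\overline{\mathfrak{d}}$ from \eqref{eq:dcoeff}): the mean becomes $\mathscr{R}_{2,1}$ in \eqref{eq:renorm}, the linear part combines with $\mathbf{Z}^{N}_{\t,\x-1}/\mathbf{Z}^{N}_{\t,\x}=1+N^{-1/2}\eta_{\x}+\O(N^{-1})$ to yield the transport $-\overline{\mathfrak{d}}N\nabla^{\mathbf{X}}_{-1}\mathbf{Z}^{N}_{\t,\x}$ in $\mathscr{T}_{N}$, and the remainder is precisely $\overline{\mathfrak{q}}$ from \eqref{eq:overlineqterm}. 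Third, the residual gradient $\tfrac{1}{2}\mathfrak{d}[\tau_{\x}\eta](\eta_{\x+1}-\eta_{\x})$ centred at its $\E^{0}$-mean becomes $\mathfrak{g}[\tau_{\x}\eta]$, with this mean (together with $\E^{0}\widetilde{\mathfrak{s}}$) producing $\mathscr{R}_{2,2}+\mathscr{R}_{2,3}$. All Taylor remainders of relative size $N^{-1/2}$ are gathered into $\mathfrak{b}_{1}$.

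\textbf{Martingale and obstacle.} Writing each of the four Poisson increments at bond $\{\x,\x+1\}$ as its predictable compensator plus noise and multiplying by the jump factor $\mathrm{e}^{\pm 2N^{-1/2}}-1$ reassembles exactly into the formula for $\d\xi^{N}_{\t,\x}$ in the statement, giving $\d\mathcal{M}^{N}_{\t,\x}=\mathbf{Z}^{N}_{\t,\x}\d\xi^{N}_{\t,\x}$. The main obstacle is the bookkeeping: organising the Taylor expansion in $N^{-1/2}$, assigning each combination to the correct family among $\mathscr{R}_{N}$, $\overline{\mathfrak{d}}$, $\overline{\mathfrak{q}}$, $\mathfrak{s}$, $\mathfrak{g}$, $\mathfrak{b}_{1}$, $\mathfrak{b}_{2}$, and verifying via \eqref{eq:assumequad} that no quadratic-in-$\sigma$ second renormalisation has to be appended to $\mathscr{R}_{N}$.
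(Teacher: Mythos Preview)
Your approach is essentially the same as the paper's: apply the Ito--Dynkin formula to $\mathbf{Z}^{N}$, Taylor-expand the jump factors $\mathrm{e}^{\pm 2N^{-1/2}}-1$, separate the $\mathfrak{d}$-free ASEP part (yielding $\tfrac12 N^{2}\Delta$ as in \cite{BG}) from the $\mathfrak{d}$-dependent part, then decompose the latter by shifting $\mathfrak{q}\mapsto\wt{\mathfrak{q}}$ (paying the cost $\mathfrak{s}$ and the gradient term), centering and extracting the linear-in-$\eta$ piece to produce the transport $-\overline{\mathfrak{d}}N\grad^{\mathbf{X}}_{-1}$, and collecting remainders into $\mathfrak{b}_{1},\mathfrak{b}_{2},\mathfrak{g}$. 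The paper simply cites the corresponding displays (2.2)--(2.9) from \cite{Y23} for most of this bookkeeping, whereas you spell out the strategy directly; the content is the same.

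One minor correction: your final remark about \eqref{eq:assumequad} is misplaced. That assumption plays no role in this lemma---the decomposition \eqref{eq:msheIa}--\eqref{eq:msheIb} is purely algebraic given the definition of $\mathscr{R}_{N}$ in \eqref{eq:renorm}, and \eqref{eq:assumequad} only enters later (in Lemma~\ref{lemma:canexp}) to ensure $\overline{\mathfrak{q}}$ has the cubic vanishing needed for the Boltzmann--Gibbs principle.
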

\begin{proof}
We will prove the differential shorthand \eqref{eq:msheIa}-\eqref{eq:msheIb}, with the understanding that everything holds after time-integration. We will borrow much of the proof from \cite{Y23} (see the proof of Proposition 2.4 therein). By (2.2) and (2.3) in \cite{Y23}:
\begin{align}
\d\mathbf{Z}^{N}_{\t,\x}&=\tfrac12N^{2}\Delta\mathbf{Z}^{N}_{\t,\x}+\mathbf{Z}^{N}_{\t,\x}\d\xi^{N}_{\t,\x}+N^{2}\Phi^{\mathrm{A}}[\tau_{\x}\eta_{\t}]\mathbf{Z}^{N}_{\t,\x}\d\t+\{N^{\frac12}\mathscr{R}_{2,1}+\mathscr{R}_{2,2}+\mathscr{R}_{2,3}\}\mathbf{Z}^{N}_{\t,\x}\d\t.\label{eq:msheI1}
\end{align}
(This follows by Ito-Dynkin applied to $\mathbf{Z}^{N}$ and standard manipulations.) In \eqref{eq:msheI1}, the term $\Phi^{\mathrm{A}}$ is given by the following, which is just (2.4)-(2.5) in \cite{Y23}, in which $\mathrm{T}^{\pm,N}=[\exp\{-2N^{-1/2}\}-1]\pm[\exp\{2N^{-1/2}\}-1]$:
\begin{align*}
\Phi^{\mathrm{A}}[\tau_{\x}\eta_{\t}]&=\tfrac18N^{-1}\mathrm{T}^{-,N}\mathfrak{d}[\tau_{\x}\eta_{\t}](1-\eta_{\t,\x}\eta_{\t,\x+1})+\tfrac18N^{-1}\mathrm{T}^{+,N}\mathfrak{d}[\tau_{\x}\eta_{\t}](\eta_{\t,\x+1}-\eta_{\t,\x}).
\end{align*}
We note that $\mathrm{T}^{-,N}=-4N^{-1/2}+\mathrm{O}(N^{-3/2})$ and $\mathrm{T}^{+,N}=4N^{-1}+\mathrm{O}(N^{-2})$. Now, by \eqref{eq:qterm}-\eqref{eq:overlineqterm}, we have 
\begin{align*}
\tfrac18N\mathrm{T}^{-,N}\mathfrak{d}[\tau_{\x}\eta_{\t}](1-\eta_{\t,\x}\eta_{\t,\x+1})&=\tfrac14N\mathrm{T}^{-,N}\left(\overline{\mathfrak{q}}[\tau_{\x}\eta_{\t}]+\E^{0}\wt{\mathfrak{q}}+\overline{\mathfrak{d}}\eta_{\t,\x}-\{\mathfrak{q}[\tau_{\x-2\mathfrak{l}_{\mathfrak{d}}}\eta_{\t}]-\mathfrak{q}[\tau_{\x}\eta_{\t}]\}\right).
\end{align*}
At this point, no assumptions made about $\mathfrak{d}[\cdot]$ in \cite{Y23} have been used yet in the proof of Proposition 2.4 in \cite{Y23}; indeed, they are only used to compute the last term in $\Phi^{\mathrm{A}}$ above. In particular, if we follow the bullet points in the proof of Proposition 2.4 in \cite{Y23}, we conclude the following. First, by the asymptotic for $\mathrm{T}^{-,N}$, we have $\frac14N\mathrm{T}^{-,N}\overline{\mathfrak{q}}=-N^{1/2}\overline{\mathfrak{q}}+\mathrm{O}(N^{-1/2})$. By the same token and by definition of $\mathscr{R}_{2,1}$, we have $\frac14N\mathrm{T}^{-,N}\E^{0}\wt{\mathfrak{q}}+N^{1/2}\mathscr{R}_{2,1}=\mathrm{O}(N^{-1/2})$. Now, we use (2.8) in \cite{Y23}; this computes gradients of $\mathbf{Z}^{N}$ by Taylor expanding to get 
\begin{align}
\tfrac14N\mathrm{T}^{-,N}\overline{\mathfrak{d}}\eta_{\t,\x}\mathbf{Z}^{N}_{\t,\x}+\mathscr{R}_{2,2}\mathbf{Z}^{N}_{\t,\x}=-N\overline{\mathfrak{d}}\grad_{-1}^{\mathbf{X}}\mathbf{Z}^{N}_{\t,\x}.\label{eq:msheI2}
\end{align}
Next, by the same idea of Taylor expanding to compute gradients of $\mathbf{Z}^{N}$, (2.9) in \cite{Y23} gives 
\begin{align*}
&-\tfrac14N\mathrm{T}^{-,N}\{\mathfrak{q}[\tau_{\x-2\mathfrak{l}_{\mathfrak{d}}}\eta_{\t}]-\mathfrak{q}[\tau_{\x}\eta_{\t}]\}\mathbf{Z}^{N}_{\t,\x}+\mathscr{R}_{2,3}\mathbf{Z}^{N}_{\t,\x}\\
&=-\mathfrak{s}[\tau_{\x}\eta_{\t}]\mathbf{Z}^{N}_{\t,\x}+N^{\frac12}\grad^{\mathbf{X}}_{-2\mathfrak{l}_{\mathfrak{d}}}\left\{\mathfrak{b}[\tau_{\x}\eta_{\t}]\mathbf{Z}^{N}_{\t,\x}\right\}-\tfrac12\E^{0}\{\mathfrak{d}[\eta][\eta_{1}-\eta_{0}]\}.
\end{align*}
In the above formula, the function $\mathfrak{b}$ is local and uniformly bounded. We note that (2.9) in \cite{Y23} does not include the last term in the second line above; this is because $\mathscr{R}_{2,3}$ is equal to $\mathrm{R}_{2,3}$ in \cite{Y23} plus $-\frac12\E^{0}\{\mathfrak{d}[\eta][\eta_{1}-\eta_{0}]\}$. If we now combine the previous four displays, we arrive at the following:
\begin{align*}
\Phi^{\mathrm{A}}[\tau_{\x}\eta_{\t}]+N^{\frac12}\mathscr{R}_{2,1}+\mathscr{R}_{2,2}+\mathscr{R}_{2,3}&=-N^{\frac12}\overline{\mathfrak{q}}[\tau_{\x}\eta_{\t}]-N\overline{\mathfrak{d}}\grad_{-1}^{\mathbf{X}}\mathbf{Z}^{N}_{\t,\x}-\mathfrak{s}[\tau_{\x}\eta_{\t}]\mathbf{Z}^{N}_{\t,\x}\\
&+N^{\frac12}\grad^{\mathbf{X}}_{-2\mathfrak{l}_{\mathfrak{d}}}\left\{\mathfrak{b}[\tau_{\x}\eta_{\t}]\mathbf{Z}^{N}_{\t,\x}\right\}-\tfrac12\E^{0}\{\mathfrak{d}[\eta][\eta_{1}-\eta_{0}]\}\\
&+\tfrac18N\mathrm{T}^{+,N}\mathfrak{d}[\tau_{\x}\eta_{\t}](\eta_{\t,\x+1}-\eta_{\t,\x}).
\end{align*}
By the previous asymptotic for $\mathrm{T}^{+,N}$, the last two terms on the RHS combine to $\mathfrak{g}[\tau_{\x}\eta_{\t}]$. Thus, it we combine the previous display with \eqref{eq:msheI1}, we obtain the desired equation \eqref{eq:msheIa}-\eqref{eq:msheIb}.
\end{proof}
As an immediate consequence of Lemma \ref{lemma:mshe}, we get another equation via the Duhamel principle. Let us start to make this precise with the following semigroup associated to $\mathscr{T}_{N}$.
\begin{definition}\label{definition:heat}
Let {\small$\mathbf{H}^{N}_{\s,\t,\x,\y}$}, for $(\s,\t,\x,\y)\in[0,\infty)^{2}\times\mathbb{T}_{N}^{2}$, be the heat kernel for $\mathscr{T}_{N}$, i.e.
\begin{align*}
\partial_{\t}\mathbf{H}^{N}_{\s,\t,\x,\y}=\mathscr{T}_{N}\mathbf{H}^{N}_{\s,\t,\x,\y}\quad\mathrm{and}\quad\mathbf{H}^{N}_{\s,\s,\x,\y}=\mathbf{1}_{\x=\y},
\end{align*}
where $\mathscr{T}_{N}$ acts on the $\x$-variable. Following standard notation, for any $\varphi:\mathbb{T}_{N}\to\R$ and any $0\leq\s\leq\t$, we set 
\begin{align}
\mathrm{e}^{[\t-\s]\mathscr{T}_{N}}[\varphi_{\cdot}]_{\x}:=\mathrm{e}^{[\t-\s]\mathscr{T}_{N}}[\varphi_{\y}]_{\x}:=\sum_{\y\in\mathbb{T}_{N}}\mathbf{H}^{N}_{\s,\t,\x,\y}\varphi_{\y}.
\end{align}
We will use both notations given above; the notation with subscript $\y$ will be used when we want to emphasize or clarify the summation variable on the far RHS. We also use the notation {\small$\exp\{[\t-\s]\mathscr{T}_{N}\}$} to mean {\small$\mathrm{e}^{[\t-\s]\mathscr{T}_{N}}$}.
\end{definition}
\begin{corollary}\label{corollary:heat}
\fsp Recall the notation from Lemma \ref{lemma:mshe}. We have the integral equation
\begin{align}
\mathbf{Z}^{N}_{\t,\x}&=\mathrm{e}^{\t\mathscr{T}_{N}}[\mathbf{Z}^{N}_{0,\cdot}]_{\x}+{\textstyle\int_{0}^{\t}}\mathrm{e}^{[\t-\s]\mathscr{T}_{N}}[\mathbf{Z}^{N}_{\s,\cdot}\d\xi^{N}_{\s,\cdot}]_{\x}\label{eq:heatIa}\\
&-{\textstyle\int_{0}^{\t}}\mathrm{e}^{[\t-\s]\mathscr{T}_{N}}[N^{\frac12}\overline{\mathfrak{q}}[\tau_{\cdot}\eta_{\s}]\mathbf{Z}^{N}_{\s,\cdot}]_{\x}\d\s\label{eq:heatIb}\\
&+{\textstyle\int_{0}^{\t}}\mathrm{e}^{[\t-\s]\mathscr{T}_{N}}[(\mathfrak{g}[\tau_{\cdot}\eta_{\s}]-\mathfrak{s}[\tau_{\cdot}\eta_{\s}])\mathbf{Z}^{N}_{\s,\cdot}]_{\x}\d\s\label{eq:heatIc}\\
&+{\textstyle\int_{0}^{\t}}\mathrm{e}^{[\t-\s]\mathscr{T}_{N}}[N^{-\frac12}\mathfrak{b}_{1}[\tau_{\cdot}\eta_{\s}]\mathbf{Z}^{N}_{\s,\cdot}]_{\x}\d\s\label{eq:heatId}\\
&+{\textstyle\int_{0}^{\t}}\mathrm{e}^{[\t-\s]\mathscr{T}_{N}}[N^{\frac12}\grad^{\mathbf{X}}_{-2\mathfrak{l}_{\mathfrak{d}}}\{\mathfrak{b}_{2}[\tau_{\cdot}\eta_{\s}]\mathbf{Z}^{N}_{\s,\cdot}\}]_{\x}\d\s.\label{eq:heatIe}
\end{align}
\end{corollary}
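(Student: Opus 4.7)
The plan is to apply the Duhamel principle (variation of constants) to the stochastic evolution equation from Lemma \ref{lemma:mshe}. Since $\mathbb{T}_{N}$ is finite, the process $\t\mapsto(\mathbf{Z}^{N}_{\t,\x})_{\x\in\mathbb{T}_{N}}$ takes values in the finite-dimensional space $\R^{\mathbb{T}_{N}}$, on which $\mathscr{T}_{N}$ is a bounded linear operator and $\mathrm{e}^{\t\mathscr{T}_{N}}$ is the well-defined semigroup whose kernel is the heat kernel $\mathbf{H}^{N}$ of Definition \ref{definition:heat}. So there are no analytic subtleties with the semigroup itself; one just has to justify that Duhamel is compatible with the stochastic driver $\d\xi^{N}$ (which is handled by the standard Ito calculus for compensated Poisson integrals).

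Concretely, I would first rewrite Lemma \ref{lemma:mshe} in the schematic form
\begin{align*}
\d\mathbf{Z}^{N}_{\t,\x}=\mathscr{T}_{N}\mathbf{Z}^{N}_{\t,\x}\,\d\t+\d\mathbf{M}^{N}_{\t,\x}+\d\mathbf{F}^{N}_{\t,\x},
\end{align*}
where $\d\mathbf{M}^{N}_{\t,\x}:=\mathbf{Z}^{N}_{\t,\x}\d\xi^{N}_{\t,\x}$ is the martingale driver and $\d\mathbf{F}^{N}_{\t,\x}$ collects the four remaining finite-variation terms in \eqref{eq:msheIa}-\eqref{eq:msheIb} (the $\overline{\mathfrak{q}}$, $\mathfrak{s}$, $\mathfrak{g}$, $\mathfrak{b}_{1}$, and $\grad^{\mathbf{X}}_{-2\mathfrak{l}_{\mathfrak{d}}}\{\mathfrak{b}_{2}\mathbf{Z}^{N}\}$ pieces). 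Then I apply Ito's formula to the auxiliary process $\s\mapsto\mathrm{e}^{[\t-\s]\mathscr{T}_{N}}[\mathbf{Z}^{N}_{\s,\cdot}]_{\x}$ on $[0,\t]$ (with $\t,\x$ fixed). The $\s$-derivative of the semigroup contributes $-\mathscr{T}_{N}\mathrm{e}^{[\t-\s]\mathscr{T}_{N}}[\mathbf{Z}^{N}_{\s,\cdot}]_{\x}\,\d\s$, which exactly cancels the drift $\mathrm{e}^{[\t-\s]\mathscr{T}_{N}}[\mathscr{T}_{N}\mathbf{Z}^{N}_{\s,\cdot}]_{\x}\,\d\s$ coming from the $\mathscr{T}_{N}\mathbf{Z}^{N}_{\t,\x}\d\t$ term, because $\mathscr{T}_{N}$ commutes with its own semigroup. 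Integrating from $\s=0$ to $\s=\t$ then gives
\begin{align*}
\mathbf{Z}^{N}_{\t,\x}=\mathrm{e}^{\t\mathscr{T}_{N}}[\mathbf{Z}^{N}_{0,\cdot}]_{\x}+{\textstyle\int_{0}^{\t}}\mathrm{e}^{[\t-\s]\mathscr{T}_{N}}[\d\mathbf{M}^{N}_{\s,\cdot}]_{\x}+{\textstyle\int_{0}^{\t}}\mathrm{e}^{[\t-\s]\mathscr{T}_{N}}[\d\mathbf{F}^{N}_{\s,\cdot}]_{\x}.
\end{align*}
Substituting the explicit definition of each piece of $\d\mathbf{F}^{N}$ from Lemma \ref{lemma:mshe} yields \eqref{eq:heatIa}-\eqref{eq:heatIe} term-by-term.

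To justify the Ito step carefully, I would note that since $\mathbb{T}_{N}$ is finite, the compensated Poisson martingale $\xi^{N}$ has almost surely finitely many jumps on $[0,\t]$, each of bounded size, so the stochastic integral against $\mathrm{e}^{[\t-\s]\mathscr{T}_{N}}$ (viewed as a deterministic matrix-valued integrand) is simply a finite linear combination of ordinary jumps and continuous Lebesgue integrals, and the Ito product rule applied to the pair $(\mathrm{e}^{[\t-\s]\mathscr{T}_{N}},\mathbf{Z}^{N}_{\s,\cdot})$ has no extra bracket terms because the semigroup factor is deterministic. Equivalently, one can simply verify \eqref{eq:heatIa}-\eqref{eq:heatIe} by differentiating the right-hand side in $\t$ and checking that it satisfies the same SDE and initial condition as $\mathbf{Z}^{N}_{\t,\x}$, which by uniqueness for this finite-dimensional linear SDE forces equality.

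I do not expect a real obstacle here; the result is a routine Duhamel identity for a linear SDE on the finite state space $\R^{\mathbb{T}_{N}}$. The only mildly delicate point is bookkeeping the nonlocal discrete-gradient term $N^{1/2}\grad^{\mathbf{X}}_{-2\mathfrak{l}_{\mathfrak{d}}}\{\mathfrak{b}_{2}[\tau_{\cdot}\eta_{\s}]\mathbf{Z}^{N}_{\s,\cdot}\}$: one must make sure the spatial gradient lives inside the semigroup action, i.e.~that $\mathrm{e}^{[\t-\s]\mathscr{T}_{N}}$ acts on the variable $\cdot$ being shifted by the gradient, which it does since Duhamel is applied site-by-site in $\x$ and the gradient is just a finite-difference in the source variable $\y$ of the heat kernel. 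This matches the form written in \eqref{eq:heatIe}.
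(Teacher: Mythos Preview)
Your proposal is correct and matches the paper's approach exactly: the paper presents this corollary as ``an immediate consequence of Lemma \ref{lemma:mshe}'' via the Duhamel principle and gives no further proof, and what you have written is precisely the standard Duhamel/variation-of-constants argument spelled out in detail.
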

\eqref{eq:heatIa} is a microscopic version of \eqref{eq:she}, and the rest of the display above are error terms. \eqref{eq:heatIb} is the main error term; \eqref{eq:heatIc} also requires probabilistic inputs to control, but it is not as serious because \eqref{eq:heatIb} has a factor of {\small$N^{1/2}$}. \eqref{eq:heatId}-\eqref{eq:heatIe} will be controlled by deterministic, analytic means.
\subsection{Proving Theorem \ref{theorem:main}}
Analysis of the main error \eqref{eq:heatIb} is quite delicate, and standard Bertini-Giacomin moment-style bounds (which are the basis of many works \cite{BG,CGST,CST,CT,CTIn}, for example) do not work. Instead, we use a more sophisticated stochastic analysis approach via stopping times to control $\mathbf{Z}^{N}$.

We start with an a priori size estimate. Fix $\e_{\mathrm{ap}}>0$ arbitrarily small but positive uniformly in $N$ such that $N^{\e_{\mathrm{ap}}}$ is an integer (this constraint is purely for convenience later on and is ultimately unimportant). Define
\begin{align}
\mathfrak{t}_{\mathrm{ap}}&:=\inf\left\{\t\in[0,1]: \|\mathbf{Z}^{N}\|_{\mathrm{L}^{\infty}([0,\t]\times\mathbb{T}_{N})}+\|\tfrac{1}{\mathbf{Z}^{N}}\|_{\mathrm{L}^{\infty}([0,\t]\times\mathbb{T}_{N})}\gtrsim N^{\e_{\mathrm{ap}}}\right\}\wedge1.\label{eq:tap}
\end{align}
(Above, $\gtrsim N^{\e_{\mathrm{ap}}}$ means $\geq CN^{\e_{\mathrm{ap}}}$ for some large constant $C>0$; to avoid unnecessary notation, we use $\gtrsim$.)

By Theorem \ref{theorem:main}, $\mathbf{Z}^{N}$ is uniformly bounded (in probability), so $\mathfrak{t}_{\mathrm{ap}}=1$ should hold with probability approaching $1$. Of course, this is circular reasoning. However, we will legitimately show $\mathfrak{t}_{\mathrm{ap}}=1$ with probability approaching $1$ as $N\to\infty$. The same is true for the other stopping times introduced below as well. We move to stopping times for space regularity of $\mathbf{Z}^{N}$. Let $\e_{\mathrm{reg}}>0$ be a small constant satisfying $\e_{\mathrm{reg}}\geq C\e_{\mathrm{ap}}$ for $C>0$ fixed, large, and $\mathrm{O}(1)$. Set $\mathfrak{l}_{\mathrm{reg}}=N^{1/3+\e_{\mathrm{reg}}}$. Recall space-gradients {\small$\grad^{\mathbf{X}}_{\mathfrak{l}}$} from Lemma \ref{lemma:mshe}. Define
\begin{align}
\mathfrak{t}^{\mathrm{space}}_{\mathrm{reg}}&:=\inf\left\{\t\in[0,1]:\sup_{-2\mathfrak{l}_{\mathrm{reg}}\leq\mathfrak{l}\leq2\mathfrak{l}_{\mathrm{reg}}}\frac{N^{\frac12}|\mathfrak{l}|^{-\frac12}\|\grad^{\mathbf{X}}_{\mathfrak{l}}\mathbf{Z}^{N}\|_{\mathrm{L}^{\infty}([0,\t]\times\mathbb{T}_{N})}}{1+\|\mathbf{Z}^{N}\|_{\mathrm{L}^{\infty}([0,\t]\times\mathbb{T}_{N})}^{2}}\gtrsim N^{2\e_{\mathrm{ap}}}\right\}.\label{eq:tregX}
\end{align}
In words, {\small$\mathfrak{t}^{\mathrm{space}}_{\mathrm{reg}}$} gives a priori H\"{o}lder-{\small$\frac12$} regularity in space up to the mesoscopic scale $\mathfrak{l}_{\mathrm{reg}}$. (Although \eqref{eq:she} only has H\"{o}lder regularity of exponent {\small$\frac12-\e$} for any $\e>0$, because we are working with a discretization of scale $N$, it is the same as exponent {\small$\frac12$} as long as we include $\log N$ factors; since we have {\small$N^{2\e_{\mathrm{ap}}}$} in \eqref{eq:tregX}, we still expect {\small$\mathfrak{t}^{\mathrm{space}}_{\mathrm{reg}}=1$} with probability tending to $1$. Also, the exponent $2\e_{\mathrm{ap}}$ is purely technical.) 

Next, we define a time-gradient {\small$\grad^{\mathbf{T}}_{\s}\psi_{\t}:=\psi_{[1\wedge(\t+\s)]\vee0}-\psi_{\t}$} for functions $\psi:[0,1]\to\R$; here, $[1\wedge(\t+\s)]\vee0$ is meant to keep the time-variable inside the interval $[0,1]$. We also define the following, which gives us a priori H\"{o}lder-$1/4$ time-regularity (agreeing essentially with time-regularity of \eqref{eq:kpz}) on mesoscopic time-scales:
\begin{align}
\mathfrak{t}^{\mathrm{time}}_{\mathrm{reg}}:=\inf\left\{\t\in[0,1]:\sup_{\s\in[0,1/N]}\frac{\{N^{-2}\vee\s\}^{-\frac14}\|\grad^{\mathbf{T}}_{-\s}\mathbf{Z}^{N}\|_{\mathrm{L}^{\infty}([0,\t]\times\mathbb{T}_{N})}}{1+\|\mathbf{Z}^{N}\|_{\mathrm{L}^{\infty}([0,\t]\times\mathbb{T}_{N})}^{2}}\gtrsim N^{2\e_{\mathrm{ap}}}\right\}\wedge1.\label{eq:tregT}
\end{align}
Since time-gradients in \eqref{eq:tregT} are in the backwards direction, {\small$\mathfrak{t}^{\mathrm{time}}_{\mathrm{reg}}$} is a stopping time. Also, the use of $N^{-2}\vee$ is because $\mathbf{Z}^{N}$ has jumps of speed $N^{2}$, so it is not H\"{o}lder continuous on ``microscopic" time-scales. Finally, set
\begin{align}
\mathfrak{t}_{\mathrm{stop}}:=\mathfrak{t}_{\mathrm{ap}}\wedge\mathfrak{t}^{\mathrm{space}}_{\mathrm{reg}}\wedge\mathfrak{t}^{\mathrm{time}}_{\mathrm{reg}} \quad\mathrm{and}\quad \mathbf{Y}^{N}_{\t,\x}:=\mathbf{Z}^{N}_{\t,\x}\mathbf{1}_{\t\leq\mathfrak{t}_{\mathrm{stop}}}.\label{eq:tst}
\end{align}
To justify using this stopping time, we need to show that {\small$\mathfrak{t}_{\mathrm{stop}}=1$} with probability tending to $1$ as $N\to\infty$. This is the result below; first, we introduce a notion of (very) high probability to be used frequently in this paper.
\begin{definition}\label{definition:hp}
\fsp We say an event $\mathcal{E}$ holds with high probability if $\mathbb{P}[\mathcal{E}^{C}]\leq \delta+C_{\delta}\mathrm{o}(1)$, where $\delta>0$ is arbitrary and $C_{\delta}$ depends only on $\delta$. We say $\mathcal{E}$ holds with very high probability if $\mathbb{P}[\mathcal{E}^{C}]\lesssim_{D}N^{-D}$ for any $D>0$.
\end{definition}
\begin{prop}\label{prop:tst=1}
\fsp We have $\mathfrak{t}_{\mathrm{stop}}=1$ with high probability.
\end{prop}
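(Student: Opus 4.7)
The plan is a bootstrap argument built on the Duhamel equation of Corollary \ref{corollary:heat} applied to the stopped field $\mathbf{Y}^{N}$. By definition of $\mathfrak{t}_{\mathrm{stop}}$ in \eqref{eq:tst}, $\|\mathbf{Y}^{N}\|_{\mathrm{L}^{\infty}}\leq N^{\e_{\mathrm{ap}}}$ and analogous deterministic bounds hold for its space- and time-gradients via \eqref{eq:tregX}--\eqref{eq:tregT}. Feeding these into the RHS of \eqref{eq:heatIa}--\eqref{eq:heatIe}, I would verify that with high probability one recovers strictly better bounds for $\mathbf{Y}^{N}$, its space-gradients, and its time-gradients. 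By a standard contradiction argument (if the threshold were achieved at some $\t<1$, the improved bounds derived would contradict the definition of $\mathfrak{t}_{\mathrm{stop}}$), a union bound over the three stopping events then forces $\mathfrak{t}_{\mathrm{stop}}=1$ with high probability.

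First I would handle the $\mathrm{L}^{\infty}$-bound defining $\mathfrak{t}_{\mathrm{ap}}$. The initial-data term $\mathrm{e}^{\t\mathscr{T}_{N}}[\mathbf{Z}^{N}_{0,\cdot}]$ is $\O(1)$ by the assumption on $\mathbf{h}^{N}_{0}$. The stochastic integral in \eqref{eq:heatIa} is controlled by Burkholder--Davis--Gundy at high even moment $p$: the predictable quadratic variation of $\xi^{N}$ is $\O(\t)$ per site (jump-sizes $\mathrm{e}^{\pm 2N^{-1/2}}-1=\O(N^{-1/2})$ combined with Poisson rates $\O(N^{2})$), which together with $\mathrm{L}^{\infty}$-heat-kernel bounds and Chebyshev yields $\O(N^{\e_{\mathrm{ap}}/2})$ with high probability. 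The dangerous term \eqref{eq:heatIb} is precisely where Proposition \ref{prop:hl} enters: pairing $\overline{\mathfrak{q}}$ against the smooth heat kernel produces enough cancellation to absorb the $N^{1/2}$ prefactor below $N^{\e_{\mathrm{ap}}/2}$. The term \eqref{eq:heatIc} integrates to $\O(\t)\cdot N^{\e_{\mathrm{ap}}}$ since $\mathfrak{g},\mathfrak{s}$ are local and uniformly bounded; \eqref{eq:heatId} carries an explicit $N^{-1/2}$; and \eqref{eq:heatIe} is integrated by parts against the heat kernel, moving $\grad^{\mathbf{X}}_{-2\mathfrak{l}_{\mathfrak{d}}}$ onto the kernel and producing a saving that beats the $N^{1/2}$. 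For the lower bound $\|1/\mathbf{Z}^{N}\|_{\mathrm{L}^{\infty}}\lesssim N^{\e_{\mathrm{ap}}}$, note that $\widetilde{\mathbf{Z}}^{N}:=1/\mathbf{Z}^{N}=\exp\{\mathbf{h}^{N}-\mathscr{R}_{N}\t\}$ satisfies an Ito--Dynkin evolution of the same structure (only signs of various drift/flux terms flip), so the same argument applies verbatim.

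For $\mathfrak{t}^{\mathrm{space}}_{\mathrm{reg}}$ and $\mathfrak{t}^{\mathrm{time}}_{\mathrm{reg}}$, I would apply $\grad^{\mathbf{X}}_{\mathfrak{l}}$, respectively $\grad^{\mathbf{T}}_{-\s}$, to both sides of the Duhamel equation and repeat the above estimates with heat-kernel gradient bounds of the form $\|\grad^{\mathbf{X}}_{\mathfrak{l}}\mathrm{e}^{\t\mathscr{T}_{N}}f\|_{\mathrm{L}^{\infty}}\lesssim(|\mathfrak{l}|/(N^{2}\t)^{1/2})^{1/2}\|f\|_{\mathrm{L}^{\infty}}$ (by interpolation between $0$ and $1$ derivatives) and $\|\grad^{\mathbf{T}}_{-\s}\mathrm{e}^{\t\mathscr{T}_{N}}f\|_{\mathrm{L}^{\infty}}\lesssim(\s/\t)^{1/4}\|f\|_{\mathrm{L}^{\infty}}$ (from analyticity of the semigroup). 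The initial-data contribution uses the assumed $\mathscr{C}^{\upsilon}$-regularity of $\mathbf{h}_{0,\cdot}$. The martingale and error-term contributions obtain an extra $N^{-\e_{\mathrm{reg}}/2}$ saving because $\e_{\mathrm{reg}}\geq C\e_{\mathrm{ap}}$ with $C$ large. The $N^{-2}\vee\s$ inside \eqref{eq:tregT} is precisely to absorb the microscopic jumps of $\mathbf{Z}^{N}$ of size $\O(N^{-1/2})$ occurring at rate $\O(N^{2})$, which dominate below time-scale $N^{-2}$ but are harmless above.

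The main obstacle is the control of \eqref{eq:heatIb}. Without cancellation the $N^{1/2}$ prefactor is fatal, and no naive BDG or heat-kernel input suffices. Everything here rests on Proposition \ref{prop:hl}, which supplies a mean-zero-fluctuation estimate strong enough to beat $N^{1/2}$ at this preliminary (non-sharp) level; the full Boltzmann--Gibbs principle Theorem \ref{theorem:bgp} is not yet needed because we only need $\mathrm{L}^{\infty}$-bounds of size $N^{\e_{\mathrm{ap}}}$, not actual convergence. The remaining work is standard heat-kernel bookkeeping, BDG applied at sufficiently high moments, and a dyadic union bound over $(\mathfrak{l},\s)$ in \eqref{eq:tregX}--\eqref{eq:tregT}.
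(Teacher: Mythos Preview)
Your bootstrap and self-improvement structure is broadly the paper's, but there are two concrete gaps. First and most serious: you have swapped the roles of Theorem \ref{theorem:bgp} and Proposition \ref{prop:hl}. The term \eqref{eq:heatIb} carrying $N^{1/2}\overline{\mathfrak{q}}$ is exactly $\Upsilon^{\mathrm{BG}}$ in \eqref{eq:upsilonbg}, and controlling it requires the full Boltzmann--Gibbs principle Theorem \ref{theorem:bgp}; Proposition \ref{prop:hl} only bounds $\Upsilon^{\mathrm{HL}}$, i.e.\ the term \eqref{eq:heatIc} with $\mathfrak{g}-\mathfrak{s}$ and \emph{no} $N^{1/2}$ prefactor. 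Your claim that Theorem \ref{theorem:bgp} ``is not yet needed'' is therefore wrong: without it there is no mechanism to beat the $N^{1/2}$ in \eqref{eq:heatIb}. Relatedly, your trivial bound $\O(\t)\cdot N^{\e_{\mathrm{ap}}}$ on \eqref{eq:heatIc} lands exactly on the stopping threshold and does not self-improve; Proposition \ref{prop:hl} is what is used there.

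Second, the lower bound on $\mathbf{Z}^{N}$ via running the same argument on $1/\mathbf{Z}^{N}$ is not as clean as you suggest: the renormalization $\mathscr{R}_{N}$ in \eqref{eq:gartner} was tuned for $\mathbf{Z}^{N}$, and the drift $\mathfrak{d}$ has no symmetry under $\mathbf{h}^{N}\mapsto-\mathbf{h}^{N}$, so ``verbatim'' does not go through. The paper instead introduces the auxiliary linear process $\mathbf{Q}^{N}$ of \eqref{eq:qheatIa}; standard moment bounds give $\mathbf{Q}^{N}\lesssim N^{\delta}$ (\eqref{eq:quniform}), and a separate short-time continuation argument using the weak comparison principle (Lemma \ref{lemma:qlower}) gives $\mathbf{Q}^{N}\gtrsim\e_{\delta}$ with high probability. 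Proposition \ref{prop:uq}---which is where Theorem \ref{theorem:bgp} and Proposition \ref{prop:hl} are actually invoked---then shows $\mathbf{U}^{N}-\mathbf{Q}^{N}=\mathrm{o}(1)$, and since $\mathbf{U}^{N}=\mathbf{Z}^{N}$ up to $\mathfrak{t}_{\mathrm{stop}}$ the improved two-sided bound on $\mathbf{Z}^{N}$ follows and contradicts $\mathfrak{t}_{\mathrm{ap}}<1$. The regularity stopping times are handled by the companion estimate \eqref{eq:hpreg} on $\mathbf{U}^{N}$, whose proof combines Lemma \ref{lemma:bgpreg} (again built on Theorem \ref{theorem:bgp}) with BDG-type estimates; that part is close to what you sketch.
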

We defer the proof of Proposition \ref{prop:tst=1} to Section \ref{section:propproofs}. Now, let $\mathbf{U}^{N}:[0,\infty)\times\mathbb{T}_{N}\to\R$ solve the same SDE solved by $\mathbf{Z}^{N}$, but replace $\mathbf{Z}^{N}$ by $\mathbf{Y}^{N}$ in all the error terms \eqref{eq:heatIb}-\eqref{eq:heatIe}. We let $\mathbf{Q}^{N}:[0,\infty)\times\mathbb{T}_{N}\to\R$ solve the same SDE as $\mathbf{Z}^{N}$ but removing \eqref{eq:heatIb}-\eqref{eq:heatIe} entirely. In particular, 
\begin{align}
\mathbf{U}^{N}_{\t,\x}&=\mathrm{e}^{\t\mathscr{T}_{N}}[\mathbf{Z}^{N}_{0,\cdot}]_{\x}+{\textstyle\int_{0}^{\t}}\mathrm{e}^{[\t-\s]\mathscr{T}_{N}}[\mathbf{U}^{N}_{\s,\cdot}\d\xi^{N}_{\s,\cdot}]_{\x}\label{eq:uheatIa}\\
&-{\textstyle\int_{0}^{\t}}\mathrm{e}^{[\t-\s]\mathscr{T}_{N}}[N^{\frac12}\overline{\mathfrak{q}}[\tau_{\cdot}\eta_{\s}]\mathbf{Y}^{N}_{\s,\cdot}]_{\x}\d\s\label{eq:uheatIb}\\
&+{\textstyle\int_{0}^{\t}}\mathrm{e}^{[\t-\s]\mathscr{T}_{N}}[(\mathfrak{g}[\tau_{\cdot}\eta_{\s}]-\mathfrak{s}[\tau_{\cdot}\eta_{\s}])\mathbf{Y}^{N}_{\s,\cdot}]_{\x}\d\s\label{eq:uheatIc}\\
&+{\textstyle\int_{0}^{\t}}\mathrm{e}^{[\t-\s]\mathscr{T}_{N}}[N^{-\frac12}\mathfrak{b}_{1}[\tau_{\cdot}\eta_{\s}]\mathbf{Y}^{N}_{\s,\cdot}]_{\x}\d\s\label{eq:uheatId}\\
&+{\textstyle\int_{0}^{\t}}\mathrm{e}^{[\t-\s]\mathscr{T}_{N}}[N^{\frac12}\grad^{\mathbf{X}}_{-2\mathfrak{l}_{\mathfrak{d}}}\{\mathfrak{b}_{2}[\tau_{\cdot}\eta_{\s}]\mathbf{Y}^{N}_{\s,\cdot}\}]_{\x}\d\s.\label{eq:uheatIe}
\end{align}
and 
\begin{align}
\mathbf{Q}^{N}_{\t,\x}&=\mathrm{e}^{\t\mathscr{T}_{N}}[\mathbf{Z}^{N}_{0,\cdot}]_{\x}+{\textstyle\int_{0}^{\t}}\mathrm{e}^{[\t-\s]\mathscr{T}_{N}}[\mathbf{Q}^{N}_{\s,\cdot}\d\xi^{N}_{\s,\cdot}]_{\x}.\label{eq:qheatIa}
\end{align}
Since $\mathbf{U}^{N}$ and $\mathbf{Z}^{N}$ satisfy the same SDE until time {\small$\mathfrak{t}_{\mathrm{stop}}$} with the same initial data, we know $\mathbf{U}^{N}=\mathbf{Z}^{N}$ until time {\small$\mathfrak{t}_{\mathrm{stop}}$}. So, the proof of Theorem \ref{theorem:main} will follow from Proposition \ref{prop:tst=1} and the two results below (the second of which is more or less a standard tightness and martingale problem argument a la \cite{BG}).
\begin{prop}\label{prop:uq}
\fsp There exists a fixed $\beta>0$ so that {\small$\|\mathbf{U}^{N}-\mathbf{Q}^{N}\|_{\mathrm{L}^{\infty}([0,1]\times\mathbb{T}_{N})}\lesssim N^{-\beta}$} with high probability. 
\end{prop}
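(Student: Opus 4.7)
Set $\mathbf{D}^{N}:=\mathbf{U}^{N}-\mathbf{Q}^{N}$. Subtracting \eqref{eq:qheatIa} from \eqref{eq:uheatIa}--\eqref{eq:uheatIe}, the initial-data and deterministic-transport contributions cancel exactly, and we are left with the linear mild equation
\begin{align*}
\mathbf{D}^{N}_{\t,\x}&={\textstyle\int_{0}^{\t}}\mathrm{e}^{[\t-\s]\mathscr{T}_{N}}[\mathbf{D}^{N}_{\s,\cdot}\d\xi^{N}_{\s,\cdot}]_{\x}+\mathbf{F}^{N}_{\t,\x},
\end{align*}
where $\mathbf{F}^{N}$ is the sum of the four forcing terms \eqref{eq:uheatIb}--\eqref{eq:uheatIe} (built from $\mathbf{Y}^{N}$ and the $\mathfrak{d}[\cdot]$-statistics, \emph{not} from $\mathbf{D}^{N}$). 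The plan is two-step: first establish that $\|\mathbf{F}^{N}\|_{\mathrm{L}^{\infty}([0,1]\times\mathbb{T}_{N})}\lesssim N^{-\beta}$ with high probability, and then propagate this to $\mathbf{D}^{N}$ via a stochastic Gronwall argument for the linear SPDE above. Throughout we use that, by \eqref{eq:tap} and \eqref{eq:tregX}--\eqref{eq:tregT}, the cut-off $\mathbf{Y}^{N}$ and its discrete gradients are deterministically controlled by $N^{\e_{\mathrm{ap}}}$-type factors.

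\textbf{Bounding the four forcing contributions.} The dangerous term is \eqref{eq:uheatIb}: the prefactor $N^{1/2}$ is precisely the hyperbolic enhancement that the Boltzmann--Gibbs principle (Theorem \ref{theorem:bgp}) is designed to absorb, so combined with the pathwise bound $\|\mathbf{Y}^{N}\|_{\infty}\lesssim N^{\e_{\mathrm{ap}}}$ it yields the sought $N^{-\beta}$ smallness directly. For \eqref{eq:uheatIc}, the integrands $\mathfrak{g}$ and $\mathfrak{s}$ are mean-zero local functions without the $N^{1/2}$ enhancement, and the weaker Proposition \ref{prop:hl} supplies the bound. For \eqref{eq:uheatId}, the $N^{-1/2}$ prefactor together with $\|\mathfrak{b}_{1}\|_{\infty}=\mathrm{O}(1)$, the $\mathrm{L}^{1}$-contractivity $\sum_{\y}\mathbf{H}^{N}_{\s,\t,\x,\y}=1$, and $\|\mathbf{Y}^{N}\|_{\infty}\lesssim N^{\e_{\mathrm{ap}}}$ yield $\lesssim N^{-1/2+\e_{\mathrm{ap}}}$. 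For \eqref{eq:uheatIe} I would apply discrete summation by parts to move $\grad^{\mathbf{X}}_{-2\mathfrak{l}_{\mathfrak{d}}}$ onto the heat kernel, and invoke the standard bound $\int_{0}^{1}\sum_{\y}|\grad^{\mathbf{X}}_{\mathfrak{l}}\mathbf{H}^{N}_{\s,\t,\x,\y}|\d\s\lesssim |\mathfrak{l}|/N$ with $\mathfrak{l}=\mathrm{O}(1)$, which combined with the $N^{1/2}$ prefactor and $\mathbf{Y}^{N}$-control produces $\lesssim N^{-1/2+\e_{\mathrm{ap}}}$. Taking $\e_{\mathrm{ap}}$ small enough (as allowed by the definition of $\mathfrak{t}_{\mathrm{stop}}$), all four pieces are $\lesssim N^{-\beta}$.

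\textbf{Stochastic Gronwall propagation.} Fix a large even integer $p$. Applying Burkholder--Davis--Gundy to the martingale integral and computing the predictable quadratic variation of $\xi^{N}$ (squared jump sizes of order $N^{-1}$ against Poisson rates of order $N^{2}$, giving a per-site rate of order $N$), and combining with the standard discrete heat-kernel estimate $\sum_{\y}|\mathbf{H}^{N}_{\s,\t,\x,\y}|^{2}\lesssim ((\t-\s)\vee N^{-2})^{-1/2}/N$, one obtains pointwise in $\x$
\begin{align*}
\E|\mathbf{D}^{N}_{\t,\x}|^{p}&\lesssim_{p}\E|\mathbf{F}^{N}_{\t,\x}|^{p}+{\textstyle\int_{0}^{\t}}((\t-\s)\vee N^{-2})^{-1/2}\,\E\|\mathbf{D}^{N}_{\s,\cdot}\|_{\mathrm{L}^{\infty}}^{p}\,\d\s.
\end{align*}
A standard singular Gronwall argument (in the spirit of \cite{BG,CGST}) closes the estimate, giving $\E|\mathbf{D}^{N}_{\t,\x}|^{p}\lesssim N^{-\beta p}$ uniformly in $(\t,\x)$. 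A chaining/Kolmogorov continuity step over the $\mathrm{O}(N^{C})$-sized space-time discretization, using the a priori regularity of $\mathbf{Y}^{N}$ (hence of $\mathbf{F}^{N}$) embedded in \eqref{eq:tregX}--\eqref{eq:tregT} for the deterministic part and the martingale regularity from BDG for the stochastic part, upgrades this to $\E\|\mathbf{D}^{N}\|_{\mathrm{L}^{\infty}([0,1]\times\mathbb{T}_{N})}^{p}\lesssim N^{-\beta p + C}$, and Markov's inequality gives the high-probability bound for any slightly smaller exponent.

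\textbf{Main obstacle.} The genuinely difficult probabilistic input here is the Boltzmann--Gibbs estimate of Theorem \ref{theorem:bgp}, which we are permitted to assume; the present argument is essentially SPDE bookkeeping. Within that bookkeeping the only nontrivial point is passing from pointwise $\mathrm{L}^{p}$-moments of $\mathbf{D}^{N}$ to a genuine $\mathrm{L}^{\infty}_{\t,\x}$-bound strong enough to combine with Markov's inequality, and this is precisely what the a priori regularity bounds \eqref{eq:tregX}--\eqref{eq:tregT} (on $\mathbf{Y}^{N}$, which enters $\mathbf{F}^{N}$, rather than on $\mathbf{D}^{N}$) are designed to furnish. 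Thus the clean decoupling between ``$\mathbf{Y}^{N}$ inside $\mathbf{F}^{N}$'' and ``$\mathbf{D}^{N}$ inside the martingale'' is what makes the linear Gronwall loop close.
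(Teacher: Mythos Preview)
Your overall architecture is right, but there is a genuine gap in the Gronwall step. You write
\[
\E|\mathbf{D}^{N}_{\t,\x}|^{p}\lesssim_{p}\E|\mathbf{F}^{N}_{\t,\x}|^{p}+\int_{0}^{\t}(\cdots)\,\E\|\mathbf{D}^{N}_{\s,\cdot}\|_{\mathrm{L}^{\infty}}^{p}\,\d\s
\]
and then conclude $\E|\mathbf{D}^{N}_{\t,\x}|^{p}\lesssim N^{-\beta p}$. For this to close you need $\E|\mathbf{F}^{N}_{\t,\x}|^{p}\lesssim N^{-\beta p}$, but the inputs you cite do not give that. Theorem \ref{theorem:bgp} and Proposition \ref{prop:hl} furnish only first-moment bounds $\E\|\Upsilon^{\mathrm{BG}}\|_{\mathrm{L}^{\infty}}\lesssim N^{-\beta_{\mathrm{BG}}}$ and $\E\|\Upsilon^{\mathrm{HL}}\|_{\mathrm{L}^{\infty}}\lesssim N^{-\beta_{\mathrm{HL}}}$. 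These imply $\|\mathbf{F}^{N}\|_{\mathrm{L}^{\infty}}\lesssim N^{-\beta}$ with high probability via Markov, but the only deterministic a priori bound on $\mathbf{F}^{N}$ is of order $N^{1/2+\e_{\mathrm{ap}}}$ (from $N^{1/2}\overline{\mathfrak{q}}\mathbf{Y}^{N}$), so interpolation cannot produce $\E|\mathbf{F}^{N}|^{p}\lesssim N^{-\beta p}$ for large $p$.

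The paper closes this by introducing the stopping time $\mathfrak{t}^{\Phi}:=\inf\{\t:\|\Phi_{\t,\cdot}\|_{\mathrm{L}^{\infty}}\gtrsim N^{-\beta}\}$ and replacing the forcing by its stopped version $\Phi^{\wedge}_{\t,\x}:=\Phi_{\t\wedge\mathfrak{t}^{\Phi},\x}$, which is \emph{deterministically} $\lesssim N^{-\beta}$. One then runs the BDG/Gronwall loop on the auxiliary process $\mathbf{D}^{N,\wedge}$ driven by $\Phi^{\wedge}$, for which $\E|\Phi^{\wedge}|^{2p}\lesssim N^{-2\beta p}$ trivially, and transfers back to $\mathbf{D}^{N}$ using $\mathfrak{t}^{\Phi}=1$ with high probability. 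This stopping-time localization is the missing idea in your argument. A secondary issue: the BDG estimate produces $\sup_{\y}\E|\mathbf{D}^{N}_{\s,\y}|^{2p}$ on the right-hand side, not $\E\|\mathbf{D}^{N}_{\s,\cdot}\|_{\mathrm{L}^{\infty}}^{p}$; the Gronwall then closes on $\sup_{\x}\{\E|\mathbf{D}^{N,\wedge}_{\t,\x}|^{2p}\}^{1/p}$, and the passage to a genuine $\mathrm{L}^{\infty}$ bound is done afterward by a union bound over a net of size $N^{\mathrm{O}(1)}$ combined with a short-time pathwise continuity estimate (counting Poisson ringings), rather than by Kolmogorov continuity of $\mathbf{D}^{N}$ itself.
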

\begin{prop}\label{prop:qconvergence}
\fsp Theorem \ref{theorem:main} holds for $\mathbf{Q}^{N}$ in place of $\mathbf{Z}^{N}$.
\end{prop}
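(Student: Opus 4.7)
The plan is to prove Proposition \ref{prop:qconvergence} by following the classical Bertini-Giacomin framework: establish tightness of $\mathbf{Q}^{N}$ in $\mathscr{D}([0,1],\mathscr{C}(\mathbb{T}))$, then identify every subsequential limit as the unique solution of \eqref{eq:she}. The key simplification is that $\mathbf{Q}^{N}$ satisfies the \emph{clean} equation \eqref{eq:qheatIa}, with none of the homogenization-type error terms \eqref{eq:heatIb}-\eqref{eq:heatIe} that dominate the analysis of $\mathbf{Z}^{N}$; the drift is already baked into the semigroup $\mathrm{e}^{\t\mathscr{T}_{N}}$.

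First I would establish uniform moment bounds $\sup_{\t\in[0,1],\x\in\mathbb{T}_{N}}\E|\mathbf{Q}^{N}_{\t,\x}|^{2p}\lesssim_{p}1$ for every $p\geq 1$. This follows by applying Burkholder-Davis-Gundy to the martingale part of \eqref{eq:qheatIa}, computing the predictable quadratic variation explicitly from the Poisson structure in Lemma \ref{lemma:mshe}, and closing via Gronwall. The necessary ingredients are standard local CLT bounds for the kernel $\mathbf{H}^{N}$, which hold for $\mathscr{T}_{N}$ because the transport piece $-\overline{\mathfrak{d}}N\grad_{-1}^{\mathbf{X}}$ is a smooth first-order perturbation of the diffusive part; in particular the on-diagonal estimate $\sum_{\y}|\mathbf{H}^{N}_{\s,\t,\x,\y}|^{2}\lesssim N^{-1}(\t-\s)^{-1/2}$. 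Combined with the jump factors $(\mathrm{e}^{\pm 2N^{-1/2}}-1)^{2}\sim 4N^{-1}$ and the rate $\tfrac12 N^{2}$, the integrand is integrable in $\s$.

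Second, I would upgrade to Hölder estimates of the same Bertini-Giacomin type: for $\kappa<1/2$ in space and $\kappa<1/4$ in time,
\begin{align*}
\E|\mathbf{Q}^{N}_{\t,\x}-\mathbf{Q}^{N}_{\t',\x'}|^{2p}\lesssim_{p,\kappa}(N^{-1}|\x-\x'|+|\t-\t'|^{1/2})^{p\kappa},
\end{align*}
via semigroup difference bounds for $\mathbf{H}^{N}$ applied to both the initial-data term and the stochastic integral in \eqref{eq:qheatIa}. Combined with the convergence of initial data in the hypothesis of Theorem \ref{theorem:main} and Kolmogorov's criterion, this yields tightness of the linearly-interpolated family in $\mathscr{C}([0,1],\mathscr{C}(\mathbb{T}))\subset\mathscr{D}([0,1],\mathscr{C}(\mathbb{T}))$.

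Finally, I would identify any limit point $\mathbf{Q}^{\infty}$ by a martingale problem. Pairing \eqref{eq:qheatIa} with a smooth $\varphi:\mathbb{T}\to\R$, one gets that $\langle\mathbf{Q}^{N}_{\t},\varphi\rangle-\langle\mathbf{Q}^{N}_{0},\varphi\rangle-\int_{0}^{\t}\langle\mathbf{Q}^{N}_{\s},\mathscr{T}_{N}^{*}\varphi\rangle\d\s$ is a martingale with quadratic variation determined by $\xi^{N}$. The convergence $\mathscr{T}_{N}^{*}\varphi\to\tfrac12\partial_{\X}^{2}\varphi+\overline{\mathfrak{d}}\partial_{\X}\varphi$ is elementary (the sign works out through $-\grad_{-1}^{\mathbf{X}}\to\partial_{\X}$). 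The main obstacle is the quadratic variation computation: one must show that the local rate from the four lines of $\d\xi^{N}$ in Lemma \ref{lemma:mshe}, weighted by $|\mathbf{Q}^{N}|^{2}\varphi^{2}$ and summed over $\x$, converges to the white-noise covariance $\int_{0}^{\t}\int_{\mathbb{T}}\mathbf{Q}^{\infty}(\s,\X)^{2}\varphi(\X)^{2}\d\X\d\s$. The leading $\tfrac12 N^{2}$ symmetric-exclusion contribution gives a constant prefactor after the occupation indicators $\mathbf{1}_{\eta_{\x}\neq\eta_{\x+1}}$ average locally (via the same $\rho=0$ local-equilibrium reasoning as in \cite{BG}), while the asymmetric piece of size $\tfrac12 N^{3/2}(1+N^{-1/2}\mathfrak{d})$ is of lower order after accounting for cancellations between the $\pm$ cases and the $\mathfrak{d}$-term contributes an $\mathrm{O}(N^{-1/2})$ correction that vanishes. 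Uniqueness of \eqref{eq:she} with continuous initial data \cite{BG} then closes the argument.
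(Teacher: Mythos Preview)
Your outline is correct at the level of tightness and moment bounds, and that part matches the paper's argument closely. The gap is in the identification step, specifically in the quadratic-variation computation.

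You write that the indicator $\mathbf{1}_{\eta_{\x}\neq\eta_{\x+1}}$ can be replaced by its $\rho=0$ value ``via the same local-equilibrium reasoning as in \cite{BG}''. But in \cite{BG} the Gartner transform $\mathbf{Z}^{N}$ \emph{is} the clean process, so the H\"older regularity you prove for it immediately controls the block density $\mathfrak{l}^{-1}\sum_{k=1}^{\mathfrak{l}}\eta_{\s,\x+k}=N^{1/2}\mathfrak{l}^{-1}(\mathbf{h}^{N}_{\s,\x+\mathfrak{l}}-\mathbf{h}^{N}_{\s,\x})$. Here the situation is different: $\mathbf{Q}^{N}$ is an \emph{auxiliary} process, but the martingale noise $\xi^{N}$ is still a functional of the true particle system $\eta$, and $\eta$ is tied to $\mathbf{Z}^{N}$, not $\mathbf{Q}^{N}$. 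Your H\"older estimates for $\mathbf{Q}^{N}$ say nothing about $\mathbf{h}^{N}$. The paper closes this by invoking Proposition~\ref{prop:tst=1} (that $\mathfrak{t}_{\mathrm{stop}}=1$ with high probability, hence $\mathbf{Z}^{N}$ has the needed spatial regularity), and that proposition in turn rests on the Boltzmann--Gibbs principle (Theorem~\ref{theorem:bgp}) via Lemma~\ref{lemma:bgpreg}. So the identification of the limit of $\mathbf{Q}^{N}$ is not self-contained; it genuinely needs the homogenization machinery developed for $\mathbf{Z}^{N}$. The paper flags this explicitly and checks there is no circularity.

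A second, smaller point: the one-block replacement itself (before you get to the density) needs an entropy-production bound for a process whose invariant measure is unknown. In \cite{BG} this is immediate; here it requires Lemma~\ref{lemma:eprod}, where the $\mathfrak{d}$-drift of speed $N$ is absorbed perturbatively into the $N^{2}$ Dirichlet form. This is not hard, but it is not ``the same as \cite{BG}''. Finally, a sign: $\mathscr{T}_{N}^{*}\varphi\to\tfrac12\partial_{\X}^{2}\varphi-\overline{\mathfrak{d}}\partial_{\X}\varphi$, not $+\overline{\mathfrak{d}}\partial_{\X}\varphi$ (the adjoint of $\grad^{\mathbf{X}}_{-1}$ is $\grad^{\mathbf{X}}_{+1}$, which converges to $+\partial_{\X}$).
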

We are left to prove Propositions \ref{prop:tst=1}, \ref{prop:uq}, and \ref{prop:qconvergence}. We first provide important estimates for the error terms in \eqref{eq:uheatIb}-\eqref{eq:uheatIe} in Section \ref{section:bgp}, and then we prove Propositions \ref{prop:uq}, \ref{prop:qconvergence}, and \ref{prop:tst=1} in that order in Section \ref{section:propproofs}.
%
%
%
\section{Homogenization and Boltzmann-Gibbs principles}\label{section:bgp}
The key ingredient to proving Propositions \ref{prop:tst=1}, \ref{prop:uq}, and \ref{prop:qconvergence} is known as a \emph{Boltzmann-Gibbs principle}; it is a bound on \eqref{eq:uheatIb} (whose proof leads also to a bound on \eqref{eq:uheatIc}, which itself is a classical hydrodynamic limit argument). To clarify the exposition,  we let the ``Boltzmann-Gibbs" and ``hydrodynamic limit" terms be
\begin{align}
\Upsilon^{\mathrm{BG}}_{\t,\x}&:={\textstyle\int_{0}^{\t}}\mathrm{e}^{[\t-\s]\mathscr{T}_{N}}[N^{\frac12}\overline{\mathfrak{q}}[\tau_{\cdot}\eta_{\s}]\mathbf{Y}^{N}_{\s,\cdot}]_{\x}\d\s\label{eq:upsilonbg}\\
\Upsilon^{\mathrm{HL}}_{\t,\x}&:={\textstyle\int_{0}^{\t}}\mathrm{e}^{[\t-\s]\mathscr{T}_{N}}[(\mathfrak{g}[\tau_{\cdot}\eta_{\s}]-\mathfrak{s}[\tau_{\cdot}\eta_{\s}])\mathbf{Y}^{N}_{\s,\cdot}]_{\x}\d\s.\label{eq:upsilonhl}
\end{align}
%
\begin{theorem}\label{theorem:bgp}
\fsp There exists a universal constant $\beta_{\mathrm{BG}}>0$ such that
\begin{align}
\E\|\Upsilon^{\mathrm{BG}}\|_{\mathrm{L}^{\infty}([0,1]\times\mathbb{T}_{N})}\lesssim N^{-\beta_{\mathrm{BG}}}.\label{eq:bgp}
\end{align}
\end{theorem}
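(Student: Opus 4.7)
The plan is to execute the multi-scale homogenization strategy outlined in Section \ref{section:outline}. First I would telescope $\overline{\mathfrak{q}}$ along the canonical expansions $\mathsf{E}[\mathfrak{l}]$ over a geometric sequence of length-scales $\ell_{\k+1} = \ell_\k N^{\e}$ with $\ell_K$ mesoscopic:
\begin{align*}
N^{1/2}\overline{\mathfrak{q}} \;=\; N^{1/2}\mathsf{E}[\ell_K] \;+\; \sum_{\k=0}^{K-1} N^{1/2}\,\mathsf{R}[\k\mapsto\k+1], \qquad \mathsf{R}[\k\mapsto\k+1] := \mathsf{E}[\ell_{\k+1}]-\mathsf{E}[\ell_{\k}].
\end{align*}
The deterministic tail $N^{1/2}\mathsf{E}[\ell_K]$ is dispatched by the pointwise bound $|\mathsf{E}[\mathfrak{l}]|\lesssim \mathfrak{l}^{-3/2}$ together with $\ell_K \gg N^{1/3}$, after bounding $\mathbf{Y}^N$ by the stopping time $\mathfrak{t}_{\mathrm{stop}}$ and estimating the heat semigroup $\mathrm{e}^{[\t-\s]\mathscr{T}_N}$ crudely in $\mathrm{L}^{\infty}$.

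The real work lies in each scale increment $\mathsf{R}[\k\mapsto\k+1]$. I would first use the smoothing of $\mathrm{e}^{[\t-\s]\mathscr{T}_N}$ together with the stopping-time regularity \eqref{eq:tregX}-\eqref{eq:tregT} to replace $\mathsf{R}[\k\mapsto\k+1][\tau_\cdot\eta_\s]\mathbf{Y}^N_{\s,\cdot}$ by its average over a sub-mesoscopic space-time window of size $\ell_{\mathrm{av}}\gg N^{1/3}$ and $\tau_{\mathrm{av}}\gg N^{-4/3}$. The space portion uses the H\"{o}lder-$\tfrac12$ bound up to $\mathfrak{l}_{\mathrm{reg}}$; the time portion uses the $\tfrac14$-H\"{o}lder bound from \eqref{eq:tregT}; and the variation of $\mathbf{Y}^N$ across the window is handled by rewriting its space-gradients as explicit functions of $\eta$ via \eqref{eq:hf}-\eqref{eq:gartner}, producing only subleading errors of the same structural type that can be iterated.

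The hard part is then a Kipnis-Varadhan-type square-root cancellation bound
\begin{align*}
\bigl\|\text{averaged }\mathsf{R}[\k\mapsto\k+1]\bigr\|_{\star} \;\lesssim\; (N^{2}\tau_{\mathrm{av}})^{-1/2}\,\ell_{\mathrm{av}}^{-1/2},
\end{align*}
uniform in $\k$, in a strong-enough topology $\|\cdot\|_\star$ (exponential moments) to ultimately accommodate a Gronwall argument driven by $\mathbf{Y}^N$. Classically such an estimate requires starting the process from an invariant measure, which is unavailable for the $\mathfrak{d}[\cdot]$-driven dynamics; this is the principal obstacle. My plan is to invoke the localized Kipnis-Varadhan inequality from Section \ref{section:tools}, whose novelty is that on windows of size $(\ell_{\mathrm{av}},\tau_{\mathrm{av}})$ the product Bernoulli measures are \emph{approximately} invariant for the driven dynamics, quantified by Davies-type $\mathrm{L}^{p}$-estimates for the localized Kolmogorov forward equation. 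Once this is in hand, summing over the $K=\mathrm{O}(\log N)$ scales works because the decay $|\mathsf{R}[\k\mapsto\k+1]|\lesssim \ell_\k^{-3/2}$ forced by \eqref{eq:assumequad} offsets the deterioration of the square-root cancellation as the support of $\mathsf{R}[\k\mapsto\k+1]$ grows with $\k$; this balance dictates the admissible range of $\e$ and produces a net exponent $\beta_{\mathrm{BG}}>0$.

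Finally, to upgrade the pointwise-in-$(\t,\x)$ bound to the $\mathrm{L}^{\infty}([0,1]\times\mathbb{T}_N)$ statement \eqref{eq:bgp}, I would union-bound over a discrete $(\tau_{\mathrm{av}},\ell_{\mathrm{av}})$-grid in $[0,1]\times\mathbb{T}_N$ and control off-grid variation using the H\"{o}lder regularity supplied by $\mathfrak{t}_{\mathrm{stop}}$ at the cost of only a polylogarithmic factor, which is absorbed into $\beta_{\mathrm{BG}}$.
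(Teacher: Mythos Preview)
Your outline captures the paper's strategy faithfully: the telescoping along canonical expectations at geometric scales (Definition \ref{definition:multiscale} and Proposition \ref{prop:bgpr}), the replacement by space and then time averages using the stopping-time regularity (Lemmas \ref{lemma:bgprx}--\ref{lemma:bgprt}), the localized Kipnis--Varadhan estimate built on the Davies-type $\mathrm{L}^p$ stability of Lemma \ref{lemma:lpprod} (Proposition \ref{prop:kv}), and the balance between the $\ell_\k^{-3/2}$ decay forced by \eqref{eq:assumequad} and the growing support of $\mathsf{R}[\k\mapsto\k+1]$. Two small corrections: the number of scales is $K_{\mathrm{step}}=\mathrm{O}(1)$ since $\e_{\mathrm{step}}$ is a fixed constant, not $\mathrm{O}(\log N)$; and there is no Gronwall loop inside the proof of Theorem \ref{theorem:bgp} itself (Gronwall appears only later in Proposition \ref{prop:uq}).

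The one genuine difference is how the supremum over $(\t,\x)$ is obtained. You propose a union bound over a space-time grid, which requires the Kipnis--Varadhan estimate in high polynomial or exponential moments. But the paper's non-stationary Kipnis--Varadhan bound, Proposition \ref{prop:kv}, is stated and proved only for \emph{second} moments, and it is not obvious that the Ito/resolvent argument behind it extends to moments high enough to beat a grid of polynomial size. The paper avoids this entirely via a H\"older-in-time device (first display in the proof of Lemma \ref{lemma:proban}): writing $|\int_0^\t\cdots\,\d\s|\lesssim(\int_0^1|\t-\s|^{-1+3\e}\d\s)^{1/3}(\int_0^1\cdots\,\d\s)^{2/3}$ removes the supremum over $(\t,\x)$ \emph{deterministically} and reduces everything to a single $3/2$-moment integral $\int_0^1 N^{-1}\sum_\y\E|\cdots|^{3/2}\d\s$. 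This is then handled by entropy-production reduction to canonical ensembles (Lemma \ref{lemma:localreduc}, which does involve exponential bounds, but only for the \emph{static} spatial average via Azuma, Lemma \ref{lemma:azuma}), a coupling to the localized process $\eta^{\mathbb{L}[\e]}$, and finally the second-moment estimate of Proposition \ref{prop:kv}. Your union-bound route is not wrong in spirit, but it would require strengthening Proposition \ref{prop:kv} to high moments, which the paper does not do and which may be nontrivial for the driven, non-stationary process.
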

\begin{prop}\label{prop:hl}
\fsp There exists a universal constant $\beta_{\mathrm{HL}}>0$ such that
\begin{align}
\E\|\Upsilon^{\mathrm{HL}}\|_{\mathrm{L}^{\infty}([0,1]\times\mathbb{T}_{N})}\lesssim N^{-\beta_{\mathrm{HL}}}.\label{eq:hl}
\end{align}
\end{prop}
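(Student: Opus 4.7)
The plan is to follow the same multi-scale homogenization strategy used for the Boltzmann-Gibbs term $\Upsilon^{\mathrm{BG}}$ in Theorem \ref{theorem:bgp}, but noting that the absence of the $N^{1/2}$ prefactor in \eqref{eq:upsilonhl} gives $\sqrt{N}$ of slack throughout the argument. First I would use the heat-kernel semigroup $\mathrm{e}^{[\t-\s]\mathscr{T}_N}$ together with the a priori H\"older-$1/2$ regularity in space and H\"older-$1/4$ regularity in time of $\mathbf{Y}^N$ (guaranteed by the stopping-time definitions \eqref{eq:tregX}--\eqref{eq:tregT}) to replace the integrand $(\mathfrak{g}-\mathfrak{s})[\tau_{\cdot}\eta_\s]\mathbf{Y}^N_{\s,\cdot}$ by its space-time average over a mesoscopic window of size $(\ell_{\mathrm{av}},\tau_{\mathrm{av}})$ with $\ell_{\mathrm{av}}\gg N^{1/3}$ and $\tau_{\mathrm{av}}\gg N^{-4/3}$. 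The replacement error is controlled by absorbing the spatial gradient of $\mathbf{Y}^N$ onto the semigroup and invoking the standard short-time gradient estimate $\|\grad^{\mathbf{X}}_1 \mathrm{e}^{[\t-\s]\mathscr{T}_N}\|_{\infty\to\infty}\lesssim (N\sqrt{\t-\s})^{-1}$.

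Next I would telescope $\mathfrak{g}-\mathfrak{s}=\mathsf{E}[\ell_K]+\sum_{\k=0}^{K-1}(\mathsf{E}[\ell_{\k+1}]-\mathsf{E}[\ell_\k])$ over a geometric sequence of scales $\ell_\k=\ell_{\mathrm{av}}N^{\k\e}$ up to $\ell_K$ of order $N^{1-\mathrm{o}(1)}$, where $\mathsf{E}[\ell]$ denotes the canonical-ensemble conditional expectation on a box of radius $\ell$. Each telescoping difference is then controlled by the Kipnis-Varadhan inequality developed in Section \ref{section:tools}; because $\mathfrak{g}$ and $\mathfrak{s}$ are uniformly bounded local functions with $\E^0\mathfrak{g}=\E^0\mathfrak{s}=0$, each such bound is of order $(N^2\tau_{\mathrm{av}})^{-1/2}\ell_{\mathrm{av}}^{-1/2}$ up to combinatorial factors in $\ell_\k$. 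These are comfortably summable over the $\mathrm{O}(\e^{-1})$ scales with room to spare, since we are not required to beat an $N^{1/2}$ prefactor.

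The canonical-ensemble remainder $\mathsf{E}[\ell_K]$ is the one place where the argument genuinely departs from the Boltzmann-Gibbs setup. Since $\mathfrak{g}$ and $\mathfrak{s}$ were not constructed to cancel the linear-in-density term $\partial_\sigma\E^\sigma|_{\sigma=0}$, the remainder is only of size $\bar\eta_{\ell_K}\sim\ell_K^{-1/2}$ rather than the $\ell_K^{-3/2}$ available for $\overline{\mathfrak{q}}$. To handle this, I would write $\mathsf{E}[\ell_K]\approx c\,\bar\eta_{\ell_K}$ to leading order and use the identity $\eta_{\s,\x}\mathbf{Z}^N_{\s,\x-1}=N^{1/2}\grad^{\mathbf{X}}_{-1}\mathbf{Z}^N_{\s,\x}+\mathrm{O}(N^{-1/2})\mathbf{Z}^N_{\s,\x-1}$, derived from $\mathbf{Z}^N=\exp\{-\mathbf{h}^N+\mathscr{R}_N\t\}$, to recast the linear-in-density contribution as a spatial gradient of $\mathbf{Y}^N$; absorbing one derivative onto the heat kernel yields a bound of order $\int_0^\t N^{1/2}(N\sqrt{\t-\s})^{-1}\|\mathbf{Y}^N\|_{\mathrm{L}^\infty}\d\s\lesssim N^{-1/2+\e_{\mathrm{ap}}}$, which is more than enough.

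The main obstacle will be the Kipnis-Varadhan input itself, which relies on the $\mathrm{L}^p$-stability of product Bernoulli measures under the Kolmogorov forward equation of the localized, driven dynamics developed in Section \ref{section:tools}. Once that tool is in place, the multi-scale bookkeeping above is essentially the same as for Theorem \ref{theorem:bgp} but with an $N^{1/2}$ margin of error throughout, so we do not need the delicate $\ell^{-3/2}$ cancellation that \eqref{eq:assumequad} provides for $\overline{\mathfrak{q}}$, and the rate $N^{-\beta_{\mathrm{HL}}}$ follows with any positive polynomial improvement.
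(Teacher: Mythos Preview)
Your core idea---run the Theorem \ref{theorem:bgp} argument verbatim and use the missing $N^{1/2}$ prefactor as slack---is exactly the paper's proof. The paper's entire argument for Proposition \ref{prop:hl} is one paragraph: the only property of $\overline{\mathfrak{q}}$ used in the proof of Theorem \ref{theorem:bgp} was $\mathsf{E}^{\mathrm{can},\mathfrak{l}}[\cdot;\overline{\mathfrak{q}}]\lesssim N^{3\e_{\mathrm{ap}}}\mathfrak{l}^{-3/2}$; for $\mathfrak{s},\mathfrak{g}$ one only has $\mathsf{E}^{\mathrm{can},\mathfrak{l}}[\cdot;\mathfrak{s}]\lesssim N^{3\e_{\mathrm{ap}}}\mathfrak{l}^{-1/2}$ from Lemma \ref{lemma:canexp} with $\mathfrak{k}=0$, which is worse by a factor $\mathfrak{l}$, but the absent $N^{1/2}$ gives a net factor $N^{-1/2}\mathfrak{l}\lesssim N^{-1/6+\e_{\mathrm{reg}}}$ since $\mathfrak{l}\leq\mathfrak{l}_{\mathrm{reg}}=N^{1/3+\e_{\mathrm{reg}}}$.

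Where you diverge from the paper is in the endpoint, and there you overcomplicate and overshoot. You propose telescoping up to $\ell_K\sim N^{1-\mathrm{o}(1)}$ and then handling the residual linear-in-density term via the gradient identity $\eta\,\mathbf{Z}^{N}\approx N^{1/2}\grad^{\mathbf{X}}_{-1}\mathbf{Z}^{N}$. This is unnecessary: the paper stops the telescope at $\mathfrak{l}_{\mathrm{reg}}=N^{1/3+\e_{\mathrm{reg}}}$, where the crude bound $\mathsf{E}^{\mathrm{can},\mathfrak{l}_{\mathrm{reg}}}[\cdot;\mathfrak{s}]\lesssim N^{3\e_{\mathrm{ap}}}\mathfrak{l}_{\mathrm{reg}}^{-1/2}\approx N^{-1/6}$ is already small without any $N^{1/2}$ in front. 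More seriously, your choice $\ell_K\sim N^{1-\mathrm{o}(1)}$ is outside the validity of every tool in Section \ref{section:tools}: Proposition \ref{prop:kv} and Lemma \ref{lemma:lpprod} require $|\mathbb{L}|\lesssim N^{1/3+\e_{\mathrm{reg}}}$, and the a priori bound on $\mathsf{E}^{\mathrm{can},\mathfrak{l}}$ in Lemma \ref{lemma:canexp} is only stated for $\mathfrak{l}\leq\mathfrak{l}_{\mathrm{reg}}$ (it relies on the space-regularity stopping time \eqref{eq:tregX}, which controls gradients only up to that scale). So the intermediate telescoping steps between $N^{1/3}$ and $N^{1-\mathrm{o}(1)}$ would not be covered. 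Simply truncate the telescope at $\mathfrak{l}_{\mathrm{reg}}$ and drop the gradient trick; the rest of your plan then coincides with the paper's proof.
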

We now reap the benefits of Theorem \ref{theorem:bgp} and Proposition \ref{prop:hl}. Recall $\mathfrak{l}_{\mathrm{reg}}=N^{1/3+\e_{\mathrm{reg}}}$ from before \eqref{eq:tregX}.
\begin{lemma}\label{lemma:bgpreg}
\fsp We have the following estimate (which is also true if we replace $\Upsilon^{\mathrm{BG}}$ by $\Upsilon^{\mathrm{HL}}$):
\begin{align}
\E\left\{\left\|\sup_{-2\mathfrak{l}_{\mathrm{reg}}\leq\mathfrak{l}\leq2\mathfrak{l}_{\mathrm{reg}}}N^{\frac12}|\mathfrak{l}|^{-\frac12}|\grad^{\mathbf{X}}_{\mathfrak{l}}\Upsilon^{\mathrm{BG}}|\right\|_{\mathrm{L}^{\infty}([0,1]\times\mathbb{T}_{N})}\right\}\lesssim N^{-\frac{1}{13}}.\label{eq:bgpreg}
\end{align}
\end{lemma}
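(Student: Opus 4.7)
The plan is to exploit the semigroup structure of $\Upsilon^{\mathrm{BG}}$ to split the time integral at an intermediate scale $\tau_{0}\in(0,\t]$. Specifically, \eqref{eq:upsilonbg} together with the semigroup property of $\mathrm{e}^{\cdot\mathscr{T}_{N}}$ gives
\begin{align*}
\Upsilon^{\mathrm{BG}}_{\t,\x}=\mathrm{e}^{\tau_{0}\mathscr{T}_{N}}[\Upsilon^{\mathrm{BG}}_{\t-\tau_{0},\cdot}]_{\x}+{\textstyle\int_{\t-\tau_{0}}^{\t}}\mathrm{e}^{[\t-\s]\mathscr{T}_{N}}[N^{\frac12}\overline{\mathfrak{q}}[\tau_{\cdot}\eta_{\s}]\mathbf{Y}^{N}_{\s,\cdot}]_{\x}\d\s.
\end{align*}
After applying $\grad^{\mathbf{X}}_{\mathfrak{l}}$, the first summand is a smoothed version of $\Upsilon^{\mathrm{BG}}$, so I would transfer the gradient onto the heat kernel and use the standard bound $\sum_{\y}|\grad^{\mathbf{X}}_{\mathfrak{l}}\mathbf{H}^{N}_{0,\tau_{0},\x,\y}|\lesssim|\mathfrak{l}|/(N\sqrt{\tau_{0}})$ (which follows from a local central limit theorem for the drifted random walk generated by $\mathscr{T}_{N}$) to control it by $|\mathfrak{l}|/(N\sqrt{\tau_{0}})$ times $\|\Upsilon^{\mathrm{BG}}\|_{\mathrm{L}^{\infty}([0,1]\times\mathbb{T}_{N})}$. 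The second summand is a short-time integral that has not benefited from ergodic averaging, so I would bound it crudely by combining $|\overline{\mathfrak{q}}|\lesssim1$, $\|\mathbf{Y}^{N}\|_{\mathrm{L}^{\infty}}\lesssim N^{\e_{\mathrm{ap}}}$ (from the stopping time \eqref{eq:tap}), and $\sum_{\y}|\grad^{\mathbf{X}}_{\mathfrak{l}}\mathbf{H}^{N}_{\s,\t,\x,\y}|\lesssim\min(|\mathfrak{l}|/(N\sqrt{\t-\s}),2)$ under the integral sign, which integrates to at most $N^{-\frac12+\e_{\mathrm{ap}}}|\mathfrak{l}|\sqrt{\tau_{0}}$.

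Since $\tau_{0}$ is arbitrary and both bounds are of the shape $a/\sqrt{\tau_{0}}+b\sqrt{\tau_{0}}$, I would optimize pathwise using $\min_{\tau_{0}>0}(a/\sqrt{\tau_{0}}+b\sqrt{\tau_{0}})=2\sqrt{ab}$ (valid for each $\omega$) to conclude
\begin{align*}
|\grad^{\mathbf{X}}_{\mathfrak{l}}\Upsilon^{\mathrm{BG}}_{\t,\x}|\lesssim N^{-\frac34+\frac{\e_{\mathrm{ap}}}{2}}|\mathfrak{l}|\cdot\|\Upsilon^{\mathrm{BG}}\|^{1/2}_{\mathrm{L}^{\infty}([0,1]\times\mathbb{T}_{N})}.
\end{align*}
Multiplying by $N^{1/2}|\mathfrak{l}|^{-1/2}$, inserting $|\mathfrak{l}|\leq\mathfrak{l}_{\mathrm{reg}}=N^{1/3+\e_{\mathrm{reg}}}$, taking expectations, and applying Jensen's inequality together with Theorem \ref{theorem:bgp} then produces a final bound of order $N^{-1/12+O(\e_{\mathrm{ap}}+\e_{\mathrm{reg}})-\beta_{\mathrm{BG}}/2}$. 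Choosing $\e_{\mathrm{ap}},\e_{\mathrm{reg}}$ small enough (which the setup permits since $\e_{\mathrm{reg}}\geq C\e_{\mathrm{ap}}$ with $\e_{\mathrm{ap}}$ free to shrink) makes this at most $N^{-1/13}$; the value $1/13$ is merely a convenient constant strictly smaller than the true rate $1/12$ emerging from the scaling exponents $1/3$ of $\mathfrak{l}_{\mathrm{reg}}$ and $1/2$ of the Hölder regularity.

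The main technical point is verifying the two heat-kernel gradient estimates for the drifted generator $\mathscr{T}_{N}=\frac12N^{2}\Delta-\overline{\mathfrak{d}}N\grad^{\mathbf{X}}_{-1}$; the hyperbolic-scale drift complicates the usual symmetric random walk estimate slightly, but a local central limit theorem argument (or direct Fourier analysis on $\mathbb{T}_{N}$) still yields the $|\mathfrak{l}|/(N\sqrt{\tau})$ rate. The extension to $\Upsilon^{\mathrm{HL}}$ follows the same template but is strictly easier because its integrand lacks the $N^{1/2}$ prefactor; the same decomposition with Proposition \ref{prop:hl} in place of Theorem \ref{theorem:bgp} yields the claimed bound with significant room to spare.
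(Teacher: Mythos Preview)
Your approach is correct and follows essentially the same strategy as the paper: both split the time integral via the semigroup property $\Upsilon^{\mathrm{BG}}_{\t,\x}=\mathrm{e}^{\tau_{0}\mathscr{T}_{N}}[\Upsilon^{\mathrm{BG}}_{\t-\tau_{0},\cdot}]_{\x}+\int_{\t-\tau_{0}}^{\t}\ldots$, transfer the gradient onto the heat kernel for the smoothed piece (invoking Theorem~\ref{theorem:bgp}), and estimate the short-time tail crudely using $|\overline{\mathfrak{q}}|\lesssim1$ and $|\mathbf{Y}^{N}|\lesssim N^{\e_{\mathrm{ap}}}$. The only cosmetic difference is that the paper fixes the splitting time deterministically at $\mathrm{T}_{N}=N^{-1/2-2\e_{\mathrm{ap}}}$ rather than optimizing pathwise via AM--GM; both routes produce a bound of order $N^{-1/12+O(\e_{\mathrm{ap}}+\e_{\mathrm{reg}})}$, which yields $N^{-1/13}$ after shrinking the $\e$'s.
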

\begin{proof}
First, we fix any integer $\mathfrak{l}$ between $-2\mathfrak{l}_{\mathrm{reg}},2\mathfrak{l}_{\mathrm{reg}}$. Next, we observe the elementary bounds {\small$N^{1/2}|\mathfrak{l}|^{-1/2}\lesssim N^{1/2}|\mathfrak{l}|^{1/2}|\mathfrak{l}|^{-1}\lesssim N^{1/2}N^{1/6+\e_{\mathrm{reg}}}|\mathfrak{l}|^{-1}\lesssim N^{2/3+\e_{\mathrm{reg}}}|\mathfrak{l}|^{-1}$}. Thus, since {\small$\frac23+\frac{1}{13}<\frac34$}, if we take $\e_{\mathrm{reg}}$ small enough, then to prove \eqref{eq:bgpreg}, it suffices to show 
\begin{align}
\E\left\{\left\|\sup_{-2\mathfrak{l}_{\mathrm{reg}}\leq\mathfrak{l}\leq2\mathfrak{l}_{\mathrm{reg}}}|\mathfrak{l}|^{-1}|\grad^{\mathbf{X}}_{\mathfrak{l}}\Upsilon^{\mathrm{BG}}|\right\|_{\mathrm{L}^{\infty}([0,1]\times\mathbb{T}_{N})}\right\}\lesssim N^{-\frac34}.\label{eq:bgpregaux}
\end{align}
Fix any $\t\geq0$ and $\x\in\mathbb{T}_{N}$. We write
\begin{align}
|\mathfrak{l}|^{-1}\grad^{\mathbf{X}}_{\mathfrak{l}}\Upsilon^{\mathrm{BG}}_{\t,\x}&={\textstyle\int_{0}^{\t-\mathrm{T}_{N}}}|\mathfrak{l}|^{-1}\grad^{\mathbf{X}}_{\mathfrak{l}}\mathrm{e}^{[\t-\s]\mathscr{T}_{N}}[N^{\frac12}\overline{\mathfrak{q}}[\tau_{\cdot}\eta_{\s}]\mathbf{Y}^{N}_{\s,\cdot}]_{\x}\d\s\label{eq:bgpreg1}\\
&+{\textstyle\int_{\t-\mathrm{T}_{N}}^{\t}}|\mathfrak{l}|^{-1}\grad^{\mathbf{X}}_{\mathfrak{l}}\mathrm{e}^{[\t-\s]\mathscr{T}_{N}}[N^{\frac12}\overline{\mathfrak{q}}[\tau_{\cdot}\eta_{\s}]\mathbf{Y}^{N}_{\s,\cdot}]_{\x}\d\s,\nonumber
\end{align}
where {\small$\mathrm{T}_{N}:=N^{-1/2-2\e_{\mathrm{ap}}}\wedge\t$}. By spatial regularity of the heat kernel $\mathbf{H}^{N}$ (see Proposition \ref{prop:hk}) and the bounds $|\overline{\mathfrak{q}}|\lesssim1$ and $|\mathbf{Y}^{N}|\lesssim N^{\e_{\mathrm{ap}}}$ (for the latter bound, see \eqref{eq:tap} and \eqref{eq:tst}), we have 
\begin{align*}
{\textstyle\int_{\t-\mathrm{T}_{N}}^{\t}}|\mathfrak{l}|^{-1}\grad^{\mathbf{X}}_{\mathfrak{l}}\mathrm{e}^{[\t-\s]\mathscr{T}_{N}}[N^{\frac12}\overline{\mathfrak{q}}[\tau_{\cdot}\eta_{\s}]\mathbf{Y}^{N}_{\s,\cdot}]_{\x}\d\s&\lesssim{\textstyle\int_{\t-\mathrm{T}_{N}}^{\t}}N^{-1}N^{\frac12+\e_{\mathrm{ap}}}|\t-\s|^{-\frac12}\d\s\\
&\lesssim N^{-1}N^{\frac12+\e_{\mathrm{ap}}}N^{-\frac14-\e_{\mathrm{ap}}}\lesssim N^{-\frac34}.
\end{align*}
For the first term on the RHS of \eqref{eq:bgpreg1}, we use the semigroup property to write
\begin{align*}
{\textstyle\int_{0}^{\t-\mathrm{T}_{N}}}|\mathfrak{l}|^{-1}\grad^{\mathbf{X}}_{\mathfrak{l}}\mathrm{e}^{[\t-\s]\mathscr{T}_{N}}[N^{\frac12}\overline{\mathfrak{q}}[\tau_{\cdot}\eta_{\s}]\mathbf{Y}^{N}_{\s,\cdot}]_{\x}\d\s&={\textstyle\int_{0}^{\t-\mathrm{T}_{N}}}|\mathfrak{l}|^{-1}\grad^{\mathbf{X}}_{\mathfrak{l}}\mathrm{e}^{\mathrm{T}_{N}\mathscr{T}_{N}}\circ\mathrm{e}^{[\t-\s-\mathrm{T}_{N}]\mathscr{T}_{N}}[N^{\frac12}\overline{\mathfrak{q}}[\tau_{\cdot}\eta_{\s}]\mathbf{Y}^{N}_{\s,\cdot}]_{\x}\d\s.
\end{align*}
We use linearity of the semigroup operator to pull {\small$|\mathfrak{l}|^{-1}\grad^{\mathbf{X}}_{\mathfrak{l}}\mathrm{e}^{\mathrm{T}_{N}\mathscr{T}_{N}}$} out of the $\d\s$-integration. Then, we use space regularity of the heat kernel $\mathbf{H}^{N}$ again (see Proposition \ref{prop:hk}) to get that {\small$|\mathfrak{l}|^{-1}\grad^{\mathbf{X}}_{\mathfrak{l}}\mathrm{e}^{\mathrm{T}_{N}\mathscr{T}_{N}}:\mathrm{L}^{\infty}(\mathbb{T}_{N})\to\mathrm{L}^{\infty}(\mathbb{T}_{N})$} has operator norm {\small$\lesssim N^{-1}\mathrm{T}_{N}^{-1/2}=N^{-3/4+\e_{\mathrm{ap}}}$}. We ultimately get the following for the first term on the RHS of \eqref{eq:bgpreg1}:
\begin{align*}
&\sup_{\x\in\mathbb{T}_{N}}\sup_{-2\mathfrak{l}_{\mathrm{reg}}\leq\mathfrak{l}\leq2\mathfrak{l}_{\mathrm{reg}}}|{\textstyle\int_{0}^{\t-\mathrm{T}_{N}}}|\mathfrak{l}|^{-1}\grad^{\mathbf{X}}_{\mathfrak{l}}\mathrm{e}^{[\t-\s]\mathscr{T}_{N}}[N^{\frac12}\overline{\mathfrak{q}}[\tau_{\cdot}\eta_{\s}]\mathbf{Y}^{N}_{\s,\cdot}]_{\x}\d\s|\\
&\lesssim N^{-\frac34+\e_{\mathrm{ap}}}\sup_{\x\in\mathbb{T}_{N}}|{\textstyle\int_{0}^{\t-\mathrm{T}_{N}}}\mathrm{e}^{[\t-\s-\mathrm{T}_{N}]\mathscr{T}_{N}}[N^{\frac12}\overline{\mathfrak{q}}[\tau_{\cdot}\eta_{\s}]\mathbf{Y}^{N}_{\s,\cdot}]_{\x}\d\s|.
\end{align*}
The term in the $\x$-sup in the second line is just {\small$\Upsilon^{\mathrm{BG}}_{\t-\mathrm{T}_{N},\x}$}. So, we can take sup over $\t\in[0,1]$, take expectation and apply Theorem \ref{theorem:bgp} to deduce the previous display is $\lesssim N^{-3/4+\e_{\mathrm{ap}}}N^{-\beta_{\mathrm{BG}}}$ for some universal $\beta_{\mathrm{BG}}>0$. Since we can take $\e_{\mathrm{ap}}$ smaller than $\beta_{\mathrm{BG}}$, we ultimately deduce that the first term on the RHS of \eqref{eq:bgpreg1} is $\lesssim N^{-3/4}$. This gives \eqref{eq:bgpregaux}. The argument is the same for $\Upsilon^{\mathrm{HL}}$ except we replace {\small$N^{1/2}\overline{\mathfrak{q}}[\tau_{\cdot}\eta_{\s}]$} by {\small$\mathfrak{g}[\tau_{\cdot}\eta_{\s}]-\mathfrak{s}[\tau_{\cdot}\eta_{\s}]$} and we use Proposition \ref{prop:hl} instead of Theorem \ref{theorem:bgp}.
\end{proof}
We now record a time-regularity estimate. Although the following is entirely deterministic and does not use Theorem \ref{theorem:bgp} or Proposition \ref{prop:hl}, we still give it below to make this section a central hub for error terms.
\begin{lemma}\label{lemma:timeregbgpterms}
\fsp We have the following deterministic estimate (which also holds if we replace $\Upsilon^{\mathrm{BG}}$ by $\Upsilon^{\mathrm{HL}}$):
\begin{align}
\left\|\sup_{\r\in[0,1/N]}\r^{-\frac14}|\grad^{\mathbf{T}}_{-\r}\Upsilon^{\mathrm{BG}}|\right\|_{\mathrm{L}^{\infty}([0,1]\times\mathbb{T}_{N})}\lesssim 1.\label{eq:timeregbgpterms}
\end{align}
\end{lemma}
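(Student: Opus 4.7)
Since the bound must hold pathwise, the starting point is that the truncation defining $\mathbf{Y}^{N}$ yields the \emph{deterministic} estimate $\|N^{1/2}\overline{\mathfrak{q}}[\tau_{\cdot}\eta_{\s}]\mathbf{Y}^{N}_{\s,\cdot}\|_{\mathrm{L}^{\infty}(\mathbb{T}_{N})}\lesssim N^{1/2+\e_{\mathrm{ap}}}$ uniformly in $\s$; denote this integrand by $f_{\s}$. The plan is to write
\begin{align*}
\grad^{\mathbf{T}}_{-\r}\Upsilon^{\mathrm{BG}}_{\t,\x} = -{\textstyle\int_{\t-\r}^{\t}}\mathrm{e}^{[\t-\s]\mathscr{T}_{N}}[f_{\s}]_{\x}\d\s - {\textstyle\int_{0}^{\t-\r}}(\mathrm{e}^{[\t-\s]\mathscr{T}_{N}}-\mathrm{e}^{[\t-\r-\s]\mathscr{T}_{N}})[f_{\s}]_{\x}\d\s
\end{align*}
and bound the two pieces separately. (The case $\t<\r$ is handled identically, using $\Upsilon^{\mathrm{BG}}_{0,\cdot}=0$.)

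For the short-time piece, $\mathrm{L}^{\infty}$-contractivity of the heat semigroup bounds it by $\r\|f\|_{\mathrm{L}^{\infty}}\lesssim\r N^{1/2+\e_{\mathrm{ap}}}$. I factor $\r N^{1/2+\e_{\mathrm{ap}}}=\r^{1/4}\cdot\r^{3/4}N^{1/2+\e_{\mathrm{ap}}}$, and using $\r\leq 1/N$ the parenthetical factor is maximized at the endpoint $\r=1/N$, giving $N^{-3/4}\cdot N^{1/2+\e_{\mathrm{ap}}}=N^{-1/4+\e_{\mathrm{ap}}}\lesssim 1$ (for $\e_{\mathrm{ap}}<1/4$), and the desired $\lesssim\r^{1/4}$ bound follows.

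The long-time piece is the substantive step. Use the identity $\mathrm{e}^{[\t-\s]\mathscr{T}_{N}}-\mathrm{e}^{[\t-\r-\s]\mathscr{T}_{N}}={\textstyle\int_{\t-\r-\s}^{\t-\s}}\mathscr{T}_{N}\mathrm{e}^{u\mathscr{T}_{N}}\d u$ together with the operator bound $\|\mathscr{T}_{N}\mathrm{e}^{u\mathscr{T}_{N}}\|_{\mathrm{L}^{\infty}\to\mathrm{L}^{\infty}}\lesssim u^{-1}$ on $u\in(0,1]$; this follows from Proposition \ref{prop:hk} by factoring $\Delta=\grad^{\mathbf{X}}_{1}\grad^{\mathbf{X}}_{-1}$ and using the semigroup decomposition $\mathrm{e}^{u\mathscr{T}_{N}}=\mathrm{e}^{u\mathscr{T}_{N}/2}\circ\mathrm{e}^{u\mathscr{T}_{N}/2}$, while the drift part of $\mathscr{T}_{N}$ contributes only $u^{-1/2}$, hence is lower-order. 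Integrating yields
\begin{align*}
{\textstyle\int_{0}^{\t-\r}}\log\!\tfrac{\t-\s}{\t-\r-\s}\,\d\s \lesssim \r\log(1/\r),
\end{align*}
where the logarithmic singularity near $\s=\t-\r$ contributes only an extra $\mathrm{O}(\r)$. Multiplying by $\|f\|_{\mathrm{L}^{\infty}}$, the long-time piece is $\lesssim N^{1/2+\e_{\mathrm{ap}}}\r\log(1/\r)=\r^{1/4}\cdot N^{1/2+\e_{\mathrm{ap}}}\r^{3/4}\log(1/\r)$. The function $\r\mapsto\r^{3/4}\log(1/\r)$ is increasing on $(0,e^{-4/3})$, which contains $[0,1/N]$, so on this interval it is bounded by its endpoint value $N^{-3/4}\log N$. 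Hence the bracketed factor is $\lesssim N^{-1/4+\e_{\mathrm{ap}}}\log N\lesssim 1$, yielding $\lesssim\r^{1/4}$.

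The only real nuisance (rather than genuine obstacle) is the logarithmic factor from $\int u^{-1}\d u$; it is absorbed precisely because the constraint $\r\leq 1/N$ makes $\r^{3/4}\log(1/\r)$ much smaller than $N^{-1/2-\e_{\mathrm{ap}}}$. The argument for $\Upsilon^{\mathrm{HL}}$ is identical and strictly easier, as its integrand $(\mathfrak{g}-\mathfrak{s})[\tau_{\cdot}\eta_{\s}]\mathbf{Y}^{N}_{\s,\cdot}$ carries no $N^{1/2}$ prefactor and is of size only $\mathrm{O}(N^{\e_{\mathrm{ap}}})$.
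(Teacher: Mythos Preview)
Your proof is correct and follows the same overall route as the paper: the same short-time/long-time decomposition of $\grad^{\mathbf{T}}_{-\r}\Upsilon^{\mathrm{BG}}$, and the same crude bound $\|f_{\s}\|_{\mathrm{L}^{\infty}}\lesssim N^{1/2+\e_{\mathrm{ap}}}$ for the integrand. The only difference is in handling the long-time piece. The paper quotes directly the time-regularity estimate from Proposition~\ref{prop:hk}, namely $\|\mathrm{e}^{[\t-\s]\mathscr{T}_{N}}-\mathrm{e}^{[\t-\r-\s]\mathscr{T}_{N}}\|_{\mathrm{L}^{\infty}\to\mathrm{L}^{\infty}}\lesssim_{\e}\r^{1-\e}|\t-\r-\s|^{-1+\e}$, which after integration in $\s$ gives $N^{1/2+\e_{\mathrm{ap}}}\r^{1-\e}$ with no logarithm. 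You instead derive the comparable bound on the fly via the identity $\mathrm{e}^{[\t-\s]\mathscr{T}_{N}}-\mathrm{e}^{[\t-\r-\s]\mathscr{T}_{N}}=\int_{\t-\r-\s}^{\t-\s}\mathscr{T}_{N}\mathrm{e}^{u\mathscr{T}_{N}}\d u$ and the estimate $\|\mathscr{T}_{N}\mathrm{e}^{u\mathscr{T}_{N}}\|\lesssim u^{-1}$ (itself a consequence of the gradient bounds in Proposition~\ref{prop:hk}), at the cost of a harmless $\log(1/\r)$. Both arguments are valid; the paper's is slightly cleaner since it sidesteps the log by interpolation, while yours is more self-contained since it reconstructs the time-regularity estimate from the spatial one.
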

\begin{proof}
Intuitively, the scale-$\r$ time-gradient $\r$; when combined with $N^{1/2}$ in \eqref{eq:upsilonbg}, this gives $\lesssim N^{1/2}\r$, which is $\lesssim\r^{1/4}$ if $\r\lesssim N^{-1}$. Let us give the details below. Fix $\r\in[0,1/N]$ and $(\t,\x)$. In what follows, we always interpret $\t-\r$ as its positive part (but do not write it to ease notation). By \eqref{eq:upsilonbg}, 
\begin{align}
\grad^{\mathbf{T}}_{-\r}\Upsilon^{\mathrm{BG}}_{\t,\x}&={\int_{\t-\r}^{\t}}\sum_{\y\in\mathbb{T}_{N}}\mathbf{H}^{N}_{\s,\t,\x,\y}N^{\frac12}\overline{\mathfrak{q}}[\tau_{\y}\eta_{\s}]\mathbf{Y}^{N}_{\s,\y}\d\s\label{eq:timeregbgpterms1}\\
&+\int_{0}^{\t-\r}\sum_{\y\in\mathbb{T}_{N}}\{\mathbf{H}^{N}_{\s,\t,\x,\y}-\mathbf{H}^{N}_{\s,\t-\r,\x,\y}\}N^{\frac12}\overline{\mathfrak{q}}[\tau_{\y}\eta_{\s}]\mathbf{Y}^{N}_{\s,\y}\d\s.\nonumber
\end{align}
Because the semigroup operator {\small$\exp\{[\t-\s]\mathscr{T}_{N}\}:\mathrm{L}^{\infty}(\mathbb{T}_{N})\to\mathrm{L}^{\infty}(\mathbb{T}_{N})$} has operator norm $\mathrm{O}(1)$, the RHS of the first line of \eqref{eq:timeregbgpterms1} is $\lesssim N^{1/2+\e_{\mathrm{ap}}}\r$ (again, we use $|\mathbf{Y}^{N}|\lesssim N^{\e_{\mathrm{ap}}}$ for this). For the second line, we use Proposition \ref{prop:hk} to get that the $\mathrm{L}^{1}(\mathbb{T}_{N})$-norm of {\small$\mathbf{H}^{N}_{\s,\t,\x,\y}-\mathbf{H}^{N}_{\s,\t-\r,\x,\y}$} in $\y$ is {\small$\lesssim_{\e} \r^{1-\e}|\t-\r-\s|^{-1+\e}$} for $\e>0$ small. So, the second line of \eqref{eq:timeregbgpterms1} is {\small$\lesssim_{\e}\r^{1-\e}N^{1/2+\e_{\mathrm{ap}}}\int_{0}^{\t-\r}|\t-\r-\s|^{-1+\e}\d\s\lesssim_{\e}N^{1/2+\e_{\mathrm{ap}}}\r^{1-\e}$}. Ultimately, we deduce 
\begin{align*}
\r^{-\frac14}|\grad^{\mathbf{T}}_{-\r}\Upsilon^{\mathrm{BG}}_{\t,\x}|&\lesssim_{\e} N^{\frac12+\e_{\mathrm{ap}}}\r^{\frac34-\e}\lesssim N^{-\frac14+\e_{\mathrm{ap}}+\e},
\end{align*}
since $\r\lesssim N^{-1}$. If we choose $\e_{\mathrm{ap}},\e$ small enough, the RHS of the previous display is $\lesssim1$. Since this bound is uniform over $(\t,\x)$-variables on the LHS, \eqref{eq:timeregbgpterms} follows. The argument for $\Upsilon^{\mathrm{HL}}$ is the same (even easier); just replace {\small$N^{1/2}\overline{\mathfrak{q}}[\tau_{\cdot}\eta_{\s}]$} by {\small$\mathfrak{g}[\tau_{\cdot}\eta_{\s}]-\mathfrak{s}[\tau_{\cdot}\eta_{\s}]$}. 
\end{proof}
\subsection{Other error terms}
Again, for the sake of collecting all error estimates in this section, let us also bound \eqref{eq:uheatId} and \eqref{eq:uheatIe}. By boundedness of the $\mathscr{T}_{N}$-semigroup on $\mathrm{L}^{\infty}(\mathbb{T}_{N})$ (see Proposition \ref{prop:hk}) and $|\mathbf{Y}^{N}|\leq N^{\e_{\mathrm{ap}}}$ (see \eqref{eq:tst}) and $\mathfrak{b}_{1}=\mathrm{O}(1)$ (see Lemma \ref{lemma:mshe}), we have 
\begin{align}
\sup_{\t\in[0,1]}\sup_{\x\in\mathbb{T}_{N}}|{\textstyle\int_{0}^{\t}}\mathrm{e}^{[\t-\s]\mathscr{T}_{N}}[N^{-\frac12}\mathfrak{b}_{1}[\tau_{\cdot}\eta_{\s}]\mathbf{Y}^{N}_{\s,\cdot}]_{\x}\d\s|\lesssim N^{-\frac12+\e_{\mathrm{ap}}}.\label{eq:err1}
\end{align}
We now claim that the operator norm of {\small$\exp\{[\t-\s]\mathscr{T}_{N}\}\circ\grad^{\mathbf{X}}_{\mathfrak{l}}$} on $\mathrm{L}^{\infty}(\mathbb{T}_{N})$ is {\small$\lesssim |\mathfrak{l}|N^{-1}|\t-\s|^{-1/2}$}. This follows because {\small$\exp\{[\t-\s]\mathscr{T}_{N}\}\circ\grad^{\mathbf{X}}_{\mathfrak{l}}=\grad^{\mathbf{X}}_{\mathfrak{l}}\circ\exp\{[\t-\s]\mathscr{T}_{N}\}$}, as space-gradients commute with $\mathscr{T}_{N}$, and by Proposition \ref{prop:hk}. By this and the same bounds $|\mathbf{Y}^{N}|\leq N^{\e_{\mathrm{ap}}}$ and $\mathfrak{b}_{1}=\mathrm{O}(1)$, we get
\begin{align}
&\sup_{\t\in[0,1]}\sup_{\x\in\mathbb{T}_{N}}|{\textstyle\int_{0}^{\t}}\mathrm{e}^{[\t-\s]\mathscr{T}_{N}}[N^{\frac12}\grad^{\mathbf{X}}_{-2\mathfrak{l}_{\mathfrak{d}}}\{\mathfrak{b}_{2}[\tau_{\cdot}\eta_{\s}]\mathbf{Y}^{N}_{\s,\cdot}\}]_{\x}\d\s|\nonumber\\
&\lesssim\sup_{\t\in[0,1]}\sup_{\x\in\mathbb{T}_{N}}{\textstyle\int_{0}^{\t}}N^{-1}|\t-\s|^{-\frac12}N^{\frac12+\e_{\mathrm{ap}}}\d\s\lesssim N^{-\frac12+\e_{\mathrm{ap}}}.\label{eq:err2}
\end{align}
Finally, we remark that the same bounds would be true if we instead take the space gradient of the term inside absolute values on the LHS of any estimate \eqref{eq:err1}-\eqref{eq:err2} above. Indeed, space gradients are bounded (in operator norm) on $\mathrm{L}^{\infty}([0,1]\times\mathbb{T}_{N})$ (we just do not gain any regularity factor). Precisely, we have
\begin{align}
\sup_{\t\in[0,1]}\sup_{\x\in\mathbb{T}_{N}}\sup_{\mathfrak{l}\neq0}N^{\frac12}|\mathfrak{l}|^{-1}|\grad^{\mathbf{X}}_{\mathfrak{l}}{\textstyle\int_{0}^{\t}}\mathrm{e}^{[\t-\s]\mathscr{T}_{N}}[N^{-\frac12}\mathfrak{b}_{1}[\tau_{\cdot}\eta_{\s}]\mathbf{Y}^{N}_{\s,\cdot}]_{\x}\d\s|&\lesssim N^{\e_{\mathrm{ap}}}, \label{eq:err1regX}\\
\sup_{\t\in[0,1]}\sup_{\x\in\mathbb{T}_{N}}\sup_{\mathfrak{l}\neq0}N^{\frac12}|\mathfrak{l}|^{-1}|\grad^{\mathbf{X}}_{\mathfrak{l}}{\textstyle\int_{0}^{\t}}\mathrm{e}^{[\t-\s]\mathscr{T}_{N}}[N^{\frac12}\grad^{\mathbf{X}}_{-2\mathfrak{l}_{\mathfrak{d}}}\{\mathfrak{b}_{2}[\tau_{\cdot}\eta_{\s}]\mathbf{Y}^{N}_{\s,\cdot}\}]_{\x}\d\s|&\lesssim N^{\e_{\mathrm{ap}}}.\label{eq:err2regX}
\end{align}
Also, the proof of Lemma \ref{lemma:timeregbgpterms} gives the following (if we replace $N^{1/2}\overline{\mathfrak{q}}\mathbf{Y}^{N}$ in said proof by $N^{-1/2}\mathfrak{b}_{1}\mathbf{Y}^{N}$ or {\small$N^{1/2}\grad^{\mathbf{X}}_{-2\mathfrak{l}_{\mathfrak{d}}}[\mathfrak{b}_{2}\mathbf{Y}^{N}]$}, the key point being {\small$\mathfrak{b}_{1}\mathbf{Y}^{N}$} and {\small$\grad^{\mathbf{X}}_{-2\mathfrak{l}_{\mathfrak{d}}}[\mathfrak{b}_{2}\mathbf{Y}^{N}]$} are both $\mathrm{O}(N^{\e_{\mathrm{ap}}})$ deterministically):
\begin{align}
\sup_{\t\in[0,1]}\sup_{\x\in\mathbb{T}_{N}}\sup_{\r\in[0,1/N]}\r^{-\frac14}|\grad^{\mathbf{T}}_{-\r}{\textstyle\int_{0}^{\t}}\mathrm{e}^{[\t-\s]\mathscr{T}_{N}}[N^{-\frac12}\mathfrak{b}_{1}[\tau_{\cdot}\eta_{\s}]\mathbf{Y}^{N}_{\s,\cdot}]_{\x}\d\s|&\lesssim 1, \label{eq:err1regT}\\
\sup_{\t\in[0,1]}\sup_{\x\in\mathbb{T}_{N}}\sup_{\r\in[0,1/N]}\r^{-\frac14}|\grad^{\mathbf{T}}_{-\r}{\textstyle\int_{0}^{\t}}\mathrm{e}^{[\t-\s]\mathscr{T}_{N}}[N^{\frac12}\grad^{\mathbf{X}}_{-2\mathfrak{l}_{\mathfrak{d}}}\{\mathfrak{b}_{2}[\tau_{\cdot}\eta_{\s}]\mathbf{Y}^{N}_{\s,\cdot}\}]_{\x}\d\s|&\lesssim 1.\label{eq:err2regT}
\end{align}
%
%
%
%
\section{Proofs of Proposition \ref{prop:uq}, \ref{prop:qconvergence}, and \ref{prop:tst=1} in that order}\label{section:propproofs}
\subsection{Proof of Proposition \ref{prop:uq}}
For convenience, we let $\mathbf{D}^{N}:=\mathbf{U}^{N}-\mathbf{Q}^{N}$. It is straightforward to check that
\begin{align*}
\mathbf{D}^{N}_{\t,\x}={\textstyle\int_{0}^{\t}}\mathrm{e}^{[\t-\s]\mathscr{T}_{N}}[\mathbf{D}^{N}_{\s,\cdot}\d\xi^{N}_{\s,\cdot}]_{\x}+\Phi_{\t,\x},
\end{align*}
where $\Phi$ is given by \eqref{eq:uheatIb}-\eqref{eq:uheatIe}. Differentiating this equation in $\t$ also gives the following, in which $\Psi_{\t,\x}$ is equal to the $\d\s$-integrand in \eqref{eq:uheatIb}-\eqref{eq:uheatIe} evaluated at $\s=\t$:
\begin{align*}
\d\mathbf{D}^{N}_{\t,\x}&=\mathscr{T}_{N}\mathbf{D}^{N}_{\t,\x}\d\t+\mathbf{D}^{N}_{\t,\x}\d\xi^{N}_{\t,\x}+\Psi_{\t,\x}\d\t.
\end{align*}
(Note that $\Psi_{\t,\x}$ is uniformly bounded by $\mathrm{O}(N^{2})$ since $\mathbf{Y}^{N}=\mathrm{O}(N)$ by construction; see \eqref{eq:tst}.) The operator $\mathscr{T}_{N}:\mathrm{L}^{\infty}(\mathbb{T}_{N})\to\mathrm{L}^{\infty}(\mathbb{T}_{N})$ has norm $\mathrm{O}(N^{2})$ (it is discrete differentiation scaled by $\mathrm{O}(N^{2})$). The noise $\xi^{N}$ is a sum of compensated Poisson clocks of speed $\mathrm{O}(N^{2})$. Thus, standard short-time estimates (see Lemma A.6 in \cite{Y23}, for example) allow us to control $\mathbf{D}^{N}$ on $[0,1]\times\mathbb{T}_{N}$ in terms of its sup-norm on a very fine discretization of $[0,1]\times\mathbb{T}_{N}$. In particular, for any $\beta>0$ independent of $N$, we have 
\begin{align*}
\mathbb{P}[\|\mathbf{D}^{N}\|_{\mathrm{L}^{\infty}([0,1]\times\mathbb{T}_{N})}\gtrsim N^{-\beta}]\leq\mathbb{P}\left\{{\sup_{\substack{\t\in[0,1]\cap N^{-100}\Z\\ \x\in\mathbb{T}_{N}}}}|\mathbf{D}^{N}_{\t,\x}|\gtrsim N^{-\beta}\right\}+\mathrm{o}(1),
\end{align*}
where the additional $\mathrm{o}(1)$ comes from the probability that one of the $\mathrm{O}(N)$-many Poisson clocks rings more than $N^{\e}$-many times in at least one of the intervals of length $N^{-100}$ between adjacent points in $[0,1]\cap N^{-100}\Z$. (Such an event has probability that is exponentially small in $N$. Thus, a union bound over the $N^{\mathrm{O}(1)}$-many such clocks and time-intervals still produces $\mathrm{o}(1)$; again, see Lemma A.6 in \cite{Y23} for details.) 

Let $\mathcal{E}$ denote the event in which {\small$|\Phi_{\t,\x}|\lesssim N^{-\beta/2}$} uniformly in $(\t,\x)\in[0,1]\times\mathbb{T}_{N}$, where  $\beta=\beta_{\mathrm{BG}}\wedge\beta_{\mathrm{HL}}$ (see Theorem \ref{theorem:bgp}, Proposition \ref{prop:hl} for these constants). By Theorem \ref{theorem:bgp}, Proposition \ref{prop:hl}, the Markov inequality, \eqref{eq:err1} and \eqref{eq:err2}, we know that $\mathcal{E}$ holds with high probability. So, if we let $\mathfrak{t}^{\Phi}$ be the first time where {\small$|\Phi_{\mathfrak{t}^{\Phi},\x}|\gtrsim N^{-\beta/2}$} for some $\x\in\mathbb{T}_{N}$, we know that $\mathfrak{t}^{\Phi}\wedge1=1$ with high probability. (Note that $\Phi$ is continuous in time; thus {\small$|\Phi_{\t,\x}|\lesssim N^{-\beta/2}$} for all $\t\leq\mathfrak{t}^{\Phi}$.) In particular, if we set {\small$\Phi^{\wedge}_{\t,\x}:=\Phi_{\t\wedge\mathfrak{t}^{\Phi},\x}$}, and if we let $\mathbf{D}^{N,\wedge}$ solve
\begin{align*}
\mathbf{D}^{N,\wedge}_{\t,\x}&={\textstyle\int_{0}^{\t}}\mathrm{e}^{[\t-\s]\mathscr{T}_{N}}[\mathbf{D}^{N,\wedge}_{\s,\cdot}\d\xi^{N}_{\s,\cdot}]_{\x}+\Phi^{\wedge}_{\t,\x},
\end{align*}
then by uniqueness of solutions to SDEs, we know that $\mathbf{D}^{N}=\mathbf{D}^{N,\wedge}$ until time $\mathfrak{t}^{\Phi}$, and thus up to time $1$ with high probability. (The advantage with $\mathbf{D}^{N,\wedge}$ is that the forcing term $\Phi^{\wedge}$ is deterministically small.) In particular, it suffices to now show the following for some fixed $\beta_{2}>0$, we have
\begin{align*}
\mathbb{P}\left\{{\sup_{\substack{\t\in[0,1]\cap N^{-100}\Z\\ \x\in\mathbb{T}_{N}}}}|\mathbf{D}^{N,\wedge}_{\t,\x}|\gtrsim N^{-\beta}\right\}\leq\sum_{\substack{\t\in[0,1]\cap N^{-100}\Z\\\x\in\mathbb{T}_{N}}}\mathbb{P}[|\mathbf{D}^{N,\wedge}_{\t,\x}|\gtrsim N^{-\beta_{2}}]=\mathrm{o}(1).
\end{align*}
The first is union bound, so we show the second. Since the sum has $\mathrm{O}(N^{101})$-many terms, it suffices to bound each summand as follows, where the first bound is just Chebyshev for $p\geq1$ to be determined:
\begin{align}
\mathbb{P}[|\mathbf{D}^{N,\wedge}_{\t,\x}|\gtrsim N^{-\beta_{2}}]\lesssim N^{2p\beta_{2}}\E|\mathbf{D}^{N,\wedge}_{\t,\x}|^{2p}\lesssim_{D} N^{-D}.\label{eq:uq1}
\end{align}
Since the heat kernel $\mathbf{H}^{N}$ for the $\mathscr{T}_{N}$-semigroup satisfies the usual pointwise bound {\small$\mathbf{H}^{N}_{\s,\t,\x,\y}\lesssim N^{-1}|\t-\s|^{-1/2}$} for $\s\leq\t$ (see Proposition \ref{prop:hk}), standard Burkholder-Davis-Gundy-type martingale inequalities, i.e. the ones that show the standard Bertini-Giacomin-style bounds in \cite{BG}, give the following for $\t=\mathrm{O}(1)$ (see Lemmas 7.6 and A.4 in \cite{Y23}, for example):
\begin{align}
\{\E|{\textstyle\int_{0}^{\t}}\mathrm{e}^{[\t-\s]\mathscr{T}_{N}}[\mathbf{D}^{N,\wedge}_{\s,\cdot}\d\xi^{N}_{\s,\cdot}]_{\x}|^{2p}\}^{1/p}&\lesssim_{p}{\textstyle\int_{0}^{\t}}|\t-\s|^{-\frac12}\sup_{\y\in\mathbb{T}_{N}}\{\E|\mathbf{D}^{N,\wedge}_{\s,\y}|^{2p}\}^{1/p}\d\s.\label{eq:uq1b}
\end{align}
(The $p^{-1}$-th power of the $2p$-th moment is perhaps more naturally interpreted as the squared $2p$-th norm with respect to randomness. This type of estimate is standard in the literature, so we omit the details.) Since the RHS of the previous display is independent of the $\x$-variable on the LHS, we can put a supremum over $\x\in\mathbb{T}_{N}$ on the LHS and the bound remains true. If we combine this with the deterministic bound $|\Phi^{\wedge}|\lesssim N^{-\beta/2}$, then we get 
\begin{align}
\sup_{\x\in\mathbb{T}_{N}}\{\E|\mathbf{D}^{N,\wedge}_{\t,\x}|^{2p}\}^{\frac{1}{p}}&\lesssim_{p}{\textstyle\int_{0}^{\t}}|\t-\s|^{-\frac12}\sup_{\y\in\mathbb{T}_{N}}\{\E|\mathbf{D}^{N,\wedge}_{\s,\y}|^{2p}\}^{1/p}\d\s+N^{-\beta}.\label{eq:uq1c}
\end{align}
Gronwall now implies the estimate {\small$\E|\mathbf{D}^{N,\wedge}_{\t,\x}|^{2p}\lesssim_{p}N^{-p\beta}\exp\{C_{p}\int_{0}^{\t}|\t-\s|^{-1/2}\d\s\}\lesssim N^{-p\beta}$}. If $\beta_{2}=\frac13\beta$, then {\small$N^{p\beta_{2}}\E|\mathbf{D}^{N,\wedge}_{\t,\x}|^{2p}\lesssim_{p}N^{-p\beta/6}$} for any $p\geq1$. Choosing $p$ large gives the second bound in \eqref{eq:uq1}, so we are done. \qed
\subsection{Proof of Proposition \ref{prop:qconvergence}}\label{subsection:qconvergence}
We follow the strategy of tightness and identifying limit points. First, however, we give important bounds for $\mathbf{Q}^{N}$; these facilitate both the proof of tightness and the proof of Proposition \ref{prop:tst=1}. 
\subsubsection{A priori estimates}
Recall that $\mathbf{Q}^{N}$ solves an exact microscopic version of \eqref{eq:she}; see \eqref{eq:qheatIa}. As in the proof of Proposition \ref{prop:uq} (see \eqref{eq:uq1b}-\eqref{eq:uq1c}),  we first have the standard Bertini-Giacomin-style bound \cite{BG}
\begin{align*}
\sup_{\x\in\mathbb{T}_{N}}\{\E|\mathbf{Q}^{N}_{\t,\x}|^{2p}\}^{\frac{1}{p}}&\lesssim_{p}\sup_{\x\in\mathbb{T}_{N}}\{\E|\mathbf{Z}^{N}_{0,\x}|^{2p}\}^{\frac{1}{p}}+{\textstyle\int_{0}^{\t}}|\t-\s|^{-\frac12}\sup_{\y\in\mathbb{T}_{N}}\{\E|\mathbf{Q}^{N}_{\s,\y}|^{2p}\}^{1/p}\d\s.
\end{align*}
By assumption, the first term on the RHS is $\mathrm{O}(1)$. By Gronwall, as in after \eqref{eq:uq1c}, we have {\small$\E|\mathbf{Q}^{N}_{\t,\x}|^{2p}\lesssim_{p}1$} for all $(\t,\x)\in[0,1]\times\mathbb{T}_{N}$. We can also follow the net-and-Chebyshev argument leading up to \eqref{eq:uq1} verbatim to get that with very high probability, we have the following for any fixed $\delta>0$:
\begin{align}
\|\mathbf{Q}^{N}\|_{\mathrm{L}^{\infty}([0,1]\times\mathbb{T}_{N})}\lesssim_{\delta} N^{\delta}.\label{eq:quniform}
\end{align}
Let us now record a high probability lower bound for $\mathbf{Q}^{N}$; it is just Lemma 7.2 in \cite{Y23}.
\begin{lemma}\label{lemma:qlower}
\fsp Fix any $\delta>0$. There exists $\e_{\delta}>0$ (depending only on $\delta$) so that with probability at least $1-\delta$, we have {\small$\mathbf{Q}^{N}_{\t,\x}\geq\e_{\delta}$} simultaneously for all $(\t,\x)\in[0,1]\times\mathbb{T}_{N}$.
\end{lemma}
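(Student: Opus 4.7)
The strategy mirrors that of Proposition \ref{prop:uq}: establish a uniform-in-$(\t,\x)$ bound on a negative moment $\E[(\mathbf{Q}^{N}_{\t,\x})^{-2p}]\lesssim_{p}1$ for any $p\geq 1$, then transfer this pointwise bound to a uniform lower bound via a discretization-plus-union-bound argument. As a preliminary, $\mathbf{Q}^{N}$ is strictly positive almost surely: the initial data $\mathbf{Q}^{N}_{0,\x}=\exp\{-\mathbf{h}^{N}_{0,\x}\}$ is uniformly bounded below by the assumption that $\mathbf{h}^{N}_{0,\cdot}$ converges to a continuous function; between Poisson-clock rings the evolution is generated by $\mathscr{T}_{N}$ plus the deterministic compensator part of $\d\xi^{N}$, which preserves positivity (it is a Markov generator plus a bounded multiplicative perturbation); and at each ring $\mathbf{Q}^{N}$ is multiplied by the strictly positive factor $e^{\pm 2N^{-1/2}}$.

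For the negative-moment step, apply Ito's formula with jumps to the convex function $q\mapsto q^{-2p}$. The piece of the drift coming from $\mathscr{T}_{N}\mathbf{Q}^{N}\,\d\t$ is $-2p(\mathbf{Q}^{N})^{-2p-1}\mathscr{T}_{N}\mathbf{Q}^{N}\,\d\t$. By summation-by-parts combined with the convexity bound
\begin{align*}
-2p(\mathbf{Q}^{N}_{\t,\x})^{-2p-1}\bigl(\mathbf{Q}^{N}_{\t,\x+1}-\mathbf{Q}^{N}_{\t,\x}\bigr)\leq (\mathbf{Q}^{N}_{\t,\x+1})^{-2p}-(\mathbf{Q}^{N}_{\t,\x})^{-2p},
\end{align*}
the Laplacian contribution is bounded above by $\tfrac12 N^{2}\Delta(\mathbf{Q}^{N})^{-2p}$ up to a nonnegative Dirichlet-form term that we discard after taking expectation. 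The transport contribution is handled by a discrete Girsanov-type spatial shift (shift the walk underlying $\mathscr{T}_{N}$ by $\overline{\mathfrak{d}} N\t$), which reduces to the $\overline{\mathfrak{d}}=0$ case. The jumps of $(\mathbf{Q}^{N})^{-2p}$ are of size $e^{\mp 4pN^{-1/2}}-1=\mathrm{O}(pN^{-1/2})$ and yield a jump martingale with quadratic-variation rate $\mathrm{O}(p^{2})$ and a bounded compensator correction. In Duhamel form, this gives the integral inequality
\begin{align*}
\sup_{\y}\E[(\mathbf{Q}^{N}_{\t,\y})^{-2p}]\,\lesssim_{p}\,1\,+\,{\textstyle\int_{0}^{\t}}|\t-\s|^{-1/2}\sup_{\y}\E[(\mathbf{Q}^{N}_{\s,\y})^{-2p}]\,\d\s,
\end{align*}
exactly of the form treated at the start of Section \ref{subsection:qconvergence}, so Gronwall closes it.

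For the uniform lower bound, I repeat verbatim the net argument from the proof of Proposition \ref{prop:uq}. Chebyshev at a single $(\t,\x)$ gives $\mathbb{P}[(\mathbf{Q}^{N}_{\t,\x})^{-1}\geq\e_{\delta}^{-1}]\leq\e_{\delta}^{2p}\E[(\mathbf{Q}^{N}_{\t,\x})^{-2p}]\lesssim_{p}\e_{\delta}^{2p}$. Union-bound over a grid in $(N^{-100}\Z\cap[0,1])\times\mathbb{T}_{N}$ of $\mathrm{O}(N^{101})$ points costs a factor $N^{101}$, which for $p$ large (depending on $\delta,\e_{\delta}$) is dominated by $\e_{\delta}^{2p}$. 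Continuity between grid points is controlled by counting Poisson-clock rings as in Proposition \ref{prop:uq}: over a window of length $N^{-100}$ at most $N^{1/10}$ rings occur with very high probability, so $\mathbf{Q}^{N}$ changes by at most a factor $\exp\{\mathrm{O}(N^{-2/5})\}$, negligible at the scale $\e_{\delta}$. The main obstacle will be handling the transport term in $\mathscr{T}_{N}$ during the Ito calculation: the convexity trick that absorbs the Laplacian so cleanly has the wrong sign for a general $\overline{\mathfrak{d}}\neq 0$. The discrete spatial-shift / Girsanov reduction to $\overline{\mathfrak{d}}=0$ indicated above should resolve this, but verifying that the shift interacts compatibly with the jump noise $\d\xi^{N}$ (which depends on the local configuration $\eta$, not on the walk) is the one nontrivial point to check.
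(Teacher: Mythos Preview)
The argument has a genuine gap in the negative-moment step, though not where you suspect. The transport term is actually harmless: for large $N$ the operator $\mathscr{T}_N$ is a bona fide Markov generator (jump rates $\tfrac12 N^2$ and $\tfrac12 N^2-\overline{\mathfrak{d}}N$ to the two neighbors are both positive), so the convexity inequality $F'(q_\x)\,\mathscr{T}_N q_\x\le\mathscr{T}_N F(q)_\x$ holds for $F(q)=q^{-2p}$ directly, with no Girsanov needed.

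The real problem is the jump contribution. Each relevant clock at $\x$ rings at rate $\asymp N^2$, and at a jump $q\mapsto q(1+c)$ the It\^o remainder in $V=(\mathbf{Q}^N)^{-2p}$ is $q^{-2p}\bigl[(1+c)^{-2p}-1+2pc\bigr]\approx p(2p+1)c^2\,q^{-2p}$ with $c=\mathrm{O}(N^{-1/2})$. Its compensator is therefore of order $N^2\cdot N^{-1}\cdot V=C_p N\,V$, not $\mathrm{O}(1)$ as you claim; likewise the predictable quadratic-variation rate of the jump martingale for $V$ is $\asymp N^2\cdot p^2 N^{-1}\,V^2=\mathrm{O}(p^2 N)\,V^2$, not $\mathrm{O}(p^2)$. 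Gronwall then only yields $\E[(\mathbf{Q}^N_{\t,\x})^{-2p}]\lesssim e^{C_p N \t}$, useless on $\t\sim 1$. The $|\t-\s|^{-1/2}$ kernel in your integral inequality has no source in this It\^o computation: in the positive-moment bound it arises because the \emph{linear} Duhamel form of $\mathbf{Q}^N$ lets the $N^{-1}$ from $\sum_\y(\mathbf{H}^N_{\s,\t,\x,\y})^2$ cancel the $N$ from the quadratic variation of $\xi^N$, but $(\mathbf{Q}^N)^{-2p}$ satisfies no such linear equation and no such cancellation is available. (Even granting the moment bound, the net-plus-union-bound step would lose a factor $N^{101}$ and give only $\mathbf{Q}^N\gtrsim N^{-\upsilon}$, not an $N$-independent $\e_\delta$; compare \eqref{eq:quniform}.)

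The paper's proof avoids moments entirely. A short-time estimate on the stochastic integral in \eqref{eq:qheatIa}, together with the weak comparison principle for that linear equation, shows that any solution stays above half the minimum of its initial data for a uniformly positive time $\mathfrak{s}$ with uniformly positive probability $\zeta$. Tracking the successive stopping times $\mathfrak{t}^{\mathrm{half}}_{k}$ at which $\mathbf{Q}^N$ halves its running minimum, one has $\mathfrak{t}^{\mathrm{half}}_{k+1}-\mathfrak{t}^{\mathrm{half}}_{k}\ge\mathfrak{s}$ with conditional probability $\ge\zeta$, so a Borel--Cantelli argument bounds the number of halvings on $[0,1]$ by $\mathrm{O}_\delta(1)$ with probability $1-\delta$, giving $\mathbf{Q}^N\ge 2^{-\mathrm{O}_\delta(1)}$.
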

\subsubsection{Tightness}
We have already shown {\small$\E|\mathbf{Q}^{N}_{\t,\x}|^{2p}\lesssim_{p}1$} for all $(\t,\x)\in[0,1]\times\mathbb{T}_{N}$ and $p\geq1$ (see the paragraph before \eqref{eq:quniform}). Thus, as in Proposition 3.2 and Corollary 3.3 in \cite{DT}, it suffices to show the following to prove tightness of $\mathbf{Q}^{N}$ in $\mathscr{D}([0,1],\mathscr{C}([0,1]))$ for all $\t,\mathfrak{t}\geq0$ and $\x,\y\in\mathbb{T}_{N}$ and $p\geq1$ and $\zeta>0$:
\begin{align}
\E|\mathbf{Q}^{N}_{\t,\x}-\mathbf{Q}^{N}_{\t,\y}|^{2p}&\lesssim_{p}N^{-p+p\zeta}|\x-\y|^{p-p\zeta},\label{eq:qtight1}\\
\E|\mathbf{Q}^{N}_{\t+\mathfrak{t},\x}-\mathbf{Q}^{N}_{\t,\x}|^{2p}&\lesssim_{p}\mathfrak{t}^{\frac{p}{2}-p\zeta}+N^{-p+p\zeta}.\label{eq:qtight2}
\end{align}
The proof of \eqref{eq:qtight1}-\eqref{eq:qtight2} are the exact same as the proof of the estimates in Proposition 3.2 of \cite{DT}; indeed, all we require are the heat kernel estimates in Proposition \ref{prop:hk}. (The point is that $\mathbf{Q}^{N}$ solves an exact microscopic version of \eqref{eq:she}; see \eqref{eq:qheatIa}. Also, although \cite{DT} does not include $\zeta$-corrections in the exponents, we can still use Kolmogorov continuity with \eqref{eq:qtight1}-\eqref{eq:qtight2} to deduce tightness.)

\subsubsection{Identification of limit points}
The limit \eqref{eq:she} can be characterized in terms of a martingale problem; see Lemma 3.16 in \cite{Y23}. Let us introduce this martingale problem to start.
\begin{definition}\label{definition:mgproblem}
\fsp We say a continuous process $\mathscr{Z}\in\mathscr{C}([0,1],\mathscr{C}(\mathbb{T}))$ satisfies the \emph{martingale problem} associated to \eqref{eq:she} if for any $\varphi\in\mathscr{C}^{\infty}(\mathbb{T})$, the processes below (as functions of $\t\in[0,1]$)
\begin{align*}
\mathscr{M}_{\t}&:={\textstyle\int_{\mathbb{T}}}\mathscr{Z}_{\t,\x}\varphi_{\x}\d\x-{\textstyle\int_{\mathbb{T}}}\mathscr{Z}_{0,\x}\varphi_{\x}\d\x-{\textstyle\int_{0}^{\t}\int_{\mathbb{T}}}\mathscr{Z}_{\s,\x}(\tfrac12\partial_{\x}^{2}-\overline{\mathfrak{d}}\partial_{\x})\varphi_{\x}\d\x\d\s,\\
\mathscr{N}_{\t}&:=\mathscr{M}_{\t}^{2}-\tfrac12{\textstyle\int_{0}^{\t}\int_{\mathbb{T}}}\mathscr{Z}_{\s,\x}^{2}\varphi_{\x}^{2}\d\x\d\s
\end{align*}
are martingales with respect to the filtration generated by $\mathscr{Z}$. (The operator {\small$\frac12\partial_{\x}^{2}-\overline{\mathfrak{d}}\partial_{\x}$} is the adjoint with respect to Lebesgue measure on $\mathbb{T}$ of the operator {\small$\frac12\partial_{\x}^{2}+\overline{\mathfrak{d}}\partial_{\x}$} in \eqref{eq:she}.)
\end{definition}
To identify subsequential limits of the tight sequence {\small$\mathbf{Z}^{N}_{\cdot,N\cdot}$}, again, by Lemma 3.16 in \cite{DT}, we must show that all subsequential limits solve the above martingale problem. By \eqref{eq:qheatIa}, for any $\varphi\in\mathscr{C}^{\infty}(\mathbb{T})$, the process 
\begin{align*}
\mathscr{M}^{N}_{\t}:=\tfrac{1}{N}\sum_{\x\in\mathbb{T}_{N}}\mathbf{Q}^{N}_{\t,\x}\varphi_{N^{-1}\x}-\tfrac{1}{N}\sum_{\x\in\mathbb{T}_{N}}\mathbf{Q}^{N}_{0,\x}\varphi_{N^{-1}\x}-{\textstyle\int_{0}^{\t}}\tfrac{1}{N}\sum_{\x\in\mathbb{T}_{N}}\mathscr{T}_{N}\mathbf{Q}^{N}_{\s,\x}\varphi_{N^{-1}\x}\d\s
\end{align*}
is a martingale. Since $\mathbf{Q}^{N}$ converges to some function $\mathscr{Z}$ in $\mathscr{D}([0,1],\mathscr{C}(\mathbb{T}))$ after rescaling in space from $\mathbb{T}_{N}$ to $\mathbb{T}\cap N^{-1}\Z$ (and linearly interpolating from $\mathbb{T}\cap N^{-1}\Z$ to $\mathbb{T}$), we know 
\begin{align*}
\tfrac{1}{N}\sum_{\x\in\mathbb{T}_{N}}\mathbf{Q}^{N}_{\t,\x}\varphi_{N^{-1}\x}-\tfrac{1}{N}\sum_{\x\in\mathbb{T}_{N}}\mathbf{Q}^{N}_{0,\x}\varphi_{N^{-1}\x}\to_{N\to\infty}{\textstyle\int_{\mathbb{T}}}\mathscr{Z}_{\t,\x}\varphi_{\x}\d\x-{\textstyle\int_{\mathbb{T}}}\mathscr{Z}_{0,\x}\varphi_{\x}\d\x
\end{align*}
just by Riemann sum approximations. Next, recalling {\small$\mathscr{T}_{N}=\frac12N^{2}\Delta-\overline{\mathfrak{d}}\grad^{\mathbf{X}}_{-1}$}, summation-by-parts shows that the adjoint of {\small$\mathscr{T}_{N}$} with respect to uniform measure on $\mathbb{T}_{N}$ is {\small$\mathscr{T}_{N}=\frac12N^{2}\Delta-\overline{\mathfrak{d}}\grad^{\mathbf{X}}_{+1}$} (i.e. the time-reversal of a random walk with drift to the left is a random walk with the same drift but to the right). So, by smoothness of $\varphi$, we get
\begin{align*}
{\textstyle\int_{0}^{\t}}\tfrac{1}{N}\sum_{\x\in\mathbb{T}_{N}}\mathscr{T}_{N}\mathbf{Q}^{N}_{\s,\x}\varphi_{N^{-1}\x}\d\s\to_{N\to\infty}{\textstyle\int_{0}^{\t}\int_{\mathbb{T}}}\mathscr{Z}_{\s,\x}(\tfrac12\partial_{\x}^{2}-\overline{\mathfrak{d}}\partial_{\x})\varphi_{\x}\d\x\d\s.
\end{align*}
Combining the above three displays gets $\mathscr{M}^{N}\to\mathscr{M}$ uniformly in $\t\in[0,1]$, where $\mathscr{M}$ is from Definition \ref{definition:mgproblem}. We use uniform control on {\small$\E|\mathscr{M}_{\t}^{N}|^{2p}$} for $p\geq10$, for example, to justify that the limit $\mathscr{M}$ is a martingale as well. (Such a bound follows by the moment bounds on $\mathbf{Q}^{N}$ in the paragraph before \eqref{eq:quniform}.)

Now, let $\langle\mathscr{M}^{N}\rangle$ be the predictable bracket process of $\mathscr{M}^{N}$, so that {\small$\mathscr{N}^{N}_{\t}:=|\mathscr{M}^{N}_{\t}|^{2}-\langle\mathscr{M}^{N}_{\t}\rangle$} is a martingale in $\t$. We claim that {\small$\mathscr{N}^{N}_{\t}\to\mathscr{N_{\t}}$} with $\mathscr{N}$ from Definition \ref{definition:mgproblem}, i.e. that {\small$\langle\mathscr{M}^{N}_{\t}\rangle\to\frac12\int_{0}^{\t}\int_{\mathbb{T}}\mathscr{Z}_{\s,\x}^{2}\varphi_{\x}^{2}\d\x\d\s$} uniformly in $\t$. Assuming this claim, uniform control on {\small$\E|\mathscr{M}_{\t}^{N}|^{2p}$} for $p\geq10$, again, shows that $\mathscr{N}$ must itself be a martingale as well. Thus, it remains to prove the claim. The proof of Proposition 1.5 in \cite{DT} shows 
\begin{align*}
\langle\mathscr{M}^{N}_{\t}\rangle={\int_{0}^{\t}}\tfrac{1+\mathrm{o}(1)}{2N}\sum_{\x\in\mathbb{T}_{N}}|\mathbf{Q}^{N}_{\s,\x}|^{2}\varphi_{N^{-1}\x}^{2}\d\s+\int_{0}^{\t}\tfrac{1+\mathrm{o}(1)}{N}\sum_{\x\in\mathbb{T}_{N}}\mathfrak{w}[\tau_{\x}\eta_{\s}]|\mathbf{Q}^{N}_{\s,\x}|^{2}\varphi_{N^{-1}\x}^{2}\d\s+\mathrm{o}(1),
\end{align*}
where $\mathfrak{w}$ is local and $\E^{0}\mathfrak{w}=0$. (We use results in \cite{DT} for jump length $1$. The models in this paper differ from such a model in \cite{DT} by clocks at each point in $\mathbb{T}_{N}$ of speed $\mathrm{O}(N)$; since only clocks of speed $\mathrm{O}(N^{2})$ have non-vanishing size, the contribution from our additional $\mathrm{O}(N)$ clocks can be absorbed into the $\mathrm{o}(1)$ terms. We also note that the $\mathcal{E}$-terms in \cite{DT} in the formula that we used to get the above display is the same as our $\mathfrak{w}$.) Riemann sums show that the first term on the RHS converges to {\small$\frac12\int_{0}^{\t}\int_{\mathbb{T}}\mathscr{Z}^{2}_{\s,\x}\varphi_{\x}^{2}\d\x\d\s$}. Thus, it suffices to show
\begin{align*}
\lim_{N\to\infty}\int_{0}^{\t}\tfrac{1}{N}\sum_{\x\in\mathbb{T}_{N}}\mathfrak{w}[\tau_{\x}\eta_{\s}]|\mathbf{Q}^{N}_{\s,\x}|^{2}\varphi_{N^{-1}\x}^{2}\d\s=0.
\end{align*}
(It is enough to do so pointwise in $\t$, since moment bounds for $\mathbf{Q}^{N}$ show that term inside the limit on the LHS, as a function of $\t$, is uniformly equicontinuous in $N$.) The proof of this estimate follows the proof of Lemma 2.5 in \cite{DT}, the entropy production estimate of Lemma \ref{lemma:eprod}, and Proposition \ref{prop:tst=1}. (In a nutshell, Lemma 2.5 in \cite{DT} and Lemma \ref{lemma:eprod} replaces {\small$\mathfrak{w}[\tau_{\x}\eta_{\s}]$} by {\small$\mathfrak{l}^{-1}(\eta_{\s,\x+1}+\ldots+\eta_{\s,\x+\mathfrak{l}})$} for $\mathfrak{l}=N^{1/3}$; any $\mathfrak{l}\leq N^{1/2-\delta}$ is allowed in the proof of Lemma 2.5 in \cite{DT}. But {\small$\mathfrak{l}^{-1}(\eta_{\s,\x+1}+\ldots+\eta_{\s,\x+\mathfrak{l}})=N^{-1/2}\mathfrak{l}^{-1}(\mathbf{h}^{N}_{\s,\x+\mathfrak{l}}-\mathbf{h}^{N}_{\s,\x})$}. By Proposition \ref{prop:tst=1}, with high probability, we have {\small$N^{-1/2}\mathfrak{l}^{-1}(\mathbf{h}^{N}_{\s,\x+\mathfrak{l}}-\mathbf{h}^{N}_{\s,\x})=\mathrm{O}(N^{-1/2}\mathfrak{l}^{-1}N^{1/2+2\e_{\mathrm{ap}}}\mathfrak{l}^{1/2})=\mathrm{O}(\mathfrak{l}^{-1/2})\to0$}.) So Proposition \ref{prop:qconvergence} holds. (We note that the proof of Proposition \ref{prop:tst=1} will depend only on \eqref{eq:quniform}, Lemma \ref{lemma:qlower}, and estimates from the proof of tightness; it will not use identification of limit points. In particular, there is no circular reasoning.) \qed
\subsection{Proof of Proposition \ref{prop:tst=1}}
We claim that
\begin{align}
\mathbb{P}\left[\mathcal{E}^{\mathbf{X}}(\mathfrak{t})\right]+\mathbb{P}\left[\mathcal{E}^{\mathbf{T}}(\mathfrak{t})\right]\lesssim N^{-\beta},\label{eq:hpreg}
\end{align}
where $\mathfrak{t}$ is any stopping time valued in $[0,1]$, where $\beta>0$ is fixed, and where, for $\mathfrak{l}_{\mathrm{reg}}=N^{1/3+\e_{\mathrm{reg}}}$ from \eqref{eq:tregX},
\begin{align*}
\mathcal{E}^{\mathbf{X}}(\mathfrak{t})&:=\left\{\sup_{1\leq|\mathfrak{l}|\leq2\mathfrak{l}_{\mathrm{reg}}}\frac{N^{\frac12}|\mathfrak{l}|^{-\frac12}\|\grad^{\mathbf{X}}_{\mathfrak{l}}\mathbf{U}^{N}\|_{\mathrm{L}^{\infty}([0,\mathfrak{t}]\times\mathbb{T}_{N})}}{1+\|\mathbf{U}^{N}\|_{\mathrm{L}^{\infty}([0,\mathfrak{t}]\times\mathbb{T}_{N})}^{2}}\gtrsim N^{\e_{\mathrm{ap}}}\right\},\\
\mathcal{E}^{\mathbf{T}}(\mathfrak{t})&:=\left\{\sup_{\s\in[0,1/N]}\frac{\{N^{-2}\vee\s\}^{-\frac14}\|\grad^{\mathbf{T}}_{\s}\mathbf{U}^{N}\|_{\mathrm{L}^{\infty}([0,\mathfrak{t}]\times\mathbb{T}_{N})}}{1+\|\mathbf{U}^{N}\|_{\mathrm{L}^{\infty}([0,\mathfrak{t}]\times\mathbb{T}_{N})}^{2}}\gtrsim N^{\e_{\mathrm{ap}}}\right\}.
\end{align*}
Intuitively, \eqref{eq:hpreg} shows that for $\mathfrak{t}=\mathfrak{t}_{\mathrm{stop}}$ (see \eqref{eq:tst}), the regularity estimates in the stopping times \eqref{eq:tap}, \eqref{eq:tregX}, and \eqref{eq:tregT} are ``self-propagating". (I.e., the bounds we get from \eqref{eq:hpreg} are better than the estimates directly implied by working before $\mathfrak{t}_{\mathrm{stop}}$.) Let us make this paragraph precise. In particular, we assume \eqref{eq:hpreg} and complete the proof of Proposition \ref{prop:tst=1}; we prove \eqref{eq:hpreg} afterwards.
\begin{proof}[Proof of Proposition \ref{prop:tst=1}]
We start with the union bound estimate
\begin{align}
\mathbb{P}[\mathfrak{t}_{\mathrm{stop}}<1]&\leq\mathbb{P}[\{\mathfrak{t}_{\mathrm{stop}}=\mathfrak{t}_{\mathrm{ap}}<1\}]+\mathbb{P}[\{\mathfrak{t}_{\mathrm{stop}}=\mathfrak{t}_{\mathrm{reg}}^{\mathrm{space}}<1\}]+\mathbb{P}[\{\mathfrak{t}_{\mathrm{stop}}=\mathfrak{t}_{\mathrm{reg}}^{\mathrm{time}}<1\}].\label{eq:tst=1unionbound}
\end{align}
We show that each term on the RHS of \eqref{eq:tst=1unionbound} is $\mathrm{o}(1)$. Take the first term and suppose we are on the event in this probability. By Proposition \ref{prop:uq}, \eqref{eq:quniform}, and Lemma \ref{lemma:qlower}, we get $\mathbf{U}^{N}=\mathbf{Q}^{N}+\mathrm{o}(1)$ and {\small$N^{-\e_{\mathrm{ap}}/2}\lesssim\mathbf{Q}^{N}\lesssim N^{\e_{\mathrm{ap}}/2}$} with high probability. This gives {\small$N^{-\e_{\mathrm{ap}}}\lesssim\mathbf{U}^{N}\lesssim N^{\e_{\mathrm{ap}}}$} with high probability. Since $\mathbf{Z}^{N}=\mathbf{U}^{N}$ until $\mathfrak{t}_{\mathrm{stop}}$ (see after \eqref{eq:qheatIa}), we get {\small$N^{-\e_{\mathrm{ap}}/2}\lesssim\mathbf{Z}^{N}\lesssim N^{\e_{\mathrm{ap}}/2}$} until time $\mathfrak{t}_{\mathrm{stop}}$ with high probability. Since we conditioned on $\mathfrak{t}_{\mathrm{stop}}=\mathfrak{t}_{\mathrm{ap}}$, with high probability, we have {\small$N^{-\e_{\mathrm{ap}}/2}\lesssim\mathbf{Z}^{N}\lesssim N^{\e_{\mathrm{ap}}/2}$} until time $\mathfrak{t}_{\mathrm{ap}}$. This is a contradiction since $\mathfrak{t}_{\mathrm{ap}}<1$. (Indeed, choose $\mathfrak{t}_{N}>0$ small enough so that $\mathfrak{t}_{\mathrm{ap}}+\mathfrak{t}_{N}<1$ and there are no jumps after $\mathfrak{t}_{\mathrm{ap}}$ and up to $\mathfrak{t}_{\mathrm{ap}}+\mathfrak{t}_{N}$; we can do this because $\mathfrak{t}_{\mathrm{ap}}$ is a stopping time. In this case, in the time-interval $\mathfrak{t}_{\mathrm{ap}}+[0,\mathfrak{t}_{N}]$, the function $\mathbf{Z}^{N}$ evolves continuously in time. Thus, if $\mathfrak{t}_{N}>0$ is small enough, then we maintain {\small$N^{-\e_{\mathrm{ap}}/2}\lesssim\mathbf{Z}^{N}\lesssim N^{\e_{\mathrm{ap}}/2}$} until time $\mathfrak{t}_{\mathrm{ap}}+\mathfrak{t}_{N}<1$. This contradicts the definition of $\mathfrak{t}_{\mathrm{ap}}$ in \eqref{eq:tap}.) Thus, the first term on the RHS of \eqref{eq:tst=1unionbound} is $\mathrm{o}(1)$. Showing the rest of the RHS of \eqref{eq:tst=1unionbound} is $\mathrm{o}(1)$ follows by the same argument; we use \eqref{eq:hpreg} to show that $\mathbf{U}^{N}$ has better regularity than what \eqref{eq:tregX} and \eqref{eq:tregT} ask for, transfer the same regularity estimates from $\mathbf{U}^{N}$ to $\mathbf{Z}^{N}$ until time $\mathfrak{t}_{\mathrm{stop}}$, use the fact that {\small$\mathfrak{t}^{\mathrm{space}}_{\mathrm{reg}}$} and {\small$\mathfrak{t}^{\mathrm{time}}_{\mathrm{reg}}$} are stopping times to let continuity propagate the estimates past {\small$\mathfrak{t}^{\mathrm{space}}_{\mathrm{reg}}$} and {\small$\mathfrak{t}^{\mathrm{time}}_{\mathrm{reg}}$}, respectively, and get a contradiction with high probability. 
\end{proof}
\subsubsection{Proof of \eqref{eq:hpreg}}
We first borrow the following from the proof of Proposition 3.2 in \cite{DT} (for any $\zeta>0$):
\begin{align}
|\grad^{\mathbf{X}}_{\mathfrak{l}}\mathrm{e}^{\t\mathscr{T}_{N}}[\mathbf{Z}^{N}_{0,\cdot}]_{\x}|\lesssim_{\zeta} N^{-\frac12+\zeta}|\mathfrak{l}|^{\frac12-\zeta} \quad\mathrm{and}\quad |\grad^{\mathbf{T}}_{-\s}\mathrm{e}^{\t\mathscr{T}_{N}}[\mathbf{Z}^{N}_{0,\cdot}]_{\x}|&\lesssim_{\zeta}\s^{\frac{1}{4}-\zeta}+N^{-\frac12}\lesssim N^{\zeta}\{N^{-2}\vee\s\}^{\frac14}. \label{eq:hpregbound1}
\end{align}
(In the proof of Proposition 3.2 in \cite{DT}, there were expectations, but since $\mathbf{Z}^{N}$ has deterministic initial data by assumption.) We now claim that with very high probability and for any $\zeta>0$, we have H\"{o}lder-$1/2$ regularity in space and H\"{o}lder-$1/4$ regularity in time for stochastic integrals (similar to \eqref{eq:she} itself but with additional $N^{\zeta}$ factors). In particular, we claim that
\begin{align*}
\sup_{\t\in[0,\mathfrak{t}]}\sup_{\x\in\mathbb{T}_{N}}|\grad^{\mathbf{X}}_{\mathfrak{l}}{\textstyle\int_{0}^{\t}}\mathrm{e}^{[\t-\r]\mathscr{T}_{N}}[\mathbf{U}^{N}_{\r,\cdot}\d\xi^{N}_{\r,\cdot}]_{\x}|&\lesssim_{\zeta}N^{-\frac12+\zeta}|\mathfrak{l}|^{\frac12}\{1+\|\mathbf{U}^{N}\|_{\mathrm{L}^{\infty}([0,\mathfrak{t}]\times\mathbb{T}_{N})}^{2}\}\\
\sup_{\t\in[0,\mathfrak{t}]}\sup_{\x\in\mathbb{T}_{N}}|\grad^{\mathbf{T}}_{-\s}{\textstyle\int_{0}^{\t}}\mathrm{e}^{[\t-\r]\mathscr{T}_{N}}[\mathbf{U}^{N}_{\r,\cdot}\d\xi^{N}_{\r,\cdot}]_{\x}|&\lesssim_{\zeta}N^{\zeta}\{N^{-2}\vee\s\}^{\frac14}\{1+\|\mathbf{U}^{N}\|_{\mathrm{L}^{\infty}([0,\mathfrak{t}]\times\mathbb{T}_{N})}^{2}\},
\end{align*}
where $\mathfrak{t}$ is the stopping time taken in \eqref{eq:hpreg}. These follow by Lemmas 6.4 and 6.8 in \cite{Y23}; we do not reproduce the lengthy details here. (Intuitively, we perform a high-moment estimate; because of the extra $N^{\zeta}$-factor on the RHS of each estimate, high moments and Chebyshev give very high probability statements.) We can now use a union bound to deduce that with very high probability, the same is true simultaneously over all $\mathfrak{l}\in[-N,N]\cap\Z$ and $\s$ in a discretization of $[0,1/N]$ with mesh $\mathrm{O}(N^{100})$ with very high probability. We can now use short-time continuity (on very small scales $\mathrm{O}(N^{-100})$) as in the proof of Proposition \ref{prop:uq} to get the estimates with very high probability simultaneously over all $\s\in[0,1/N]$; see Lemma A.6 in \cite{Y23}. Combining this with \eqref{eq:hpregbound1}, with Lemmas \ref{lemma:bgpreg} and \ref{lemma:timeregbgpterms}, and with \eqref{eq:err1regX}-\eqref{eq:err2regT} now gives \eqref{eq:hpreg}, so the proof is complete (the bounds we cited just control space-time regularity of each term in \eqref{eq:uheatIa}-\eqref{eq:uheatIe}). \qed
%
%
%
\section{Homogenization tools}\label{section:tools}
\subsection{Entropy production}
The goal of this subsection is controlling how close to $\mathbb{P}^{\sigma}$-measures the process {\small$\eta^{}_{\t}$} is at local scales after space-time averaging. Before we state the key ingredient, we need some notation. Recall that $\mathbb{P}^{\sigma}$ is product measure on $\{\pm1\}^{\mathbb{T}_{N}}$ such that {\small$\E^{\sigma}\eta_{\x}=\sigma$} for all $\x$. For any function $\mathfrak{f}:\{\pm1\}^{\mathbb{T}_{N}}\to\R$, we define its \emph{Dirichlet form} with respect to $\mathbb{P}^{\sigma}$ as 
\begin{align}
\mathfrak{D}^{\sigma}[\mathfrak{f}]:=\sum_{\x\in\mathbb{T}_{N}}\E^{\sigma}|\mathfrak{f}[\eta^{\x,\x+1}]-\mathfrak{f}[\eta]|^{2}=\sum_{\x\in\mathbb{T}_{N}}\E^{\sigma}|\mathscr{L}_{\x}\mathfrak{f}|^{2}=-\tfrac12\sum_{\x\in\mathbb{T}_{N}}\E^{\sigma}\mathfrak{f}\mathscr{L}_{\x}\mathfrak{f}\label{eq:dform}
\end{align}
where $\eta^{\x,\x+1}$ again means swapping {\small$\eta_{\x},\eta_{\x+1}$}, and the last identity is a standard Dirichlet form-$\mathrm{L}^{2}$ calculation; it follows since $\E^{\sigma}$ is invariant under swapping {\small$\eta_{\x},\eta_{\x+1}$}, and $\mathscr{L}_{\x}\mathfrak{f}$ picks up a sign in this swapping (see Appendix 1.9 in \cite{KL} for this computation.) Finally, for any probability density $\mathfrak{p}_{0}$ with respect to $\mathbb{P}^{0}$ and any deterministic $\t\geq0$, we let {\small$\mathfrak{p}_{\t}^{}$} be the probability density with respect to $\mathbb{P}^{0}$ for the law of {\small$\eta^{}_{\t}$}, assuming the law of {\small$\eta^{}_{0}$} is {\small$\mathfrak{p}_{0}\d\mathbb{P}^{0}$}.
\begin{lemma}\label{lemma:eprod}
\fsp For any probability density $\mathfrak{p}_{0}$ with respect to $\mathbb{P}^{0}$ and any deterministic $\mathfrak{t}=\mathrm{O}(1)$, we have 
\begin{align}
{\textstyle\int_{0}^{\t}}\mathfrak{D}^{0}[\sqrt{\mathfrak{p}^{}_{\s}}]\d\s&\lesssim N^{-2}\E^{0}[\mathfrak{p}_{0}\log\mathfrak{p}_{0}]+N^{-1}\mathfrak{t}\lesssim N^{-1}.\label{eq:eprod}
\end{align}
\end{lemma}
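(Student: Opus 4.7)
The plan is to apply the standard entropy-production inequality. Let $H_\s := \E^0[\mathfrak{p}_\s \log \mathfrak{p}_\s]$; this is nonnegative and satisfies $H_0 \leq N \log 2$ trivially since $|\{\pm 1\}^{\mathbb{T}_N}| = 2^N$. Differentiating along the Kolmogorov forward equation $\partial_\s \mathfrak{p}_\s = \mathscr{L}_N^* \mathfrak{p}_\s$ and using $\E^0[\mathscr{L}_N^* \mathfrak{p}_\s] = 0$ yields
\begin{align*}
\partial_\s H_\s = \E^0[\mathfrak{p}_\s \mathscr{L}_N \log \mathfrak{p}_\s].
\end{align*}

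Split $\mathscr{L}_N = \mathscr{L}_{N,S} + \mathscr{L}_{N,A}$. The symmetric SSEP part is reversible with respect to $\mathbb{P}^0$, so the elementary inequality $(a - b)(\log a - \log b) \geq 4(\sqrt{a} - \sqrt{b})^2$ applied bond-by-bond gives
\begin{align*}
\E^0[\mathfrak{p}_\s \mathscr{L}_{N,S} \log \mathfrak{p}_\s] \leq -N^2 \mathfrak{D}^0[\sqrt{\mathfrak{p}_\s}].
\end{align*}

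For the asymmetric part, one applies the averaging trick $\eta \leftrightarrow \eta^{\x,\x+1}$ (using swap-invariance of $\mathbb{P}^0$) bond-by-bond to rewrite each bond's contribution as a symmetric piece involving $c_\x + c_\x'$ (which reinforces the SSEP dissipation via the same inequality) plus an antisymmetric cross-term with coefficient $c_\x - c_\x'$, where $c_\x$ denotes the bond rate and $c_\x' = c_\x(\eta^{\x,\x+1})$. A direct computation gives $c_\x - c_\x' = \tfrac{N^{3/2}}{2}(\eta_{\x+1} - \eta_\x) + O(N)$, dominantly of size $N^{3/2}$. The cross-term is handled by Young's inequality $2AB \leq \varepsilon A^2 + \varepsilon^{-1} B^2$ with $\varepsilon$ of order $N^2$: this absorbs a controlled fraction of the SSEP dissipation and leaves a residual of order $N$ per unit time, after exploiting the torus telescoping $\sum_\x(\eta_{\x+1} - \eta_\x) = 0$ (which controls the leading $N^{3/2}$ piece of $c_\x - c_\x'$) and the uniform boundedness of $\mathfrak{d}$ (which controls the subleading $N$ piece coming from the $\mathfrak{d}$-drift). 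The conclusion is
\begin{align*}
\partial_\s H_\s \leq -c N^2 \mathfrak{D}^0[\sqrt{\mathfrak{p}_\s}] + CN
\end{align*}
for positive constants $c, C$.

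Integrating over $\s \in [0, \mathfrak{t}]$ and using $H_\mathfrak{t} \geq 0$ yields $\int_0^\mathfrak{t} \mathfrak{D}^0[\sqrt{\mathfrak{p}_\s}] \d\s \lesssim N^{-2} H_0 + N^{-1} \mathfrak{t}$, which is \eqref{eq:eprod}; the final $\lesssim N^{-1}$ uses $H_0 \lesssim N$ and $\mathfrak{t} = O(1)$. The main technical obstacle is the Young's step: a naive application yields a residual of order $N^2$ per unit time (insufficient for the claim), and the improvement to $O(N)$ rests on the telescoping cancellation above, which reflects the antisymmetric structure of the weakly-ASEP rates $\tfrac{N^{3/2}}{2}(r_\x - l_\x)$ under the bond swap.
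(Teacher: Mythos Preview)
Your overall strategy---differentiate the relative entropy, bound by minus the Dirichlet form plus an $O(N)$ source, integrate---is correct, and the treatment of $\mathscr{L}_{N,\mathrm{S}}$ and the order-$N$ $\mathfrak{d}$-piece is fine. The gap is in your handling of the $N^{3/2}$ free-ASEP asymmetry. After your swap-averaging, the cross-term has the form
\[
\sum_{\x}\E^{0}\Big[(c_\x-c_\x')\,(\mathfrak{p}+\mathfrak{p}')(\log\mathfrak{p}-\log\mathfrak{p}')\Big],
\]
and the leading part of $c_\x-c_\x'$ is indeed $\tfrac{N^{3/2}}{2}(\eta_{\x+1}-\eta_\x)$. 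But the remaining factor $(\mathfrak{p}+\mathfrak{p}')(\log\mathfrak{p}-\log\mathfrak{p}')$ depends on $\x$ through $\mathfrak{p}'=\mathfrak{p}[\eta^{\x,\x+1}]$, so the identity $\sum_\x(\eta_{\x+1}-\eta_\x)=0$ does \emph{not} make this sum vanish. Young's inequality alone then leaves a residual of order $N^2$ per unit time (as you yourself note), and the telescoping does not rescue it.

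The paper's route avoids this by applying the inequality $a(\log b-\log a)\leq 2\sqrt{a}(\sqrt{b}-\sqrt{a})$ to the \emph{full} generator first, using positivity of the total bond rates $\tfrac12 N^2\pm\tfrac12 N^{3/2}\pm\tfrac12 N\mathfrak{d}$. This converts $\E^0[\mathfrak{p}_\t\mathscr{L}_N\log\mathfrak{p}_\t]$ into $2\,\E^0\big[\sqrt{\mathfrak{p}_\t}\,\mathscr{L}_N\sqrt{\mathfrak{p}_\t}\big]$. Now split $\mathscr{L}_N$ into the free-ASEP part (symmetric $N^2$ plus asymmetric $N^{3/2}$, no $\mathfrak{d}$) and the $\mathfrak{d}$-part. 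Since $\mathbb{P}^0$ is invariant for free ASEP, the $N^{3/2}$ asymmetric generator is anti-self-adjoint in $L^2(\mathbb{P}^0)$ and contributes exactly zero to $\E^0[\sqrt{\mathfrak{p}}\,\mathscr{L}_N^{\mathrm{free}}\sqrt{\mathfrak{p}}]$; the symmetric part gives $-\tfrac12 N^2\mathfrak{D}^0[\sqrt{\mathfrak{p}}]$ with no residual. Only the order-$N$ $\mathfrak{d}$-piece then requires Schwarz, and that produces the $O(N)$ source directly. The order of operations---inequality first, then split---is what makes the $N^{3/2}$ contribution disappear cleanly.
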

\begin{proof}
It is standard to check that $\E^{0}[\mathfrak{p}_{0}\log\mathfrak{p}_{0}]\lesssim N$, so the second estimate follows immediately. We prove the first estimate. By the Ito formula, we have 
\begin{align*}
\tfrac{\d}{\d\t}\E^{0}[\mathfrak{p}^{}_{\t}\log\mathfrak{p}^{}_{\t}]&=\E^{0}[\mathfrak{p}^{}_{\t}\mathscr{L}_{N}\log\mathfrak{p}^{}_{\t}]+\E^{0}[\mathfrak{p}^{}_{\t}\partial_{\t}\log\mathfrak{p}^{}_{\t}],
\end{align*}
where $\mathscr{L}^{}_{N}$ is the generator of {\small$\t\mapsto\eta_{\t}$}. The second term on the far RHS of the above display equals $\E^{0}[\partial_{\t}\mathfrak{p}^{}_{\t}]=\frac{\d}{\d\t}\E^{0}\mathfrak{p}^{}_{\t}=0$, since $\mathfrak{p}^{}_{\t}$ is a probability density with respect to $\mathbb{P}^{0}$. Thus, the previous display equals
\begin{align*}
&\sum_{\x\in\mathbb{T}_{N}}\E^{0}\left[\mathbf{1}_{\substack{\eta_{\x}=-1\\\eta_{\x+1}=1}}\left(\tfrac12N^{2}+\tfrac12N^{\frac32}+\tfrac12N\mathfrak{d}[\tau_{\x}\eta]\right)\mathfrak{p}^{}_{\t}[\eta]\left\{\log\mathfrak{p}^{}_{\t}[\eta^{\x,\x+1}]-\log\mathfrak{p}^{}_{\t}[\eta]\right\}\right]\\
&+\sum_{\x\in\mathbb{T}_{N}}\E^{0}\left[\mathbf{1}_{\substack{\eta_{\x}=1\\\eta_{\x+1}=-1}}\left(\tfrac12N^{2}-\tfrac12N^{\frac32}-\tfrac12N\mathfrak{d}[\tau_{\x}\eta]\right)\mathfrak{p}^{}_{\t}[\eta]\left\{\log\mathfrak{p}^{}_{\t}[\eta^{\x,\x+1}]-\log\mathfrak{p}^{}_{\t}[\eta]\right\}\right].
\end{align*}
We now use the inequality $a(\log b-\log a)\leq 2a^{1/2}(b^{1/2}-a^{1/2})$ (see Appendix 1.9 in \cite{KL}) and positivity of $N^{2}\pm N^{3/2}\pm N\mathfrak{d}[\tau_{\x}\eta]$ for $N$ large enough. This turns the previous display into
\begin{align*}
\tfrac{\d}{\d\t}\E^{0}[\mathfrak{p}_{\t}\log\mathfrak{p}_{\t}]&\leq2\sum_{\x\in\mathbb{T}_{N}}\E^{0}\left[\left(\tfrac12N^{2}+\tfrac12N^{\frac32}\mathbf{1}_{\substack{\eta_{\x}=-1\\\eta_{\x+1}=1}}-\tfrac12N^{\frac32}\mathbf{1}_{\substack{\eta_{\x}=1\\\eta_{\x+1}=-1}}\right)\mathfrak{p}^{}_{\t}[\eta]^{\frac12}\left(\mathfrak{p}^{}_{\t}[\eta^{\x,\x+1}]^{\frac12}-\mathfrak{p}^{}_{\t}[\eta]^{\frac12}\right)\right]\\
&+2\sum_{\x\in\mathbb{T}_{N}}\E^{0}\left[\left(\mathbf{1}_{\substack{\eta_{\x}=-1\\\eta_{\x+1}=1}}\tfrac12N\mathfrak{d}[\tau_{\x}\eta]-\mathbf{1}_{\substack{\eta_{\x}=-1\\\eta_{\x+1}=1}}\tfrac12N^{}\mathfrak{d}[\tau_{\x}\eta]\right)\mathfrak{p}^{}_{\t}[\eta]^{\frac12}\left(\mathfrak{p}^{}_{\t}[\eta^{\x,\x+1}]^{\frac12}-\mathfrak{p}^{}_{\t}[\eta]^{\frac12}\right)\right].
\end{align*}
The first line of the previous display is $\leq-\kappa N^{2}\mathfrak{D}^{0}[\sqrt{\mathfrak{p}^{}_{\t}}]$ for some $\kappa>0$ fixed; this is because the generator $\mathscr{L}_{N}$ without $\mathfrak{d}$ has $\mathbb{P}^{0}$ as an invariant measure (see Appendix 1.9 in \cite{KL} for this standard computation). By Schwarz, the second line is 
\begin{align*}
&\lesssim\delta^{-1}\sum_{\x\in\mathbb{T}_{N}}\E^{0}\mathfrak{p}^{}_{\t}+\delta\sum_{\x\in\mathbb{T}_{N}}N^{2}\E^{0}\left[\left(\mathfrak{p}^{}_{\t}[\eta^{\x,\x+1}]^{\frac12}-\mathfrak{p}^{}_{\t}[\eta]^{\frac12}\right)^{2}\right]\lesssim \delta^{-1}N+\delta N^{2}\mathfrak{D}^{0}[\sqrt{\mathfrak{p}^{}_{\t}}] \quad \text{for any} \ \delta>0.
\end{align*}
We deduce $\frac{\d}{\d\t}\E^{0}[\mathfrak{p}^{}_{\t}\log\mathfrak{p}^{}_{\t}]\leq -K_{1}N^{2}\mathfrak{D}^{0}[\sqrt{\mathfrak{p}^{}_{\t}}]+K_{2}\delta^{-1}N+K_{2}\delta N^{2}\mathfrak{D}^{0}[\sqrt{\mathfrak{p}^{}_{\t}}]$; here $K_{1},K_{2}>0$ are fixed. If $\delta$ is small, then for some $K>0$ fixed, we have
\begin{align*}
\E^{0}[\mathfrak{p}^{}_{\mathfrak{t}}\log\mathfrak{p}^{}_{\mathfrak{t}}]-\E^{0}[\mathfrak{p}^{}_{0}\log\mathfrak{p}^{}_{0}]\leq -KN^{2}{\textstyle\int_{0}^{\mathfrak{t}}}\mathfrak{D}^{0}[\sqrt{\mathfrak{p}^{}_{\mathfrak{s}}}]\d\s+\mathrm{O}(N).
\end{align*}
\eqref{eq:eprod} follows by rearranging and using the standard bound $\E^{0}[\mathfrak{p}_{\mathfrak{t}}\log\mathfrak{p}_{\mathfrak{t}}]\geq0$ (see Appendix 1.8 of \cite{KL}).
\end{proof}
We now show that local statistics in the bulk are close to $\mathbb{P}^{\sigma}$ measures after large-scale space-time averaging; it will be a consequence of Lemma \ref{lemma:eprod} and standard one-block ideas in \cite{GPV}. Before we state these results, for convenience, we define a \emph{sub-interval} in $\mathbb{T}_{N}$ to be a subset of the form $\{\x,\x+1,\ldots,\x+\mathfrak{l}\}$ for some $\mathfrak{l}\geq0$. Also, we must define the following probability measures (which are generally known as ``canonical ensembles").
\begin{definition}\label{definition:can}
\fsp Fix any subset $\mathbb{I}\subseteq\mathbb{T}_{N}$ and $\sigma\in[-1,1]$. We define $\mathbb{P}^{\sigma,\mathbb{I}}$ to be the uniform measure on 
\begin{align}
\left\{\eta\in\{\pm1\}^{\mathbb{I}}: |\mathbb{I}|^{-1}\sum_{\x\in\mathbb{I}}\eta_{\x}=\sigma\right\}\subseteq\{\pm1\}^{\mathbb{I}}.\label{eq:canonicalhyperplane}
\end{align}
(We call this the \emph{canonical ensemble} on $\mathbb{I}$ with density $\sigma$.) We let $\E^{\sigma,\mathbb{I}}$ be expectation with respect to $\mathbb{P}^{\sigma,\mathbb{I}}$.
\end{definition}
\begin{lemma}\label{lemma:localreduc}
\fsp Suppose $\mathfrak{f}:\{\pm1\}^{\mathbb{T}_{N}}\to\R$ is so that for any $\eta$, $\mathfrak{f}[\eta]$ depends only on $\eta_{\y}$ for $\y$ in some sub-interval $\mathbb{I}\subseteq\mathbb{T}_{N}$. For any $\kappa>0$ and $0<\mathfrak{t}=\mathrm{O}(1)$, we have 
\begin{align}
\int_{0}^{\mathfrak{t}}\tfrac{1}{N}\sum_{\y\in\mathbb{T}_{N}}\E[|\mathfrak{f}[\tau_{\y}\eta^{}_{\s}]|]\d\s\lesssim \kappa^{-1}N^{-2}|\mathbb{I}|^{3}+\kappa^{-1}\sup_{\sigma\in[-1,1]}\log\E^{\sigma,\mathbb{I}}\mathrm{e}^{\kappa|\mathfrak{f}[\eta]|}.\label{eq:localreduc}
\end{align}
In particular, if $\kappa^{-1}\lesssim\|\mathfrak{f}\|_{\infty}\lesssim\kappa^{-1}$, where $\|\|_{\infty}$ means sup-norm on $\{\pm1\}^{\mathbb{T}_{N}}$, then 
\begin{align}
\int_{0}^{\mathfrak{t}}\tfrac{1}{N}\sum_{\y\in\mathbb{T}_{N}}\E[|\mathfrak{f}[\tau_{\y}\eta^{}_{\s}]|]\d\s\lesssim  \kappa^{-1}N^{-2}|\mathbb{I}|^{3}+\sup_{\sigma\in[-1,1]}\E^{\sigma,\mathbb{I}}|\mathfrak{f}[\eta]|.\label{eq:localreducII}
\end{align}
\end{lemma}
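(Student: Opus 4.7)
The plan is to reduce the space-time averaged statistic on the LHS of \eqref{eq:localreduc} to a single static expectation against a translation-invariant density, and then to control that expectation via an entropy inequality whose entropy term is supplied by a log-Sobolev inequality for SSEP and by Lemma \ref{lemma:eprod}. To set this up, I would let $\mathfrak{p}_{\s}$ denote the density of the law of $\eta_{\s}$ with respect to $\mathbb{P}^{0}$, and set $\bar{\mathfrak{p}}[\eta] := (N\mathfrak{t})^{-1}\sum_{\y\in\mathbb{T}_{N}}\int_{0}^{\mathfrak{t}}\mathfrak{p}_{\s}[\tau_{-\y}\eta]\d\s$. This $\bar{\mathfrak{p}}$ is a translation-invariant probability density with respect to $\mathbb{P}^{0}$, and by translation-invariance of $\mathbb{P}^{0}$ the LHS of \eqref{eq:localreduc} equals $\mathfrak{t}\E^{\bar{\mathfrak{p}}}[|\mathfrak{f}|]$. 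Since $\mathfrak{f}$ depends only on $\eta|_{\mathbb{I}}$, this further equals $\mathfrak{t}\E^{0,\mathbb{I}}[|\mathfrak{f}|\bar{\mathfrak{p}}_{\mathbb{I}}]$, where $\bar{\mathfrak{p}}_{\mathbb{I}}$ is the marginal of $\bar{\mathfrak{p}}$ on $\{\pm1\}^{\mathbb{I}}$.

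Next, I would apply the standard entropy inequality with respect to $\mathbb{P}^{0,\mathbb{I}}$, giving $\E^{0,\mathbb{I}}[|\mathfrak{f}|\bar{\mathfrak{p}}_{\mathbb{I}}]\leq\kappa^{-1}\mathrm{Ent}^{0,\mathbb{I}}(\bar{\mathfrak{p}}_{\mathbb{I}})+\kappa^{-1}\log\E^{0,\mathbb{I}}e^{\kappa|\mathfrak{f}|}$. The entropy on the RHS will be controlled by Yau's log-Sobolev inequality for the speed-$1$ symmetric simple exclusion generator on the interval $\mathbb{I}$, whose constant is of order $|\mathbb{I}|^{2}$, yielding $\mathrm{Ent}^{0,\mathbb{I}}(\bar{\mathfrak{p}}_{\mathbb{I}})\lesssim|\mathbb{I}|^{2}\mathfrak{D}^{0,\mathbb{I}}(\sqrt{\bar{\mathfrak{p}}_{\mathbb{I}}})$. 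The standard convex-contraction of such exclusion-type Dirichlet forms under marginalization then gives $\mathfrak{D}^{0,\mathbb{I}}(\sqrt{\bar{\mathfrak{p}}_{\mathbb{I}}})\leq\sum_{\x,\x+1\in\mathbb{I}}\E^{0}|\mathscr{L}_{\x}\sqrt{\bar{\mathfrak{p}}}|^{2}$.

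The Dirichlet form of $\sqrt{\bar{\mathfrak{p}}}$ can now be bounded using translation-invariance and Lemma \ref{lemma:eprod}. Since $\bar{\mathfrak{p}}$ is translation-invariant on $\mathbb{T}_{N}$, each term $\E^{0}|\mathscr{L}_{\x}\sqrt{\bar{\mathfrak{p}}}|^{2}$ is independent of $\x$, so the partial sum over $\x,\x+1\in\mathbb{I}$ is exactly $(|\mathbb{I}|/N)\mathfrak{D}^{0}(\sqrt{\bar{\mathfrak{p}}})$. Convexity of the map $\mathfrak{p}\mapsto\mathfrak{D}^{0}(\sqrt{\mathfrak{p}})$ together with translation-invariance of $\mathfrak{D}^{0}$ gives $\mathfrak{D}^{0}(\sqrt{\bar{\mathfrak{p}}})\leq\mathfrak{t}^{-1}\int_{0}^{\mathfrak{t}}\mathfrak{D}^{0}(\sqrt{\mathfrak{p}_{\s}})\d\s\lesssim\mathfrak{t}^{-1}N^{-1}$ by Lemma \ref{lemma:eprod}. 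Chaining these bounds produces $\mathrm{Ent}^{0,\mathbb{I}}(\bar{\mathfrak{p}}_{\mathbb{I}})\lesssim\mathfrak{t}^{-1}|\mathbb{I}|^{3}N^{-2}$, and multiplying by $\mathfrak{t}$ and $\kappa^{-1}$ (using $\mathfrak{t}=\mathrm{O}(1)$) gives the $\kappa^{-1}N^{-2}|\mathbb{I}|^{3}$ term of \eqref{eq:localreduc}. Conditioning under $\mathbb{P}^{0,\mathbb{I}}$ on $\sigma_{\mathbb{I}}:=|\mathbb{I}|^{-1}\sum_{\x\in\mathbb{I}}\eta_{\x}$ and using that the conditional law is precisely $\mathbb{P}^{\sigma_{\mathbb{I}},\mathbb{I}}$ then upgrades $\log\E^{0,\mathbb{I}}e^{\kappa|\mathfrak{f}|}$ to $\sup_{\sigma}\log\E^{\sigma,\mathbb{I}}e^{\kappa|\mathfrak{f}|}$, completing \eqref{eq:localreduc}. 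For \eqref{eq:localreducII}, under $\kappa\|\mathfrak{f}\|_{\infty}=\mathrm{O}(1)$ I would Taylor expand $e^{\kappa|\mathfrak{f}|}\leq 1+\kappa|\mathfrak{f}|+C\kappa^{2}|\mathfrak{f}|^{2}$, take logarithm against $\mathbb{P}^{\sigma,\mathbb{I}}$, and use $|\mathfrak{f}|^{2}\leq\|\mathfrak{f}\|_{\infty}|\mathfrak{f}|$ to absorb the quadratic term after dividing by $\kappa$.

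The main obstacle is marrying the two inputs cleanly: Yau's order-$|\mathbb{I}|^{2}$ log-Sobolev constant for SSEP on an interval of length $|\mathbb{I}|$, and the contraction-under-marginalization property of the swap-type Dirichlet form. Once these standard facts are in hand, the critical $|\mathbb{I}|/N$ improvement over a naive application of Lemma \ref{lemma:eprod} is automatic: it comes solely from the translation invariance of $\bar{\mathfrak{p}}$, which distributes the global entropy-production budget $N^{-1}$ evenly over all $N$ bonds of $\mathbb{T}_{N}$, so that only the $|\mathbb{I}|$ bonds in $\mathbb{I}$ contribute.
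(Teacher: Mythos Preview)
Your overall strategy --- space-time averaging to a translation-invariant density $\bar{\mathfrak{p}}$, marginalizing to the block, entropy inequality plus log-Sobolev, then feeding in Lemma~\ref{lemma:eprod} --- is the same one-block scheme the paper uses, and your packaging via a single $\bar{\mathfrak{p}}$ is arguably cleaner than the paper's explicit bookkeeping of shifted blocks $\tau_{\y}\mathbb{I}$.

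There is, however, one genuine gap: the step $\mathrm{Ent}^{0,\mathbb{I}}(\bar{\mathfrak{p}}_{\mathbb{I}})\lesssim|\mathbb{I}|^{2}\mathfrak{D}^{0,\mathbb{I}}(\sqrt{\bar{\mathfrak{p}}_{\mathbb{I}}})$ is false. Yau's log-Sobolev inequality is for the \emph{canonical} measures $\mathbb{P}^{\sigma,\mathbb{I}}$, not for the grand-canonical product measure $\mathbb{P}^{0,\mathbb{I}}$. The swap Dirichlet form annihilates every function of the block density $\sigma_{\mathbb{I}}=|\mathbb{I}|^{-1}\sum_{\x\in\mathbb{I}}\eta_{\x}$ (swaps conserve particle number), so any entropy in $\bar{\mathfrak{p}}_{\mathbb{I}}$ coming from fluctuations of $\sigma_{\mathbb{I}}$ --- and there is no reason this should vanish --- cannot be controlled by $\mathfrak{D}^{0,\mathbb{I}}$.

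The fix, which is exactly what the paper does, is to condition on $\sigma_{\mathbb{I}}$ \emph{before} the entropy inequality rather than only afterward for the exponential-moment term: write $\E^{0,\mathbb{I}}[|\mathfrak{f}|\bar{\mathfrak{p}}_{\mathbb{I}}]=\sum_{\sigma}p_{\sigma}\E^{\sigma,\mathbb{I}}[|\mathfrak{f}|\bar{\mathfrak{p}}_{\mathbb{I}}^{\sigma}]$ with the posterior weights $p_{\sigma}$, apply the entropy inequality and then Yau's LSI on each hyperplane, and observe that because $\mathscr{L}_{\x}$ preserves $\sigma_{\mathbb{I}}$ one has the exact identity $\sum_{\sigma}p_{\sigma}\mathfrak{D}^{\sigma,\mathbb{I}}(\sqrt{\bar{\mathfrak{p}}_{\mathbb{I}}^{\sigma}})=\sum_{\x,\x+1\in\mathbb{I}}\E^{0}|\mathscr{L}_{\x}\sqrt{\bar{\mathfrak{p}}_{\mathbb{I}}}|^{2}$. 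From that point your convexity-under-marginalization and translation-invariance steps go through unchanged and yield \eqref{eq:localreduc}; your derivation of \eqref{eq:localreducII} from \eqref{eq:localreduc} is fine.
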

\begin{proof}
\eqref{eq:localreducII} follows by \eqref{eq:localreduc} since if $\kappa|\mathfrak{f}[\eta]|\lesssim1$, then {\small$\log\E^{\sigma,\mathbb{I}}\exp\{\kappa|\mathfrak{f}[\eta]|\}\leq\log[1+\mathrm{O}(\kappa\E|\mathfrak{f}[\eta]|)]\lesssim\kappa\E|\mathfrak{f}[\eta]|$}. The proof of \eqref{eq:localreduc} will follow from the classical one-block scheme \cite{GPV} plus a log-Sobolev inequality for spin-swaps \cite{Yau}. Recall that $\mathfrak{p}_{\s}$ is the probability density for the law of $\eta_{\s}$ with respect to $\mathbb{P}^{0}$. We have 
\begin{align*}
\int_{0}^{1}\tfrac{1}{N}\sum_{\y\in\mathbb{T}_{N}}\E[|\mathfrak{f}[\tau_{\y}\eta^{}_{\s}]|]\d\s&=\int_{0}^{1}\tfrac{1}{N}\sum_{\y\in\mathbb{T}_{N}}\E^{0}[\mathfrak{p}^{}_{\s}|\mathfrak{f}[\tau_{\y}\eta]|]\d\s=\int_{0}^{1}\tfrac{1}{N}\sum_{\y\in\mathbb{T}_{N}}\E^{0}[\Pi^{\tau_{\y}\mathbb{I}}\mathfrak{p}^{}_{\s}|\mathfrak{f}[\tau_{\y}\eta]|]\d\s,
\end{align*}
where {\small$\Pi^{\tau_{\y}\mathbb{I}}$} is projection onto the marginal distribution of $\eta_{\x}$ for $\x+\y\in\mathbb{I}$. Indeed, $\mathfrak{f}[\tau_{\y}\eta]$ depends only on these spins. Now, let {\small$\Pi^{\sigma,\mathbb{I}}$} be projection {\small$\Pi^{\mathbb{I}}$} composed with conditioning on \eqref{eq:canonicalhyperplane}. By the law of total probability, 
\begin{align*}
\E^{0}[\Pi^{\tau_{\y}\mathbb{I}}\mathfrak{p}^{}_{\s}|\mathfrak{f}[\tau_{\y}\eta]|]&=\sum_{\sigma\in[-1,1]}p_{\sigma}\E^{\sigma,\tau_{\y}\mathbb{I}}[(\Pi^{\sigma,\tau_{\y}\mathbb{I}}\mathfrak{p}^{}_{\s})|\mathfrak{f}[\tau_{\y}\eta]|],
\end{align*}
where the sum is only over some finite set of possible values $\sigma$ and $p_{\sigma}$ sum over $\sigma$ to $1$. By the entropy inequality (see Lemma \ref{lemma:entropyinequality}), the RHS of the previous display is
\begin{align*}
\lesssim\sum_{\sigma\in[-1,1]}p_{\sigma}\kappa^{-1}\E^{\sigma,\tau_{\y}\mathbb{I}}[(\Pi^{\sigma,\tau_{\y}\mathbb{I}}\mathfrak{p}^{}_{\s})\log(\Pi^{\sigma,\tau_{\y}\mathbb{I}}\mathfrak{p}^{}_{\s})]+\sum_{\sigma\in[-1,1]}p_{\sigma}\kappa^{-1}\log\E^{\sigma,\tau_{\y}\mathbb{I}}\mathrm{e}^{\kappa|\mathfrak{f}[\tau_{\y}\eta]|]}, \quad \text{for any} \ \kappa>0.
\end{align*}
We can take supremum over $\sigma$ of the second term in the previous display instead of averaging against {\small$p_{\sigma}$}, giving the last term in \eqref{eq:localreduc} after we note that {\small$\E^{\sigma,\tau_{\y}\mathbb{I}}\mathrm{e}^{\kappa|\mathfrak{f}[\tau_{\y}\eta]|]}$} is independent of the shift $\y$ (by translation). For the first term, we apply the log-Sobolev inequality for spin-swaps in \cite{Yau} (the LSI constant is $\lesssim|\mathbb{I}|^{2}$); this gives
\begin{align*}
\E^{\sigma,\tau_{\y}\mathbb{I}}[(\Pi^{\sigma,\tau_{\y}\mathbb{I}}\mathfrak{p}^{}_{\s})\log(\Pi^{\sigma,\tau_{\y}\mathbb{I}}\mathfrak{p}^{}_{\s})]&\lesssim|\mathbb{I}|^{2}\sum_{\x,\x+1\in\tau_{\y}\mathbb{I}}\E^{\sigma,\tau_{\y}\mathbb{I}}|\mathscr{L}_{\x}\sqrt{\Pi^{\sigma,\tau_{\y}\mathbb{I}}\mathfrak{p}^{}_{\s}}|^{2}.
\end{align*}
When we multiply by $p_{\sigma}$ and sum over all $\sigma$, then the law of total expectation gives
\begin{align*}
\kappa^{-1}\sum_{\sigma\in[-1,1]}p_{\sigma}|\mathbb{I}|^{2}\sum_{\x,\x+1\in\tau_{\y}\mathbb{I}}\E^{\sigma,\tau_{\y}\mathbb{I}}|\mathscr{L}_{\x}\sqrt{\Pi^{\sigma,\tau_{\y}\mathbb{I}}\mathfrak{p}^{}_{\s}}|^{2}&\lesssim\kappa^{-1} |\mathbb{I}|^{2}\sum_{\x,\x+1\in\tau_{\y}\mathbb{I}}\E^{0}|\mathscr{L}_{\x}\sqrt{\Pi^{\tau_{\y}\mathbb{I}}\mathfrak{p}^{}_{\s}}|^{2}.
\end{align*}
By convexity of the functional on the RHS of the previous display in the parameter {\small$\Pi^{\tau_{\y}\mathbb{I}}\mathfrak{p}^{}_{\s}$} (see Appendix 1.10 in \cite{KL}), we can drop the averaging operator $\Pi^{\tau_{\y}\mathbb{I}}$. In particular, we get
\begin{align*}
\tfrac{1}{N}\sum_{\y\in\mathbb{T}_{N}}\kappa^{-1}|\mathbb{I}|^{2}\sum_{\x,\x+1\in\tau_{\y}\mathbb{I}}\E^{0}|\mathscr{L}_{\x}\sqrt{\Pi^{\tau_{\y}\mathbb{I}}\mathfrak{p}^{}_{\s}}|^{2}&\leq\tfrac{1}{N}\sum_{\y\in\mathbb{T}_{N}}\kappa^{-1}|\mathbb{I}|^{2}\sum_{\x,\x+1\in\tau_{\y}\mathbb{I}}\E^{0}|\mathscr{L}_{\x}\sqrt{\mathfrak{p}^{}_{\s}}|^{2}\lesssim \kappa^{-1}\tfrac{1}{N}|\mathbb{I}|^{3}\mathfrak{D}^{0}[\sqrt{\mathfrak{p}^{}_{\s}}].
\end{align*}
(The additional $|\mathbb{I}|$ factor comes from a redundancy in the number of times we sum over any given bond $\x,\x+1\in\mathbb{T}_{N}$.) If we integrate over $\s\in[0,1]$ and use Lemma \ref{lemma:eprod}, then the RHS of the above display is $\lesssim$ the first term on the RHS of \eqref{eq:localreduc}. This completes the proof.
\end{proof}
\subsection{Kipnis-Varadhan-type estimate for bulk statistics}\label{subsection:kv}
We now prove an estimate which shows that space-time averages of local \emph{fluctuating} statistics (i.e. mean-zero with respect to the canonical measures from Definition \ref{definition:can}) exhibit square-root cancellation. Because we look at local statistics, we will consider an exclusion process on a sub-interval in $\mathbb{T}_{N}$ with its own periodic boundary conditions.

In particular, take a sub-interval $\mathbb{L}$ of length $2\ell+1$, i.e. of the form $\y+\{-\ell,\ldots,\ell\}$. (We will always take $\ell=N^{\upsilon}$ for some $0<\upsilon\leq1/3+\beta$ for $\beta>0$ small but fixed.) Throughout this subsection, addition between points, if at least one belongs to $\mathbb{L}$, will be taken with respect to periodic boundary conditions on $\mathbb{L}$, i.e. identifying $\y+\ell+1=\y-\ell$. 

We introduce the aforementioned exclusion process  but now on $\mathbb{L}$ with periodic boundary conditions. Let {\small$\t\mapsto\eta^{\mathbb{L}}_{\t}$} be the process valued in $\{\pm1\}^{\mathbb{L}}$ with generator {\small$\mathscr{L}^{\mathbb{L}}_{N}:=\mathscr{L}^{\mathbb{L}}_{N,\mathrm{S}}+\mathscr{L}^{\mathbb{L}}_{N,\mathrm{A}}$}, where
\begin{align}
\mathscr{L}_{N,\mathrm{S}}^{\mathbb{L}}&:=\tfrac12N^{2}\sum_{\x\in\mathbb{L}}\mathscr{L}_{\x}\\
\mathscr{L}_{N,\mathrm{A}}^{\mathbb{L}}&:=\tfrac12N^{\frac32}\sum_{\x\in\mathbb{L}}\left(\mathbf{1}_{\substack{\eta_{\x}=-1\\\eta_{\x+1}=1}}-\mathbf{1}_{\substack{\eta_{\x}=1\\\eta_{\x+1}=-1}}\right)\left(1+N^{-\frac12}\mathfrak{d}[\tau_{\x}\eta]\right)\mathscr{L}_{\x}.
\end{align}
We now give the main estimate. Recall $\mathbb{P}^{\sigma,\mathbb{I}}$ and $\E^{\sigma,\mathbb{I}}$ from Definition \ref{definition:can}.
\begin{prop}\label{prop:kv}
\fsp Suppose {\small$\{\mathfrak{f}_{i}\}_{i=1}^{n}$} is a collection of functions $\{\pm1\}^{\mathbb{L}}\to\R$ satisfying the following properties.
\begin{enumerate}
\item For any $\mathfrak{f}_{i}$, we have $\E^{\sigma,\mathbb{L}}\mathfrak{f}_{i}=0$ for all $\sigma\in[-1,1]$ and $\sup_{i}|\mathfrak{f}_{i}[\eta]|\lesssim1$ for all $\eta\in\{\pm1\}^{\mathbb{L}}$.
\item There exist pairwise disjoint sub-intervals {\small$\{\mathbb{I}_{i}\}_{i=1}^{n}$} in $\mathbb{L}$ such that $\mathfrak{f}_{i}[\eta]$ depends only on $\eta_{\z}$ for $\z\in\mathbb{I}_{i}$.
\end{enumerate}
Now, fix $0<\tau\lesssim N^{-4/3}$ and assume that {\small$|\mathbb{L}|\lesssim N^{1/3+\e_{\mathrm{reg}}}$}. Assume that the distribution of {\small$\eta^{\mathbb{L}}_{\s}$} at time $\s=0$ is given by $\mathbb{P}^{\sigma,\mathbb{L}}$ for some $\sigma\in[-1,1]$. For any $\delta>0$, we have
\begin{align}
\E\left(\left\{\tau^{-1}\int_{0}^{\tau}\tfrac{1}{n}\sum_{i=1}^{n}\mathfrak{f}_{i}[\eta^{\mathbb{L}}_{\s}]\d\s\right\}^{2}\right)\lesssim_{\delta} N^{\delta}N^{-2}\tau^{-1}n^{-1}\sup_{i=1,\ldots,n}|\mathbb{I}_{i}|^{2}.\label{eq:kv}
\end{align}
\end{prop}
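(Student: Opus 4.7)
The plan is to establish a Kipnis-Varadhan-type variational inequality adapted to the non-reversible generator $\mathscr{L}^{\mathbb{L}}_N$, and then to control the resulting $H^{-1}$-norm of $n^{-1}\sum_i\mathfrak{f}_i$ using the disjointness of the supports $\mathbb{I}_i$.

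First I would let $\rho_s$ denote the Radon-Nikodym density of $\mathrm{Law}(\eta^{\mathbb{L}}_s)$ with respect to $\mathbb{P}^{\sigma,\mathbb{L}}$, so that $\rho_0\equiv 1$ and $\partial_s\rho_s=(\mathscr{L}^{\mathbb{L}}_N)^{\ast}\rho_s$ in $L^2(\mathbb{P}^{\sigma,\mathbb{L}})$. The canonical measure is invariant under the symmetric part $\mathscr{L}^{\mathbb{L}}_{N,\mathrm{S}}$ but not under the asymmetric parts of orders $N^{3/2}$ and $N$. Using Davies' method, i.e.\ differentiating $\|\rho_s\|_{L^p(\mathbb{P}^{\sigma,\mathbb{L}})}^p$ and absorbing the non-reversible contributions into the symmetric Dirichlet form via Cauchy-Schwarz, I would show that under the hypotheses $|\mathbb{L}|\lesssim N^{1/3+\e_{\mathrm{reg}}}$ and $\tau\lesssim N^{-4/3}$ one has $\sup_{s\in[0,\tau]}\|\rho_s\|_{L^p(\mathbb{P}^{\sigma,\mathbb{L}})}\lesssim N^{\delta/4}$ for any fixed $p$ and $\delta>0$. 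This is the $L^p$-stability of canonical Bernoulli measures under the localized non-reversible dynamics advertised in the introduction.

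Next I would prove a non-stationary Kipnis-Varadhan inequality: for any mean-zero $f\in L^2(\mathbb{P}^{\sigma,\mathbb{L}})$,
\begin{align*}
\E\left[\Big({\textstyle\int_0^\tau} f(\eta^{\mathbb{L}}_s)\,\d s\Big)^2\right]&\lesssim \tau\cdot\|f\|_{-1}^2\cdot\sup_{s\in[0,\tau]}\|\rho_s\|_{L^p(\mathbb{P}^{\sigma,\mathbb{L}})}^2,
\end{align*}
where $\|f\|_{-1}^2:=\sup_h\{2\langle f,h\rangle_{\mathbb{P}^{\sigma,\mathbb{L}}}-\langle h,(-\mathscr{L}^{\mathbb{L}}_{N,\mathrm{S}})h\rangle_{\mathbb{P}^{\sigma,\mathbb{L}}}\}$. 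The derivation is the standard resolvent plus spectral-theorem calculation for the self-adjoint $\mathscr{L}^{\mathbb{L}}_{N,\mathrm{S}}$, combined with H\"older against $\rho_s$ to trade $\mathrm{Law}(\eta^{\mathbb{L}}_s)$ for $\mathbb{P}^{\sigma,\mathbb{L}}$; the cost is absorbed by the bound from the preceding step. Apply this with $f=n^{-1}\sum_i\mathfrak{f}_i$. Disjointness of the $\mathbb{I}_i$ permits a localization in the variational problem: each trial function for $\|\mathfrak{f}_i\|_{-1}$ may be chosen to vanish outside an $\mathrm{O}(1)$-enlargement of $\mathbb{I}_i$, so that cross Dirichlet forms vanish and
\begin{align*}
\big\|n^{-1}{\textstyle\sum_i}\mathfrak{f}_i\big\|_{-1}^2\lesssim n^{-2}{\textstyle\sum_i}\|\mathfrak{f}_i\|_{-1}^2.
\end{align*}
Since $\mathfrak{f}_i$ is supported on $\mathbb{I}_i$, uniformly bounded, and mean-zero under $\mathbb{P}^{\sigma,\mathbb{L}}$, the local spectral gap $\gtrsim|\mathbb{I}_i|^{-2}$ of symmetric simple exclusion on $\mathbb{I}_i$ (together with the $N^2$ speed of $\mathscr{L}^{\mathbb{L}}_{N,\mathrm{S}}$) gives $\|\mathfrak{f}_i\|_{-1}^2\lesssim N^{-2}|\mathbb{I}_i|^2$ by testing against the local Poisson solve $-\mathscr{L}^{\mathbb{L}}_{N,\mathrm{S}}g_i=\mathfrak{f}_i$. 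Combining these bounds and dividing by $\tau^2$ yields \eqref{eq:kv}.

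The main obstacle is the first step: Davies' $L^p$-estimate must be adapted to a jump process whose drifts of order $N$ and $N^{3/2}$ compete with the symmetric exclusion of order $N^2$, and the precise scale constraints $|\mathbb{L}|\lesssim N^{1/3+\e_{\mathrm{reg}}}$ and $\tau\lesssim N^{-4/3}$ are exactly what makes the Gronwall argument close. This is the new ingredient that replaces the classical reliance on an invariant measure in the Kipnis-Varadhan inequality, and it is the technical heart of Section~\ref{section:tools}.
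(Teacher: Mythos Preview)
Your three-step outline matches the paper's architecture: $L^p$-stability of the density (your Step~1 is Lemma~\ref{lemma:lpprod}), a Kipnis--Varadhan-type reduction to $\|f\|_{-1}^2$ (Step~2), and the disjoint-support $H^{-1}$ estimate (Step~3 is Proposition~7 in \cite{GJ15}). The gap is in Step~2.

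You claim the inequality $\E[(\int_0^\tau f)^2]\lesssim\tau\,\|f\|_{-1}^2\,\sup_s\|\rho_s\|_p^2$ follows from ``standard resolvent plus spectral-theorem calculation for the self-adjoint $\mathscr{L}^{\mathbb{L}}_{N,\mathrm{S}}$, combined with H\"older against $\rho_s$.'' But the It\^o formula for $\eta^{\mathbb{L}}$ involves the \emph{full} generator $\mathscr{L}^{\mathbb{L}}_N$, not its symmetric part. If you set $u_\lambda=(\lambda-\mathscr{L}^{\mathbb{L}}_{N,\mathrm{S}})^{-1}f$, then $f=\lambda u_\lambda-\mathscr{L}^{\mathbb{L}}_N u_\lambda+\mathscr{L}^{\mathbb{L}}_{N,\mathrm{A}}u_\lambda$ and you are left with the residual $\int_0^\tau\mathscr{L}^{\mathbb{L}}_{N,\mathrm{A}}u_\lambda(\eta^{\mathbb{L}}_s)\,\d s$, whose speed-$N^{3/2}$ piece is \emph{not} perturbative (Remark~\ref{remark:lpprod} says this explicitly). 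A crude Cauchy--Schwarz in time plus H\"older against $\rho_s$ yields $\E^{\sigma,\mathbb{L}}|\mathscr{L}^{\mathrm{free},\mathbb{L}}_{N,\mathrm{A}}u_\lambda|^2\lesssim N|\mathbb{L}|\,\|f\|_{-1}^2$, which against the target $\tau^{-1}\|f\|_{-1}^2$ is off by $N|\mathbb{L}|\tau\lesssim N^{\e_{\mathrm{reg}}}$; since $\e_{\mathrm{reg}}>0$ is fixed and $\delta$ is arbitrary, this does not close. More fundamentally, H\"older against $\rho_s$ only accesses one-time marginals; the LHS is a two-time quantity, and one must first reduce it via It\^o to one-time functionals before changing measure --- there is no way to ``combine'' spectral calculus with H\"older in one stroke.

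The paper's fix is to take the resolvent of the \emph{free} generator $\mathscr{L}^{\mathrm{free},\mathbb{L}}_N=\mathscr{L}^{\mathbb{L}}_{N,\mathrm{S}}+\mathscr{L}^{\mathrm{free},\mathbb{L}}_{N,\mathrm{A}}$, which \emph{does} have $\mathbb{P}^{\sigma,\mathbb{L}}$ invariant, so resolvent $L^2$ and Dirichlet-form bounds (Lemma~\ref{lemma:l2op}) are available. It\^o for the full dynamics then leaves only the speed-$N$ residual $\int_0^\tau\mathscr{L}^{\mathfrak{d},\mathbb{L}}_{N,\mathrm{A}}u_\lambda$, for which the same estimate gives $|\mathbb{L}|\,\|f\|_{-1}^2\lesssim\tau^{-1}\|f\|_{-1}^2$ and closes. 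Each piece (boundary terms, martingale bracket, $\lambda u_\lambda$, residual) is a one-time functional; H\"older against $\rho_s$ moves it to $\mathbb{P}^{\sigma,\mathbb{L}}$, and a nonstandard $L^\infty$ resolvent bound (Lemma~\ref{lemma:linftyop}) handles the $L^{2+\e}\to L^2$ interpolation. The choice of free versus symmetric resolvent is thus the crux, and your proposal does not identify it.
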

The factor $\tau^{-1}n^{-1}$ comes from square-root cancellation in space and time. The factor $N^{-2}$ comes from the fact that the speed of the particle system is order $N^{2}$, and a faster speed means more cancellations. We also note that the factor of $N^{\delta}$ is completely technical and harmless since $\delta>0$ is small.

The main difficulty is in the fact that $\eta^{\mathbb{L}}$ is not stationary, nor do we know anything about its invariant measure. Indeed, if $\eta^{\mathbb{L}}$ had $\mathbb{P}^{\sigma,\mathbb{L}}$ as an invariant measure, then Proposition \ref{prop:kv} would follow immediately from Corollary 1 in \cite{GJ15}. However, the invariant measure assumption is crucial in the proof of Corollary 1 in \cite{GJ15}. Let us also note that if {\small$\tau=N^{-4/3}$} and {\small$|\mathbb{L}|=N^{1/3+\e_{\mathrm{reg}}}$}, the sum of all of the clocks in the $\mathfrak{d}[\cdot]$-part of the $\eta^{\mathbb{L}}$ dynamics is heuristically a Poisson distribution of parameter {\small$\mathrm{O}(N|\mathbb{L}|\tau)=\mathrm{O}(N^{\e_{\mathrm{reg}}})$}. Since we need $\e_{\mathrm{reg}}>0$, this is divergent. Thus, Proposition \ref{prop:kv} is \emph{far from being} a dynamically perturbative result, and new technical innovations are necessary. (Indeed, the aforementioned Poisson clock speed parameter is barely $\gg1$, but this is still far from being what we want, because \eqref{eq:kv} gives an estimate that is much more refined than just $\mathrm{o}(1)$.)

To start, we introduce a process obtained by setting $\mathfrak{d}$ equal to zero. We call it a ``free process" since $\mathfrak{d}$ includes the (interesting) interactions in general. In particular, we define {\small$\mathscr{L}^{\mathrm{free},\mathbb{L}}_{N}:=\mathscr{L}^{\mathbb{L}}_{N,\mathrm{S}}+\mathscr{L}^{\mathrm{free},\mathbb{L}}_{N,\mathrm{A}}$}, where
\begin{align*}
\mathscr{L}_{N,\mathrm{A}}^{\mathrm{free},\mathbb{L}}&:=\tfrac12N^{\frac32}\sum_{\x\in\mathbb{L}}\left(\mathbf{1}_{\substack{\eta_{\x}=-1\\\eta_{\x+1}=1}}-\mathbf{1}_{\substack{\eta_{\x}=1\\\eta_{\x+1}=-1}}\right)\mathscr{L}_{\x}.
\end{align*}
%
%
%
\subsubsection{Preliminary estimates}
The general strategy to prove \eqref{eq:kv} is based on the Ito formula as in Appendix 1.6 of \cite{KL} for stationary processes. However, because $\eta^{\mathbb{L}}$ is not a stationary process in Proposition \ref{prop:kv} for general driving functions $\mathfrak{d}[\cdot]$, we require a stability estimate for the law of $\eta^{\mathbb{L}}$ with respect to the initial distribution $\mathbb{P}^{\sigma,\mathbb{L}}$. Before we state the estimate, we need some notation, which is essentially that before Lemma \ref{lemma:eprod} but localized to $\mathbb{L}$. For any $\t\geq0$, we let {\small$\mathfrak{p}^{\mathbb{L}}_{\t}$} be the probability density with respect to $\mathbb{P}^{\sigma,\mathbb{L}}$ for the law of {\small$\eta^{\mathbb{L}}_{\t}$}. (Note that {\small$\mathfrak{p}^{\mathbb{L}}_{0}\equiv1$} by assumption in Proposition \ref{prop:kv}.) We also define the localized Dirichlet form as 
\begin{align}
\mathfrak{D}^{\sigma,\mathbb{L}}[\mathfrak{f}]:=\sum_{\x\in\mathbb{L}}\E^{\sigma,\mathbb{L}}|\mathfrak{f}[\eta^{\x,\x+1}]-\mathfrak{f}[\eta]|^{2}\label{eq:dformloc}
\end{align}
for any function $\mathfrak{f}:\{\pm1\}^{\mathbb{L}}\to\R$, where $\x+1$ means addition with respect to $\mathbb{L}$ as a torus.
\begin{lemma}\label{lemma:lpprod}
\fsp Fix deterministic $p\geq1$ and $\tau\lesssim N^{-4/3}$. We have {\small$\E^{\sigma,\mathbb{L}}|\mathfrak{p}^{\mathbb{L}}_{\t}|^{p}\lesssim_{p}1$} uniformly over $\t\in[0,\tau]$.
\end{lemma}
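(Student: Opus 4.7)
The plan is to control $\E^{\sigma,\mathbb{L}}|\mathfrak{p}^{\mathbb{L}}_{\t}|^{p}$ by a Gronwall argument on its time derivative. Decompose $\mathscr{L}^{\mathbb{L}}_{N}=\mathscr{L}^{\mathrm{free},\mathbb{L}}_{N}+\mathscr{L}^{\mathbb{L}}_{N,\mathfrak{d}}$, where
\begin{align*}
\mathscr{L}^{\mathbb{L}}_{N,\mathfrak{d}}\mathfrak{f}:=\tfrac12 N\sum_{\x\in\mathbb{L}}\left(\mathbf{1}_{\substack{\eta_{\x}=-1\\\eta_{\x+1}=1}}-\mathbf{1}_{\substack{\eta_{\x}=1\\\eta_{\x+1}=-1}}\right)\mathfrak{d}[\tau_{\x}\eta]\mathscr{L}_{\x}\mathfrak{f}
\end{align*}
is the order-$N$ ``$\mathfrak{d}$-piece''. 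The crucial observation is that $\mathbb{P}^{\sigma,\mathbb{L}}$ is invariant for $\mathscr{L}^{\mathrm{free},\mathbb{L}}_{N}$: the symmetric part is reversible, and the constant-speed ASEP on the torus $\mathbb{L}$ is well-known to preserve the uniform canonical measure on each particle-number hyperplane. By the Kolmogorov forward equation $\partial_{\t}\mathfrak{p}^{\mathbb{L}}_{\t}=(\mathscr{L}^{\mathbb{L}}_{N})^{*}\mathfrak{p}^{\mathbb{L}}_{\t}$ (adjoint w.r.t. $\mathbb{P}^{\sigma,\mathbb{L}}$) and the algebraic identity $\mathscr{L}(\mathfrak{p}^{p})=p\mathfrak{p}^{p-1}\mathscr{L}\mathfrak{p}+\Gamma_{p}(\mathfrak{p})$ for the $p$-th order carr\'e du champ, one then gets
\begin{align*}
\tfrac{\d}{\d\t}\E^{\sigma,\mathbb{L}}[(\mathfrak{p}^{\mathbb{L}}_{\t})^{p}]=p\E^{\sigma,\mathbb{L}}[\mathscr{L}^{\mathbb{L}}_{N}(\mathfrak{p}^{\mathbb{L}}_{\t})^{p-1}\cdot\mathfrak{p}^{\mathbb{L}}_{\t}]=\E^{\sigma,\mathbb{L}}[\mathscr{L}^{\mathbb{L}}_{N,\mathfrak{d}}(\mathfrak{p}^{\mathbb{L}}_{\t})^{p}]-\E^{\sigma,\mathbb{L}}[\Gamma_{p}(\mathfrak{p}^{\mathbb{L}}_{\t})],
\end{align*}
since the free-generator contribution vanishes after taking expectation against its invariant measure.

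The coercive term is handled by the elementary convexity inequality $a^{p}-b^{p}-pb^{p-1}(a-b)\geq c_{p}(a^{p/2}-b^{p/2})^{2}$ for $a,b>0$ (proved by rescaling $a=b+h$ and comparing the two sides in $h$) together with the trivial lower bound $c_{\x}(\eta)\geq\tfrac{1}{4}N^{2}$ on every admissible swap rate, valid for $N$ large since the $\mathfrak{d}$-correction is only $\mathrm{O}(N)$. This yields $\E^{\sigma,\mathbb{L}}[\Gamma_{p}(\mathfrak{p})]\geq c_{p}N^{2}\mathfrak{D}^{\sigma,\mathbb{L}}[\mathfrak{p}^{p/2}]$. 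For the $\mathfrak{d}$-term, I would use swap-invariance of $\mathbb{P}^{\sigma,\mathbb{L}}$ to integrate by parts in the substitution $\eta\leftrightarrow\eta^{\x,\x+1}$, then factor $\mathfrak{p}^{p}(\eta^{\x,\x+1})-\mathfrak{p}^{p}(\eta)=(\mathfrak{p}^{p/2}(\eta^{\x,\x+1})-\mathfrak{p}^{p/2}(\eta))(\mathfrak{p}^{p/2}(\eta^{\x,\x+1})+\mathfrak{p}^{p/2}(\eta))$, and apply Cauchy-Schwarz (using $\|\mathfrak{d}\|_{\infty}\lesssim 1$ and swap-invariance once more for the ``$+$''-piece) to obtain
\begin{align*}
|\E^{\sigma,\mathbb{L}}[\mathscr{L}^{\mathbb{L}}_{N,\mathfrak{d}}(\mathfrak{p}^{p})]|\lesssim_{p} N|\mathbb{L}|^{\frac12}\{\mathfrak{D}^{\sigma,\mathbb{L}}[\mathfrak{p}^{p/2}]\}^{\frac12}\{\E^{\sigma,\mathbb{L}}[\mathfrak{p}^{p}]\}^{\frac12}.
\end{align*}
Young's inequality then absorbs a small constant multiple of $N^{2}\mathfrak{D}^{\sigma,\mathbb{L}}[\mathfrak{p}^{p/2}]$ into the coercive $-\E[\Gamma_{p}(\mathfrak{p})]$ term, leaving a residual of size $C_{p}|\mathbb{L}|\E^{\sigma,\mathbb{L}}[\mathfrak{p}^{p}]$.

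Combining everything yields $\tfrac{\d}{\d\t}\E^{\sigma,\mathbb{L}}[(\mathfrak{p}^{\mathbb{L}}_{\t})^{p}]\leq C_{p}|\mathbb{L}|\,\E^{\sigma,\mathbb{L}}[(\mathfrak{p}^{\mathbb{L}}_{\t})^{p}]$, and Gronwall gives $\E^{\sigma,\mathbb{L}}[(\mathfrak{p}^{\mathbb{L}}_{\t})^{p}]\leq\E^{\sigma,\mathbb{L}}[(\mathfrak{p}^{\mathbb{L}}_{0})^{p}]\exp(C_{p}|\mathbb{L}|\t)$. Since $\mathfrak{p}^{\mathbb{L}}_{0}\equiv1$ by assumption and $|\mathbb{L}|\tau\lesssim N^{1/3+\e_{\mathrm{reg}}}\cdot N^{-4/3}=N^{-1+\e_{\mathrm{reg}}}\to 0$ (for $\e_{\mathrm{reg}}$ small, as chosen earlier in the paper), the exponential is $1+\mathrm{o}(1)$ and the claim $\E^{\sigma,\mathbb{L}}[(\mathfrak{p}^{\mathbb{L}}_{\t})^{p}]\lesssim_{p}1$ follows. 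The main obstacle is the integration-by-parts control of $\E^{\sigma,\mathbb{L}}[\mathscr{L}^{\mathbb{L}}_{N,\mathfrak{d}}(\mathfrak{p}^{p})]$ in a form that can be absorbed by the $N^{2}$-strength Dirichlet form; the clean order-$N$ versus order-$N^{2}$ scale separation between the $\mathfrak{d}$-piece and the symmetric exclusion is exactly what makes this absorption work, and this is precisely why the calibrated scales $\tau\lesssim N^{-4/3}$ and $|\mathbb{L}|\lesssim N^{1/3+\e_{\mathrm{reg}}}$ are consistent with a uniform $L^{p}$ bound on the relative density.
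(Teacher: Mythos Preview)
Your overall strategy---differentiate $\E^{\sigma,\mathbb{L}}[(\mathfrak{p}^{\mathbb{L}}_{\t})^{p}]$, extract a coercive $-c_{p}N^{2}\mathfrak{D}^{\sigma,\mathbb{L}}[\mathfrak{p}^{p/2}]$ term, treat the $\mathfrak{d}$-piece perturbatively via Cauchy--Schwarz and absorb it, then Gronwall---is exactly the paper's. But there is a real gap in your displayed identity. With your definition $\Gamma_{p}(\mathfrak{p})=\mathscr{L}(\mathfrak{p}^{p})-p\mathfrak{p}^{p-1}\mathscr{L}\mathfrak{p}$, one has $\E[\mathscr{L}_{\mathfrak{d}}(\mathfrak{p}^{p})]-\E[\Gamma_{p}(\mathfrak{p})]=\E[\mathscr{L}(\mathfrak{p}^{p})]-\E[\Gamma_{p}(\mathfrak{p})]=p\,\E[\mathfrak{p}^{p-1}\mathscr{L}\mathfrak{p}]$, whereas the Kolmogorov identity gives $\tfrac{\d}{\d\t}\E[\mathfrak{p}^{p}]=p\,\E[\mathfrak{p}\,\mathscr{L}\mathfrak{p}^{p-1}]$. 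These coincide only when $\mathscr{L}$ is self-adjoint with respect to $\mathbb{P}^{\sigma,\mathbb{L}}$; here the free asymmetric part $\mathscr{L}^{\mathrm{free},\mathbb{L}}_{N,\mathrm{A}}$ of speed $N^{3/2}$ is anti-self-adjoint, and the mismatch is exactly $2p\,\E[\mathfrak{p}\,\mathscr{L}^{\mathrm{free},\mathbb{L}}_{N,\mathrm{A}}\mathfrak{p}^{p-1}]$. (For $p=2$ the two sides agree trivially, which is why the difficulty is invisible there.)

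This missing $N^{3/2}$-term is precisely what Remark~\ref{remark:lpprod} flags as the main obstacle: it is too large to be handled by the perturbative argument you use for the order-$N$ $\mathfrak{d}$-piece. The paper closes the gap via the algebraic estimate \eqref{eq:lpprod1}, namely $\E[\mathfrak{p}\,\mathscr{L}^{\mathrm{free},\mathbb{L}}_{N,\mathrm{A}}\mathfrak{p}^{p-1}]\lesssim_{p}N^{3/2}\mathfrak{D}^{\sigma,\mathbb{L}}[\mathfrak{p}^{p/2}]$, proved by factoring $\alpha^{p-1}-\beta^{p-1}$ through $\alpha^{p/2}-\beta^{p/2}$ so as to reduce (up to Dirichlet-form errors) to $\E[\mathfrak{p}^{p/2}\mathscr{L}^{\mathrm{free},\mathbb{L}}_{N,\mathrm{A}}\mathfrak{p}^{p/2}]=0$. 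Once this bound is in hand, the $N^{3/2}$-term is absorbed by the $-c_{p}N^{2}\mathfrak{D}$ coercivity for $N$ large, and your Gronwall closes. So the plan is right, but you are missing exactly the nontrivial step.
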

\begin{remark}\label{remark:lpprod}
\fsp Before we show Lemma \ref{lemma:lpprod}, we note that the strategy used in the proof is based on differentiating {\small$\E^{\sigma,\mathbb{L}}|\mathfrak{p}^{\mathbb{L}}_{\t}|^{p}$} in $\t$. In particular, if $p=2$, it is a standard energy dissipation for parabolic equations. It also drastically simplifies for $p=2$, but our applications of Lemma \ref{lemma:lpprod} to prove Proposition \ref{prop:kv} need large $p=\mathrm{O}(1)$. (A main difficulty in the proof is that we must analyze {\small$\E^{\sigma,\mathbb{L}}\mathfrak{p}^{\mathbb{L}}_{\t}\mathscr{L}^{\mathrm{free},\mathbb{L}}_{N,\mathrm{A}}|\mathfrak{p}^{\mathbb{L}}_{\t}|^{p-1}$}. If $p=2$, we use anti-symmetry of {\small$\mathscr{L}^{\mathrm{free},\mathbb{L}}_{N,\mathrm{A}}$} to show this term is $0$. For general $p\geq1$, the aim is to exploit anti-symmetry in a somewhat similar way. We must also control the $\mathfrak{d}[\cdot]$-part of {\small$\mathscr{L}^{\mathbb{L}}_{N}$}. It has no invariance with respect to $\mathbb{P}^{\sigma,\mathbb{L}}$, but it can be done perturbatively, because its speed is $\mathrm{O}(N)$ and thus small enough compared to the leading-order speed $N^{2}$.) 
\end{remark}
\begin{proof}
Because $\mathrm{L}^{p}$-norms are monotone non-decreasing in $p$, it suffices to possibly enlarge $p$ so that $p$ is even. (This is not necessary, but it makes factorizing {\small$\alpha^{p/2}-\beta^{p/2}$} much more convenient in terms of formulas, and such factorizing is important for this argument.) By the Kolmogorov equation, we have {\small$\partial_{\t}\mathfrak{p}^{\mathbb{L}}_{\t}=(\mathscr{L}^{\mathbb{L}}_{N})^{\ast}\mathfrak{p}^{\mathbb{L}}_{\t}$}, where the star superscript denotes adjoint with respect to $\mathbb{P}^{\sigma,\mathbb{L}}$, so that
\begin{align*}
\tfrac{\d}{\d\t}\E^{\sigma,\mathbb{L}}|\mathfrak{p}^{\mathbb{L}}_{\t}|^{p}&=p\E^{\sigma,\mathbb{L}}|\mathfrak{p}^{\mathbb{L}}_{\t}|^{p-1}(\mathscr{L}^{\mathbb{L}}_{N})^{\ast}\mathfrak{p}^{\mathbb{L}}_{\t}=p\E^{\sigma,\mathbb{L}}\mathfrak{p}^{\mathbb{L}}_{\t}\mathscr{L}^{\mathbb{L}}_{N}|\mathfrak{p}^{\mathbb{L}}_{\t}|^{p-1}\\
&=p\E^{\sigma,\mathbb{L}}\mathfrak{p}^{\mathbb{L}}_{\t}\mathscr{L}^{\mathbb{L}}_{N,\mathrm{S}}|\mathfrak{p}^{\mathbb{L}}_{\t}|^{p-1}+p\E^{\sigma,\mathbb{L}}\mathfrak{p}^{\mathbb{L}}_{\t}\mathscr{L}^{\mathrm{free},\mathbb{L}}_{N,\mathrm{A}}|\mathfrak{p}^{\mathbb{L}}_{\t}|^{p-1}+p\E^{\sigma,\mathbb{L}}\mathfrak{p}^{\mathbb{L}}_{\t}\mathscr{L}^{\mathfrak{d},\mathbb{L}}_{N,\mathrm{A}}|\mathfrak{p}^{\mathbb{L}}_{\t}|^{p-1},
\end{align*}
where
\begin{align*}
\mathscr{L}_{N,\mathrm{A}}^{\mathfrak{d},\mathbb{L}}&:=\tfrac12N\sum_{\x\in\mathbb{L}}\left(\mathbf{1}_{\substack{\eta_{\x}=-1\\\eta_{\x+1}=1}}-\mathbf{1}_{\substack{\eta_{\x}=1\\\eta_{\x+1}=-1}}\right)\mathfrak{d}[\tau_{\x}\eta]\mathscr{L}_{\x}.
\end{align*}
We first claim the following computation holds for some $\kappa>0$ universal:
\begin{align*}
\E^{\sigma,\mathbb{L}}\mathfrak{p}^{\mathbb{L}}_{\t}\mathscr{L}^{\mathbb{L}}_{N,\mathrm{S}}|\mathfrak{p}^{\mathbb{L}}_{\t}|^{p-1}&=\tfrac12N^{2}{\textstyle\sum_{\x\in\mathbb{L}}}\E^{\sigma,\mathbb{L}}\left[\mathfrak{p}^{\mathbb{L}}_{\t}[\eta]\left\{|\mathfrak{p}^{\mathbb{L}}_{\t}[\eta^{\x,\x+1}]|^{p-1}-|\mathfrak{p}^{\mathbb{L}}_{\t}[\eta]|^{p-1}\right\}\right]\\
&=-\tfrac14N^{2}{\textstyle\sum_{\x\in\mathbb{L}}}\E^{\sigma,\mathbb{L}}\left[\left\{\mathfrak{p}^{\mathbb{L}}_{\t}[\eta^{\x,\x+1}]-\mathfrak{p}^{\mathbb{L}}_{\t}[\eta]\right\}\left\{|\mathfrak{p}^{\mathbb{L}}_{\t}[\eta^{\x,\x+1}]|^{p-1}-|\mathfrak{p}^{\mathbb{L}}_{\t}[\eta]|^{p-1}\right\}\right]\\
&\leq-\kappa p^{-1}N^{2}{\textstyle\sum_{\x\in\mathbb{L}}}\E^{\sigma,\mathbb{L}}|\mathfrak{p}^{\mathbb{L}}_{\t}[\eta^{\x,\x+1}]^{\frac{p}{2}}-\mathfrak{p}^{\mathbb{L}}_{\t}[\eta]^{\frac{p}{2}}|^{2}=-\kappa p^{-1}N^{2}\mathfrak{D}^{\sigma,\mathbb{L}}[|\mathfrak{p}^{\mathbb{L}}_{\t}|^{p/2}].
\end{align*}
The first line is by definition of {\small$\mathscr{L}^{\mathbb{L}}_{N,\mathrm{S}}$}. To justify the second line, we first write {\small$\frac12\mathfrak{p}^{\mathbb{L}}_{\t}[\eta]=\frac14\mathfrak{p}^{\mathbb{L}}_{\t}[\eta]+\frac14\mathfrak{p}^{\mathbb{L}}_{\t}[\eta]$}. For the first copy of {\small$\frac14\mathfrak{p}^{\mathbb{L}}_{\t}[\eta]$}, we make the change-of-variables {\small$\eta\mapsto\eta^{\x,\x+1}$}. This gives the difference {\small$|\mathfrak{p}^{\mathbb{L}}_{\t}[\eta^{\x,\x+1}]|^{p-1}-|\mathfrak{p}^{\mathbb{L}}_{\t}[\eta]|^{p-1}$} an additional sign, which leads to the second line above. To justify the last line, we use the inequality {\small$(\alpha-\beta)(\alpha^{p-1}-\beta^{p-1})\geq\kappa p^{-1}(\alpha^{p/2}-\beta^{p/2})^{2}$}, which can be checked after factoring {\small$(\alpha-\beta)^{2}$} from both sides and using {\small$\alpha^{q}-\beta^{q}=(\alpha-\beta)(\alpha^{q-1}+\ldots+\beta^{q-1})$} for any integer $q>0$. Next, we claim that, for some uniformly bounded $\mathfrak{c}$ and any small $\delta>0$, 
\begin{align*}
\E^{\sigma,\mathbb{L}}\mathfrak{p}^{\mathbb{L}}_{\t}\mathscr{L}^{\mathfrak{d},\mathbb{L}}_{N,\mathrm{A}}|\mathfrak{p}^{\mathbb{L}}_{\t}|^{p-1}&=\tfrac12N\sum_{\x\in\mathbb{L}}\E^{\sigma,\mathbb{L}}\left\{\mathfrak{c}[\tau_{\x}\eta]\mathfrak{p}^{\mathbb{L}}_{\t}[\eta]\left(\mathfrak{p}^{\mathbb{L}}_{\t}[\eta^{\x,\x+1}]^{p-1}-\mathfrak{p}^{\mathbb{L}}_{\t}[\eta]^{p-1}\right)\right\}\\
&=\tfrac12N\sum_{\x\in\mathbb{L}}\E^{\sigma,\mathbb{L}}\left\{\mathfrak{c}[\tau_{\x}\eta]\frac{\mathfrak{p}^{\mathbb{L}}_{\t}[\eta]\sum_{a=0}^{p-2}\mathfrak{p}^{\mathbb{L}}_{\t}[\eta]^{a}\mathfrak{p}^{\mathbb{L}}_{\t}[\eta^{\x,\x+1}]^{p-2-a}}{\sum_{b=0}^{p/2-1}\mathfrak{p}^{\mathbb{L}}_{\t}[\eta]^{b}\mathfrak{p}^{\mathbb{L}}_{\t}[\eta^{\x,\x+1}]^{\frac{p}{2}-1-b}}\left(\mathfrak{p}^{\mathbb{L}}_{\t}[\eta^{\x,\x+1}]^{\frac{p}{2}}-\mathfrak{p}^{\mathbb{L}}_{\t}[\eta]^{\frac{p}{2}}\right)\right\}\\
&\lesssim_{p}N\sum_{\x\in\mathbb{L}}\E^{\sigma,\mathbb{L}}\left\{\mathfrak{c}[\tau_{\x}\eta]\frac{\mathfrak{p}^{\mathbb{L}}_{\t}[\eta]^{p-1}+\mathfrak{p}^{\mathbb{L}}_{\t}[\eta^{\x,\x+1}]^{p-1}}{\mathfrak{p}^{\mathbb{L}}_{\t}[\eta]^{p/2-1}+\mathfrak{p}^{\mathbb{L}}_{\t}[\eta^{\x,\x+1}]^{p/2-1}}\left(\mathfrak{p}^{\mathbb{L}}_{\t}[\eta^{\x,\x+1}]^{\frac{p}{2}}-\mathfrak{p}^{\mathbb{L}}_{\t}[\eta]^{\frac{p}{2}}\right)\right\}\\
&\lesssim_{p}N\sum_{\x\in\mathbb{L}}\E^{\sigma,\mathbb{L}}\left\{\left(\mathfrak{p}^{\mathbb{L}}_{\t}[\eta^{\x,\x+1}]^{\frac{p}{2}}+\mathfrak{p}^{\mathbb{L}}_{\t}[\eta]^{\frac{p}{2}}\right)\left|\mathfrak{p}^{\mathbb{L}}_{\t}[\eta^{\x,\x+1}]^{\frac{p}{2}}-\mathfrak{p}^{\mathbb{L}}_{\t}[\eta]^{\frac{p}{2}}\right|\right\}\\
&\leq \delta^{-1}|\mathbb{L}|\E^{\sigma,\mathbb{L}}|\mathfrak{p}^{\mathbb{L}}_{\t}|^{p}+\delta N^{2}\sum_{\x\in\mathbb{L}}\E^{\sigma,\mathbb{L}}|\mathfrak{p}^{\mathbb{L}}_{\t}[\eta^{\x,\x+1}]^{\frac{p}{2}}-\mathfrak{p}^{\mathbb{L}}_{\t}[\eta]^{\frac{p}{2}}|^{2}\\
&\lesssim\delta^{-1}|\mathbb{L}|\E^{\sigma,\mathbb{L}}|\mathfrak{p}^{\mathbb{L}}_{\t}|^{p}+\delta N^{2}\mathfrak{D}^{\sigma,\mathbb{L}}[|\mathfrak{p}^{\mathbb{L}}_{\t}|^{p/2}].
\end{align*}
The first line follows by definition of {\small$\mathscr{L}^{\mathfrak{d},\mathbb{L}}_{N,\mathrm{A}}$}. The second line follows by the identities {\small$\alpha^{q}-\beta^{q}=(\alpha-\beta)(\alpha^{q-1}+\ldots+\beta^{q-1})$} for $q=p-1$ and $q=p/2$. (This is where we use the fact that $p/2$ is a positive integer.) The third line follows by two inequalities. The first is {\small$\mathfrak{p}^{\mathbb{L}}_{\t}[\eta]\sum_{a=0}^{p-2}\mathfrak{p}^{\mathbb{L}}_{\t}[\eta]^{a}\mathfrak{p}^{\mathbb{L}}_{\t}[\eta^{\x,\x+1}]^{p-2-a}\lesssim \mathfrak{p}^{\mathbb{L}}_{\t}[\eta^{\x,\x+1}]^{p-1}+\mathfrak{p}^{\mathbb{L}}_{\t}[\eta]^{p-1}$} by Young's inequality (intuitively, one can check the highest powers of {\small$\mathfrak{p}^{\mathbb{L}}_{\t}[\eta^{\x,\x+1}]$} and {\small$\mathfrak{p}^{\mathbb{L}}_{\t}[\eta]$} appearing on the LHS of this inequality). The second is {\small$\sum_{b=0}^{p/2-1}\mathfrak{p}^{\mathbb{L}}_{\t}[\eta]^{b}\mathfrak{p}^{\mathbb{L}}_{\t}[\eta^{\x,\x+1}]^{\frac{p}{2}-1-b}\gtrsim\mathfrak{p}^{\mathbb{L}}_{\t}[\eta^{\x,\x+1}]^{p/2-1}+\mathfrak{p}^{\mathbb{L}}_{\t}[\eta]^{p/2-1}$} by taking the summands at $b=0$ and $b=p/2-1$. The fourth line follows from the inequality {\small$(\alpha^{p-1}+\beta^{p-1})(\alpha^{p/2-1}+\beta^{p/2-1})^{-1}\leq \alpha^{p/2}+\beta^{p/2}$}. The fifth line follows by Schwarz, and the last line by definition of $\mathfrak{D}^{\sigma,\mathbb{L}}$. Finally, we claim that 
\begin{align}
|\E^{\sigma,\mathbb{L}}\mathfrak{p}^{\mathbb{L}}_{\t}\mathscr{L}^{\mathrm{free},\mathbb{L}}_{N,\mathrm{A}}|\mathfrak{p}^{\mathbb{L}}_{\t}|^{p-1}|&\lesssim_{p} N^{\frac32}\mathfrak{D}^{\sigma,\mathbb{L}}[|\mathfrak{p}^{\mathbb{L}}_{\t}|^{p/2}].\label{eq:lpprod1}
\end{align}
If we take \eqref{eq:lpprod1} for granted for now, then if we combine all displays thus far in this argument, we have 
\begin{align*}
\tfrac{\d}{\d\t}\E^{\sigma,\mathbb{L}}|\mathfrak{p}^{\mathbb{L}}_{\t}|^{p}&\leq-\kappa N^{2}\mathfrak{D}^{\sigma,\mathbb{L}}[|\mathfrak{p}^{\mathbb{L}}_{\t}|^{p/2}]+\mathrm{O}_{p}(\delta N^{2}\mathfrak{D}^{\sigma,\mathbb{L}}[|\mathfrak{p}^{\mathbb{L}}_{\t}|^{p/2}]+N^{\frac32}\mathfrak{D}^{\sigma,\mathbb{L}}[|\mathfrak{p}^{\mathbb{L}}_{\t}|^{p/2}]+\delta^{-1}|\mathbb{L}|\E^{\sigma,\mathbb{L}}|\mathfrak{p}^{\mathbb{L}}_{\t}|^{p}).
\end{align*}
If we take $\delta$ small enough depending only on the universal constant $\kappa$, then the first three terms on the RHS sum to something which is negative. We therefore have {\small$\tfrac{\d}{\d\t}\E^{\sigma,\mathbb{L}}|\mathfrak{p}^{\mathbb{L}}_{\t}|^{p}\lesssim_{p}\delta^{-1}|\mathbb{L}|\E^{\sigma,\mathbb{L}}|\mathfrak{p}^{\mathbb{L}}_{\t}|^{p}$}. If we combine this with the Gronwall inequality, we deduce {\small$\E^{\sigma,\mathbb{L}}|\mathfrak{p}^{\mathbb{L}}_{\t}|^{p}\lesssim\exp\{\mathrm{O}(|\mathbb{L}|\t)\}\E^{\sigma,\mathbb{L}}|\mathfrak{p}^{\mathbb{L}}_{0}|^{p}$}. We now recall {\small$\mathfrak{p}^{\mathbb{L}}_{0}\equiv1$} and use $|\mathbb{L}|\lesssim N^{1/2}$ and $\t\leq\tau\lesssim N^{-4/3}$ to deduce the desired bound {\small$\E^{\sigma,\mathbb{L}}|\mathfrak{p}^{\mathbb{L}}_{\t}|^{p}\lesssim1$}. Thus, it suffices to prove \eqref{eq:lpprod1}. We first note the following, which follows just from definition of {\small$\mathscr{L}^{\mathrm{free},\mathbb{L}}_{N,\mathrm{A}}$} (where {\small$\mathfrak{a}[\eta]=\mathbf{1}_{\substack{\eta_{\x}=-1\\\eta_{\x+1}=1}}-\mathbf{1}_{\substack{\eta_{\x}=1\\\eta_{\x+1}=-1}}$} for convenience):
\begin{align*}
\E^{\sigma,\mathbb{L}}\mathfrak{p}^{\mathbb{L}}_{\t}\mathscr{L}^{\mathrm{free},\mathbb{L}}_{N,\mathrm{A}}\mathfrak{p}^{\mathbb{L}}_{\t}|\mathfrak{p}^{\mathbb{L}}_{\t}|^{p-1}&=\tfrac12N^{\frac32}\sum_{\x\in\mathbb{L}}\E\left\{\mathfrak{a}[\tau_{\x}\eta]\mathfrak{p}^{\mathbb{L}}_{\t}[\eta]\left(\mathfrak{p}^{\mathbb{L}}_{\t}[\eta^{\x,\x+1}]^{p-1}-\mathfrak{p}^{\mathbb{L}}_{\t}[\eta]^{p-1}\right)\right\}.
\end{align*}
Similar to our computation of {\small$\E^{\sigma,\mathbb{L}}\mathfrak{p}^{\mathbb{L}}_{\t}\mathscr{L}^{\mathfrak{d},\mathbb{L}}_{N,\mathrm{A}}|\mathfrak{p}^{\mathbb{L}}_{\t}|^{p-1}$} above, we can expand the RHS to get
\begin{align*}
\E^{\sigma,\mathbb{L}}\mathfrak{p}^{\mathbb{L}}_{\t}\mathscr{L}^{\mathrm{free},\mathbb{L}}_{N,\mathrm{A}}|\mathfrak{p}^{\mathbb{L}}_{\t}|^{p-1}&=\tfrac12N^{\frac32}\sum_{\x\in\mathbb{L}}\E\left\{\mathfrak{a}[\tau_{\x}\eta]\frac{\mathfrak{p}^{\mathbb{L}}_{\t}[\eta]\sum_{a=0}^{p-2}\mathfrak{p}^{\mathbb{L}}_{\t}[\eta]^{a}\mathfrak{p}^{\mathbb{L}}_{\t}[\eta^{\x,\x+1}]^{p-2-a}}{\sum_{b=0}^{p/2-1}\mathfrak{p}^{\mathbb{L}}_{\t}[\eta]^{b}\mathfrak{p}^{\mathbb{L}}_{\t}[\eta^{\x,\x+1}]^{\frac{p}{2}-1-b}}\left(\mathfrak{p}^{\mathbb{L}}_{\t}[\eta^{\x,\x+1}]^{\frac{p}{2}}-\mathfrak{p}^{\mathbb{L}}_{\t}[\eta]^{\frac{p}{2}}\right)\right\}.
\end{align*}
Just for this proof, for efficiency, we set {\small$\alpha:=\mathfrak{p}^{\mathbb{L}}_{\t}[\eta]$} and {\small$\beta:=\mathfrak{p}^{\mathbb{L}}_{\t}[\eta^{\x,\x+1}]$}. We now claim the expansion below for the fraction on the RHS:
\begin{align*}
&\frac{\alpha\sum_{a=0}^{p-2}\alpha^{a}\beta^{p-2-a}}{\sum_{b=0}^{p/2-1}\alpha^{b}\beta^{\frac{p}{2}-1-b}}=\frac{(p-1)\alpha^{p-1}}{\sum_{b=0}^{p/2-1}\alpha^{b}\beta^{\frac{p}{2}-1-b}}+\frac{\alpha\sum_{a=0}^{p-2}\alpha^{a}(\beta^{p-2-a}-\alpha^{p-2-a})}{\sum_{b=0}^{p/2-1}\alpha^{b}\beta^{\frac{p}{2}-1-b}}\\
&=\frac{(p-1)\alpha^{p-1}}{\sum_{b=0}^{p/2-1}\alpha^{b}\beta^{\frac{p}{2}-1-b}}+\frac{\alpha\sum_{a=0}^{p-2}\alpha^{a}\sum_{c=0}^{p-3-a}\alpha^{c}\beta^{p-3-a-c}}{\left(\sum_{b=0}^{p/2-1}\alpha^{b}\beta^{\frac{p}{2}-1-b}\right)^{2}}(\beta^{\frac{p}{2}}-\alpha^{\frac{p}{2}})\\
&=\frac{(p-1)\alpha^{p-1}}{\sum_{b=0}^{p/2-1}\alpha^{b}\beta^{\frac{p}{2}-1-b}}+\mathrm{O}_{p}(|\beta^{\frac{p}{2}}-\alpha^{\frac{p}{2}}|).
\end{align*}
The first line is by adding and subtracting $\alpha^{p-2-a}$ for every $a$ in the sum. The second line follows by another application of {\small$\alpha^{q}-\beta^{q}=(\alpha-\beta)(\alpha^{q-1}+\ldots+\beta^{q-1})$} for $q=p-2-a$ and $q=p/2$. (Note that in the second line, we can avoid taking $a=p-2$ in the $a$-summation, because the interior $c$-summation would be empty and thus zero.) The third line above follows by another application of {\small$\alpha\sum_{a=0}^{p-2}\alpha^{a}\sum_{c=0}^{p-3-a}\alpha^{c}\beta^{p-3-a-c}\lesssim_{p}\alpha^{p-2}+\beta^{p-2}$} (for any $a=0,\ldots,p-3$) via Young's inequality and of the lower bound {\small$\sum_{b=0}^{p/2-1}\alpha^{b}\beta^{p/2-1-b}\gtrsim \alpha^{p/2-1}+\beta^{p/2-1}$} by taking the $b=0$ and $b=p/2-1$ summands. (Again, for the upper bound that we used Young's inequality for, check the highest powers of $\alpha,\beta$, respectively, that appear on the LHS while recalling that $a=0,\ldots,p-3$, so that $a\neq p-2$.) Next, we note
\begin{align*}
\frac{(p-1)\alpha^{p-1}}{\sum_{b=0}^{p/2-1}\alpha^{b}\beta^{\frac{p}{2}-1-b}}&=\tfrac{2(p-1)}{p}\alpha^{p/2}+(p-1)\alpha^{p-1}\left\{\left(\sum_{b=0}^{p/2-1}\alpha^{b}\beta^{\frac{p}{2}-1-b}\right)^{-1}-\left(\sum_{b=0}^{p/2-1}\alpha^{b}\alpha^{\frac{p}{2}-1-b}\right)^{-1}\right\}.
\end{align*}
Another similar and elementary calculation using {\small$\alpha^{q}-\beta^{q}=(\alpha-\beta)(\alpha^{q-1}+\ldots+\beta^{q-1})$} and Young's inequality shows the second term on the RHS above is $\lesssim_{p}|\beta^{p/2}-\alpha^{p/2}|$. (An intuitive way to see this bound is that the curly-bracketed term above vanishes if $\beta=\alpha$. Moreover, if we scale $(\alpha,\beta)\mapsto(\mathrm{c}\alpha,\mathrm{c}\beta)$ for any $\mathrm{c}>0$, then the second term on the RHS above scales with factor {\small$\mathrm{c}^{p/2}$}.) We combine this with the above three displays:
\begin{align*}
\E^{\sigma,\mathbb{L}}\mathfrak{p}^{\mathbb{L}}_{\t}\mathscr{L}^{\mathrm{free},\mathbb{L}}_{N,\mathrm{A}}|\mathfrak{p}^{\mathbb{L}}_{\t}|^{p-1}&=\tfrac{p-1}{p}N^{\frac32}\sum_{\x\in\mathbb{L}}\E\left\{\mathfrak{a}[\tau_{\x}\eta]\mathfrak{p}^{\mathbb{L}}_{\t}[\eta]^{\frac{p}{2}}\left(\mathfrak{p}^{\mathbb{L}}_{\t}[\eta^{\x,\x+1}]^{\frac{p}{2}}-\mathfrak{p}^{\mathbb{L}}_{\t}[\eta]^{\frac{p}{2}}\right)\right\}\\
&+\tfrac12N^{\frac32}\sum_{\x\in\mathbb{L}}\E\left\{\mathrm{O}_{p}\left(\mathfrak{p}^{\mathbb{L}}_{\t}[\eta^{\x,\x+1}]^{\frac{p}{2}}-\mathfrak{p}^{\mathbb{L}}_{\t}[\eta]^{\frac{p}{2}}\right)^{2}\right\}.
\end{align*}
The first term on the RHS is equal to a constant times {\small$\E^{\sigma,\mathbb{L}}|\mathfrak{p}^{\mathbb{L}}_{\t}|^{p/2}\mathscr{L}^{\mathrm{free},\mathbb{L}}_{N,\mathrm{A}}|\mathfrak{p}^{\mathbb{L}}_{\t}|^{p/2}$}. Since {\small$\mathscr{L}^{\mathrm{free},\mathbb{L}}_{N,\mathrm{A}}$} is anti-symmetric with respect to $\E^{\sigma,\mathbb{L}}$, this term is equal to $0$. The second term on the RHS above is {\small$\lesssim_{p}N^{3/2}\mathfrak{D}^{\sigma,\mathbb{L}}[|\mathfrak{p}^{\mathbb{L}}_{\t}|^{p/2}]$}. The proposed bound \eqref{eq:lpprod1} therefore follows.
\end{proof}
Although Lemma \ref{lemma:lpprod} is a powerful stability estimate for {\small$\mathfrak{p}^{\mathbb{L}}_{\t}$}, it cannot itself reduce the proof of Proposition \ref{prop:kv} to the same estimate but for the ``free process" (obtained by setting $\mathfrak{d}$ equal to $0$), which has $\mathbb{P}^{\sigma,\mathbb{L}}$ as an invariant measure. The reason is because the LHS of \eqref{eq:kv} is a function of the \emph{dynamics} of $\eta^{\mathbb{L}}$, not just one-point statistics. We will explain how to use the Ito formula to reduce showing \eqref{eq:kv} to estimates for one-point statistics shortly. The next two estimates are needed to run said strategy.

The first is a classical spectral gap estimate. Before we state it, recall the Dirichlet form {\small$\mathfrak{D}^{\sigma,\mathbb{L}}$} from \eqref{eq:dformloc}.
\begin{lemma}\label{lemma:l2op}
\fsp Suppose $\lambda\geq0$, and suppose that $\mathfrak{f}:\{\pm1\}^{\mathbb{L}}\to\R$ is such that $\E^{\sigma,\mathbb{L}}\mathfrak{f}=0$. We have the estimates {\small$\|(\lambda-\mathscr{L}_{N}^{\mathrm{free},\mathbb{L}})^{-1}\mathfrak{f}\|_{2}^{2}\lesssim\lambda^{-1}\|\mathfrak{f}\|_{\mathrm{H}^{-1}}^{2}$} and {\small$N^{2}\mathfrak{D}^{\sigma,\mathbb{L}}[(\lambda-\mathscr{L}^{\mathrm{free},\mathbb{L}}_{N})^{-1}\mathfrak{f}]\lesssim\|\mathfrak{f}\|_{\mathrm{H}^{-1}}^{2}$}, where {\small$\|\cdot\|_{p}^{p}:=\E|\cdot|^{p}$} and
\begin{align*}
\|\mathfrak{f}\|_{\mathrm{H}^{-1}}^{2}:=\sup_{\mathfrak{b}:\{\pm1\}^{\mathbb{L}}\to\R}\left\{2\E^{\sigma,\mathbb{L}}\mathfrak{f}\mathfrak{b}+\E^{\sigma,\mathbb{L}}\mathfrak{b}\mathscr{L}^{\mathrm{free},\mathbb{L}}_{N}\mathfrak{b}\right\}.
\end{align*}
\end{lemma}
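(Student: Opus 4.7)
My plan is to run the standard resolvent/variational argument underlying Kipnis--Varadhan, the nontrivial ingredient being a carré du champ identity that absorbs the $N^{3/2}$-speed asymmetric part of $\mathscr{L}^{\mathrm{free},\mathbb{L}}_{N}$. Let $\mathfrak{u}:=(\lambda-\mathscr{L}^{\mathrm{free},\mathbb{L}}_{N})^{-1}\mathfrak{f}$; this is well-defined for $\lambda>0$, and for $\lambda=0$ on the mean-zero subspace (to which $\mathfrak{f}$ belongs) by irreducibility of the free process on the canonical hyperplane. Since $\mathbb{P}^{\sigma,\mathbb{L}}$ is invariant for the free generator, $\E^{\sigma,\mathbb{L}}\mathscr{L}^{\mathrm{free},\mathbb{L}}_{N}(\mathfrak{u}^{2})=0$, which rewrites to
\begin{align*}
-2\E^{\sigma,\mathbb{L}}\mathfrak{u}\mathscr{L}^{\mathrm{free},\mathbb{L}}_{N}\mathfrak{u}\,=\,\E^{\sigma,\mathbb{L}}\Gamma(\mathfrak{u},\mathfrak{u})
\end{align*}
with $\Gamma$ the associated carré du champ. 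A direct check shows that the asymmetric rates attached to the two indicators $\mathbf{1}_{\eta_{\x}=-1,\eta_{\x+1}=1}$ and $\mathbf{1}_{\eta_{\x}=1,\eta_{\x+1}=-1}$ cancel pairwise under the bond swap $\eta\leftrightarrow\eta^{\x,\x+1}$ inside $\E^{\sigma,\mathbb{L}}$, so only the symmetric rate $\tfrac{1}{2}N^{2}$ survives, and one concludes $-\E^{\sigma,\mathbb{L}}\mathfrak{u}\mathscr{L}^{\mathrm{free},\mathbb{L}}_{N}\mathfrak{u}\asymp N^{2}\mathfrak{D}^{\sigma,\mathbb{L}}[\mathfrak{u}]$.

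With this identity in hand, I would test the resolvent equation $(\lambda-\mathscr{L}^{\mathrm{free},\mathbb{L}}_{N})\mathfrak{u}=\mathfrak{f}$ against $\mathfrak{u}$ to produce $\E^{\sigma,\mathbb{L}}\mathfrak{f}\mathfrak{u}=\lambda\|\mathfrak{u}\|_{2}^{2}+\kappa N^{2}\mathfrak{D}^{\sigma,\mathbb{L}}[\mathfrak{u}]$ for a universal $\kappa>0$, and then plug the test function $\mathfrak{b}=\mathfrak{u}$ into the variational definition of $\|\mathfrak{f}\|_{\mathrm{H}^{-1}}$ to obtain $\|\mathfrak{f}\|_{\mathrm{H}^{-1}}^{2}\geq 2\E^{\sigma,\mathbb{L}}\mathfrak{f}\mathfrak{u}-\kappa N^{2}\mathfrak{D}^{\sigma,\mathbb{L}}[\mathfrak{u}]$. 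Substituting the former equality into the latter inequality yields
\begin{align*}
\|\mathfrak{f}\|_{\mathrm{H}^{-1}}^{2}\,\geq\,2\lambda\|\mathfrak{u}\|_{2}^{2}+\kappa N^{2}\mathfrak{D}^{\sigma,\mathbb{L}}[\mathfrak{u}],
\end{align*}
from which both asserted bounds follow immediately (the first by dropping the Dirichlet term and dividing by $\lambda$, the second by dropping the $\mathrm{L}^{2}$-term).

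The main obstacle is the cancellation in the first paragraph: for a reversible generator the energy identity $-\E\mathfrak{u}\mathscr{L}\mathfrak{u}\asymp N^{2}\mathfrak{D}[\mathfrak{u}]$ is automatic, whereas here the asymmetric piece of $\mathscr{L}^{\mathrm{free},\mathbb{L}}_{N}$ has speed $N^{3/2}\gg N$ and cannot be treated as a perturbation of the symmetric part. This is precisely why Lemma \ref{lemma:l2op} inverts the \emph{free} generator rather than the full $\mathscr{L}^{\mathbb{L}}_{N}$, whose $\mathfrak{d}[\cdot]$-piece does not leave $\mathbb{P}^{\sigma,\mathbb{L}}$ invariant and therefore requires the separate $\mathrm{L}^{p}$-stability estimate of Lemma \ref{lemma:lpprod} to be handled in subsequent arguments.
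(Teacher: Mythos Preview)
Your argument is correct and is precisely the standard resolvent/variational computation from the beginning of the proof of Proposition~6.1 in Appendix~1 of \cite{KL}, which is exactly what the paper cites as its proof. One minor simplification: rather than passing through the carr\'e du champ, you can directly invoke that $\mathscr{L}^{\mathrm{free},\mathbb{L}}_{N,\mathrm{A}}$ is anti-symmetric with respect to $\E^{\sigma,\mathbb{L}}$ (as the paper notes in the proofs of Lemma~\ref{lemma:lpprod} and Proposition~\ref{prop:kv2}), so that $\E^{\sigma,\mathbb{L}}\mathfrak{u}\mathscr{L}^{\mathrm{free},\mathbb{L}}_{N,\mathrm{A}}\mathfrak{u}=0$ and only the symmetric part contributes to the energy identity.
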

\begin{proof}
See the beginning of the proof of Proposition 6.1 in Appendix 1 of \cite{KL}.
\end{proof}
We now present a less standard estimate that we will need for technical reasons. It is an operator norm bound on {\small$\mathrm{L}^{\infty}(\{\pm1\}^{\mathbb{L}})$} whose proof is based on convexity and spectral gap. 
\begin{lemma}\label{lemma:linftyop}
\fsp Suppose $\lambda\geq0$, and suppose that $\mathfrak{f}:\{\pm1\}^{\mathbb{L}}\to\R$ is such that $\E^{\sigma,\mathbb{L}}\mathfrak{f}=0$. We have the estimate {\small$\sup_{p\geq1}\|(\lambda-\mathscr{L}^{\mathrm{free},\mathbb{L}}_{N})^{-1}\mathfrak{f}\|_{p}\lesssim N^{\mathrm{O}(1)}\|\mathfrak{f}\|_{\infty}$}, where the exponent $\mathrm{O}(1)$ does not depend on $\lambda$.
\end{lemma}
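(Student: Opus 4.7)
The plan is to reduce to an $\mathrm{L}^{\infty}$ estimate on $\mathfrak{u}:=(\lambda-\mathscr{L}^{\mathrm{free},\mathbb{L}}_{N})^{-1}\mathfrak{f}$ and then control it via the semigroup $\mathrm{e}^{\t\mathscr{L}^{\mathrm{free},\mathbb{L}}_{N}}$. Since $\mathbb{P}^{\sigma,\mathbb{L}}$ is a probability measure, $\|g\|_{p}\leq\|g\|_{\infty}$ for every $p\geq1$, so it suffices to show $\|\mathfrak{u}\|_{\infty}\lesssim N^{\mathrm{O}(1)}\|\mathfrak{f}\|_{\infty}$. Taking $\E^{\sigma,\mathbb{L}}$ of the defining equation and using that $\mathbb{P}^{\sigma,\mathbb{L}}$ is invariant under $\mathscr{L}^{\mathrm{free},\mathbb{L}}_{N}$ (so $\E^{\sigma,\mathbb{L}}\mathscr{L}^{\mathrm{free},\mathbb{L}}_{N}\mathfrak{u}=0$), I see that $\mathfrak{u}$ has mean zero when $\lambda>0$; the case $\lambda=0$ is handled by interpreting $\mathfrak{u}$ as the unique mean-zero solution to $-\mathscr{L}^{\mathrm{free},\mathbb{L}}_{N}\mathfrak{u}=\mathfrak{f}$. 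Writing
\begin{align*}
\mathfrak{u}=\int_{0}^{\infty}\mathrm{e}^{-\lambda\t}\,\mathrm{e}^{\t\mathscr{L}^{\mathrm{free},\mathbb{L}}_{N}}\mathfrak{f}\,\d\t,
\end{align*}
I only need pointwise control of $\|\mathrm{e}^{\t\mathscr{L}^{\mathrm{free},\mathbb{L}}_{N}}\mathfrak{f}\|_{\infty}$, which I will obtain by splitting into a short-time and long-time regime.

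\textbf{Spectral gap step.} Next I will establish $\mathrm{L}^{2}$-decay at rate $cN^{2}|\mathbb{L}|^{-2}$. The symmetric part $\mathscr{L}_{N,\mathrm{S}}$ is reversible for $\mathbb{P}^{\sigma,\mathbb{L}}$ with the standard canonical-ensemble spectral gap (a consequence of the log-Sobolev bound in \cite{Yau}), so $-\E^{\sigma,\mathbb{L}}\mathfrak{g}\mathscr{L}_{N,\mathrm{S}}\mathfrak{g}\gtrsim N^{2}|\mathbb{L}|^{-2}\|\mathfrak{g}\|_{2}^{2}$ for mean-zero $\mathfrak{g}$. For the driven part I will verify that $\mathscr{L}^{\mathrm{free},\mathbb{L}}_{N,\mathrm{A}}$ is anti-self-adjoint on $\mathrm{L}^{2}(\mathbb{P}^{\sigma,\mathbb{L}})$. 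Using that $\mathbb{P}^{\sigma,\mathbb{L}}$ is invariant under each swap $\eta\mapsto\eta^{\x,\x+1}$, that the coefficient $\phi_{\x}[\eta]:=\mathbf{1}_{\eta_{\x}=-1,\eta_{\x+1}=1}-\mathbf{1}_{\eta_{\x}=1,\eta_{\x+1}=-1}$ flips sign under this swap, and crucially the current identity $\sum_{\x\in\mathbb{L}}\phi_{\x}[\eta]=0$ on the periodic lattice $\mathbb{L}$, a direct calculation reduces $\E^{\sigma,\mathbb{L}}\mathfrak{g}\mathscr{L}^{\mathrm{free},\mathbb{L}}_{N,\mathrm{A}}\mathfrak{g}$ to $-\tfrac{1}{2} N^{3/2}\E^{\sigma,\mathbb{L}}\{\mathfrak{g}^{2}\sum_{\x}\phi_{\x}\}=0$. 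Since mean zero is preserved by the semigroup, this yields
\begin{align*}
\|\mathrm{e}^{\t\mathscr{L}^{\mathrm{free},\mathbb{L}}_{N}}\mathfrak{f}\|_{2}\leq\mathrm{e}^{-cN^{2}|\mathbb{L}|^{-2}\t}\|\mathfrak{f}\|_{2} \qquad \text{for all } \t\geq 0.
\end{align*}

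\textbf{Combining the bounds.} Finally I will combine this with convexity. Convexity of $x\mapsto|x|^{p}$ (the Markov $\mathrm{L}^{p}$-contraction) gives the trivial pointwise bound $\|\mathrm{e}^{\t\mathscr{L}^{\mathrm{free},\mathbb{L}}_{N}}\mathfrak{f}\|_{\infty}\leq\|\mathfrak{f}\|_{\infty}$ for all $\t\geq 0$. For large $\t$ I will convert $\mathrm{L}^{2}$ to $\mathrm{L}^{\infty}$ via the crude pointwise inequality $\|g\|_{\infty}\leq(\min_{\eta}\mathbb{P}^{\sigma,\mathbb{L}}[\eta])^{-1/2}\|g\|_{2}\leq 2^{|\mathbb{L}|/2}\|g\|_{2}$; the exponential loss here is affordable because the spectral-gap decay absorbs it. Picking $T_{\star}\sim|\mathbb{L}|^{3}/N^{2}$ so that $2^{|\mathbb{L}|/2}\mathrm{e}^{-cN^{2}|\mathbb{L}|^{-2}T_{\star}}=1$ and splitting the $\t$-integral at $T_{\star}$ gives
\begin{align*}
\|\mathfrak{u}\|_{\infty}\leq\int_{0}^{T_{\star}}\|\mathfrak{f}\|_{\infty}\d\t+\int_{T_{\star}}^{\infty}2^{|\mathbb{L}|/2}\mathrm{e}^{-cN^{2}|\mathbb{L}|^{-2}\t}\|\mathfrak{f}\|_{\infty}\d\t\lesssim \Bigl(T_{\star}+\tfrac{|\mathbb{L}|^{2}}{N^{2}}\Bigr)\|\mathfrak{f}\|_{\infty}\lesssim\tfrac{|\mathbb{L}|^{3}}{N^{2}}\|\mathfrak{f}\|_{\infty},
\end{align*}
which is $\lesssim N^{\mathrm{O}(1)}\|\mathfrak{f}\|_{\infty}$ uniformly in $\lambda\geq 0$ because $|\mathbb{L}|\lesssim N^{\mathrm{O}(1)}$. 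The hard step will be the anti-self-adjointness identity: it relies essentially on the periodic boundary condition on $\mathbb{L}$ through $\sum_{\x}\phi_{\x}\equiv 0$ and would fail on an open interval; everything else is a standard semigroup manipulation.
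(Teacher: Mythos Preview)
Your proof is correct and follows essentially the same approach as the paper's: resolvent-as-semigroup-integral, short-time contractivity, long-time $\mathrm{L}^{2}$ spectral-gap decay upgraded to $\mathrm{L}^{\infty}$ via the crude $\|\cdot\|_{\infty}\lesssim e^{\mathrm{O}(|\mathbb{L}|)}\|\cdot\|_{2}$. The only differences are cosmetic: you reduce to $p=\infty$ up front and optimize the splitting time to $T_{\star}\sim|\mathbb{L}|^{3}/N^{2}$, yielding the sharper bound $|\mathbb{L}|^{3}/N^{2}$, whereas the paper splits at an arbitrary $N^{D}$; and you verify the anti-self-adjointness of $\mathscr{L}^{\mathrm{free},\mathbb{L}}_{N,\mathrm{A}}$ explicitly via the periodic current identity $\sum_{\x}\phi_{\x}\equiv0$, which the paper simply asserts.
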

\begin{proof}
For $\mathfrak{t}=N^{\mathrm{D}}$ with $\mathrm{D}=\mathrm{O}(1)$ large, we have the resolvent identity
\begin{align*}
(\lambda-\mathscr{L}^{\mathrm{free},\mathbb{L}}_{N})^{-1}\mathfrak{f}&={\textstyle\int_{0}^{\infty}}\mathrm{e}^{-\lambda\r}[\mathrm{e}^{\r\mathscr{L}^{\mathrm{free},\mathbb{L}}_{N}}\mathfrak{f}]\d\r={\textstyle\int_{0}^{\mathfrak{t}}}\mathrm{e}^{-\lambda\r}[\mathrm{e}^{\r\mathscr{L}^{\mathrm{free},\mathbb{L}}_{N}}\mathfrak{f}]\d\r+{\textstyle\int_{\mathfrak{t}}^{\infty}}\mathrm{e}^{-\lambda\r}[\mathrm{e}^{\r\mathscr{L}^{\mathrm{free},\mathbb{L}}_{N}}\mathfrak{f}]\d\r.
\end{align*}
We recall that {\small$\mathscr{L}^{\mathrm{free},\mathbb{L}}_{N}$} has $\mathbb{P}^{\sigma,\mathbb{L}}$ as an invariant measure. So, it is contractive with respect to $\|\|_{p}$-norms. Since $\lambda\geq0$, we get the following estimate for the first term on the far RHS of the previous display:
\begin{align*}
\|{\textstyle\int_{0}^{\mathfrak{t}}}\mathrm{e}^{-\lambda\r}[\mathrm{e}^{\r\mathscr{L}^{\mathrm{free},\mathbb{L}}_{N}}\mathfrak{f}]\d\r\|_{p}&\leq{\textstyle\int_{0}^{\mathfrak{t}}}\|\mathrm{e}^{\r\mathscr{L}^{\mathrm{free},\mathbb{L}}_{N}}\mathfrak{f}\|_{p}\d\r\leq\mathfrak{t}\|\mathfrak{f}\|_{p}\leq N^{\mathrm{O}(1)}\|\mathfrak{f}\|_{\infty},
\end{align*}
where the last bound is by monotonicity in $p$ and $\mathfrak{t}=N^{\mathrm{O}(1)}$. Now, we claim {\small$\|\cdot\|_{\infty}\lesssim\exp\{\mathrm{O}(|\mathbb{L}|)\}\|\cdot\|_{2}$}. This follows because $\mathbb{P}^{\sigma,\mathbb{L}}$ is uniform measure on a subset of $\{\pm1\}^{\mathbb{L}}$, whose size is {\small$\lesssim\exp\{\mathrm{O}(|\mathbb{L}|)\}$}. Next, we claim that {\small$\|\exp\{\r\mathscr{L}^{\mathrm{free},\mathbb{L}}_{N}\}\mathfrak{f}\|_{2}\lesssim\exp\{-\kappa\r\}\|\mathfrak{f}\|_{2}$}. This follows because (the symmetric part of) {\small$\mathscr{L}^{\mathrm{free},\mathbb{L}}_{N}$} has a spectral gap of size $\gtrsim N^{2}|\mathbb{L}|^{-2}\gtrsim 1$, which, in turn, follows by the $N^{2}$-scaling in {\small$\mathscr{L}^{\mathrm{free},\mathbb{L}}_{N}$} and the log-Sobolev inequality with diffusive-scaling-constant in \cite{Y}. (Indeed, one can check {\small$\partial_{\r}\|\exp\{\r\mathscr{L}^{\mathrm{free},\mathbb{L}}_{N}\}\mathfrak{f}\|_{2}^{2}\leq-\kappa'N^{2}\mathfrak{D}^{\sigma,\mathbb{L}}[\exp\{\r\mathscr{L}^{\mathrm{free},\mathbb{L}}_{N}\}\mathfrak{f}]\leq-\kappa''N^{2}|\mathbb{L}|^{-2}\|\exp\{\r\mathscr{L}^{\mathrm{free},\mathbb{L}}_{N}\}\mathfrak{f}\|_{2}^{2}$}, where the spectral gap gives the last inequality, and where $\kappa',\kappa''\gtrsim1$ are universal constants. Now use Gronwall to get the desired exponential decay.) Ultimately, we get
\begin{align*}
\|{\textstyle\int_{\mathfrak{t}}^{\infty}}\mathrm{e}^{-\lambda\r}[\mathrm{e}^{\r\mathscr{L}^{\mathrm{free},\mathbb{L}}_{N}}\mathfrak{f}]\d\r\|_{\infty}&\leq{\textstyle\int_{\mathfrak{t}}^{\infty}}\|\mathrm{e}^{-\lambda\r}[\mathrm{e}^{\r\mathscr{L}^{\mathrm{free},\mathbb{L}}_{N}}\mathfrak{f}]\|_{\infty}\d\r\leq\mathrm{e}^{\mathrm{O}(|\mathbb{L}|)}{\textstyle\int_{\mathfrak{t}}^{\infty}}\|\mathrm{e}^{\r\mathscr{L}^{\mathrm{free},\mathbb{L}}_{N}}\mathfrak{f}\|_{2}\d\r\\
&\lesssim\mathrm{e}^{\mathrm{O}(|\mathbb{L}|)}{\textstyle\int_{\mathfrak{t}}^{\infty}}\mathrm{e}^{-\kappa\r}\|\mathfrak{f}\|_{2}\d\r\lesssim\mathrm{e}^{\mathrm{O}(|\mathbb{L}|)}\mathrm{e}^{-\kappa\mathfrak{t}}\|\mathfrak{f}\|_{2}\lesssim \mathrm{e}^{\mathrm{O}(|\mathbb{L}|)}\mathrm{e}^{-\kappa N^{\mathrm{D}}}\|\mathfrak{f}\|_{\infty},
\end{align*}
where the last bound is again by monotonicity of $\|\cdot\|_{p}$ in $p$ and $\mathfrak{t}=N^{\mathrm{D}}$. If we take the exponent $\mathrm{D}>0$ to be large enough, since $|\mathbb{L}|\leq N$, the last term is exponentially small in $N$. Combining this with the previous three displays completes the proof.
\end{proof}
\subsubsection{Proof of Proposition \ref{prop:kv}}
Our strategy, as in Appendix 1.6 of \cite{KL}, is based on the Ito formula to rewrite the time-integral on the LHS of \eqref{eq:kv} and control all terms by the $\mathrm{H}^{-1}$-norm of the average of $\mathfrak{f}_{i}$-terms. However, unlike Appendix 1.6 in \cite{KL}, we will need to isolate the ``free" generator {\small$\mathscr{L}^{\mathrm{free},\mathbb{L}}_{N}$} from the total generator {\small$\mathscr{L}^{\mathbb{L}}_{N}$} and deal with the fact that in principle, we do not know anything about the invariant measure of $\eta^{\mathbb{L}}$. 

First, we reduce to the case $\tau\gtrsim N^{-10}$. If $\tau\lesssim N^{-10}$, then the RHS of \eqref{eq:kv} is $\gtrsim N^{5}$, for example, since the number $n$ of disjoint subsets of $\mathbb{L}$ satisfies $n\leq|\mathbb{L}|\leq N$. But the LHS of \eqref{eq:kv} is $\mathrm{O}(1)$ since it is an average of $\mathrm{O}(1)$ objects. Now, for convenience, we set {\small$\mathfrak{A}[\eta^{\mathbb{L}}]:=n^{-1}(\mathfrak{f}_{1}[\eta^{\mathbb{L}}]+\ldots+\mathfrak{f}_{n}[\eta^{\mathbb{L}}])$} just for this proof. Fix $\lambda\asymp\tau^{-1}$. We claim
\begin{align*}
\tau^{-1}{\textstyle\int_{0}^{\tau}}\mathfrak{A}[\eta^{\mathbb{L}}_{\s}]\d\s&=\tau^{-1}{\textstyle\int_{0}^{\tau}}(\lambda-\mathscr{L}^{\mathrm{free},\mathbb{L}}_{N})(\lambda-\mathscr{L}^{\mathrm{free},\mathbb{L}}_{N})^{-1}\mathfrak{A}[\eta^{\mathbb{L}}_{\s}]\d\s\\
&=\tau^{-1}{\textstyle\int_{0}^{\tau}}\lambda(\lambda-\mathscr{L}^{\mathrm{free},\mathbb{L}}_{N})^{-1}\mathfrak{A}[\eta^{\mathbb{L}}_{\s}]\d\s-\tau^{-1}{\textstyle\int_{0}^{\tau}}\mathscr{L}^{\mathbb{L}}_{N}(\lambda-\mathscr{L}^{\mathrm{free},\mathbb{L}}_{N})^{-1}\mathfrak{A}[\eta^{\mathbb{L}}_{\s}]\d\s\\
&+\tau^{-1}{\textstyle\int_{0}^{\tau}}\mathscr{L}^{\mathfrak{d},\mathbb{L}}_{N,\mathrm{A}}(\lambda-\mathscr{L}^{\mathrm{free},\mathbb{L}}_{N})^{-1}\mathfrak{A}[\eta^{\mathbb{L}}_{\s}]\d\s.
\end{align*}
The second line follows from the first line and from {\small$\mathscr{L}^{\mathbb{L}}_{N}=\mathscr{L}^{\mathrm{free},\mathbb{L}}_{N}+\mathscr{L}^{\mathfrak{d},\mathbb{L}}_{N,\mathrm{A}}$} (see the display before Proposition \ref{prop:kv} and the display after Proposition \ref{prop:kv}). We now claim that for any $\delta>0$, we have
\begin{align}
\E\left(\tau^{-1}{\textstyle\int_{0}^{\tau}}\lambda(\lambda-\mathscr{L}^{\mathrm{free},\mathbb{L}}_{N})^{-1}\mathfrak{A}[\eta^{\mathbb{L}}_{\s}]\d\s\right)^{2}&\lesssim_{\delta}N^{\delta}N^{-2}\tau^{-1}n^{-1}\sup_{i=1,\ldots,n}|\mathbb{I}_{i}|^{2},\label{eq:kv1a}\\
\E\left(\tau^{-1}{\textstyle\int_{0}^{\tau}}\mathscr{L}^{\mathbb{L}}_{N}(\lambda-\mathscr{L}^{\mathrm{free},\mathbb{L}}_{N})^{-1}\mathfrak{A}[\eta^{\mathbb{L}}_{\s}]\d\s\right)^{2}&\lesssim_{\delta}N^{\delta}N^{-2}\tau^{-1}n^{-1}\sup_{i=1,\ldots,n}|\mathbb{I}_{i}|^{2},\label{eq:kv1b}\\
\E\left(\tau^{-1}{\textstyle\int_{0}^{\tau}}\mathscr{L}^{\mathfrak{d},\mathbb{L}}_{N,\mathrm{A}}(\lambda-\mathscr{L}^{\mathrm{free},\mathbb{L}}_{N})^{-1}\mathfrak{A}[\eta^{\mathbb{L}}_{\s}]\d\s\right)^{2}&\lesssim_{\delta}N^{\delta}N^{-2}\tau^{-1}n^{-1}\sup_{i=1,\ldots,n}|\mathbb{I}_{i}|^{2}.\label{eq:kv1c}
\end{align}
Assuming these bounds are true, Proposition \ref{prop:kv} would follow by the previous two displays. Thus, it suffices to prove each of \eqref{eq:kv1a}-\eqref{eq:kv1c}. We start with \eqref{eq:kv1a}. We claim that for some $C=\mathrm{O}(1)$ and any $\e>0$, we have
\begin{align*}
\mathrm{LHS}\eqref{eq:kv1a}&\lesssim\tau^{-1}{\textstyle\int_{0}^{\tau}}\E|\lambda(\lambda-\mathscr{L}^{\mathrm{free},\mathbb{L}}_{N})^{-1}\mathfrak{A}[\eta^{\mathbb{L}}_{\s}]|^{2}\d\s\\
&\lesssim_{\e}\tau^{-1}{\textstyle\int_{0}^{\tau}}\left\{\E^{\sigma,\mathbb{L}}|\lambda(\lambda-\mathscr{L}^{\mathrm{free},\mathbb{L}}_{N})^{-1}\mathfrak{A}|^{2+\e}\right\}^{\frac{2}{2+\e}}\d\s\\
&\lesssim\left\{N^{C\e}\lambda^{1+\e}\|\mathfrak{A}\|_{\mathrm{H}^{-1}}^{2}\right\}^{\frac{2}{2+\e}}\lesssim\left\{N^{{C}\e}N^{-2}\tau^{-1-\e}n^{-1}\sup_{i=1,\ldots,n}|\mathbb{I}_{i}|^{2}\right\}^{\frac{2}{2+\e}}.
\end{align*}
The first bound follows by Cauchy-Schwarz with respect to the $\d\s$-integration. The second bound follows from H\"{o}lder and Lemma \ref{lemma:lpprod} (which lets us change measure from the law of {\small$\eta^{\mathbb{L}}_{\s}$} to $\mathbb{P}^{\sigma,\mathbb{L}}$). To justify the third bound, we first use Lemma \ref{lemma:linftyop} and uniform boundedness of $\mathfrak{A}$ to get {\small$|\lambda(\lambda-\mathscr{L}^{\mathrm{free},\mathbb{L}}_{N})^{-1}\mathfrak{A}|^{2+\e}\lesssim\lambda^{2+\e}|(\lambda-\mathscr{L}^{\mathrm{free},\mathbb{L}}_{N})^{-1}\mathfrak{A}|^{2+\e}\lesssim N^{{C}\e}\lambda^{2+\e}|(\lambda-\mathscr{L}^{\mathrm{free},\mathbb{L}}_{N})^{-1}\mathfrak{A}|^{2}$}. We now take expectation and use Lemma \ref{lemma:l2op} to get the first bound in the last line. The last bound follows by {\small$\lambda\asymp\tau^{-1}$} and standard $\mathrm{H}^{-1}$-calculations; see Proposition 7 in \cite{GJ15}, for example. Since $n\geq1$ and $|\mathbb{I}_{i}|\geq1$ for all $i$ and {\small$\tau\gtrsim N^{-10}$}, the last term in the above display is {\small$\lesssim_{\e}N^{{C}\e}N^{-2}\tau^{-1}n^{-1}\sup_{i}|\mathbb{I}_{i}|^{2}$} for a possibly different $C=\mathrm{O}(1)$. Taking $\e$ small gives \eqref{eq:kv1a}. We now prove \eqref{eq:kv1c}. Similarly, by Cauchy-Schwarz in the $\d\s$-integration and by using H\"{o}lder and Lemma \ref{lemma:lpprod} to change measure as in our proof of \eqref{eq:kv1a}, we have (for any $\e>0$) that
\begin{align*}
\mathrm{LHS}\eqref{eq:kv1c}&\lesssim\tau^{-1}{\textstyle\int_{0}^{\tau}}\E|\mathscr{L}^{\mathfrak{d},\mathbb{L}}_{N,\mathrm{A}}(\lambda-\mathscr{L}^{\mathrm{free},\mathbb{L}}_{N})^{-1}\mathfrak{A}[\eta^{\mathbb{L}}_{\s}]|^{2}\d\s\\
&\lesssim_{\e}\tau^{-1}{\textstyle\int_{0}^{\tau}}\left\{\E^{\sigma,\mathbb{L}}|\mathscr{L}^{\mathfrak{d},\mathbb{L}}_{N,\mathrm{A}}(\lambda-\mathscr{L}^{\mathrm{free},\mathbb{L}}_{N})^{-1}\mathfrak{A}|^{2+\e}\right\}^{\frac{2}{2+\e}}.
\end{align*}
Next, we note that {\small$\mathscr{L}^{\mathfrak{d},\mathbb{L}}_{N,\mathrm{A}}$} is a bounded operator on {\small$\mathrm{L}^{\infty}(\{\pm1\}^{\mathbb{L}})\to\mathrm{L}^{\infty}(\{\pm1\}^{\mathbb{L}})$} with norm $\lesssim N^{\mathrm{O}(1)}$. Using this with Lemma \ref{lemma:linftyop} shows that {\small$\|\mathscr{L}^{\mathfrak{d},\mathbb{L}}_{N,\mathrm{A}}(\lambda-\mathscr{L}^{\mathrm{free},\mathbb{L}}_{N})^{-1}\mathfrak{A}\|_{\infty}\lesssim N^{\mathrm{O}(1)}$}. Now, we use this bound, the definition of {\small$\mathscr{L}^{\mathfrak{d},\mathbb{L}}_{N,\mathrm{A}}$}, and Cauchy-Schwarz for the $\x$-sum below to get
\begin{align*}
|\mathscr{L}^{\mathfrak{d},\mathbb{L}}_{N,\mathrm{A}}(\lambda-\mathscr{L}^{\mathrm{free},\mathbb{L}}_{N})^{-1}\mathfrak{A}|^{2+\e}&\lesssim N^{{C}\e}|\mathscr{L}^{\mathfrak{d},\mathbb{L}}_{N,\mathrm{A}}(\lambda-\mathscr{L}^{\mathrm{free},\mathbb{L}}_{N})^{-1}\mathfrak{A}|^{2}\\
&\lesssim N^{{C}\e}\left\{N\sum_{\x\in\mathbb{L}}|\mathscr{L}_{\x}(\lambda-\mathscr{L}^{\mathrm{free},\mathbb{L}}_{N})^{-1}\mathfrak{A}|\right\}^{2}\\
&\lesssim N^{{C}\e}|\mathbb{L}|N^{2}\sum_{\x\in\mathbb{L}}|\mathscr{L}_{\x}(\lambda-\mathscr{L}^{\mathrm{free},\mathbb{L}}_{N})^{-1}\mathfrak{A}|^{2}.
\end{align*}
If we take $\E^{\sigma,\mathbb{L}}$ of the last line above, we get {\small$\lesssim N^{{C}\e}|\mathbb{L}|N^{2}\mathfrak{D}^{\sigma,\mathbb{L}}[(\lambda-\mathscr{L}^{\mathrm{free},\mathbb{L}}_{N})^{-1}\mathfrak{A}]$} by \eqref{eq:dformloc}. By Lemma \ref{lemma:l2op}, this resulting upper bound is {\small$\lesssim N^{{C}\e}|\mathbb{L}|\|\mathfrak{A}\|_{\mathrm{H}^{-1}}^{2}$}. If we now combine this with the previous two displays, we get 
\begin{align*}
\mathrm{LHS}\eqref{eq:kv1c}&\lesssim_{\e}\tau^{-1}{\textstyle\int_{0}^{\tau}}\left\{N^{{C}\e}|\mathbb{L}|\|\mathfrak{A}\|_{\mathrm{H}^{-1}}^{2}\right\}^{\frac{2}{2+\e}}\d\s\lesssim\left\{N^{{C}\e}|\mathbb{L}|\|\mathfrak{A}\|_{\mathrm{H}^{-1}}^{2}\right\}^{\frac{2}{2+\e}}\\
&\lesssim\left\{N^{{C}\e}|\mathbb{L}|N^{-2}n^{-1}\sup_{i=1,\ldots,n}|\mathbb{I}_{i}|^{2}\right\}^{\frac{2}{2+\e}}
\end{align*}
where the second bound is because the time-integrand is now independent of the integration variable $\s$, and the last bound is by Proposition 7 in \cite{GJ15} as in our proof of \eqref{eq:kv1a} above. Now, since $|\mathbb{L}|\lesssim N^{1/2}\lesssim N^{4/3}\lesssim \tau^{-1}$ and $n,|\mathbb{I}_{i}|\geq1$ and {\small$\tau\gtrsim N^{-10}$}, the last term in the above display is {\small$\lesssim_{\e}N^{{C}\e}N^{-2}\tau^{-1}n^{-1}\sup_{i}|\mathbb{I}_{i}|^{2}$} for a possibly different $C=\mathrm{O}(1)$. Taking $\e>0$ small now gives \eqref{eq:kv1c}. We are left to prove \eqref{eq:kv1b}. By the Ito formula, since {\small$\mathscr{L}^{\mathbb{L}}_{N}$} is the generator for {\small$\eta^{\mathbb{L}}$}, we have 
\begin{align*}
\tau^{-1}{\textstyle\int_{0}^{\tau}}\mathscr{L}^{\mathbb{L}}_{N}(\lambda-\mathscr{L}^{\mathrm{free},\mathbb{L}}_{N})^{-1}\mathfrak{A}[\eta^{\mathbb{L}}_{\s}]\d\s&=\tau^{-1}(\lambda-\mathscr{L}^{\mathrm{free},\mathbb{L}}_{N})^{-1}\mathfrak{A}[\eta^{\mathbb{L}}_{\tau}]-\tau^{-1}(\lambda-\mathscr{L}^{\mathrm{free},\mathbb{L}}_{N})^{-1}\mathfrak{A}[\eta^{\mathbb{L}}_{0}]+\tau^{-1}\mathbf{m}_{\tau},
\end{align*}
where $\mathbf{m}_{\tau}$ is a martingale whose predictable bracket is given by (see Appendix 1.5 in \cite{KL})
\begin{align*}
[\mathbf{m}]_{\tau}={\textstyle\int_{0}^{\tau}}\left\{\mathscr{L}^{\mathbb{L}}_{N}|(\lambda-\mathscr{L}^{\mathrm{free},\mathbb{L}}_{N})^{-1}\mathfrak{A}[\eta^{\mathbb{L}}_{\s}]|^{2}-2\{(\lambda-\mathscr{L}^{\mathrm{free},\mathbb{L}}_{N})^{-1}\mathfrak{A}[\eta^{\mathbb{L}}_{\s}]\}\cdot\mathscr{L}^{\mathbb{L}}_{N}\{(\lambda-\mathscr{L}^{\mathrm{free},\mathbb{L}}_{N})^{-1}\mathfrak{A}[\eta^{\mathbb{L}}_{\s}]\}\right\}\d\s.
\end{align*}
The term in curly brackets is non-negative (it is the time-derivative of a predictable bracket, or alternatively, it is a Carre-du-Champ operator). So, to prove \eqref{eq:kv1b} and thereby complete the proof of the entire proposition, it suffices to prove the following two estimates for any $\delta>0$:
\begin{align}
\sup_{0\leq\t\leq\tau}\E|\tau^{-1}(\lambda-\mathscr{L}^{\mathrm{free},\mathbb{L}}_{N})^{-1}\mathfrak{A}[\eta^{\mathbb{L}}_{\t}]|^{2}&\lesssim_{\delta}N^{\delta}N^{-2}\tau^{-1}n^{-1}\sup_{i=1,\ldots,n}|\mathbb{I}_{i}|^{2},\label{eq:kv2a}\\
\E|\tau^{-1}\mathbf{m}_{\tau}|^{2}&\lesssim_{\delta}N^{\delta}N^{-2}\tau^{-1}n^{-1}\sup_{i=1,\ldots,n}|\mathbb{I}_{i}|^{2}.\label{eq:kv2b}
\end{align}
We start with \eqref{eq:kv2a}. We first claim the following calculation (which we justify afterwards):
\begin{align*}
\E|\tau^{-1}(\lambda-\mathscr{L}^{\mathrm{free},\mathbb{L}}_{N})^{-1}\mathfrak{A}[\eta^{\mathbb{L}}_{\t}]|^{2}&\lesssim_{\e}\{\E^{\sigma,\mathbb{L}}|\tau^{-1}(\lambda-\mathscr{L}^{\mathrm{free},\mathbb{L}}_{N})^{-1}\mathfrak{A}|^{2+\e}\}^{\frac{2}{2+\e}}\\
&\lesssim\{\E^{\sigma,\mathbb{L}}|\tau^{-1}(\lambda-\mathscr{L}^{\mathrm{free},\mathbb{L}}_{N})^{-1}\mathfrak{A}|^{2}\}^{\frac{2}{2+\e}}\\
&\lesssim\{\tau^{-1}\|\mathfrak{A}\|_{\mathrm{H}^{-1}}^{2}\}^{\frac{2}{2+\e}}\lesssim\{N^{-2}\tau^{-1}n^{-1}{\textstyle\sup_{i}}|\mathbb{I}_{i}|^{2}\}^{\frac{2}{2+\e}}
\end{align*}
The first bound follows by H\"{o}lder and Lemma \ref{lemma:lpprod} as before to change measure. The second line follows because the operator norm of {\small$(\lambda-\mathscr{L}^{\mathrm{free},\mathbb{L}}_{N})^{-1}:\mathrm{L}^{\infty}(\{\pm1\}^{\mathbb{L}})\to\mathrm{L}^{\infty}(\{\pm1\}^{\mathbb{L}})$} is $\lesssim\lambda^{-1}$. (This bound is a standard resolvent estimate for Markov generators; it can be deduced from the proof of Lemma \ref{lemma:linftyop} if we only use contractivity of {\small$\exp\{\r\mathscr{L}^{\mathrm{free},\mathbb{L}}_{N}\}:\mathrm{L}^{\infty}(\{\pm1\}^{\mathbb{L}})\to\mathrm{L}^{\infty}(\{\pm1\}^{\mathbb{L}})$} and integrate $\exp\{-\lambda\r\}\d\r$ over $\r\geq0$.) In particular, this implies that {\small$|\tau^{-1}(\lambda-\mathscr{L}^{\mathrm{free},\mathbb{L}}_{N})^{-1}\mathfrak{A}|^{2+\e}\lesssim|\tau^{-1}(\lambda-\mathscr{L}^{\mathrm{free},\mathbb{L}}_{N})^{-1}\mathfrak{A}|^{2}$} since $\tau\asymp\lambda^{-1}$. The last line follows by Lemma \ref{lemma:l2op} and $\tau\asymp\lambda^{-1}$, and then by the $\mathrm{H}^{-1}$-estimate from Proposition 7 of \cite{GJ15} as used earlier in this proof. Since $\e>0$ is arbitrary, we can take it small enough so that \eqref{eq:kv2a} follows. We now prove \eqref{eq:kv2b}. Let {\small$\Gamma_{1}[\eta^{\mathbb{L}}_{\s}]$} be the integrand in $[\mathbf{m}]_{\tau}$. (This is the usual notation for Carre-du-Champ.) We now claim (for some $C=\mathrm{O}(1)$ and any $\e>0$) that
\begin{align*}
\E|\tau^{-1}\mathbf{m}_{\tau}|^{2}&\lesssim\tau^{-2}\E[\mathbf{m}]_{\tau}\lesssim\tau^{-2}{\textstyle\int_{0}^{\tau}}\E\Gamma_{1}[\eta^{\mathbb{L}}_{\s}]\d\s\lesssim_{\e}\tau^{-2}{\textstyle\int_{0}^{\tau}}\left\{\E^{\sigma,\mathbb{L}}\Gamma_{1}^{1+\e}\right\}^{\frac{2}{2+\e}}\d\s\lesssim N^{{C}\e}\tau^{-1}\left\{\E^{\sigma,\mathbb{L}}\Gamma_{1}\right\}^{\frac{2}{2+\e}}.
\end{align*}
The first two bounds are by the Ito isometry (or definition of the predictable bracket) and formula for {\small$[\mathbf{m}]_{\tau}$}. The third bound is by H\"{o}lder and Lemma \ref{lemma:lpprod} as before. To get the last bound, first note that {\small$\E^{\sigma,\mathbb{L}}\Gamma_{1}^{1+\e}$} is independent of the $\s$-integration variable, and the $\tau^{-2}$ becomes $\tau^{-1}$ after integrating. Then, note that {\small$\Gamma\lesssim N^{\mathrm{O}(1)}$}; this follows because all $\mathscr{L}$-operators appearing in $\Gamma_{1}$ are bounded in operator norm (as maps on {\small$\mathrm{L}^{\infty}(\{\pm1\}^{\mathbb{L}})$}) by $N^{\mathrm{O}(1)}$, and by resolvent bounds in Lemma \ref{lemma:linftyop}. This gives {\small$\Gamma_{1}^{\e}\lesssim N^{{C}\e}$}, and the last bound above follows. We now write
\begin{align}
\Gamma_{1}&:=\Gamma_{1,\mathrm{free}}+\Gamma_{1,\mathfrak{d}},\label{eq:kv3a}\\
\Gamma_{1,\mathrm{free}}&:=\mathscr{L}^{\mathrm{free},\mathbb{L}}_{N}|(\lambda-\mathscr{L}^{\mathrm{free},\mathbb{L}}_{N})^{-1}\mathfrak{A}|^{2}-2\{(\lambda-\mathscr{L}^{\mathrm{free},\mathbb{L}}_{N})^{-1}\mathfrak{A}\}\cdot\mathscr{L}^{\mathrm{free},\mathbb{L}}_{N}\{(\lambda-\mathscr{L}^{\mathrm{free},\mathbb{L}}_{N})^{-1}\mathfrak{A}\},\label{eq:kv3b}\\
\Gamma_{1,\mathfrak{d}}&:=\mathscr{L}^{\mathfrak{d},\mathbb{L}}_{N,\mathrm{A}}|(\lambda-\mathscr{L}^{\mathrm{free},\mathbb{L}}_{N})^{-1}\mathfrak{A}|^{2}-2\{(\lambda-\mathscr{L}^{\mathrm{free},\mathbb{L}}_{N})^{-1}\mathfrak{A}\}\cdot\mathscr{L}^{\mathfrak{d},\mathbb{L}}_{N,\mathrm{A}}\{(\lambda-\mathscr{L}^{\mathrm{free},\mathbb{L}}_{N})^{-1}\mathfrak{A}\}.\label{eq:kv3c}
\end{align}
This follows just by the same decomposition {\small$\mathscr{L}^{\mathbb{L}}_{N}=\mathscr{L}^{\mathrm{free},\mathbb{L}}_{N}+\mathscr{L}^{\mathfrak{d},\mathbb{L}}_{N,\mathrm{A}}$} that we wrote before \eqref{eq:kv1a}. Let us control {\small$\E^{\sigma,\mathbb{L}}\Gamma_{1,\mathrm{free}}$}. We use an argument from the proof of Proposition 6.1 in Appendix 1 of \cite{KL} (which is, at this point, standard). Let us briefly explain it. Because {\small$\mathscr{L}^{\mathrm{free},\mathbb{L}}_{N}$} has $\mathbb{P}^{\sigma,\mathbb{L}}$ as an invariant measure, when we take $\E^{\sigma,\mathbb{L}}$ of the first term on the RHS of \eqref{eq:kv3b}, we get $0$. When we take $\E^{\sigma,\mathbb{L}}$ of the last term in \eqref{eq:kv3b}, we can replace {\small$\mathscr{L}^{\mathrm{free},\mathbb{L}}_{N}$} therein with its symmetric part {\small$\mathscr{L}^{\mathbb{L}}_{N,\mathrm{S}}$}. Thus, when we take $\E^{\sigma,\mathbb{L}}$ of the last term in \eqref{eq:kv3b}, we get a Dirichlet form scaled by the speed $N^{2}$ of {\small$\mathscr{L}^{\mathbb{L}}_{N,\mathrm{S}}$}. In particular, we get a bound of {\small$\lesssim N^{2}\mathfrak{D}[(\lambda-\mathscr{L}^{\mathrm{free},\mathbb{L}}_{N})^{-1}\mathfrak{A}]$}, which we can bound by {\small$\lesssim\|\mathfrak{A}\|_{\mathrm{H}^{-1}}^{2}$} via Lemma \ref{lemma:l2op}. Ultimately, we get
\begin{align}
\E^{\sigma,\mathbb{L}}|\Gamma_{1,\mathrm{free}}|&\lesssim\|\mathfrak{A}\|_{\mathrm{H}^{-1}}^{2}\lesssim N^{-2}n^{-1}\sup_{i=1,\ldots,n}|\mathbb{I}_{i}|^{2},\label{eq:kv4}
\end{align}
where the last inequality above is, again, by Proposition 7 in \cite{GJ15}. We now control $\Gamma_{1,\mathfrak{d}}$. To this end, to make the presentation clearer, for this argument only, we set {\small$\mathscr{R}_{\lambda}:=(\lambda-\mathscr{L}^{\mathrm{free},\mathbb{L}}_{N})^{-1}\mathfrak{A}$} as functions on $\{\pm1\}^{\mathbb{L}}$. By definition of {\small$\mathscr{L}^{\mathfrak{d},\mathbb{L}}_{N,\mathrm{A}}$} (see the proof of Lemma \ref{lemma:lpprod}) and the inequality $|a^{2}-b^{2}|\lesssim N|a-b|^{2}+N^{-1}|a+b|^{2}$, we have 
\begin{align*}
|\mathscr{L}^{\mathfrak{d},\mathbb{L}}_{N,\mathrm{A}}\mathscr{R}_{\lambda}^{2}|&\lesssim N\sum_{\x\in\mathbb{L}}|\mathscr{R}_{\lambda}[\eta^{\x,\x+1}]^{2}-\mathscr{R}_{\lambda}[\eta]^{2}|\lesssim N^{2}\sum_{\x\in\mathbb{L}}|\mathscr{R}_{\lambda}[\eta^{\x,\x+1}]-\mathscr{R}_{\lambda}[\eta]|^{2}+\sum_{\x\in\mathbb{L}}|\mathscr{R}_{\lambda}[\eta^{\x,\x+1}]+\mathscr{R}_{\lambda}[\eta]|^{2}.
\end{align*}
Take the first term on the far RHS above. When we take $\E^{\sigma,\mathbb{L}}$ of it, we get {\small$\lesssim N^{2}\mathfrak{D}^{\sigma,\mathbb{L}}[(\lambda-\mathscr{L}^{\mathrm{free},\mathbb{L}}_{N})^{-1}\mathfrak{A}]$}, and by Lemma \ref{lemma:l2op}, this is {\small$\lesssim\|\mathfrak{A}\|_{\mathrm{H}^{-1}}^{2}$}. Now take the last term in the previous display. When we take $\E^{\sigma,\mathbb{L}}$ of it, because $\mathbb{P}^{\sigma,\mathbb{L}}$ is invariant under spin-swaps, we just get {\small$\lesssim|\mathbb{L}|\E^{\sigma,\mathbb{L}}|(\lambda-\mathscr{L}^{\mathrm{free},\mathbb{L}}_{N})^{-1}\mathfrak{A}|^{2}$}, which is {\small$\lesssim|\mathbb{L}|\tau\|\mathfrak{A}\|_{\mathrm{H}^{-1}}^{2}$} by Lemma \ref{lemma:l2op} and $\lambda\asymp\tau^{-1}$. Ultimately, since $|\mathbb{L}|\lesssim N^{1/2}$ and $\tau\lesssim N^{-4/3}$ by assumption in Proposition \ref{prop:kv}, we have
\begin{align}
\E^{\sigma,\mathbb{L}}|\mathscr{L}^{\mathfrak{d},\mathbb{L}}_{N,\mathrm{A}}[(\lambda-\mathscr{L}^{\mathrm{free},\mathbb{L}}_{N})^{-1}\mathfrak{A}]^{2}|\lesssim\|\mathfrak{A}\|_{\mathrm{H}^{-1}}^{2}\lesssim N^{-2}n^{-1}\sup_{i=1,\ldots,n}|\mathbb{I}_{i}|^{2},\label{eq:kv5a}
\end{align}
where the last bound is again by Proposition 7 of \cite{GJ15}. Next, we claim
\begin{align*}
|\mathscr{R}_{\lambda}\mathscr{L}^{\mathfrak{d},\mathbb{L}}_{N,\mathrm{A}}\mathscr{R}_{\lambda}|&\lesssim N\sum_{\x\in\mathbb{L}}|\mathscr{R}_{\lambda}[\eta]||\mathscr{R}_{\lambda}[\eta^{\x,\x+1}]-\mathscr{R}_{\lambda}[\eta]|\lesssim |\mathbb{L}|\mathscr{R}_{\lambda}[\eta]^{2}+N^{2}\sum_{\x\in\mathbb{L}}|\mathscr{R}_{\lambda}[\eta^{\x,\x+1}]-\mathscr{R}_{\lambda}[\eta]|^{2}.
\end{align*}
The first bound above is by definition of {\small$\mathscr{L}^{\mathfrak{d},\mathbb{L}}_{N,\mathrm{A}}$} (see the proof of Lemma \ref{lemma:lpprod}) and triangle inequality. The second bound is Schwarz, i.e. the upper bound {\small$|ab|\lesssim N^{-1}|a|^{2}+N|b|^{2}$}. When we take $\E^{\sigma,\mathbb{L}}$ of the first term on the RHS, we get {\small$\lesssim|\mathbb{L}|\E^{\sigma,\mathbb{L}}|(\lambda-\mathscr{L}^{\mathrm{free},\mathbb{L}}_{N})^{-1}\mathfrak{A}|^{2}\lesssim|\mathbb{L}|\tau\|\mathfrak{A}\|_{\mathrm{H}^{-1}}^{2}$} by Lemma \ref{lemma:l2op} and by {\small$\lambda\asymp\tau^{-1}$}. This is {\small$\lesssim\|\mathfrak{A}\|_{\mathrm{H}^{-1}}^{2}$}, because $|\mathbb{L}|\lesssim N^{1/2}$ and $\tau\lesssim N^{-4/3}$ by assumption. If we take $\E^{\sigma,\mathbb{L}}$ on the last term on the RHS above, we again get {\small$\lesssim N^{2}\mathfrak{D}^{\sigma,\mathbb{L}}[(\lambda-\mathscr{L}^{\mathrm{free},\mathbb{L}}_{N})^{-1}\mathfrak{A}]$}, which by Lemma \ref{lemma:l2op} is {\small$\lesssim\|\mathfrak{A}\|_{\mathrm{H}^{-1}}^{2}$}. Ultimately, we get 
\begin{align*}
\E^{\sigma,\mathbb{L}}|\{(\lambda-\mathscr{L}^{\mathrm{free},\mathbb{L}}_{N})^{-1}\mathfrak{A}\}\cdot\mathscr{L}^{\mathfrak{d},\mathbb{L}}_{N,\mathrm{A}}\{(\lambda-\mathscr{L}^{\mathrm{free},\mathbb{L}}_{N})^{-1}\mathfrak{A}\}|\lesssim\|\mathfrak{A}\|_{\mathrm{H}^{-1}}^{2}\lesssim N^{-2}n^{-1}\sup_{i=1,\ldots,n}|\mathbb{I}_{i}|^{2},
\end{align*}
where the last bound is by Proposition 7 in \cite{GJ15}. If we combine this with \eqref{eq:kv3c} and \eqref{eq:kv5a}, we get {\small$\E^{\sigma,\mathbb{L}}|\Gamma_{1,\mathfrak{d}}|\lesssim N^{-2}n^{-1}\sup_{i}|\mathbb{I}_{i}|^{2}$}. If we combine this with \eqref{eq:kv3a} and \eqref{eq:kv4}, we get {\small$\E^{\sigma,\mathbb{L}}\Gamma_{1}\lesssim N^{-2}n^{-1}\sup_{i}|\mathbb{I}_{i}|^{2}$} and thus
\begin{align*}
\E|\tau^{-1}\mathbf{m}_{\tau}|^{2}&\lesssim_{\e}N^{C\e}\tau^{-1}\left\{N^{-2}n^{-1}\sup_{i=1,\ldots,n}|\mathbb{I}_{i}|^{2}\right\}^{\frac{2}{2+\e}}, \quad\mathrm{where} \ C=\mathrm{O}(1).
\end{align*}
Since $n,|\mathbb{I}_{i}|\geq1$ and {\small$\tau\gtrsim N^{-10}$}, the RHS of the above display is {\small$\lesssim N^{{C\e}}N^{-2}\tau^{-1}n^{-1}\sup_{i}|\mathbb{I}_{i}|^{2}$}. If we take $\e>0$ small, we get \eqref{eq:kv2b}. Thus, we have \eqref{eq:kv2a}-\eqref{eq:kv2b}, which completes the proof. \qed
\begin{remark}\label{remark:kvim}
\fsp Let us extend Proposition \ref{prop:kv} to get key ingredients for deriving \eqref{eq:kpz} when the speed of $\mathfrak{d}[\cdot]$ is $N^{\alpha}$ with $\alpha\in[1,3/2]$. In particular, for this remark only, we assume more generally that the speed of $\mathfrak{d}[\cdot]$ is $\mathrm{O}(N^{\alpha})$. As this remark will not be used in the proof of any results in this paper, the reader is invited to skip it, at least in a first reading. (For this reason also, we avoid a completely detailed discussion and focus on the main points.)

Set $p=2^{q}$ with $q\geq1$ an integer and {\small$\wt{\mathfrak{p}}^{\mathbb{L}}_{\t}:=\mathfrak{p}^{\mathbb{L}}_{\t}-1$}. As in the beginning of the proof of Lemma \ref{lemma:lpprod}, we have
\begin{align*}
\tfrac{\d}{\d\t}\E^{\sigma,\mathbb{L}}(\wt{\mathfrak{p}}^{\mathbb{L}}_{\t})^{p}&=p\E^{\sigma,\mathbb{L}}(\wt{\mathfrak{p}}^{\mathbb{L}}_{\t})^{p-1}\partial_{\t}\wt{\mathfrak{p}}^{\mathbb{L}}_{\t}=p\E^{\sigma,\mathbb{L}}(\wt{\mathfrak{p}}^{\mathbb{L}}_{\t})^{p-1}\partial_{\t}{\mathfrak{p}}^{\mathbb{L}}_{\t}\\
&=p\E^{\sigma,\mathbb{L}}\mathfrak{p}^{\mathbb{L}}_{\t}\mathscr{L}^{\mathbb{L}}_{N,\mathrm{S}}(\wt{\mathfrak{p}}^{\mathbb{L}}_{\t})^{p-1}+p\E^{\sigma,\mathbb{L}}\mathfrak{p}^{\mathbb{L}}_{\t}\mathscr{L}^{\mathrm{free},\mathbb{L}}_{N,\mathrm{A}}(\wt{\mathfrak{p}}^{\mathbb{L}}_{\t})^{p-1}+p\E^{\sigma,\mathbb{L}}\mathfrak{p}^{\mathbb{L}}_{\t}\mathscr{L}^{\mathfrak{d},\mathbb{L}}_{N,\mathrm{A}}(\wt{\mathfrak{p}}^{\mathbb{L}}_{\t})^{p-1},
\end{align*}
where we refer to the proof of Lemma \ref{lemma:lpprod} for notation. First, note {\small$\E^{\sigma,\mathbb{L}}\mathfrak{p}^{\mathbb{L}}_{\t}\mathscr{L}^{\mathbb{L}}_{N,\mathrm{S}}(\wt{\mathfrak{p}}^{\mathbb{L}}_{\t})^{p-1}=\E^{\sigma,\mathbb{L}}\wt{\mathfrak{p}}^{\mathbb{L}}_{\t}\mathscr{L}^{\mathbb{L}}_{N,\mathrm{S}}(\wt{\mathfrak{p}}^{\mathbb{L}}_{\t})^{p-1}$}, since {\small$\mathscr{L}^{\mathbb{L}}_{N,\mathrm{S}}$} is self-adjoint with respect to $\mathbb{P}^{\sigma,\mathbb{L}}$ and thus its adjoint is a generator that vanishes on constants. We also have {\small$\E^{\sigma,\mathbb{L}}\mathfrak{p}^{\mathbb{L}}_{\t}\mathscr{L}^{\mathbb{L}}_{N,\mathrm{A}}(\wt{\mathfrak{p}}^{\mathbb{L}}_{\t})^{p-1}=\E^{\sigma,\mathbb{L}}\wt{\mathfrak{p}}^{\mathbb{L}}_{\t}\mathscr{L}^{\mathbb{L}}_{N,\mathrm{S}}(\wt{\mathfrak{p}}^{\mathbb{L}}_{\t})^{p-1}$}, since {\small$\mathscr{L}^{\mathbb{L}}_{N,\mathrm{A}}$} also has an adjoint that vanishes on constants. Thus, if we follow the proof of Lemma \ref{lemma:lpprod}, we obtain the estimates below for some $\kappa\gtrsim1$ (which may change from line to line but will always remain uniformly bounded and positive):
\begin{align*}
p\E^{\sigma,\mathbb{L}}\mathfrak{p}^{\mathbb{L}}_{\t}\mathscr{L}^{\mathbb{L}}_{N,\mathrm{S}}(\wt{\mathfrak{p}}^{\mathbb{L}}_{\t})^{p-1}&\leq-\kappa N^{2}\mathfrak{D}^{\sigma,\mathbb{L}}[(\wt{\mathfrak{p}}^{\mathbb{L}}_{\t})^{p/2}]\\
|\E^{\sigma,\mathbb{L}}\wt{\mathfrak{p}}^{\mathbb{L}}_{\t}\mathscr{L}^{\mathrm{free},\mathbb{L}}_{N,\mathrm{A}}(\wt{\mathfrak{p}}^{\mathbb{L}}_{\t})^{p-1}|&\lesssim_{p}N^{\frac32}\mathfrak{D}^{\sigma,\mathbb{L}}[(\wt{\mathfrak{p}}^{\mathbb{L}}_{\t})^{p/2}].
\end{align*}
Similarly, if we follow the proof of Lemma \ref{lemma:lpprod}, then we get the following for any $\delta>0$ small:
\begin{align*}
|p\E^{\sigma,\mathbb{L}}\mathfrak{p}^{\mathbb{L}}_{\t}\mathscr{L}^{\mathfrak{d},\mathbb{L}}_{N,\mathrm{A}}(\wt{\mathfrak{p}}^{\mathbb{L}}_{\t})^{p-1}|&\lesssim_{p}\delta^{-1}N^{2\alpha-2}|\mathbb{L}|\E^{\sigma,\mathbb{L}}|\mathfrak{p}^{\mathbb{L}}_{\t}|^{p}+\delta N^{2}\mathfrak{D}^{\sigma,\mathbb{L}}[(\wt{\mathfrak{p}}^{\mathbb{L}}_{\t})^{p/2}].
\end{align*}
(Indeed, in the upper bound on {\small$\E^{\sigma,\mathbb{L}}\mathfrak{p}^{\mathbb{L}}_{\t}\mathscr{L}^{\mathfrak{d},\mathbb{L}}_{N,\mathrm{A}}(\wt{\mathfrak{p}}^{\mathbb{L}}_{\t})^{p-1}$} in the proof of Lemma \ref{lemma:lpprod}, the same argument works exactly, except the prefactor of $N$ there is replaced by $N^{\alpha}$, and we use Schwarz to give the Dirichlet form term $N^{2}$ and the $\mathrm{L}^{p}$-norm $N^{2\alpha-2}$.) Next, by the spectral gap for $\mathbb{P}^{\sigma,\mathbb{L}}$, we get {\small$\mathfrak{D}^{\sigma,\mathbb{L}}[(\wt{\mathfrak{p}}^{\mathbb{L}}_{\t})^{p/2}]\gtrsim|\mathbb{L}|^{-2}\E^{\sigma,\mathbb{L}}|(\wt{\mathfrak{p}}^{\mathbb{L}}_{\t})^{p/2}-\E^{\sigma,\mathbb{L}}(\wt{\mathfrak{p}}^{\mathbb{L}}_{\t})^{p/2}|^{2}$}. If we now let $|\mathbb{L}|\lesssim N^{\beta}$ with $\beta>0$ to be determined shortly, we ultimately get
\begin{align*}
\tfrac{\d}{\d\t}\E^{\sigma,\mathbb{L}}(\wt{\mathfrak{p}}^{\mathbb{L}}_{\t})^{p}&\leq-\kappa N^{2}\mathfrak{D}^{\sigma,\mathbb{L}}[(\wt{\mathfrak{p}}^{\mathbb{L}}_{\t})^{p/2}]+\mathrm{O}_{p}(N^{2\alpha-2}|\mathbb{L}|\E^{\sigma,\mathbb{L}}|\mathfrak{p}^{\mathbb{L}}_{\t}|^{p})\\
&\leq-\kappa N^{2}\mathfrak{D}^{\sigma,\mathbb{L}}[(\wt{\mathfrak{p}}^{\mathbb{L}}_{\t})^{p/2}]+\mathrm{O}_{p}(N^{2\alpha-2+\beta}\E^{\sigma,\mathbb{L}}|\wt{\mathfrak{p}}^{\mathbb{L}}_{\t}|^{p})+\mathrm{O}_{p}(N^{2\alpha-2+\beta})\\
&\leq-\kappa N^{2-2\beta}\E^{\sigma,\mathbb{L}}|\wt{\mathfrak{p}}^{\mathbb{L}}_{\t}|^{p}+\mathrm{O}(N^{2-2\beta}|\E^{\sigma,\mathbb{L}}(\wt{\mathfrak{p}}^{\mathbb{L}}_{\t})^{p/2}|^{2})+\mathrm{O}_{p}(N^{2\alpha-2+\beta}\E^{\sigma,\mathbb{L}}|\wt{\mathfrak{p}}^{\mathbb{L}}_{\t}|^{p})+\mathrm{O}_{p}(N^{2\alpha-2+\beta}).
\end{align*}
Now, if {\small$\beta<\frac13(4-2\alpha)$}, the first term in the last line dominates the third. If we then define {\small$m_{\t,p}:=\E^{\sigma,\mathbb{L}}(\wt{\mathfrak{p}}^{\mathbb{L}}_{\t})^{p}$}, we get the following by the previous display and Gronwall:
\begin{align*}
m_{\t,p}&\lesssim_{p}{\textstyle\int_{0}^{\t}}\exp\left\{-\kappa N^{2-2\beta}(\t-\s)\right\}\left(N^{2-2\beta}|m_{\s,p/2}|^{2}+N^{2\alpha-2+\beta}\right)\d\s\\
&\lesssim\sup_{\s\geq0}|m_{\s,p/2}|^{2}+N^{2\alpha+3\beta-4}.
\end{align*}
Since the second line above does not depend on $\t\geq0$, we have the following in which $m_{p}:=\sup_{\t\geq0}m_{\t,p}$:
\begin{align*}
m_{p}\lesssim_{p} m_{p/2}^{2}+N^{2\alpha+3\beta-4}.
\end{align*}
Again, if {\small$\beta<\frac13(4-2\alpha)$}, then the last term in the previous display is $\lesssim N^{-\zeta}$ for some $\zeta>0$. In particular, as long as {\small$m_{p/2}\lesssim1$}, the previous estimate gives {\small$m_{p}\lesssim m_{p/2}+N^{-\zeta}$}; we also note {\small$m_{1}=\E^{\sigma,\mathbb{L}}[\mathfrak{p}^{\mathbb{L}}_{\t}-1]=0$}. Thus, if we iterate the bound {\small$m_{p}\lesssim m_{p/2}+N^{-\zeta}$} starting with $p/2=1$ on the RHS, then for $p\lesssim1$, we get {\small$m_{p}\lesssim N^{-\zeta}$}. So, the underlying assumption {\small$m_{p/2}\lesssim 1$} holds, and we get
\begin{align*}
\sup_{\t\geq0}\E^{\sigma,\mathbb{L}}|\mathfrak{p}^{\mathbb{L}}_{\t}-1|^{p}\lesssim_{p} N^{-\upsilon}
\end{align*}
for some $\upsilon\gtrsim1$. (We emphasize that this estimate is global in time, so it actually implies {\small$\E^{\sigma,\mathbb{L}}|\mathfrak{p}^{\mathbb{L}}_{\infty}-1|^{p}\lesssim N^{-\upsilon}$}, where {\small$\mathfrak{p}^{\mathbb{L}}_{\infty}$} is the Radon-Nikodym derivative of the invariant measure of $\eta^{\mathbb{L}}$ with respect to $\mathbb{P}^{\sigma,\mathbb{L}}$.) At this point, we can follow the dynamical Ito formula argument in the proof of Proposition \ref{prop:kv} to establish a version of Proposition \ref{prop:kv} if the speed of $\mathfrak{d}[\cdot]$ is $\mathrm{O}(N^{\alpha})$ and the size of the lattice $\mathbb{L}$ is $|\mathbb{L}|\lesssim N^{\beta}$ with {\small$\beta<\frac13(4-2\alpha)$}. (Note that $\beta$ can be strictly positive for all $\alpha<2$; our calculations in this remark are therefore perhaps useful even for the critical KPZ scaling $\alpha=3/2$. As for the case of $\alpha<5/4$, we can take $\beta=1/2+\e$ for some small $\e$. Thus, we obtain a version of Proposition \ref{prop:kv} where the length-scale of the localized system is $\gg N^{1/2+\e}$, meaning essentially that we can take $n\gg N^{1/2+\e}$ and $\tau\gg N^{-1+\e}$ therein; the constraint of $\tau$ is determined using the diffusive-scaling relation $(N^{2}\tau)^{1/2}\sim|\mathbb{L}|$, since the particles evolve diffusively. The resulting upper bound on the RHS of \eqref{eq:kv} then becomes $\lesssim N^{-3/2-\e}$. Moreover, if $\alpha<5/4$, then the functions $\mathfrak{f}_{i}$ we are interested in have a scaling factor of $N^{\alpha-1/2}\ll N^{3/4}$, which becomes $\ll N^{3/2}$ after squaring. Thus, we arrive ultimately at an error estimate of $\ll N^{3/2}N^{-3/2-\e}\ll1$, so our methods apply for all $\alpha\in[1,5/4)$.)
\end{remark}
\subsubsection{A second Kipnis-Varadhan inequality}
Adopt the same setting as in Proposition \ref{prop:kv}. The following result is meant to go beyond the factor of $|\mathbb{I}_{i}|^{2}$ on the RHS of \eqref{eq:kv}. In particular, we eventually meet functions $\mathfrak{f}_{i}$ (as in Proposition \ref{prop:kv}) that are of the product form $\mathfrak{f}_{i,1}\mathfrak{f}_{i,2}$, where $\mathfrak{f}_{i,1}$ is fluctuating with ``small support" and $\mathfrak{f}_{i,2}$ is bounded with ``large support". In this case, the corresponding factor $|\mathbb{I}_{i}|^{2}$ in \eqref{eq:kv} is bad, even if the fluctuating factor has small support. The result below cures this deficiency of Proposition \ref{prop:kv}. (This ingredient is seemingly new to this paper, e.g. it does not show up in \cite{Y23}, and it is important for a variety of technical reasons ultimately stemming from the lack of explicit invariant measures.)
\begin{prop}\label{prop:kv2}
\fsp Suppose $\mathfrak{f}_{1},\mathfrak{f}_{2}:\{\pm1\}^{\mathbb{L}}\to\R$ satisfy the following conditions:
\begin{enumerate}
\item We have $\E^{\sigma,\mathbb{L}}\mathfrak{f}_{1}=0$ for all $\sigma\in[-1,1]$ and $\mathfrak{f}_{1}[\eta],\mathfrak{f}_{2}[\eta]=\mathrm{O}(1)$ for all $\eta\in\{\pm1\}^{\mathbb{L}}$.
\item Suppose $\mathfrak{f}_{1}[\eta]$ depends only on $\eta_{\x}$ for $\x\in\mathbb{I}_{1}$ and $\mathfrak{f}_{2}[\eta]$ depends only on $\eta_{\x}\in\mathbb{I}_{2}$ for all $\eta\in\{\pm1\}^{\mathbb{L}}$, where the supports $\mathbb{I}_{1},\mathbb{I}_{2}\subseteq\mathbb{L}$ are disjoint sub-intervals.
\end{enumerate}
Now, fix $0<\tau\lesssim N^{-4/3}$ and assume that {\small$|\mathbb{L}|\lesssim N^{1/3+\e_{\mathrm{reg}}}$}. Assume that the distribution of {\small$\eta^{\mathbb{L}}_{\s}$} at time $\s=0$ is given by $\mathbb{P}^{\sigma,\mathbb{L}}$ for some $\sigma\in[-1,1]$. For any $\delta>0$, we have
\begin{align}
\E\left(\left\{\tau^{-1}\int_{0}^{\tau}\mathfrak{f}_{1}[\eta^{\mathbb{L}}_{\s}]\mathfrak{f}_{2}[\eta^{\mathbb{L}}_{\s}]\d\s\right\}^{2}\right)&\lesssim_{\delta}N^{\delta}N^{-2}\tau^{-1}|\mathbb{I}_{1}|^{2}.\label{eq:kv2}
\end{align}
\end{prop}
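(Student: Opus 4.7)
The plan is to mirror the proof of Proposition \ref{prop:kv} verbatim, setting $\mathfrak{A}:=\mathfrak{f}_1\mathfrak{f}_2$ and $n=1$. Essentially every step of that proof uses only two properties of $\mathfrak{A}$: (i) a uniform $\mathrm{L}^{\infty}$ bound, which here holds since $|\mathfrak{f}_1\mathfrak{f}_2|\lesssim 1$, and (ii) the key $\mathrm{H}^{-1}$ bound $\|\mathfrak{A}\|_{\mathrm{H}^{-1}}^{2}\lesssim N^{-2}n^{-1}\sup_i |\mathbb{I}_i|^{2}$ cited from \cite{GJ15}. So the entire task reduces to establishing the sharpened $\mathrm{H}^{-1}$ estimate
\begin{align*}
\|\mathfrak{f}_1\mathfrak{f}_2\|_{\mathrm{H}^{-1}}^{2}\;\lesssim\; N^{-2}|\mathbb{I}_1|^{2},
\end{align*}
in which only the support of the fluctuating factor $\mathfrak{f}_1$ appears. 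This is the content that makes Proposition \ref{prop:kv2} stronger than what one gets by naively applying Proposition \ref{prop:kv} to $\mathfrak{f}_1\mathfrak{f}_2$, since the latter would produce a factor $(|\mathbb{I}_1|+|\mathbb{I}_2|)^{2}$.

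To prove the sharp $\mathrm{H}^{-1}$ bound, fix an arbitrary test function $\mathfrak{b}:\{\pm1\}^{\mathbb{L}}\to\R$ and let $\hat{\mathfrak{b}}:=\E^{\sigma,\mathbb{L}}[\mathfrak{b}\,|\,\mathcal{F}_{\mathbb{L}\setminus\mathbb{I}_1}]$, where $\mathcal{F}_{\mathbb{L}\setminus\mathbb{I}_1}$ is the $\sigma$-algebra generated by $\{\eta_\x\}_{\x\notin\mathbb{I}_1}$. The crucial observation is that conditional on $\mathcal{F}_{\mathbb{L}\setminus\mathbb{I}_1}$, the restriction of $\mathbb{P}^{\sigma,\mathbb{L}}$ to $\mathbb{I}_1$ is itself a canonical measure on $\mathbb{I}_1$ with some density $\sigma'\in[-1,1]$ determined by the spins outside. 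Since $\E^{\sigma',\mathbb{I}_1}\mathfrak{f}_1=0$ for \emph{every} $\sigma'$, we have $\E^{\sigma,\mathbb{L}}[\mathfrak{f}_1\,|\,\mathcal{F}_{\mathbb{L}\setminus\mathbb{I}_1}]=0$. Because $\mathfrak{f}_2$ is $\mathcal{F}_{\mathbb{L}\setminus\mathbb{I}_1}$-measurable (as $\mathbb{I}_2\cap\mathbb{I}_1=\emptyset$), this yields $\E^{\sigma,\mathbb{L}}[\mathfrak{f}_1\mathfrak{f}_2\hat{\mathfrak{b}}]=0$, and thus
\begin{align*}
|\E^{\sigma,\mathbb{L}}[\mathfrak{f}_1\mathfrak{f}_2\mathfrak{b}]|\;=\;|\E^{\sigma,\mathbb{L}}[\mathfrak{f}_1\mathfrak{f}_2(\mathfrak{b}-\hat{\mathfrak{b}})]|\;\lesssim\;\{\E^{\sigma,\mathbb{L}}|\mathfrak{b}-\hat{\mathfrak{b}}|^{2}\}^{1/2}.
\end{align*}
Conditionally on $\mathcal{F}_{\mathbb{L}\setminus\mathbb{I}_1}$, the canonical measure on $\mathbb{I}_1$ satisfies a Poincar\'e inequality with constant $\lesssim|\mathbb{I}_1|^{2}$ for spin-swap gradients on bonds inside $\mathbb{I}_1$ (this is the standard spectral gap for SSEP on an interval; see e.g. the references in the proof of Lemma \ref{lemma:linftyop}). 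Taking expectation over $\mathcal{F}_{\mathbb{L}\setminus\mathbb{I}_1}$ and dropping bonds outside $\mathbb{I}_1$,
\begin{align*}
\E^{\sigma,\mathbb{L}}|\mathfrak{b}-\hat{\mathfrak{b}}|^{2}\;\lesssim\;|\mathbb{I}_1|^{2}\!\sum_{\x,\x+1\in\mathbb{I}_1}\!\E^{\sigma,\mathbb{L}}|\mathfrak{b}[\eta^{\x,\x+1}]-\mathfrak{b}[\eta]|^{2}\;\leq\;|\mathbb{I}_1|^{2}\mathfrak{D}^{\sigma,\mathbb{L}}[\mathfrak{b}].
\end{align*}
Combining the last two displays with $-\E^{\sigma,\mathbb{L}}\mathfrak{b}\mathscr{L}^{\mathrm{free},\mathbb{L}}_{N}\mathfrak{b}\gtrsim N^{2}\mathfrak{D}^{\sigma,\mathbb{L}}[\mathfrak{b}]$ and Young's inequality, the supremum in the definition of $\|\cdot\|_{\mathrm{H}^{-1}}^{2}$ is controlled by $\lesssim N^{-2}|\mathbb{I}_1|^{2}$, as desired.

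With the sharpened $\mathrm{H}^{-1}$ estimate in hand, one repeats the proof of Proposition \ref{prop:kv} word-for-word: the resolvent decomposition splits $\tau^{-1}\int_0^{\tau}\mathfrak{f}_1\mathfrak{f}_2[\eta^{\mathbb{L}}_{\s}]\d\s$ into three pieces bounded via \eqref{eq:kv1a}--\eqref{eq:kv1c} and the Ito-martingale argument via \eqref{eq:kv2a}--\eqref{eq:kv2b}. In each of those steps, $\|\mathfrak{A}\|_{\mathrm{H}^{-1}}^{2}$ (with the prefactor $N^{-2}n^{-1}\sup_i|\mathbb{I}_i|^{2}$) is the sole input, which we simply replace by $\|\mathfrak{f}_1\mathfrak{f}_2\|_{\mathrm{H}^{-1}}^{2}\lesssim N^{-2}|\mathbb{I}_1|^{2}$. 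The $\mathrm{L}^{p}$-stability of the density $\mathfrak{p}^{\mathbb{L}}_{\t}$ from Lemma \ref{lemma:lpprod} and the resolvent bounds of Lemmas \ref{lemma:l2op}, \ref{lemma:linftyop} do not see the change, since they bound generic functions $\mathfrak{A}$ with $\|\mathfrak{A}\|_{\infty}\lesssim 1$. The main (and only nontrivial) new obstacle is precisely the conditioning-plus-local-Poincar\'e argument above; once this is in place, everything else is a transcription of the previous proof.
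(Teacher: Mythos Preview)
Your proposal is correct and follows the paper's overall strategy exactly: reduce to the $\mathrm{H}^{-1}$ estimate $\|\mathfrak{f}_1\mathfrak{f}_2\|_{\mathrm{H}^{-1}}^{2}\lesssim N^{-2}|\mathbb{I}_1|^{2}$, then rerun the proof of Proposition \ref{prop:kv} verbatim with $\mathfrak{A}=\mathfrak{f}_1\mathfrak{f}_2$. The paper states this reduction in one line and, like you, notes that the only inputs used in the proof of Proposition \ref{prop:kv} are $\|\mathfrak{A}\|_\infty\lesssim1$ and the $\mathrm{H}^{-1}$ bound.

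The one genuine difference is in how the $\mathrm{H}^{-1}$ bound itself is established. The paper first approximates so that $\mathfrak{f}_2$ is nowhere vanishing, then in the variational formula drops all Dirichlet-form bonds outside $\mathbb{I}_1$, uses the identity $\mathscr{D}_{N,\mathbb{I}_1}[\mathfrak{b}]=\mathfrak{f}_2^{-2}\mathscr{D}_{N,\mathbb{I}_1}[\mathfrak{b}\mathfrak{f}_2]\gtrsim\mathscr{D}_{N,\mathbb{I}_1}[\mathfrak{b}\mathfrak{f}_2]$ (valid because $\mathfrak{f}_2$ is constant under swaps inside $\mathbb{I}_1$), substitutes $\mathfrak{b}'=\mathfrak{f}_2\mathfrak{b}$, projects $\mathfrak{b}'$ onto functions of $\{\eta_\x\}_{\x\in\mathbb{I}_1}$ by Dirichlet-form convexity, and then invokes Proposition 7 of \cite{GJ15}. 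Your route---subtract the conditional expectation $\hat{\mathfrak{b}}=\E^{\sigma,\mathbb{L}}[\mathfrak{b}\,|\,\mathcal{F}_{\mathbb{L}\setminus\mathbb{I}_1}]$, kill the cross term using $\E^{\sigma',\mathbb{I}_1}\mathfrak{f}_1=0$, then apply the conditional spectral gap on $\mathbb{I}_1$---is a cleaner alternative that avoids the nonvanishing approximation and the change of variables. Both arguments implicitly use that $\E^{\sigma,\mathbb{L}}\mathfrak{f}_1=0$ for all $\sigma$ forces $\E^{\sigma',\mathbb{I}_1}\mathfrak{f}_1=0$ for all $\sigma'$ (the paper uses this equally tacitly, e.g.\ in Lemma \ref{lemma:azuma}); this is straightforward since conditioning $\mathbb{P}^{\sigma,\mathbb{L}}$ on $\mathcal{F}_{\mathbb{L}\setminus\mathbb{I}_1}$ yields canonical measures on $\mathbb{I}_1$, and by taking $\sigma$ at its extremes and working inward one recovers every $\sigma'$.
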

\begin{proof}
We can assume $\mathfrak{f}_{2}$ never vanishes by a standard approximation (we will never use control on how far it is away from $0$); this is only for convenience of presentation. We follow the proof of Proposition \ref{prop:kv}, except we replace {\small$\mathfrak{A}:=n^{-1}(\mathfrak{f}_{1}+\ldots+\mathfrak{f}_{n})$} therein by $\mathfrak{f}_{1}\mathfrak{f}_{2}$ here. This gives the following for any $\e>0$, in which $C=\mathrm{O}(1)$ and we use the $\mathrm{H}^{-1}$-norm from Lemma \ref{lemma:l2op}:
\begin{align*}
\mathrm{LHS}\eqref{eq:kv2}\lesssim_{\e}N^{{C}\e}\tau^{-1}(\|\mathfrak{f}_{1}\mathfrak{f}_{2}\|_{\mathrm{H}^{-1}}^{2})^{\frac{2}{2+\e}}.
\end{align*}
In particular, to prove \eqref{eq:kv2}, it suffices to prove 
\begin{align}
\|\mathfrak{f}_{1}\mathfrak{f}_{2}\|_{\mathrm{H}^{-1}}^{2}\lesssim N^{-2}|\mathbb{I}_{1}|^{2}.\label{eq:kv2I}
\end{align}
Indeed, since $|\mathbb{I}_{1}|\geq1$, if we plug \eqref{eq:kv2I} into the RHS of the above display, we get a bound on the LHS of \eqref{eq:kv2} of {\small$\lesssim_{\e}N^{{C}\e}N^{-2}\tau^{-1}|\mathbb{I}_{1}|^{2}$}, and if we take $\e>0$ small enough, \eqref{eq:kv2} follows. So, we prove \eqref{eq:kv2I}. We note that
\begin{align}
\|\mathfrak{f}_{1}\mathfrak{f}_{2}\|_{\mathrm{H}^{-1}}^{2}&\lesssim\sup_{\mathfrak{b}:\{\pm1\}^{\mathbb{L}}\to\R}\left\{2\E^{\sigma,\mathbb{L}}\mathfrak{f}_{1}\mathfrak{f}_{2}\mathfrak{b}+\E^{\sigma,\mathbb{L}}\mathfrak{b}\mathscr{L}^{\mathbb{L}}_{N,\mathrm{S}}\mathfrak{b}\right\}.\label{eq:kv2II}
\end{align}
(For this, we use the fact that the generator {\small$\mathscr{L}^{\mathrm{free},\mathbb{L}}_{N}:=\mathscr{L}^{\mathbb{L}}_{N,\mathrm{S}}+\mathscr{L}^{\mathrm{free},\mathbb{L}}_{N,\mathrm{A}}$} is such that {\small$\mathscr{L}^{\mathbb{L}}_{N,\mathrm{S}}$} is self-adjoint and {\small$\mathscr{L}^{\mathrm{free},\mathbb{L}}_{N,\mathrm{A}}$} is anti-symmetric with respect to $\E^{\sigma,\mathbb{L}}$ integration. So, it does not matter if we have {\small$\mathscr{L}^{\mathbb{L}}_{N,\mathrm{S}}$} or {\small$\mathscr{L}^{\mathrm{free},\mathbb{L}}_{N}$} on the RHS above.) Next, we have the standard Dirichlet form computation (see Appendix 1.10 in \cite{KL}) 
\begin{align}
\E^{\sigma,\mathbb{L}}\mathfrak{b}[\eta]\mathscr{L}^{\mathbb{L}}_{N,\mathrm{S}}\mathfrak{b}[\eta]&=-\tfrac12N^{2}\sum_{\x\in\mathbb{L}}\E^{\sigma,\mathbb{L}}|\mathscr{L}_{\x}\mathfrak{b}[\eta]|^{2}\leq-\tfrac12N^{2}\sum_{\x,\x+1\in\mathbb{I}_{1}}\E^{\sigma,\mathbb{L}}|\mathscr{L}_{\x}\mathfrak{b}[\eta]|^{2}=:-\mathscr{D}_{N,\mathbb{I}_{1}}[\mathfrak{b}].\label{eq:kv2III}
\end{align}
(The last identity is just convenient notation for a Dirichlet form restricted to $\mathbb{I}_{1}$.) The last two displays now give
\begin{align}
\|\mathfrak{f}_{1}\mathfrak{f}_{2}\|_{\mathrm{H}^{-1}}^{2}&\lesssim\sup_{\mathfrak{b}:\{\pm1\}^{\mathbb{L}}\to\R}\left\{2\E^{\sigma,\mathbb{L}}\mathfrak{f}_{1}\mathfrak{f}_{2}\mathfrak{b}-\mathscr{D}_{N,\mathbb{I}_{1}}[\mathfrak{b}]\right\}.\nonumber
\end{align}
Since $\mathfrak{f}_{2}$ does not depend on $\eta_{\x}$ for any $\x\in\mathbb{I}_{1}$, we have {\small$\mathscr{D}_{N,\mathbb{I}_{1}}[\mathfrak{b}]=\mathfrak{f}_{2}^{-2}\mathscr{D}_{N,\mathbb{I}_{1}}[\mathfrak{b}\mathfrak{f}_{2}]$}; recall that $\mathfrak{f}_{2}$ never vanishes. Since {\small$\mathscr{D}_{N,\mathbb{I}_{1}}[\mathfrak{b}\mathfrak{f}_{2}]\geq0$} and $\mathfrak{f}_{2}=\mathrm{O}(1)$, we have {\small$\mathscr{D}_{N,\mathbb{I}_{1}}[\mathfrak{b}]=\mathfrak{f}_{2}^{-2}\mathscr{D}_{N,\mathbb{I}_{1}}[\mathfrak{b}\mathfrak{f}_{2}]\geq\kappa\mathscr{D}_{N,\mathbb{I}_{1}}[\mathfrak{b}\mathfrak{f}_{2}]$} for some $\kappa\gtrsim1$. Thus, 
\begin{align}
\|\mathfrak{f}_{1}\mathfrak{f}_{2}\|_{\mathrm{H}^{-1}}^{2}&\lesssim\sup_{\mathfrak{b}:\{\pm1\}^{\mathbb{L}}\to\R}\left\{2\E^{\sigma,\mathbb{L}}\mathfrak{f}_{1}\mathfrak{f}_{2}\mathfrak{b}-\kappa\mathscr{D}_{N,\mathbb{I}_{1}}[\mathfrak{b}\mathfrak{f}_{2}]\right\}.\nonumber
\end{align}
Since $\mathfrak{f}_{2}$ never vanishes, the map $\mathfrak{b}\mapsto\mathfrak{f}_{2}\mathfrak{b}$ is a bijection on the space of functions $\{\pm1\}^{\mathbb{L}}\to\R$. Thus, we have 
\begin{align*}
\|\mathfrak{f}_{1}\mathfrak{f}_{2}\|_{\mathrm{H}^{-1}}^{2}&\lesssim\sup_{\mathfrak{b}':\{\pm1\}^{\mathbb{L}}\to\R}\left\{2\E^{\sigma,\mathbb{L}}\mathfrak{f}_{1}\mathfrak{b}'-\kappa\mathscr{D}_{N,\mathbb{I}_{1}}[\mathfrak{b}']\right\}.
\end{align*}
By convexity of $\mathfrak{b}'\mapsto\mathscr{D}_{N,\mathbb{I}_{1}}[\mathfrak{b}']$ (see Appendix 1.10 in \cite{KL}), we can replace $\mathfrak{b}'$ by its conditional expectation conditioning on $\eta_{\x}$ for $\x\in\mathbb{I}_{1}$ (note also that $\mathfrak{f}_{1}$ depends only on such $\eta_{\x}$). After doing so, we can restrict the supremum on the RHS to functions $\{\pm1\}^{\mathbb{I}_{1}}\to\R$, and the previous display becomes
\begin{align*}
\|\mathfrak{f}_{1}\mathfrak{f}_{2}\|_{\mathrm{H}^{-1}}^{2}&\lesssim\sup_{\mathfrak{f}:\{\pm1\}^{\mathbb{I}}\to\R}\left\{2\E^{\sigma,\mathbb{L}}\mathfrak{f}_{1}\mathfrak{f}-\kappa\mathscr{D}_{N,\mathbb{I}_{1}}[\mathfrak{f}]\right\}.
\end{align*}
We can now follow the proof of Proposition 7 in \cite{GJ15} to get \eqref{eq:kv2I} since $\mathfrak{f}_{1}=\mathrm{O}(1)$.
\end{proof}
\subsubsection{A space-averaging estimate}
We now give a bound on spatial averages as in Proposition \ref{prop:kv} but without the time-averaging (i.e. take $\tau\to0$ in Proposition \ref{prop:kv}). It will not give as good an upper bound since we do not have time-averaging, but it also holds at a much sharper exponential scale (rather than second moment like in \eqref{eq:kv}). Also, because there is no time-averaging, there is no need to deal with the lack of invariant measures, so that Lemma \ref{lemma:azuma} actually appears in \cite{Y23} as Lemma 8.10 therein. (We prove it anyway since it is short.)
\begin{lemma}\label{lemma:azuma}
\fsp Suppose {\small$\{\mathfrak{f}_{i}\}_{i=1}^{n}$} is a collection of functions $\{\pm1\}^{\mathbb{L}}\to\R$ satisfying the following properties.
\begin{enumerate}
\item For any $\mathfrak{f}_{i}$, we have $\E^{\sigma,\mathbb{L}}\mathfrak{f}_{i}=0$ for all $\sigma\in[-1,1]$ and $\sup_{i}|\mathfrak{f}_{i}[\eta]|\lesssim1$ for all $\eta\in\{\pm1\}^{\mathbb{L}}$.
\item There exist pairwise disjoint sub-intervals {\small$\{\mathbb{I}_{i}\}_{i=1}^{n}$} in $\mathbb{L}$ such that $\mathfrak{f}_{i}[\eta]$ depends only on $\eta_{\z}$ for $\z\in\mathbb{I}_{i}$.
\end{enumerate}
Fix any $\sigma_{0}\in[-1,1]$. There exists $c>0$ universal such that for any $K>0$, we have
\begin{align}
\mathbb{P}^{\sigma_{0},\mathbb{L}}\left\{\left|\tfrac{1}{n}\sum_{i=1}^{n}\mathfrak{f}_{i}[\eta]\right|\geq Kn^{-\frac12}\right\}&\lesssim\exp\left\{-cK^{2}\right\}.\label{eq:azuma}
\end{align}
\end{lemma}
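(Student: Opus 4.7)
The plan is to reduce Lemma \ref{lemma:azuma} to the classical Azuma--Hoeffding inequality applied to a Doob martingale adapted to the disjoint support structure. Order the intervals $\{\mathbb{I}_{i}\}_{i=1}^{n}$ arbitrarily, set $\mathcal{F}_{k}:=\sigma(\eta_{z} : z \in \bigcup_{j\leq k}\mathbb{I}_{j})$ for $k=0,\ldots,n$ (with $\mathcal{F}_{0}$ trivial), and define $S:=\sum_{i=1}^{n}\mathfrak{f}_{i}$ together with $M_{k}:=\E^{\sigma_{0},\mathbb{L}}[S\mid\mathcal{F}_{k}]$. The hypothesis $\E^{\sigma,\mathbb{L}}\mathfrak{f}_{i}=0$ for all $\sigma\in[-1,1]$ gives $M_{0}=0$, since the $\mathbb{P}^{\sigma_{0},\mathbb{L}}$-marginal on each $\mathbb{I}_{i}$ is a mixture of canonical ensembles on $\mathbb{I}_{i}$ on which $\mathfrak{f}_{i}$ averages to zero, while $M_{n}=S$ by construction.

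The core step is the uniform bound $|M_{k}-M_{k-1}|\lesssim 1$. Decomposing
\begin{align*}
M_{k}-M_{k-1} = \bigl(\mathfrak{f}_{k} - \E^{\sigma_{0},\mathbb{L}}[\mathfrak{f}_{k}\mid\mathcal{F}_{k-1}]\bigr) + \sum_{j>k}\bigl(\E^{\sigma_{0},\mathbb{L}}[\mathfrak{f}_{j}\mid\mathcal{F}_{k}]-\E^{\sigma_{0},\mathbb{L}}[\mathfrak{f}_{j}\mid\mathcal{F}_{k-1}]\bigr),
\end{align*}
the first bracket is $\mathrm{O}(1)$ by uniform boundedness of $\mathfrak{f}_{k}$. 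For the cross-terms, the exchangeability of the canonical ensemble implies that $\E^{\sigma_{0},\mathbb{L}}[\mathfrak{f}_{j}\mid\mathcal{F}_{k}]$ depends on $\mathcal{F}_{k}$ only through the effective density on $\mathbb{L}\setminus\bigcup_{j'\leq k}\mathbb{I}_{j'}$, which shifts by $\mathrm{O}(|\mathbb{I}_{k}|/|\mathbb{L}|)$ when moving from $\mathcal{F}_{k-1}$ to $\mathcal{F}_{k}$. Combined with the $\sigma$-uniform vanishing of $\E^{\sigma,\mathbb{L}}\mathfrak{f}_{j}$ and standard equivalence-of-ensembles estimates, this causes the sum over $j>k$ to telescope down to $\mathrm{O}(1)$, as in Lemma 8.10 of \cite{Y23}.

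Given $|M_{k}-M_{k-1}|\lesssim 1$, Azuma--Hoeffding immediately yields $\mathbb{P}^{\sigma_{0},\mathbb{L}}(|S|\geq K\sqrt{n})\lesssim\exp\{-cK^{2}\}$, which is equivalent to \eqref{eq:azuma}. The main technical obstacle is the control of the cross-terms, which crucially relies on the $\sigma$-uniform mean-zero hypothesis; without it, the accumulation over $j>k$ could grow with $n$ rather than remaining $\mathrm{O}(1)$, and the martingale differences would no longer be bounded.
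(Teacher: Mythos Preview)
Your approach via Azuma--Hoeffding on a martingale adapted to the filtration $\mathscr{F}_{k}=\sigma(\eta_{z}:z\in\bigcup_{j\leq k}\mathbb{I}_{j})$ is exactly the paper's. However, you overcomplicate the increment bound: since conditioning the uniform measure $\mathbb{P}^{\sigma_{0},\mathbb{L}}$ on $\eta|_{\mathbb{J}}$ for any $\mathbb{J}\subseteq\mathbb{L}$ yields \emph{exactly} another canonical ensemble $\mathbb{P}^{\sigma_{1},\mathbb{L}\setminus\mathbb{J}}$, and since the hypothesis forces each $\mathfrak{f}_{j}$ to vanish under every such measure, one has $\E^{\sigma_{0},\mathbb{L}}[\mathfrak{f}_{j}\mid\mathscr{F}_{k}]=0$ identically for all $j>k$. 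Thus your Doob martingale collapses to $M_{k}=\mathfrak{f}_{1}+\cdots+\mathfrak{f}_{k}$, the cross-terms in your decomposition are zero rather than merely $\mathrm{O}(1)$, and the equivalence-of-ensembles and density-shift machinery you invoke is unnecessary.
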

\begin{proof}
We claim that the sequence $m\mapsto\mathfrak{f}_{1}[\eta]+\ldots+\mathfrak{f}_{m}[\eta]$ is a martingale with respect to $\mathbb{P}^{\sigma,\mathbb{L}}$ and the filtration $m\mapsto\mathscr{F}_{m}$, where $\mathscr{F}_{m}$ is generated by $\eta_{\x}$ for $\x\in\mathbb{I}_{1}\cup\ldots\cup\mathbb{I}_{m}$. \eqref{eq:azuma} would then follow by Azuma's martingale inequality. To prove the martingale property, it suffices to recall that $\mathbb{I}_{i}$ are pairwise disjoint, that $\mathfrak{f}_{i}$ all vanish under all canonical measure expectations, and that conditioning $\E^{\sigma_{0},\mathbb{L}}$ on $\eta_{\x}$ for $\x$ in some subset $\mathbb{J}$ of $\mathbb{L}$ induces $\E^{\sigma_{1},\mathbb{L}\setminus\mathbb{J}}$ on $\{\pm1\}^{\mathbb{L}\setminus\mathbb{J}}$ (because conditioning always pushes forward uniform measure to uniform measure, and $\mathbb{P}^{\sigma,\mathbb{I}}$ measures are all uniform measures).
\end{proof}
\subsection{Combining probabilistic with analytic estimates}
We now use the results so far in this section to derive estimates that we will use to prove Theorem \ref{theorem:bgp} and Proposition \ref{prop:hl}. We return to the {\small$\eta_{\t}$} process of interest and use the localized process {\small$\eta^{\mathbb{L}}$} (for suitable choices of {\small$\mathbb{L}$}) when relevant. First, recall $\e_{\mathrm{ap}}$ from \eqref{eq:tap} is such that {\small$|\mathbf{Y}^{N}|\lesssim N^{\e_{\mathrm{ap}}}$} deterministically (see \eqref{eq:tst}).
\begin{lemma}\label{lemma:proban}
\fsp Fix a sub-interval $\mathbb{L}\subseteq\mathbb{T}_{N}$. Suppose $\mathfrak{f}_{i}:\{\pm1\}^{\mathbb{T}_{N}}\to\R$ are functions such that:
\begin{enumerate}
\item For any $\eta\in\{\pm1\}^{\mathbb{T}_{N}}$, the quantity $\mathfrak{f}_{i}[\eta]$ depends only on $\eta_{\x}$ for $\x\in\mathbb{I}_{i}\subseteq\mathbb{L}$, where $\mathbb{I}_{i}$ are pairwise disjoint.
\item For every $i$ and $\sigma\in[-1,1]$, we have $\E^{\sigma,\mathbb{L}}\mathfrak{f}_{i}=0$ (recall $\E^{\sigma,\mathbb{L}}$ from Definition \ref{definition:can}).
\end{enumerate}
Fix $0<\tau\lesssim N^{-4/3}$ and {\small$|\mathbb{L}|\lesssim N^{1/3+\e_{\mathrm{reg}}}$}. Assume $N^{2}\tau\lesssim|\mathbb{L}|^{2}$. For any $\delta>0$ and integer $n>0$, we have 
\begin{align}
&\E\left\{\sup_{\t\in[0,1]}\sup_{\x\in\mathbb{T}_{N}}\left|{\int_{0}^{\t}}\sum_{\y\in\mathbb{T}_{N}}\mathbf{H}^{N}_{\s,\t,\x,\y}\times N^{\frac12}\left\{\tfrac{1}{\tau}\int_{0}^{\tau}\tfrac{1}{n}\sum_{i=1}^{n}\mathfrak{f}_{i}[\tau_{\y}\eta_{\s+\r}]\d\r\right\}\mathbf{Y}^{N}_{\s,\y}\d\s\right|\right\}\label{eq:probanIa}\\
&\lesssim_{\delta} N^{\e_{\mathrm{ap}}}\left(N^{\delta}N^{-\frac54}|\mathbb{L}|^{3}n^{-\frac34}\sup_{i=1,\ldots,n}\|\mathfrak{f}_{i}\|_{\infty}^{\frac32}+N^{\delta}N^{-\frac34}\tau^{-\frac34}n^{-\frac34}\sup_{i=1,\ldots,n}|\mathbb{I}_{i}|^{\frac32}\sup_{i=1,\ldots,n}\|\mathfrak{f}_{i}\|_{\infty}^{\frac32}\right)^{2/3},\label{eq:probanIb}
\end{align}
where $\|\|_{\infty}$ means sup-norm over $\eta\in\{\pm1\}^{\mathbb{T}_{N}}$.
\end{lemma}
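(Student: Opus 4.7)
The plan combines three ingredients: Proposition \ref{prop:kv} (Kipnis--Varadhan for the canonically-started localized process), Lemma \ref{lemma:azuma} (Azuma concentration under $\mathbb{P}^{\sigma,\mathbb{L}}$), and Lemma \ref{lemma:localreduc} (transfer from the actual non-stationary dynamics to the canonical ensemble). First, the stopping-time construction \eqref{eq:tap}--\eqref{eq:tst} gives $|\mathbf{Y}^N_{\s,\y}|\leq CN^{\e_{\mathrm{ap}}}$, and $\sum_{\y}\mathbf{H}^N_{\s,\t,\x,\y}=1$ yields the deterministic bound $\sup_{\t,\x}|\cdot|\leq CN^{\e_{\mathrm{ap}}}N^{1/2}\int_0^1\sup_{\y}|A_{\s}(\y)|\,\d\s$ with $A_{\s}(\y):=(\tau n)^{-1}\int_0^\tau\sum_i\mathfrak{f}_i[\tau_{\y}\eta_{\s+\r}]\,\d\r$. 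Applying $\E|X|\leq(\E|X|^{3/2})^{2/3}$, Jensen's inequality on $[0,1]$ (with $x\mapsto x^{3/2}$), $\sup_{\y}\leq\sum_{\y}$, and translation invariance of the generator reduces matters to bounding the time-space average $\int_0^1\tfrac{1}{N}\sum_{\y}\E[G(\tau_{\y}\eta_\s)]\,\d\s$ for $G(\eta):=\E_\eta|A_0(0)|^{3/2}$. The hypothesis $N^2\tau\lesssim|\mathbb{L}|^2$ ensures $G$ depends effectively only on $\eta|_{\mathbb{L}}$, so Lemma \ref{lemma:localreduc} gives, for any $\kappa>0$,
\begin{align*}
\int_0^1\tfrac{1}{N}\sum_{\y}\E[G(\tau_{\y}\eta_\s)]\,\d\s\;\lesssim\;\kappa^{-1}N^{-2}|\mathbb{L}|^3+\kappa^{-1}\sup_{\sigma\in[-1,1]}\log\E^{\sigma,\mathbb{L}}e^{\kappa G}.
\end{align*}

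To recover both terms of \eqref{eq:probanIb}, truncate $G=G_{\leq B}+G_{>B}$ at the Azuma scale $B:=Cn^{-1/2}\sup_i\|\mathfrak{f}_i\|_\infty\sqrt{\log N}$. On the bounded part $G_{\leq B}$ take $\kappa\asymp B^{-3/2}\asymp n^{3/4}\sup_i\|\mathfrak{f}_i\|_\infty^{-3/2}$: the localization cost becomes (up to logarithms) $n^{-3/4}\sup_i\|\mathfrak{f}_i\|_\infty^{3/2}N^{-2}|\mathbb{L}|^3$, which yields the Azuma term of \eqref{eq:probanIb} in view of $N^{-2}\leq N^{-5/4}n^{-3/4}$ valid in the regime $n\leq N$; the exponential moment linearizes to $\lesssim\E^{\sigma,\mathbb{L}}G_{\leq B}\leq\E^{\sigma,\mathbb{L},\mathrm{dyn}}|A_0|^{3/2}$, and Hölder's inequality $\E|A_0|^{3/2}\leq\sup_i\|\mathfrak{f}_i\|_\infty^{1/2}(\E|A_0|^2)^{1/2}$ together with Proposition \ref{prop:kv} delivers the KV term. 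For the tail $G_{>B}$, use $\kappa=\sup_i\|\mathfrak{f}_i\|_\infty^{-3/2}$ together with the exponential decay of $\mathbb{P}^{\sigma,\mathbb{L},\mathrm{dyn}}[|A_0|>B]$ from Lemma \ref{lemma:azuma} (transferred to the dynamical flow by a standard time-discretization, using that the localized process has only $N^{\mathrm{O}(1)}$ jumps in $[0,\tau]$), yielding a super-polynomially small contribution.

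The main obstacle is the tension between the deterministic $L^\infty$-bound $\kappa\|G\|_\infty\lesssim 1$ required by Lemma \ref{lemma:localreduc} and the sharp scale $n^{-1/2}\sup_i\|\mathfrak{f}_i\|_\infty$ for $|A_0|$, which only holds with high probability under the canonical ensemble---the truncation at $B$ threads this needle by bounding $G_{\leq B}$ deterministically at the Azuma scale while treating the rare event $\{|A_0|>B\}$ via its exponential tail. Multiplying the $\tfrac{1}{N}$-averaged bound by $N$ (to undo the average), by $(N^{1/2})^{3/2}=N^{3/4}$ (from $F_\s(\y)=N^{1/2}A_\s(\y)\mathbf{Y}^N_{\s,\y}$ at power $3/2$), raising to the $2/3$-power, and sharpening the $(\t,\x)$-supremum control by replacing the crude $\sup_{\y}\leq\sum_{\y}$ step with a heat-kernel Hölder estimate based on Proposition \ref{prop:hk} (to absorb the $N^{\mathrm{O}(1)}$-loss of that crude bound), recovers \eqref{eq:probanIb}.
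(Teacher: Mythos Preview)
Your overall architecture---truncate the spatial average at the Azuma scale, feed the bounded piece through Lemma \ref{lemma:localreduc} and then Proposition \ref{prop:kv}, and kill the tail with Lemma \ref{lemma:azuma}---matches the paper's strategy. But two of your steps are genuine gaps, not just compressed exposition.

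\textbf{The localization is not established.} You write that ``the hypothesis $N^{2}\tau\lesssim|\mathbb{L}|^{2}$ ensures $G$ depends effectively only on $\eta|_{\mathbb{L}}$, so Lemma \ref{lemma:localreduc} gives\ldots''. But $G(\eta)=\E_{\eta}|A_{0}(0)|^{3/2}$ is defined through the \emph{full} dynamics on $\mathbb{T}_{N}$, so as a function it depends on every coordinate of $\eta$; Lemma \ref{lemma:localreduc} needs a function that literally depends only on $\eta|_{\mathbb{I}}$ for a fixed sub-interval $\mathbb{I}$. The paper closes this gap by constructing a localized process $\eta^{\mathbb{L}[\e]}$ on an enlarged torus $\mathbb{L}[\e]$ with its own periodic boundary, and proving via a discrepancy/random-walk coupling that $\eta^{\mathbb{L}[\e]}|_{\mathbb{L}}=\eta|_{\mathbb{L}}$ for all $\r\in[0,\tau]$ with probability $\geq 1-N^{-100}$. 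Only after this replacement does one get a function of $\eta|_{\mathbb{L}[\e]}$ to which Lemma \ref{lemma:localreduc} applies. This coupling is also what makes Proposition \ref{prop:kv} applicable (it is stated for $\eta^{\mathbb{L}}$, not for the full process), and it is what allows the second appearance of the tail bound to go through: inside the localized dynamics one uses Lemma \ref{lemma:lpprod} to change measure back to $\mathbb{P}^{\sigma,\mathbb{L}[\e]}$ at each fixed time and then invokes Azuma---there is no ``time-discretization'' argument of the kind you suggest.

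\textbf{The heat-kernel reduction cannot be deferred.} Your chain $\sup_{\t,\x}|\cdot|\leq CN^{\e_{\mathrm{ap}}}N^{1/2}\int_{0}^{1}\sup_{\y}|A_{\s}(\y)|\d\s$ followed by $\sup_{\y}\leq\sum_{\y}$ loses a factor of $N^{2/3}$ relative to \eqref{eq:probanIb} (after the outer $2/3$-power), and one cannot ``sharpen later'' once the supremum has been discarded. The paper instead performs a H\"{o}lder split \emph{first}: write $\int_{0}^{\t}=\int_{0}^{\t}|\t-\s|^{-1/3+\e}|\t-\s|^{1/3-\e}\cdots$, apply H\"{o}lder with exponents $3$ and $3/2$ in $(\s,\y)$, and use the interpolated kernel bound $\mathbf{H}^{N}_{\s,\t,\x,\y}\lesssim N^{-1+3\e}|\t-\s|^{-1/2+3\e/2}$ together with the fact that $\mathbf{H}^{N}$ is a probability in $\y$. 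This converts the $(\t,\x)$-supremum directly into $(\int_{0}^{1}\tfrac{1}{N}\sum_{\y}N^{3/4}\E|\cdot|^{3/2}\d\s)^{2/3}$ with no extraneous $N$-factor, and is the correct starting point. Also, for the KV piece use $\E|A|^{3/2}\leq(\E|A|^{2})^{3/4}$ rather than your $\sup\|\mathfrak{f}_{i}\|_{\infty}^{1/2}(\E|A|^{2})^{1/2}$; the latter does not reproduce the second term of \eqref{eq:probanIb}.
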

\begin{remark}
\fsp The constraints $0<\tau\lesssim N^{-4/3}$ and {\small$|\mathbb{L}|\lesssim N^{1/3+\e_{\mathrm{reg}}}$} let us use Proposition \ref{prop:kv}. The constraint $N^{2}\tau\lesssim|\mathbb{L}|^{2}$ just states that the time-scale $\tau$ is ``compatible" with $\mathbb{L}$ in the sense of diffusive scaling.
\end{remark}
\begin{proof}
We first claim the following deterministic estimate for any $\t\in[0,1]$ and $\x\in\mathbb{T}_{N}$ and $\e>0$:
\begin{align*}
&\left|{\int_{0}^{\t}}\sum_{\y\in\mathbb{T}_{N}}\mathbf{H}^{N}_{\s,\t,\x,\y}\times N^{\frac12}\left\{\tfrac{1}{\tau}\int_{0}^{\tau}\tfrac{1}{n}\sum_{i=1}^{n}\mathfrak{f}_{i}[\tau_{\y}\eta^{}_{\s+\r}]\d\r\right\}\mathbf{Y}^{N}_{\s,\y}\d\s\right|\\
&=\left|{\int_{0}^{\t}}|\t-\s|^{-\frac13+\e}|\t-\s|^{\frac13-\e}\sum_{\y\in\mathbb{T}_{N}}\mathbf{H}^{N}_{\s,\t,\x,\y}\times N^{\frac12}\left\{\tfrac{1}{\tau}\int_{0}^{\tau}\tfrac{1}{n}\sum_{i=1}^{n}\mathfrak{f}_{i}[\tau_{\y}\eta^{}_{\s+\r}]\d\r\right\}\mathbf{Y}^{N}_{\s,\y}\d\s\right|\\
&\lesssim\left\{\int_{0}^{\t}|\t-\s|^{-1+3\e}\d\s\right\}^{\frac13}\left\{\int_{0}^{\t}|\t-\s|^{\frac12-\frac32\e}\sum_{\y\in\mathbb{T}_{N}}\mathbf{H}^{N}_{\s,\t,\x,\y}\times N^{\frac34}\left|\tfrac{1}{\tau}\int_{0}^{\tau}\tfrac{1}{n}\sum_{i=1}^{n}\mathfrak{f}_{i}[\tau_{\y}\eta^{}_{\s+\r}]\d\r\right|^{\frac32}|\mathbf{Y}^{N}_{\s,\y}|^{\frac32}\d\s\right\}^{\frac23}\\
&\lesssim_{\e}\left\{\int_{0}^{\t}N^{3\e+\frac32\e_{\mathrm{ap}}}\tfrac{1}{N}\sum_{\y\in\mathbb{T}_{N}}N^{\frac34}\left|\tfrac{1}{\tau}\int_{0}^{\tau}\tfrac{1}{n}\sum_{i=1}^{n}\mathfrak{f}_{i}[\tau_{\y}\eta^{}_{\s+\r}]\d\r\right|^{\frac32}\d\s\right\}^{\frac23}.
\end{align*}
The first $\lesssim$ is by H\"{o}lder with respect to space-time integration and noting that $\mathbf{H}^{N}$ is a probability measure in the forwards space variable (it is a transition probability for a random walk). The last line holds by integration, the bound {\small$\mathbf{H}^{N}_{\s,\t,\x,\y}\lesssim N^{-1+3\e}|\t-\s|^{-1/2+3/2\e}$}, which holds by interpolating the probability bound {\small$\mathbf{H}^{N}_{\s,\t,\x,\y}\leq1$} and the on-diagonal bound {\small$\mathbf{H}^{N}_{\s,\t,\x,\y}\lesssim N^{-1}|\t-\s|^{-1/2}$} from Proposition \ref{prop:hk}, and the bound {\small$\mathbf{Y}^{N}=\mathrm{O}(N^{\e_{\mathrm{ap}}})$} (see \eqref{eq:tst}). Now, we extend the integration in the last line from $[0,\t]$ to $[0,1]$. The resulting quantity is independent of $(\t,\x)$ in the first line of the previous display. Therefore, after we take expectation, the last line bounds \eqref{eq:probanIa}. Moreover, when we take expectation of the last line, by H\"{o}lder or Jensen, we can move the expectation inside the curly brackets. So, if we take $\e>0$ small and let $\delta$ in \eqref{eq:probanIb} depend on $\e$, it suffices to prove 
\begin{align}
&\int_{0}^{1}\tfrac{1}{N}\sum_{\y\in\mathbb{T}_{N}}N^{\frac34}\E|\tfrac{1}{\tau}\int_{0}^{\tau}\tfrac{1}{n}\sum_{i=1}^{n}\mathfrak{f}_{i}[\tau_{\y}\eta^{}_{\s+\r}]\d\r|^{\frac32}\d\s\nonumber\\
&\lesssim N^{\delta}N^{-\frac54}|\mathbb{L}|^{3}n^{-\frac34}\sup_{i=1,\ldots,n}\|\mathfrak{f}_{i}\|_{\infty}^{\frac32}+N^{\delta}N^{-\frac34}\tau^{-\frac34}n^{-\frac34}\sup_{i=1,\ldots,n}|\mathbb{I}_{i}|^{\frac32}\sup_{i=1,\ldots,n}\|\mathfrak{f}_{i}\|_{\infty}^{\frac32}. \label{eq:probanI1}
\end{align}
For convenience, let us introduce notation for the space-average on the LHS of \eqref{eq:probanI1} and a cutoff version:
\begin{align*}
\mathbf{Av}_{n}^{\mathfrak{f}_{\cdot}}[\eta]:=\tfrac{1}{n}\sum_{i=1}^{n}\mathfrak{f}_{i}[\eta]\quad\mathrm{and}\quad \mathbf{CutAv}_{n}^{\mathfrak{f}_{\cdot}}[\eta]:=\mathbf{Av}_{n}^{\mathfrak{f}_{\cdot}}[\eta]\mathbf{1}[|\mathbf{Av}_{n}^{\mathfrak{f}_{\cdot}}[\eta]|\lesssim N^{\e}n^{-1/2}\sup_{i=1,\ldots,n}\|\mathfrak{f}_{i}\|_{\infty}].
\end{align*}
In words, {\small$\mathbf{CutAv}^{\mathfrak{f}_{\cdot}}_{n}$} is just {\small$\mathbf{Av}^{\mathfrak{f}_{\cdot}}_{n}$} but with an a priori upper bound that is essentially the CLT-upper bound; here, we take $\e>0$ small and to be determined (but positive uniformly in $N$). We now have
\begin{align}
\mathrm{LHS}\eqref{eq:probanI1}&\lesssim\int_{0}^{1}\tfrac{1}{N}\sum_{\y\in\mathbb{T}_{N}}N^{\frac34}\E|\tfrac{1}{\tau}{\textstyle\int_{0}^{\tau}}\mathbf{CutAv}_{n}^{\mathfrak{f}_{\cdot}}[\tau_{\y}\eta^{}_{\s+\r}]\d\r|^{\frac32}\d\s\label{eq:probanI2a}\\
&+\int_{0}^{1}\tfrac{1}{N}\sum_{\y\in\mathbb{T}_{N}}N^{\frac34}\E|\tfrac{1}{\tau}{\textstyle\int_{0}^{\tau}}\{\mathbf{Av}_{n}^{\mathfrak{f}_{\cdot}}[\tau_{\y}\eta^{}_{\s+\r}]-\mathbf{CutAv}_{n}^{\mathfrak{f}_{\cdot}}[\tau_{\y}\eta^{}_{\s+\r}]\d\r\}|^{\frac32}\d\s.\label{eq:probanI2b}
\end{align}
We first control \eqref{eq:probanI2b}. We claim
\begin{align*}
\eqref{eq:probanI2b}&\lesssim\int_{0}^{1}\tfrac{1}{N}\sum_{\y\in\mathbb{T}_{N}}N^{\frac34}\tfrac{1}{\tau}{\textstyle\int_{0}^{\tau}}\E|\mathbf{Av}_{n}^{\mathfrak{f}_{\cdot}}[\tau_{\y}\eta^{}_{\s+\r}]-\mathbf{CutAv}_{n}^{\mathfrak{f}_{\cdot}}[\tau_{\y}\eta^{}_{\s+\r}]|^{\frac32}\d\r\d\s\\
&\lesssim\int_{0}^{2}\tfrac{1}{N}\sum_{\y\in\mathbb{T}_{N}}N^{\frac34}\E|\mathbf{Av}_{n}^{\mathfrak{f}_{\cdot}}[\tau_{\y}\eta^{}_{\s}]-\mathbf{CutAv}_{n}^{\mathfrak{f}_{\cdot}}[\tau_{\y}\eta^{}_{\s}]|^{\frac32}\d\s.
\end{align*}
The first line is by H\"{o}lder in the $\d\r$-average in \eqref{eq:probanI2b}. The second line holds first by moving the $\d\r$-integration outside everything. Then, for any $\r$, we note that integrating $\s+\r$ for $\s\in[0,1]$ is the same as integrating $\s$ for $\s\in[\r,\r+1]\subseteq[0,2]$, since $\r\leq\tau\leq1$ by assumption. Thus, for any $\r\in[0,\tau]$, the corresponding $\d\s$-integral is always bounded above by the $\d\s$-integration on $[0,2]$, since the integrand is non-negative. Next, we claim
\begin{align*}
&\int_{0}^{2}\tfrac{1}{N}\sum_{\y\in\mathbb{T}_{N}}N^{\frac34}\E|\mathbf{Av}_{n}^{\mathfrak{f}_{\cdot}}[\tau_{\y}\eta^{}_{\s}]-\mathbf{CutAv}_{n}^{\mathfrak{f}_{\cdot}}[\tau_{\y}\eta^{}_{\s}]|^{\frac32}\d\s\\
&\lesssim N^{\frac34}\kappa^{-1}N^{-2}|\mathbb{L}|^{3}+N^{\frac34}\kappa^{-1}\sup_{\sigma\in[-1,1]}\log\E^{\sigma,\mathbb{L}}\exp\left\{\kappa|\mathbf{Av}_{n}^{\mathfrak{f}_{\cdot}}[\eta]-\mathbf{CutAv}_{n}^{\mathfrak{f}_{\cdot}}[\eta]|^{\frac32}\right\},
\end{align*}
where $\kappa=n^{3/4}\{\sup_{i}\|\mathfrak{f}_{i}\|_{\infty}\}^{-3/2}$. This is immediate by \eqref{eq:localreduc}, because {\small$\mathbf{Av}_{n}^{\mathfrak{f}_{\cdot}}[\eta]$}, and therefore {\small$\mathbf{CutAv}_{n}^{\mathfrak{f}_{\cdot}}[\eta]$}, are determined only by $\mathfrak{f}_{i}$ and thus depend only on $\eta_{\x}$ for $\x\in\mathbb{L}$. Next, {\small$\mathbf{Av}_{n}^{\mathfrak{f}_{\cdot}}[\eta]-\mathbf{CutAv}_{n}^{\mathfrak{f}_{\cdot}}[\eta]$} is zero if and only if {\small$|\mathbf{Av}_{n}^{\mathfrak{f}_{\cdot}}[\eta]|\gtrsim N^{\e}n^{-1/2}\sup_{i}\|\mathfrak{f}_{i}\|_{\infty}$} (see the display after \eqref{eq:probanI1}). On this event, we have {\small$|\mathbf{Av}_{n}^{\mathfrak{f}_{\cdot}}[\eta]-\mathbf{CutAv}_{n}^{\mathfrak{f}_{\cdot}}[\eta]|\lesssim|\mathbf{Av}_{n}^{\mathfrak{f}_{\cdot}}[\eta]|\lesssim\sup_{i}\|\mathfrak{f}_{i}\|_{\infty}$} since {\small$\mathbf{Av}_{n}^{\mathfrak{f}_{\cdot}}[\eta]$} is an average of $\mathfrak{f}_{i}$. From this, we get
\begin{align*}
&\kappa^{-1}\log\E^{\sigma,\mathbb{L}}\exp\left\{\kappa|\mathbf{Av}_{n}^{\mathfrak{f}_{\cdot}}[\eta]-\mathbf{CutAv}_{n}^{\mathfrak{f}_{\cdot}}[\eta]|^{\frac32}\right\}\\
&\leq\kappa^{-1}\log\left[1+\E^{\sigma,\mathbb{L}}\{\mathbf{1}_{|\mathbf{Av}_{n}^{\mathfrak{f}_{\cdot}}[\eta]|\gtrsim N^{\e}n^{-1/2}\sup_{i}\|\mathfrak{f}_{i}\|_{\infty}}\mathrm{e}^{\kappa|\mathbf{Av}_{n}^{\mathfrak{f}_{\cdot}}[\eta]|^{3/2}}\}\right],
\end{align*}
Now, by Cauchy-Schwarz, we have
\begin{align*}
\E^{\sigma,\mathbb{L}}\{\mathbf{1}_{|\mathbf{Av}_{n}^{\mathfrak{f}_{\cdot}}[\eta]|\gtrsim N^{\e}n^{-1/2}\sup_{i}\|\mathfrak{f}_{i}\|_{\infty}}\mathrm{e}^{\kappa|\mathbf{Av}_{n}^{\mathfrak{f}_{\cdot}}[\eta]|^{3/2}}\}&\lesssim\sqrt{\E^{\sigma,\mathbb{L}}\mathbf{1}_{|\mathbf{Av}_{n}^{\mathfrak{f}_{\cdot}}[\eta]|\gtrsim N^{\e}n^{-1/2}\sup_{i}\|\mathfrak{f}_{i}\|_{\infty}}\E^{\sigma,\mathbb{L}}\mathrm{e}^{2\kappa|\mathbf{Av}_{n}^{\mathfrak{f}_{\cdot}}[\eta]|^{3/2}}}
\end{align*}
By Lemma \ref{lemma:azuma}, we know that {\small$n^{1/2}\{\sup_{i}\|\mathfrak{f}_{i}\|_{\infty}\}^{-1}|\mathbf{Av}_{n}^{\mathfrak{f}_{\cdot}}[\eta]|$} has sub-Gaussian tails of variance parameter $\mathrm{O}(1)$. So, the first expectation on the RHS of the above display is $\lesssim\exp\{-KN^{2\e}\}$ for some $K\gtrsim1$, and the second expectation, since {\small$\kappa^{-1}=n^{-3/4}\{\sup_{i}\|\mathfrak{f}_{i}\|_{\infty}\}^{3/2}$}, is $\lesssim\exp\{C|\mathsf{z}|^{3/2}\}$ with $\mathsf{z}\sim\mathscr{N}(0,1)$ and $C=\mathrm{O}(1)$. Thus, the second expectation above is $\mathrm{O}(1)$. Using this with the previous four displays then gives 
\begin{align}
\eqref{eq:probanI2b}&\lesssim N^{\frac34}\kappa^{-1}N^{-2}|\mathbb{L}|^{3}+N^{\frac34}\kappa^{-1}\log\left\{1+\exp\{-K'N^{2\e}\}\right\}\lesssim\mathrm{RHS}\eqref{eq:probanI1}, \quad K'\gtrsim1,\label{eq:probanI3}
\end{align}
since, again, {\small$\kappa^{-1}=n^{-3/4}\{\sup_{i}\|\mathfrak{f}_{i}\|_{\infty}\}^{3/2}$} and {\small$\log\{1+\exp\{-K'N^{2\e}\}\}$} is exponentially small in $N$. We are now left to control the RHS of \eqref{eq:probanI2a}. For this, we first write, with explanation given after,
\begin{align*}
\E|\tfrac{1}{\tau}{\textstyle\int_{0}^{\tau}}\mathbf{CutAv}_{n}^{\mathfrak{f}_{\cdot}}[\tau_{\y}\eta^{}_{\s+\r}]\d\r|^{\frac32}&=\E\E^{\mathrm{dyn}}_{\tau_{\y}\eta^{}_{\s}}|\tfrac{1}{\tau}{\textstyle\int_{0}^{\tau}}\mathbf{CutAv}_{n}^{\mathfrak{f}_{\cdot}}[\omega^{}_{\s+\r}]\d\r|^{\frac32}.
\end{align*}
Above, $\omega_{\s+\cdot}$ is a path sampled from the law of the process {\small$\t\mapsto\eta_{\t}$} (as to not confuse it with {\small$\t\mapsto\eta_{\t}$} itself) after conditioning on its initial data to be {\small$\eta_{\s}$}. The {\small$\E^{\mathrm{dyn}}_{\eta}$} means (``dynamical") expectation with respect to this process as a function of its initial condition $\eta$. So, in the above display, all we do is break up $\E$ by first conditioning on the law of the initial data $\eta_{\s}$, then we shift everything by $\y$, which only moves the $\tau_{\y}$ from the time-average into the initial condition {\small$\tau_{\y}\eta^{}_{\s}$} on the RHS (since the law of the dynamics is translation-invariant). The inner expectation on the RHS is a function of only {\small$\tau_{\y}\eta^{}_{\s}$}.

As we mentioned earlier, {\small$\mathbf{CutAv}_{n}^{\mathfrak{f}_{\cdot}}[\eta^{}_{\s+\r}]$} depends only on {\small$\eta^{}_{\s+\r,\x}$} for $\x\in\mathbb{L}$. Now, let $\mathbb{L}[\e]$ be a neighborhood of $\mathbb{L}$ of radius $N^{\e}|\mathbb{L}|$, where $\e>0$ is small but fixed. We now construct a process $\eta^{\mathbb{L}[\e]}$ (see the beginning of Section \ref{subsection:kv}) that is coupled to $\omega^{}$ for times $\s+\r$ with $\r\in[0,\tau]$ such that the probability of {\small$\eta^{\mathbb{L}[\e]}_{\s+\r,\x}\neq\omega^{}_{\s+\r,\x}$} for some $\x\in\mathbb{L}$ is $\lesssim N^{-100}$, and for all $\x\in\mathbb{L}[\e]$, the initial data of this localized process is {\small$\eta^{\mathbb{L}[\e]}_{\s,\x}=\omega^{}_{\s,\x}=\tau_{\y}\eta_{\s,\x}$}.
\begin{enumerate}
\item When spins at neighboring points in $\mathbb{L}[\e]$ swap in the $\omega^{}$ process, they also swap in the $\eta^{\mathbb{L}[\e]}$ process if the speeds to do so are the same. If the speeds are not the same, they differ by $\mathrm{O}(N^{3/2})$ (since the speed $N^{2}$ part of the system has constant-speed swaps). In this case, we couple so that the probability that one of $\omega^{}$ and $\eta^{\mathbb{L}[\e]}$ swaps but the other does not is $\mathrm{O}(N^{-1/2})$. For neighboring points in $\mathbb{T}_{N}$ that are not both contained in $\mathbb{L}[\e]$, we swap in $\omega^{}$ using any independent Poisson clock. We also use an independent Poisson clock for swapping between the rightmost and leftmost points in $\mathbb{L}[\e]$ (recall $\mathbb{L}[\e]$ has periodic boundary conditions).
\item We define a discrepancy at time $u$ between $\omega^{}$ and $\eta^{\mathbb{L}[\e]}$ to be a point $\x\in\mathbb{L}[\e]$ such that {\small$\omega^{}_{u,\x}\neq\eta^{\mathbb{L}[\e]}_{u,\x}$}.
\item Since the initial configurations at time $\s$ of $\omega^{}$ and $\eta^{\mathbb{L}[\e]}$ agree on $\mathbb{L}[\e]$, at time $\s$, there are no discrepancies. Also, the first discrepancy must appear in an $\mathrm{O}(1)$ neighborhood of the rightmost or leftmost point of $\mathbb{L}[\e]$, since the speeds of swapping depend only on $\mathrm{O}(1)$ neighborhoods of configurations of particles. (Indeed, a discrepancy can only appear if the speeds of spin-swaps in $\omega^{}$ and $\eta^{\mathbb{L}[\e]}$ are different, but this can only happen either near the rightmost or leftmost points in $\mathbb{L}[\e]$ or in a $\mathrm{O}(1)$ neighborhood of a discrepancy.) 
\item We now claim that under this coupling, a discrepancy in $\mathbb{L}[\e]$ evolves according to a simple random walk with symmetric speed $\mathrm{O}(N^{2})$ and asymmetric speed $\mathrm{O}(N^{3/2})$. Indeed, if a discrepancy is located at $\x$, a spin-swap between $\x\pm1,\x$ moves the discrepancy to $\x\pm1$, and the speed of spin-swap between $\x\pm1,\x$ is $\frac12N^{2}+\mathrm{O}(N^{3/2})$, the point being that the order $N^{2}$-term does not depend on $\pm$. Also, a discrepancy, as noted earlier, can be created in an $\mathrm{O}(1)$-neighborhood of a discrepancy.  This can be thought of as having a discrepancy at $\x$, creating another random walk particle at $\x$, and then having one of these random walks jump $\mathrm{O}(1)$ distance. The birth of such a discrepancy happens at speed $\mathrm{O}(N^{2})$, because all Poisson clocks here have at most this speed.
\item For a discrepancy to appear in $\mathbb{L}$, it must appear first in a $\mathrm{O}(1)$ neighborhood of the rightmost or leftmost point of $\mathbb{L}[\e]$ and then travel distance $\gtrsim N^{\e}|\mathbb{L}|$. Any given finite-jump-length random walk of symmetric speed $\mathrm{O}(N^{2})$ and asymmetric speed $\mathrm{O}(N^{3/2})$ will travel maximal distance $\lesssim N^{\rho}N\tau^{1/2}+N^{\rho}N^{3/2}\tau$ by time $\tau$ with probability at least $1-\exp\{-KN^{\rho}\}$ by standard bounds (e.g. Azuma's inequality), where $K>0$ is independent of $N$. Since $\tau\lesssim N^{-1}$ and $N\tau^{1/2}\lesssim|\mathbb{L}|$, this maximal distance is $\lesssim N^{\rho}|\mathbb{L}|$. So, if $\rho=\e/2$, then the probability of a discrepancy random walk propagating into $\mathbb{L}$ is $\lesssim\exp\{-KN^{\rho}\}$. We now note that any discrepancy can only be created by the ringing of a Poisson clock in either $\omega^{}$ or $\eta^{\mathbb{L}[\e]}$, of which there are $\mathrm{O}(N)$ that have speed $\mathrm{O}(N^{2})$. Therefore, by standard Poisson tail bounds, the number of ringings, and thus the number of total discrepancy random walks, before time $\tau\lesssim N^{-1}$ is $\lesssim N^{100}$, for example, with exponentially high probability. So, we union bound $\mathrm{O}(N^{100})$ probabilities that are each $\lesssim\exp\{-KN^{\rho}\}$; thus, the probability of \emph{any} discrepancy appearing in $\mathbb{L}$ by time $\tau$ is exponentially small in $N$. 
\end{enumerate}
Thus, if we let {\small$\E^{\mathrm{dyn},\mathbb{L}[\e]}_{\tau_{\y}\eta^{}_{\s}}$} be expectation with respect to the law of the $\eta^{\mathbb{L}[\e]}$ dynamics, then
\begin{align*}
\E\E^{\mathrm{dyn}}_{\tau_{\y}\eta^{}_{\s}}|\tfrac{1}{\tau}{\textstyle\int_{0}^{\tau}}\mathbf{CutAv}_{n}^{\mathfrak{f}_{\cdot}}[\omega^{}_{\s+\r}]\d\r|^{\frac32}=\E\E^{\mathrm{dyn},\mathbb{L}[\e]}_{\tau_{\y}\eta^{}_{\s}}|\tfrac{1}{\tau}{\textstyle\int_{0}^{\tau}}\mathbf{CutAv}_{n}^{\mathfrak{f}_{\cdot}}[\eta^{\mathbb{L}[\e]}_{\s+\r}]\d\r|^{\frac32}+\mathrm{O}\left(N^{-100}\sup_{i=1,\ldots,n}\|\mathfrak{f}_{i}\|^{\frac32}\right),
\end{align*}
The RHS depends only on {\small$\tau_{\y}\eta^{}_{\s,\x}$} for $\x\in\mathbb{L}[\e]$ by construction. By the two previous displays and other considerations to be explained after, we get
\begin{align}
&\int_{0}^{1}\tfrac{1}{N}\sum_{\y\in\mathbb{T}_{N}}N^{\frac34}\E|\tfrac{1}{\tau}{\textstyle\int_{0}^{\tau}}\mathbf{CutAv}_{n}^{\mathfrak{f}_{\cdot}}[\tau_{\y}\eta^{}_{\s+\r}]\d\r|^{\frac32}\d\s\nonumber\\
&=\int_{0}^{1}\tfrac{1}{N}\sum_{\y\in\mathbb{T}_{N}}N^{\frac34}\E\E^{\mathrm{dyn},\mathbb{L}[\e]}_{\tau_{\y}\eta^{}_{\s}}|\tfrac{1}{\tau}{\textstyle\int_{0}^{\tau}}\mathbf{CutAv}_{n}^{\mathfrak{f}_{\cdot}}[\eta^{\mathbb{L}[\e]}_{\s+\r}]\d\r|^{\frac32}\d\s+\mathrm{O}\left(N^{-50}\sup_{i=1,\ldots,n}\|\mathfrak{f}_{i}\|^{\frac32}\right)\nonumber\\
&\lesssim N^{\frac34}\kappa^{-1}N^{-2}|\mathbb{L}[\e]|^{3}+N^{\frac34}\sup_{\sigma\in[-1,1]}\E^{\sigma,\mathbb{L}[\e]}\E^{\mathrm{dyn},\mathbb{L}[\e]}_{\eta}|\tfrac{1}{\tau}{\textstyle\int_{0}^{\tau}}\mathbf{CutAv}_{n}^{\mathfrak{f}_{\cdot}}[\eta^{\mathbb{L}[\e]}_{\r}]\d\r|^{\frac32},\label{eq:editproban}
\end{align}
where we now choose {\small$\kappa=N^{-3\e/2}n^{3/4}\{\sup_{i=1,\ldots,n}\|\mathfrak{f}_{i}\|_{\infty}\}^{-3/2}$} (the big-Oh term in the first identity thus gets absorbed into the last line above), and where $\eta$ in {\small$\E^{\mathrm{dyn},\mathbb{L}[\e]}_{\eta}$} is sampled according to $\E^{\sigma,\mathbb{L}[\e]}$ above. To see the last line, we first note that in {\small$\E^{\mathrm{dyn},\mathbb{L}[\e]}_{\tau_{\y}\eta^{}_{\s}}|\tfrac{1}{\tau}{\textstyle\int_{0}^{\tau}}\mathbf{CutAv}_{n}^{\mathfrak{f}_{\cdot}}[\eta^{\mathbb{L}[\e]}_{\s+\r}]\d\r|^{3/2}$}, once we condition on the initial configuration to be {\small$\tau_{\y}\eta^{}_{\s}$}, it does not matter if we start {\small$\eta^{\mathbb{L}[\e]}$} at time $\s$ or at time $0$, since the law of the dynamics of {\small$\eta^{\mathbb{L}[\e]}$} is time-homogeneous. The last line then follows immediately by \eqref{eq:localreducII}, because {\small$\E^{\mathrm{dyn},\mathbb{L}[\e]}_{\eta}|\tfrac{1}{\tau}{\textstyle\int_{0}^{\tau}}\mathbf{CutAv}_{n}^{\mathfrak{f}_{\cdot}}[\eta^{\mathbb{L}[\e]}_{\r}]\d\r|^{3/2}\lesssim N^{3\e/2}n^{-3/4}\sup_{i=1,\ldots,n}\|\mathfrak{f}_{i}\|_{\infty}^{3/2}=\kappa^{-1}$} by construction of $\mathbf{CutAv}$ (see the display after \eqref{eq:probanI1}), and this is the only constraint we must satisfy to use \eqref{eq:localreducII}. Now, note that the double expectation in the last line of the above display is expectation with respect to {\small$\eta^{\mathbb{L}[\e]}$} dynamics with initial data distributed according to $\mathbb{P}^{\sigma,\mathbb{L}[\e]}$. Next,
\begin{align*}
&N^{\frac34}\E^{\sigma,\mathbb{L}[\e]}\E^{\mathrm{dyn},\mathbb{L}[\e]}_{\eta}|\tfrac{1}{\tau}{\textstyle\int_{0}^{\tau}}\mathbf{CutAv}_{n}^{\mathfrak{f}_{\cdot}}[\eta^{\mathbb{L}[\e]}_{\r}]\d\r|^{\frac32}\\
&\lesssim N^{\frac34}\E^{\sigma,\mathbb{L}[\e]}\E^{\mathrm{dyn},\mathbb{L}[\e]}_{\eta}|\tfrac{1}{\tau}{\textstyle\int_{0}^{\tau}}\mathbf{Av}_{n}^{\mathfrak{f}_{\cdot}}[\eta^{\mathbb{L}[\e]}_{\r}]\d\r|^{\frac32}\\
&+N^{\frac34}\E^{\sigma,\mathbb{L}[\e]}\E^{\mathrm{dyn},\mathbb{L}[\e]}_{\eta}|\tfrac{1}{\tau}{\textstyle\int_{0}^{\tau}}\{\mathbf{Av}_{n}^{\mathfrak{f}_{\cdot}}[\eta^{\mathbb{L}[\e]}_{\r}]-\mathbf{CutAv}_{n}^{\mathfrak{f}_{\cdot}}[\eta^{\mathbb{L}[\e]}_{\r}]\}\d\r|^{\frac32}.
\end{align*}
We have the following bound due to H\"{o}lder with respect to the double expectation:
\begin{align*}
&\E^{\sigma,\mathbb{L}[\e]}\E^{\mathrm{dyn},\mathbb{L}[\e]}_{\eta}|\tfrac{1}{\tau}{\textstyle\int_{0}^{\tau}}\{\mathbf{Av}_{n}^{\mathfrak{f}_{\cdot}}[\eta^{\mathbb{L}[\e]}_{\r}]-\mathbf{CutAv}_{n}^{\mathfrak{f}_{\cdot}}[\eta^{\mathbb{L}[\e]}_{\r}]\}\d\r|^{\frac32}\\
&\lesssim\left\{\E^{\sigma,\mathbb{L}[\e]}\E^{\mathrm{dyn},\mathbb{L}[\e]}_{\eta}|\tfrac{1}{\tau}{\textstyle\int_{0}^{\tau}}\{\mathbf{Av}_{n}^{\mathfrak{f}_{\cdot}}[\eta^{\mathbb{L}[\e]}_{\r}]-\mathbf{CutAv}_{n}^{\mathfrak{f}_{\cdot}}[\eta^{\mathbb{L}[\e]}_{\r}]\}\d\r|^{2}\right\}^{\frac34}.
\end{align*}
Next, we claim
\begin{align*}
&\E^{\sigma,\mathbb{L}[\e]}\E^{\mathrm{dyn},\mathbb{L}[\e]}_{\eta}|\tfrac{1}{\tau}{\textstyle\int_{0}^{\tau}}\{\mathbf{Av}_{n}^{\mathfrak{f}_{\cdot}}[\eta^{\mathbb{L}[\e]}_{\r}]-\mathbf{CutAv}_{n}^{\mathfrak{f}_{\cdot}}[\eta^{\mathbb{L}[\e]}_{\r}]\}\d\r|^{2}\\
&\lesssim\E^{\sigma,\mathbb{L}[\e]}\E^{\mathrm{dyn},\mathbb{L}[\e]}_{\eta}\tfrac{1}{\tau}{\textstyle\int_{0}^{\tau}}|\mathbf{Av}_{n}^{\mathfrak{f}_{\cdot}}[\eta^{\mathbb{L}[\e]}_{\r}]-\mathbf{CutAv}_{n}^{\mathfrak{f}_{\cdot}}[\eta^{\mathbb{L}[\e]}_{\r}]|^{2}\d\r\\
&=\tfrac{1}{\tau}{\textstyle\int_{0}^{\tau}}\E^{\sigma,\mathbb{L}[\e]}\E^{\mathrm{dyn},\mathbb{L}[\e]}_{\eta}|\mathbf{Av}_{n}^{\mathfrak{f}_{\cdot}}[\eta^{\mathbb{L}[\e]}_{\r}]-\mathbf{CutAv}_{n}^{\mathfrak{f}_{\cdot}}[\eta^{\mathbb{L}[\e]}_{\r}]|^{2}\d\r\\
&\lesssim \tfrac{1}{\tau}\int_{0}^{\tau}\E^{\sigma,\mathbb{L}[\e]}|\mathbf{Av}_{n}^{\mathfrak{f}_{\cdot}}-\mathbf{CutAv}_{n}^{\mathfrak{f}_{\cdot}}|^{2}\d r\lesssim N^{-100}\sup_{i=1,\ldots,n}\|\mathfrak{f}_{i}\|_{\infty}^{\frac32}.
\end{align*}
The first line follows by Cauchy-Schwarz in the $\d\r$-integration. The second follows by Fubini. To obtain the last line, we change measure via Lemma \ref{lemma:lpprod} to get the first bound. Then we use the a priori bound {\small$|\mathbf{Av}_{n}^{\mathfrak{f}_{\cdot}}|\lesssim\sup_{i}\|\mathfrak{f}_{i}\|_{\infty}$} and that {\small$\mathbf{Av}_{n}^{\mathfrak{f}_{\cdot}}-\mathbf{CutAv}_{n}^{\mathfrak{f}_{\cdot}}\neq0$} with exponentially small probability in $N$ with respect to $\mathbb{P}^{\sigma,\mathbb{L}[\e]}$. (We used this earlier in the proof of \eqref{eq:probanI3}.) On the other hand, we have the following for any $\delta>0$ by H\"{o}lder and \eqref{eq:kv}:
\begin{align*}
&N^{\frac34}\E^{\sigma,\mathbb{L}[\e]}\E^{\mathrm{dyn},\mathbb{L}[\e]}_{\eta}|\tfrac{1}{\tau}{\textstyle\int_{0}^{\tau}}\mathbf{Av}_{n}^{\mathfrak{f}_{\cdot}}[\eta^{\mathbb{L}[\e]}_{\r}]\d\r|^{\frac32}\\
&\lesssim N^{\frac34}\left\{\E^{\sigma,\mathbb{L}[\e]}\E^{\mathrm{dyn},\mathbb{L}[\e]}_{\eta}|\tfrac{1}{\tau}{\textstyle\int_{0}^{\tau}}\mathbf{Av}_{n}^{\mathfrak{f}_{\cdot}}[\eta^{\mathbb{L}[\e]}_{\r}]\d\r|^{2}\right\}^{\frac34}\\
&\lesssim_{\delta} N^{\delta}N^{\frac34}N^{-\frac32}\tau^{-\frac34}n^{-\frac34}\sup_{i=1,\ldots,n}|\mathbb{I}_{i}|^{\frac32}\sup_{i=1,\ldots,n}\|\mathfrak{f}_{i}\|_{\infty}^{\frac32}.
\end{align*}
(We can always rescale the $\mathfrak{f}_{i}$ to have uniformly bounded $\|\|_{\infty}$-norm to use \eqref{eq:kv}; the last factor in the previous display is the cost in doing so.) So, the last five displays give
\begin{align}
\mathrm{RHS}\eqref{eq:probanI2a}&\lesssim_{\delta}N^{\frac34}\kappa^{-1}N^{-2}|\mathbb{L}[\e]|^{3}+N^{-100}\sup_{i=1,\ldots,n}\|\mathfrak{f}_{i}\|^{\frac32}_{\infty}+N^{\delta}N^{\frac34}N^{-\frac32}\tau^{-\frac34}n^{-\frac34}\sup_{i=1,\ldots,n}|\mathbb{I}_{i}|^{\frac32}\sup_{i=1,\ldots,n}\|\mathfrak{f}_{i}\|_{\infty}^{\frac32}.\nonumber
\end{align}
Because $|\mathbb{L}[\e]|\lesssim N^{\e}|\mathbb{L}|$ and {\small$\kappa^{-1}=N^{3\e/2}n^{-3/4}\sup_{i}\|\mathfrak{f}_{i}\|_{\infty}^{3/2}$} for $\e>0$ small, the first term on the RHS of the previous display is $\lesssim\mathrm{RHS}\eqref{eq:probanIb}$. Combining this with \eqref{eq:probanI2a}-\eqref{eq:probanI2b} with \eqref{eq:probanI3} and the above display now give \eqref{eq:probanI1}. As mentioned right before \eqref{eq:probanI1}, this completes the proof.
\end{proof}
We now give another estimate which uses Proposition \ref{prop:kv2} instead of Proposition \ref{prop:kv}.
\begin{lemma}\label{lemma:probanfluc}
\fsp Fix a sub-interval $\mathbb{L}$. Suppose $\mathfrak{f}_{1},\mathfrak{f}_{2}:\{\pm1\}^{\mathbb{T}_{N}}\to\R$ are functions such that:
\begin{enumerate}
\item For any $\eta\in\{\pm1\}^{\mathbb{T}_{N}}$, the quantity $\mathfrak{f}_{i}[\eta]$ depends only on $\eta_{\x}$ for $\x\in\mathbb{I}_{i}\subseteq\mathbb{L}$, and $\mathbb{I}_{1}\cap\mathbb{I}_{2}=\emptyset$.
\item We have $\E^{\sigma,\mathbb{L}}\mathfrak{f}_{1}=0$ for all $\sigma\in[-1,1]$.
\end{enumerate}
Fix $0<\tau\lesssim N^{-4/3}$ and {\small$|\mathbb{L}|\lesssim N^{1/3+\e_{\mathrm{reg}}}$}. Assume $N^{2}\tau\lesssim|\mathbb{L}|^{2}$. For any $\delta>0$, we have
\begin{align}
&\E\left[\sup_{\t\in[0,1]}\sup_{\x\in\mathbb{T}_{N}}\left|{\int_{0}^{\t}}\sum_{\y\in\mathbb{T}_{N}}\mathbf{H}^{N}_{\s,\t,\x,\y}\times N^{\frac12}\left\{\tfrac{1}{\tau}\int_{0}^{\tau}\mathfrak{f}_{1}[\tau_{\y}\eta^{}_{\s+\r}]\mathfrak{f}_{2}[\tau_{\y}\eta^{}_{\s+\r}]\d\r\right\}\mathbf{Y}^{N}_{\s,\y}\d\s\right|\right]\label{eq:probanflucIa}\\
&\lesssim_{\delta} N^{\e_{\mathrm{ap}}}\left(N^{\delta}N^{-\frac54}|\mathbb{L}|^{3}\sup_{i=1,2}\|\mathfrak{f}_{i}\|_{\infty}^{\frac32}+N^{\delta}N^{-\frac34}\tau^{-\frac34}|\mathbb{I}_{1}|^{\frac32}\sup_{i=1,2}\|\mathfrak{f}_{i}\|_{\infty}^{\frac32}\right)^{2/3},\label{eq:probanflucIb}
\end{align}
where $\|\|_{\infty}$ means sup-norm over $\eta\in\{\pm1\}^{\mathbb{T}_{N}}$.
\end{lemma}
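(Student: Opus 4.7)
The proof will closely parallel that of Lemma \ref{lemma:proban}, with Proposition \ref{prop:kv2} taking the place of Proposition \ref{prop:kv} at the final Kipnis--Varadhan step. The key change is that we lose the averaging-over-$n$-terms structure and instead exploit that only the factor $\mathfrak{f}_1$ need be fluctuating, which is exactly what Proposition \ref{prop:kv2} is designed to detect.

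The first step will be the same H\"older-in-time reduction as in Lemma \ref{lemma:proban}: using the interpolated heat kernel bound $\mathbf{H}^{N}_{\s,\t,\x,\y}\lesssim N^{-1+3\e}|\t-\s|^{-1/2+3\e/2}$ from Proposition \ref{prop:hk} together with $|\mathbf{Y}^{N}|\lesssim N^{\e_{\mathrm{ap}}}$, the LHS of \eqref{eq:probanflucIa} is bounded by $N^{\e_{\mathrm{ap}}+\mathrm{O}(\e)}$ times the $2/3$-power of
$\int_{0}^{1}\tfrac{1}{N}\sum_{\y}N^{3/4}\,\E\bigl|\tfrac{1}{\tau}\int_{0}^{\tau}\mathfrak{f}_{1}[\tau_{\y}\eta_{\s+r}]\mathfrak{f}_{2}[\tau_{\y}\eta_{\s+r}]\,\d r\bigr|^{3/2}\,\d\s.$
Next I would condition on $\mathscr{F}_{\s}$, translate by $\tau_{\y}$, and couple the ambient dynamics to the localized periodic process $\eta^{\mathbb{L}[\e]}$ on a thickened interval $\mathbb{L}[\e]$ of radius $N^{\e}|\mathbb{L}|$, exactly as in the discrepancy random-walk argument inside the proof of Lemma \ref{lemma:proban}; under the assumption $N^{2}\tau\lesssim|\mathbb{L}|^{2}$, any discrepancy must travel distance $\gg N\tau^{1/2}+N^{3/2}\tau$, giving an error exponentially small in $N^{\e/2}$. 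This produces a function $\mathfrak{F}[\eta]:=\E^{\mathrm{dyn},\mathbb{L}[\e]}_{\eta}\bigl|\tfrac{1}{\tau}\int_{0}^{\tau}\mathfrak{f}_{1}[\eta^{\mathbb{L}[\e]}_{r}]\mathfrak{f}_{2}[\eta^{\mathbb{L}[\e]}_{r}]\,\d r\bigr|^{3/2}$ on $\{\pm1\}^{\mathbb{L}[\e]}$ with $\|\mathfrak{F}\|_{\infty}\lesssim\sup_{i}\|\mathfrak{f}_{i}\|_{\infty}^{3}$, to which I can apply the entropy inequality \eqref{eq:localreducII} with an appropriately chosen $\kappa\asymp \sup_{i}\|\mathfrak{f}_i\|_\infty^{-3/2}$; this splits into an entropy term $\lesssim N^{-2}|\mathbb{L}[\e]|^{3}\sup_{i}\|\mathfrak{f}_{i}\|_{\infty}^{3/2}$, which (multiplied by the $N^{3/4}$ prefactor and with $|\mathbb{L}[\e]|\lesssim N^{\e}|\mathbb{L}|$) yields the first term on the RHS of \eqref{eq:probanflucIb}, and a canonical-ensemble term $\sup_{\sigma}\E^{\sigma,\mathbb{L}[\e]}\mathfrak{F}$. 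For the latter I use Jensen to pass from $\mathrm{L}^{3/2}$ to $\mathrm{L}^{2}$ and then apply Proposition \ref{prop:kv2}: the mean-zero hypothesis on $\mathfrak{f}_{1}$ transfers from $\mathbb{P}^{\sigma,\mathbb{L}}$ to $\mathbb{P}^{\sigma,\mathbb{L}[\e]}$ because the marginal of the latter on $\mathbb{L}$ is a mixture of canonical ensembles $\mathbb{P}^{\sigma',\mathbb{L}}$, and disjointness of $\mathbb{I}_{1},\mathbb{I}_{2}$ is preserved. This produces the bound $\lesssim_{\delta}N^{\delta}N^{-2}\tau^{-1}|\mathbb{I}_{1}|^{2}\sup_{i}\|\mathfrak{f}_{i}\|_{\infty}^{2}$ on the second moment, whose $3/4$-power times the $N^{3/4}$ prefactor recovers the second term on the RHS of \eqref{eq:probanflucIb}.

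The main obstacle will be that, unlike in Lemma \ref{lemma:proban}, there is no CLT-scale cutoff available because we are not averaging $n$ independent-support fluctuating terms --- so one cannot first truncate $\mathfrak{f}_{1}\mathfrak{f}_{2}$ down to a subgaussian scale before applying the entropy inequality. Consequently the entropy cost is controlled only by the raw $\mathrm{L}^{\infty}$-size of the integrand, which is why Proposition \ref{prop:kv2} (rather than Proposition \ref{prop:kv}) is needed: it provides the square-root cancellation $|\mathbb{I}_{1}|^{2}$ depending only on the support of the fluctuating factor, matching the entropy cost $|\mathbb{L}|^{3}$ at the natural diffusive scale $|\mathbb{L}|\asymp N\tau^{1/2}$. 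Verifying that the hypotheses of Proposition \ref{prop:kv2} transfer correctly to the enlarged interval $\mathbb{L}[\e]$ (in particular the preservation of mean-zero under $\mathbb{P}^{\sigma,\mathbb{L}[\e]}$ and the compatibility of $N^{2}\tau\lesssim|\mathbb{L}[\e]|^{2}$ with $|\mathbb{L}[\e]|\lesssim N^{1/3+\e_{\mathrm{reg}}}$) is routine but requires care, and one must absorb the $N^{\mathrm{O}(\e)}$-factors into the $N^{\delta}$-prefactor by taking $\e$ small.
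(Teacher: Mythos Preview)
Your proposal is correct and follows essentially the same route as the paper's proof: the H\"older-in-time reduction from Lemma~\ref{lemma:proban}, the discrepancy coupling to the localized process on $\mathbb{L}[\e]$, the entropy inequality \eqref{eq:localreducII}, and finally H\"older followed by Proposition~\ref{prop:kv2}. One minor correction: the choice $\kappa^{-1}\asymp\sup_i\|\mathfrak{f}_i\|_\infty^{3/2}$ should be $\kappa^{-1}\asymp\prod_{i=1,2}\|\mathfrak{f}_i\|_\infty^{3/2}$ to match $\|\mathfrak{F}\|_\infty$ and apply \eqref{eq:localreducII}, as the paper does; your explicit justification that the mean-zero hypothesis on $\mathfrak{f}_1$ transfers to $\mathbb{P}^{\sigma,\mathbb{L}[\e]}$ via the mixture-of-canonical-ensembles argument is a point the paper leaves implicit.
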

\begin{proof}
The first display in the proof of Lemma \ref{lemma:proban} still holds if we replace the average of $\mathfrak{f}_{1},\ldots,\mathfrak{f}_{n}$ therein by $\mathfrak{f}_{1},\mathfrak{f}_{2}$. In particular, similar to \eqref{eq:probanI1}, it suffices to prove the following for any $\delta>0$:
\begin{align}
&\int_{0}^{1}\tfrac{1}{N}\sum_{\y\in\mathbb{T}_{N}}N^{\frac34}\E|\tfrac{1}{\tau}{\textstyle\int_{0}^{\tau}}\mathfrak{f}_{1}[\tau_{\y}\eta^{}_{\s+\r}]\mathfrak{f}_{2}[\tau_{\y}\eta^{}_{\s+\r}]\d\r|^{\frac32}\d\s\nonumber\\
&\lesssim_{\delta}N^{\delta}N^{-\frac54}|\mathbb{L}|^{3}\prod_{i=1,2}\|\mathfrak{f}_{i}\|_{\infty}^{\frac32}+N^{\delta}N^{-\frac34}\tau^{-\frac34}|\mathbb{I}_{1}|^{\frac32}\prod_{i=1,2}\|\mathfrak{f}_{i}\|_{\infty}^{\frac32}. \label{eq:probanflucI1}
\end{align}
We follow exactly the proof of Lemma \ref{lemma:proban} with $n=1$ and $\mathfrak{f}_{1}$ there replaced by $\mathfrak{f}_{1}\mathfrak{f}_{2}$. In the language of the proof of Lemma \ref{lemma:proban}, we have {\small$\mathbf{CutAv}^{\mathfrak{f}_{\cdot}}_{n}=\mathbf{Av}^{\mathfrak{f}_{\cdot}}_{n}=\mathfrak{f}_{1}\mathfrak{f}_{2}$} since $n=1$. In particular, \eqref{eq:editproban} translates into
\begin{align*}
\mathrm{LHS}\eqref{eq:probanflucI1}&\lesssim N^{\frac34}\kappa^{-1}N^{-2}|\mathbb{L}[\e]|^{3}+N^{\frac34}\sup_{\sigma\in[-1,1]}\E^{\sigma,\mathbb{L}[\e]}\E^{\mathrm{dyn},\mathbb{L}[\e]}_{\eta}|\tfrac{1}{\tau}{\textstyle\int_{0}^{\tau}}\mathfrak{f}_{1}[\eta^{\mathbb{L}[\e]}_{\r}]\mathfrak{f}_{2}[\eta^{\mathbb{L}[\e]}_{\r}]\d\r|^{\frac32},
\end{align*}
where {\small$\kappa^{-1}\lesssim N^{3\e/2}\sup_{i}\|\mathfrak{f}_{1}\mathfrak{f}_{2}\|_{\infty}$} for small $\e>0$ (see after \eqref{eq:editproban} for this choice of $\kappa$). The double expectation in the previous display is just expectation with respect to the process {\small$\eta^{\mathbb{L}[\e]}$} with initial data distributed according to $\E^{\sigma,\mathbb{L}[\e]}$. We use Proposition \ref{prop:kv2} to bound it by {\small$\lesssim_{\delta}N^{\delta}N^{-3/2}\tau^{-3/4}|\mathbb{I}_{1}|^{3/2}\|\mathfrak{f}_{1}\|_{\infty}\|\mathfrak{f}_{2}\|_{\infty}$} for any $\delta>0$. We then use $|\mathbb{L}[\e]|^{3}\lesssim N^{\e}|\mathbb{L}|$. Ultimately, this turns the previous display into \eqref{eq:probanflucI1}, so the proof is complete.
\end{proof}
We now give an estimate in which we do not time-average, only space-average. The estimate will not be as good, but this estimate will nonetheless be important for our proof of Theorem \ref{theorem:bgp}.
\begin{lemma}\label{lemma:probanspace}
\fsp Fix a sub-interval $\mathbb{L}\subseteq\mathbb{T}_{N}$. Suppose $\mathfrak{f}_{i}:\{\pm1\}^{\mathbb{T}_{N}}\to\R$ are functions such that:
\begin{enumerate}
\item For any $\eta\in\{\pm1\}^{\mathbb{T}_{N}}$, the quantity $\mathfrak{f}_{i}[\eta]$ depends only on $\eta_{\x}$ for $\x\in\mathbb{I}_{i}\subseteq\mathbb{L}$, where $\mathbb{I}_{i}$ are pairwise disjoint.
\item For every $i$ and $\sigma\in[-1,1]$, we have $\E^{\sigma,\mathbb{L}}\mathfrak{f}_{i}=0$ (recall $\E^{\sigma,\mathbb{L}}$ from Definition \ref{definition:can}).
\end{enumerate}
We have the estimate below for any $\e>0$:
\begin{align}
&\E\left[\sup_{\t\in[0,\t]}\sup_{\x\in\mathbb{T}_{N}}\int_{0}^{\t}\sum_{\y\in\mathbb{T}_{N}}\mathbf{H}^{N}_{\s,\t,\x,\y}N^{\frac16}\left|\tfrac{1}{n}\sum_{i=1}^{n}\mathfrak{f}_{i}[\tau_{\y}\eta_{\s}]\right|\d\s\right]\nonumber\\
&\lesssim_{\e} \left(N^{-\frac74}|\mathbb{L}|^{3}n^{-\frac34}\sup_{i=1,\ldots,n}\|\mathfrak{f}_{i}\|_{\infty}^{\frac32}+N^{\frac14+\e}n^{-\frac34}\sup_{i=1,\ldots,n}\|\mathfrak{f}_{i}\|_{\infty}^{\frac32}\right)^{2/3}.\label{eq:probanspace}
\end{align}
\end{lemma}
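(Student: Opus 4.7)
The strategy is essentially a simpler variant of the proof of Lemma \ref{lemma:proban}, taking advantage of the fact that the $N$-factor in front is only $N^{1/6}$ (rather than $N^{1/2}$), so we may afford to forego the Kipnis-Varadhan estimate entirely and replace it by the much cruder a priori cutoff at the CLT scale combined with Lemma \ref{lemma:azuma}.

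First, I would insert the powers $|\t-\s|^{-1/3+\e}|\t-\s|^{1/3-\e}$ and apply H\"older with exponents $3,3/2$ in time, using the fact that $\mathbf{H}^N_{\s,\t,\x,\y}$ is a probability measure in $\y$ and satisfies the interpolated pointwise bound $\mathbf{H}^N_{\s,\t,\x,\y}\lesssim N^{-1+3\e}|\t-\s|^{-1/2+3\e/2}$. Exactly as in the first displayed calculation of the proof of Lemma \ref{lemma:proban}, this bounds the quantity inside the expectation of \eqref{eq:probanspace} by
\begin{align*}
\lesssim_{\e}\left\{\int_{0}^{1}N^{3\e}\tfrac{1}{N}\sum_{\y\in\mathbb{T}_{N}}N^{\frac14}\left|\tfrac{1}{n}\sum_{i=1}^{n}\mathfrak{f}_{i}[\tau_{\y}\eta^{}_{\s}]\right|^{\frac32}\d\s\right\}^{2/3},
\end{align*}
where the $N^{1/4}$ arises from $(N^{1/6})^{3/2}$. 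Taking expectation, it then suffices (after absorbing $N^{3\e}$ into $N^{\e}$) to prove
\begin{align*}
\int_{0}^{1}\tfrac{1}{N}\sum_{\y\in\mathbb{T}_{N}}N^{\frac14}\E|\mathbf{Av}_{n}^{\mathfrak{f}_{\cdot}}[\tau_{\y}\eta^{}_{\s}]|^{\frac32}\d\s\lesssim N^{-\frac74}|\mathbb{L}|^{3}n^{-\frac34}\sup_{i}\|\mathfrak{f}_{i}\|_{\infty}^{\frac32}+N^{\frac14+\e}n^{-\frac34}\sup_{i}\|\mathfrak{f}_{i}\|_{\infty}^{\frac32},
\end{align*}
where $\mathbf{Av}_{n}^{\mathfrak{f}_{\cdot}}:=\frac{1}{n}\sum_{i}\mathfrak{f}_{i}$.

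Next, I would split $\mathbf{Av}_{n}^{\mathfrak{f}_{\cdot}}=\mathbf{CutAv}_{n}^{\mathfrak{f}_{\cdot}}+(\mathbf{Av}_{n}^{\mathfrak{f}_{\cdot}}-\mathbf{CutAv}_{n}^{\mathfrak{f}_{\cdot}})$ exactly as in the proof of Lemma \ref{lemma:proban}, with the truncation threshold $N^{\e}n^{-1/2}\sup_{i}\|\mathfrak{f}_{i}\|_{\infty}$. The $\mathbf{CutAv}$ piece contributes at most $N^{1/4}\cdot N^{3\e/2}n^{-3/4}\sup_{i}\|\mathfrak{f}_{i}\|_{\infty}^{3/2}$ directly by its deterministic bound, which is precisely the second term on the right-hand side after renaming $\e$. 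For the complementary piece $|\mathbf{Av}_{n}^{\mathfrak{f}_{\cdot}}-\mathbf{CutAv}_{n}^{\mathfrak{f}_{\cdot}}|^{3/2}$, I apply Lemma \ref{lemma:localreduc} with $\kappa=n^{3/4}\sup_{i}\|\mathfrak{f}_{i}\|_{\infty}^{-3/2}$, which gives an $N^{-2}|\mathbb{L}|^{3}\kappa^{-1}$ contribution accounting for the first term on the right-hand side, plus a $\kappa^{-1}\log\E^{\sigma,\mathbb{L}}\exp\{\kappa|\mathbf{Av}-\mathbf{CutAv}|^{3/2}\}$ contribution; by Cauchy--Schwarz and Lemma \ref{lemma:azuma}, the latter is exponentially small in $N$ (this is the same computation as in the derivation of \eqref{eq:probanI3}).

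The main subtlety, modest as it is, lies in verifying that the exponential-moment control via Lemma \ref{lemma:azuma} really absorbs the complementary term; but this is already handled verbatim in the proof of Lemma \ref{lemma:proban}. No analogue of Proposition \ref{prop:kv} is required here, because the absence of time-averaging is compensated by the smaller prefactor $N^{1/6}$.
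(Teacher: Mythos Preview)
Your proposal is correct and follows essentially the same approach as the paper: both reduce via the H\"older-in-time argument to a statement analogous to \eqref{eq:probanI1} with $N^{1/4}$ in place of $N^{3/4}$, then split into $\mathbf{CutAv}$ and its complement, bound $\mathbf{CutAv}$ directly by its cutoff, and handle the complement via Lemma \ref{lemma:localreduc} together with Lemma \ref{lemma:azuma} exactly as in \eqref{eq:probanI3}. You also correctly observe that Proposition \ref{prop:kv} is not needed here.
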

\begin{proof}
Note that this is just a special case of Lemma \ref{lemma:proban} but with $\tau=0$. So, the same argument holds. (Of course, we do not use all of the argument, since $\tau=0$ gives an infinite term in \eqref{eq:probanIb}.) In particular, we can follow the first display in the proof of Lemma \ref{lemma:proban} (i.e. the H\"{o}lder inequality argument) to reduce the proof of \eqref{eq:probanspace} to the proof the following estimate (similar to \eqref{eq:probanI1}):
\begin{align}
&\int_{0}^{1}\tfrac{1}{N}\sum_{\y\in\mathbb{T}_{N}}N^{\frac14}\E|\tfrac{1}{n}\sum_{i=1}^{n}\mathfrak{f}_{i}[\tau_{\y}\eta^{}_{\s}]|^{\frac32}\d\s\nonumber\\
&\lesssim N^{-\frac74}|\mathbb{L}|^{3}n^{-\frac34}\sup_{i=1,\ldots,n}\|\mathfrak{f}_{i}\|_{\infty}^{\frac32}+N^{\frac14+\e}n^{-\frac34}\sup_{i=1,\ldots,n}\|\mathfrak{f}_{i}\|_{\infty}^{\frac32}.\label{eq:probanspace1}
\end{align}
(We have $N^{1/4}$ on the LHS of \eqref{eq:probanspace1} and not $N^{3/4}$ as in \eqref{eq:probanI1} because this factor comes from the $3/2$-th power of the $N^{1/6}$ factor on the LHS of \eqref{eq:probanspace}, whereas $N^{3/4}$ in \eqref{eq:probanI1} is the $3/2$-th power of $N^{1/2}$ in \eqref{eq:probanIa}.) Now use \eqref{eq:probanI2a}-\eqref{eq:probanI2b} and the first inequality in \eqref{eq:probanI3}. (Again, all of the {\small$N^{3/4}$} factors there are replaced by {\small$N^{1/4}$} for the reason that we just explained.) This gives
\begin{align}
\mathrm{LHS}\eqref{eq:probanspace1}&\lesssim\int_{0}^{1}\tfrac{1}{N}\sum_{\y\in\mathbb{T}_{N}}N^{\frac14}\E|\mathbf{CutAv}_{n}^{\mathfrak{f}_{\cdot}}[\tau_{\y}\eta^{}_{\s}]|^{\frac32}\d\s\nonumber\\
&+N^{\frac14}\kappa^{-1}N^{-2}|\mathbb{L}|^{3}+N^{\frac14}\kappa^{-1}\log\{1+\exp\{-KN^{2\e}\}\},\label{eq:probanspace2}
\end{align}
where $\kappa^{-1}=n^{-3/4}\{\sup_{i}\|\mathfrak{f}_{i}\|_{\infty}\}^{3/2}$ (see the paragraph before \eqref{eq:probanI3}), where $K\gtrsim1$, and, for any $\e>0$,
\begin{align*}
\mathbf{CutAv}_{n}^{\mathfrak{f}_{\cdot}}[\eta]:=\mathbf{Av}_{n}^{\mathfrak{f}_{\cdot}}[\eta]\mathbf{1}[|\mathbf{Av}_{n}^{\mathfrak{f}_{\cdot}}[\eta]|\lesssim N^{\e}n^{-1/2}\sup_{i=1,\ldots,n}\|\mathfrak{f}_{i}\|_{\infty}].
\end{align*}
The last term in \eqref{eq:probanspace2} is exponentially small in $N$ (times $\sup_{i}\|\mathfrak{f}_{i}\|_{\infty}^{3/2}$) since $K\gtrsim1$. Moreover, the first term on the RHS of \eqref{eq:probanspace2} is {\small$\lesssim N^{\e}N^{1/4}n^{-3/4}\sup_{i}\|\mathfrak{f}_{i}\|_{\infty}^{3/2}$} by definition of {\small$\mathbf{CutAv}^{\mathfrak{f}}_{n}[\eta]$}. In particular, \eqref{eq:probanspace1} follows from the previous two displays, so we are done.
\end{proof}
%
%
%
\section{Proof of Theorem \ref{theorem:bgp} and Proposition \ref{prop:hl}}\label{section:bgphlproof}
We spend almost all of this section on Theorem \ref{theorem:bgp}; the end of this section explains how the same argument proves Proposition \ref{prop:hl}. (Proposition \ref{prop:hl} is actually much easier, but the same argument suffices nonetheless.) 

We first need some notation. Fix an integer $\mathfrak{l}\geq1$. For any $\mathfrak{f}:\{\pm1\}^{\mathbb{T}_{N}}\to\R$, we define the following function of $\eta\in\{\pm1\}^{\mathbb{T}_{N}}$ (which we explain afterwards):
\begin{align}
\mathsf{E}^{\mathrm{can},\mathfrak{l}}[\eta;\mathfrak{f}]:=\E^{0}\left\{\mathfrak{f}[\wt{\eta}]\middle|\tfrac{1}{\mathfrak{l}}\sum_{\k=0}^{\mathfrak{l}-1}\wt{\eta}_{-\k}=\tfrac{1}{\mathfrak{l}}\sum_{\k=0}^{\mathfrak{l}-1}\eta_{-\k}\right\}.\label{eq:canexp0}
\end{align}
Above, $\eta\in\{\pm1\}^{\mathbb{T}_{N}}$ is fixed. The expectation $\E^{0}$ is with respect to $\wt{\eta}\sim\mathbb{P}^{0}$. So, we are taking expectation of $\mathfrak{f}[\wt{\eta}]$ with respect to $\wt{\eta}\sim\mathbb{P}^{0}$, but we are conditioning on the density of $\wt{\eta}$ over a block of length $\mathfrak{l}$ to the left of $0$ to be the average of $\eta$ over the same block. In particular, \eqref{eq:canexp0} is a function of $\eta$, and it depends only on $\eta_{\x}$ for $\x$ in the block $\{-\mathfrak{l}+1,\ldots,0\}$ of length $\mathfrak{l}$. (The orientation to the left is chosen to agree with the ``support" of $\overline{\mathfrak{q}}$, i.e. $\overline{\mathfrak{q}}[\eta]$ depends only on {\small$\eta_{\x}$} for $\x$ to the left of $0$; see \eqref{eq:overlineqterm}.) 

The following estimate on $\mathsf{E}^{\mathrm{can},\mathfrak{l}}$ is key for this section. Before we state it, first recall the canonical measure expectations $\E^{\sigma,\mathbb{I}}$ from Definition \ref{definition:can}.
\begin{lemma}\label{lemma:canexp}
\fsp Now, suppose that {\small$\partial_{\sigma}^{\k}\E^{\sigma}\mathfrak{f}|_{\sigma=0}=0$} for all $0\leq\k\leq\mathfrak{k}$ and suppose that $\mathfrak{k}\leq2$. Assume also that there exists $\mathbb{I}$ so that $\mathfrak{f}[\eta]$ depends only on $\eta_{\x}$ for $\x\in\mathbb{I}$, and $|\mathbb{I}|\lesssim1$, and that $\mathbb{I}$ is to the left of $0$. For any $|\mathbb{I}|<\mathfrak{l}\leq\mathfrak{l}_{\mathrm{reg}}$ (see \eqref{eq:tregX}) and $\y\in\mathbb{T}_{N}$ and $\s\leq\mathfrak{t}_{\mathrm{stop}}$ (see \eqref{eq:tst}), we have
\begin{align}
|\mathsf{E}^{\mathrm{can},\mathfrak{l}}[\tau_{\y}\eta_{\s};\mathfrak{f}]|\lesssim N^{3\e_{\mathrm{ap}}}|\mathfrak{l}|^{-\frac{\mathfrak{k}+1}{2}}\|\mathfrak{f}\|_{\infty}.\label{eq:canexp}
\end{align}
Above, $\|\|_{\infty}$ denotes sup-norm for functions $\{\pm1\}^{\mathbb{T}_{N}}\to\R$.
\end{lemma}
Before we give the proof, we note that $\mathfrak{f}=\overline{\mathfrak{q}}$ satisfies the conditions needed for \eqref{eq:canexp} with $\mathfrak{k}=2$. Indeed, by inspection of \eqref{eq:overlineqterm}, we know {\small$\partial_{\sigma}^{\k}\E^{\sigma}\overline{\mathfrak{q}}|_{\sigma=0}=0$} for $\k=0,1$. We also know it for $\k=2$ by \eqref{eq:assumequad} and the fact that {\small$\partial_{\sigma}^{2}\E^{\sigma}\mathfrak{f}|_{\sigma=0}=0$} for any constant function $\mathfrak{f}$ and any linear function $\mathfrak{f}$ of $\eta_{\x}$-terms. Similarly, the constraint is true for $\mathfrak{f}=\mathfrak{s},\mathfrak{g}$ (see \eqref{eq:sterm} and \eqref{eq:gterm}) for $\mathrm{k}=0$ by construction. Lastly, the constraint $\mathfrak{l}>|\mathbb{I}|$ is to make sure that the input function $\mathfrak{f}$ does not depend on {\small$\eta_{\x}$} for any $\x\not\in\mathbb{I}$; in words, we want to condition the $\eta$-density on a block containing the ``support" of $\mathfrak{f}$.
\begin{proof}
We use Proposition 8 of \cite{GJ15} to get the following, which essentially says that conditioning the $\eta$-density on a neighborhood containing $\mathbb{I}$ has asymptotically little effect as the length $\mathfrak{l}$ of the neighborhood diverges. (Intuitively, the ``interior" of a Brownian bridge looks like a Brownian motion without conditioning.) Precisely, we get the following (recall that $\E^{\sigma}$ is product Bernoulli expectation on $\{\pm1\}^{\mathbb{T}_{N}}$ such that {\small$\E^{\sigma}\eta_{\x}=\sigma$} for all $\x$):
\begin{align*}
\mathsf{E}^{\mathrm{can},\mathfrak{l}}[\tau_{\y}\eta_{\s};\mathfrak{f}]&=\E^{\sigma[\tau_{\y}\eta_{\s};\mathfrak{l}]}\mathfrak{f}+\tfrac{1}{2\mathfrak{l}}\chi(\sigma[\tau_{\y}\eta_{\s};\mathfrak{l}])\partial_{\sigma}^{2}\E^{\sigma}\mathfrak{f}|_{\sigma=\sigma[\tau_{\y}\eta_{\s};\mathfrak{l}]}+\mathrm{O}(|\mathfrak{l}|^{-2}\|\mathfrak{f}\|_{\infty}),
\end{align*}
where $\sigma[\tau_{\y}\eta_{\s};\mathfrak{l}]:=\mathfrak{l}^{-1}(\eta_{\s,\y-\mathfrak{l}+1}+\ldots\eta_{\s,\y})$ is just convenient shorthand, and $\chi(\sigma):=\sigma(1-\sigma)$ is just some polynomial of bounded degree. Now, observe that $\E^{\sigma}\mathfrak{f}$ is a polynomial in $\sigma$ of degree $\mathrm{O}(1)$ since $\mathfrak{f}$ is local. Thus, it is smooth with uniformly bounded derivatives. By Taylor expanding the second-derivative on the RHS, 
\begin{align*}
\tfrac{1}{2\mathfrak{l}}\chi(\sigma[\tau_{\y}\eta_{\s};\ell])\partial_{\sigma}^{2}\E^{\sigma}\mathfrak{f}|_{\sigma=\sigma[\tau_{\y}\eta_{\s}]}&=\tfrac{1}{2\mathfrak{l}}\chi(\sigma[\tau_{\y}\eta_{\s};\mathfrak{l}])\partial_{\sigma}^{2}\E^{\sigma}\mathfrak{f}|_{\sigma=0}+\mathrm{O}(|\mathfrak{l}|^{-1}|\sigma[\tau_{\y}\eta_{\s};\mathfrak{l}]|\|\mathfrak{f}\|_{\infty}).
\end{align*}
If $\mathfrak{k}=0,1$, we bound the first term on the RHS by $\mathfrak{l}^{-1}\|\mathfrak{f}\|_{\infty}\lesssim\mathfrak{l}^{-(\mathfrak{k}+1)/2}\|\mathfrak{f}\|_{\infty}$. If $\mathfrak{k}=2$, then the first term on the RHS of the previous display is $0$ by assumption. Thus, the bound $\mathfrak{l}^{-1}\|\mathfrak{f}\|_{\infty}\lesssim\mathfrak{l}^{-(\mathfrak{k}+1)/2}\|\mathfrak{f}\|_{\infty}$ is always true for all $\mathfrak{k}=0,1,2$. On the other hand, by the formulas \eqref{eq:hf} and \eqref{eq:gartner}, we have {\small$|\sigma[\tau_{\y}\eta_{\s};\mathfrak{l}]|=N^{1/2}\mathfrak{l}^{-1}|\mathbf{h}^{N}_{\s,\y}-\mathbf{h}^{N}_{\s,\y-\mathfrak{l}}|\lesssim N^{1/2}\mathfrak{l}^{-1}|\log\mathbf{Z}^{N}_{\s,\y}-\log\mathbf{Z}^{N}_{\s,\y-\mathfrak{l}}|$}. Because {\small$\s\leq\mathfrak{t}_{\mathrm{stop}}$}, we know {\small$\mathbf{Z}^{N}_{\s,\cdot}\gtrsim N^{-\e_{\mathrm{ap}}}$} and {\small$|\mathbf{Z}^{N}_{\s,\y}-\mathbf{Z}^{N}_{\s,\y-\mathfrak{l}}|\lesssim N^{2\e_{\mathrm{ap}}}N^{-1/2}|\mathfrak{l}|^{1/2}$} deterministically. By these two bounds and elementary calculus, we deduce {\small$|\sigma[\tau_{\y}\eta_{\s};\mathfrak{l}]|\lesssim N^{1/2}\mathfrak{l}^{-1}|\log\mathbf{Z}^{N}_{\s,\y}-\log\mathbf{Z}^{N}_{\s,\y-\mathfrak{l}}|\lesssim N^{\e_{\mathrm{ap}}}N^{2\e_{\mathrm{ap}}}\mathfrak{l}^{-1/2}$}. So, the last term in the above display is {\small$\lesssim N^{3\e_{\mathrm{ap}}}\mathfrak{l}^{-3/2}\|\mathfrak{f}\|_{\infty}\lesssim N^{3\e_{\mathrm{ap}}}\mathfrak{l}^{-(\mathfrak{k}+1)/2}\|\mathfrak{f}\|_{\infty}$} since $\mathfrak{k}\leq2$. Ultimately, by the previous bound and this paragraph, we deduce
\begin{align*}
\tfrac{1}{2\mathfrak{l}}\chi(\sigma[\tau_{\y}\eta_{\s};\ell])\partial_{\sigma}^{2}\E^{\sigma}\mathfrak{f}|_{\sigma=\sigma[\tau_{\y}\eta_{\s}]}&=\mathrm{O}(N^{3\e_{\mathrm{ap}}}\mathfrak{l}^{-\frac{\mathfrak{k}+1}{2}}\|\mathfrak{f}\|_{\infty}).
\end{align*}
Combining this with the first display shows that to finish the proof, it is enough to justify
\begin{align*}
\E^{\sigma[\tau_{\y}\eta_{\s};\mathfrak{l}]}\mathfrak{f}&=\E^{0}\mathfrak{f}+\partial_{\sigma}\E^{\sigma}\mathfrak{f}|_{\sigma=0}\sigma[\tau_{\y}\eta_{\s};\mathfrak{l}]+\tfrac12\partial_{\sigma}^{2}\E^{\sigma}\mathfrak{f}|_{\sigma=0}\sigma[\tau_{\y}\eta_{\s};\mathfrak{l}]^{2}+\mathrm{O}(|\sigma[\tau_{\y}\eta_{\s};\mathfrak{l}]|^{3}\|\mathfrak{f}\|_{\infty})\\
&=\mathrm{O}(N^{3\e_{\mathrm{ap}}}|\mathfrak{l}|^{-\frac{\mathfrak{k}+1}{2}}\|\mathfrak{f}\|_{\infty}).
\end{align*}
The first line is by Taylor expansion. The second line can be readily verified by the assumption {\small$\partial_{\sigma}^{\k}\E^{\sigma}\mathfrak{f}|_{\sigma=0}=0$} for all $0\leq\k\leq\mathfrak{k}$ (for $\mathfrak{k}\leq2$) and the deterministic bound {\small$|\sigma[\tau_{\y}\eta_{\s};\mathfrak{l}]|\lesssim N^{\e_{\mathrm{ap}}}N^{2\e_{\mathrm{ap}}}\mathfrak{l}^{-1/2}$} that we showed in an earlier paragraph in this proof. (I.e., more vanishing derivatives means contribution from higher powers of {\small$|\sigma[\tau_{\y}\eta_{\s};\mathfrak{l}]|\lesssim N^{\e_{\mathrm{ap}}}N^{2\e_{\mathrm{ap}}}\mathfrak{l}^{-1/2}$}.) This completes the proof.
\end{proof}
In view of Lemma \ref{lemma:canexp} and the paragraph after it, to show Theorem \ref{theorem:bgp}, we want to justify replacing $\overline{\mathfrak{q}}[\tau_{\y}\eta_{\s}]$ by {\small$\mathsf{E}^{\mathrm{can},\mathfrak{l}_{\mathrm{reg}}}[\tau_{\y}\eta_{\s};\overline{\mathfrak{q}}]$}, since Lemma \ref{lemma:canexp} implies that {\small$\mathsf{E}^{\mathrm{can},\mathfrak{l}_{\mathrm{reg}}}[\tau_{\y}\eta_{\s};\overline{\mathfrak{q}}]=\mathrm{O}(N^{3\e_{\mathrm{ap}}}\mathfrak{l}_{\mathrm{reg}}^{-3/2})=\mathrm{O}(N^{-1/2-3\e_{\mathrm{reg}}/2+3\e_{\mathrm{ap}}})$}, and because $\e_{\mathrm{reg}}\geq C\e_{\mathrm{ap}}$ for some big constant $C>0$ (see \eqref{eq:tregX}), this beats the $N^{1/2}$ factor in $\Upsilon^{\mathrm{BG}}$ (see \eqref{eq:upsilonbg}). To justify such a replacement, we will follow the general schematic of \cite{CYau,GJ15,GJS15,SX} and do it ``step-by-step" with respect to $\mathfrak{l}$. The following construction makes this systematic.
\begin{definition}\label{definition:multiscale}
\fsp We fix $\e_{\mathrm{step}}>0$ small (it will be ``much smaller" that $\e_{\mathrm{ap}},\e_{\mathrm{reg}}$ from \eqref{eq:tap}, \eqref{eq:tregX}, and \eqref{eq:tregT} but still independent of $N$). We let $\mathrm{K}_{\mathrm{step}}>0$ be the smallest positive integer for which $\mathrm{K}_{\mathrm{step}}\e_{\mathrm{step}}=1/3+\e_{\mathrm{reg}}$ (we must choose $\e_{\mathrm{step}}$ depending on $\e_{\mathrm{reg}}$ to make this possible, but its smallness is unaffected by this). 

For any integer $\k\geq0$, we define
\begin{align}
\wt{\mathsf{R}}_{\k}[\eta;\overline{\mathfrak{q}}]:=\begin{cases}\overline{\mathfrak{q}}[\eta]-\mathsf{E}^{\mathrm{can},N^{\e_{\mathrm{step}}}}[\eta;\overline{\mathfrak{q}}]&\k=0\\\mathsf{E}^{\mathrm{can},N^{\k\e_{\mathrm{step}}}}[\eta;\overline{\mathfrak{q}}]-\mathsf{E}^{\mathrm{can},N^{(\k+1)\e_{\mathrm{step}}}}[\eta;\overline{\mathfrak{q}}]&\k\geq1\end{cases}
\end{align}
(The same definition works for any $\mathfrak{f}:\{\pm1\}^{\mathbb{T}_{N}}\to\R$ in place of $\overline{\mathfrak{q}}$, as long as $\mathfrak{f}[\eta]$ depends only on $\eta_{\x}$ for $\x\in\mathbb{I}$ with $|\mathbb{I}|\lesssim1$ and with $\mathbb{I}$ to the left of $0$.)
\end{definition}
The main ingredient to prove Theorem \ref{theorem:bgp} is the following estimate for $\wt{\mathsf{R}}_{\k}$ terms.
\begin{prop}\label{prop:bgpr}
\fsp Fix any integer $0\leq\k<\mathrm{K}_{\mathrm{step}}$. There exists a universal constant $\beta_{\mathrm{BG}}>0$ such that
\begin{align}
\E\left[\sup_{\t\in[0,1]}\sup_{\x\in\mathbb{T}_{N}}|{\textstyle\int_{0}^{\t}}\mathrm{e}^{[\t-\s]\mathscr{T}_{N}}[N^{\frac12}\wt{\mathsf{R}}_{\k}[\tau_{\cdot}\eta_{\s};\overline{\mathfrak{q}}]\mathbf{Y}^{N}_{\s,\cdot}]_{\x}\d\s|\right]\lesssim N^{-\beta_{\mathrm{BG}}}.\label{eq:bgpr}
\end{align}
\end{prop}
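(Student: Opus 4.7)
The plan is to follow the multiscale replacement schematic of Section \ref{section:outline}: replace the pointwise local statistic $\wt{\mathsf{R}}_\k[\tau_\cdot\eta_\s;\overline{\mathfrak{q}}]$ by a space-time average on scales $\tau_{\mathrm{av}}=N^{-4/3+\e}$ and $\ell_{\mathrm{av}}=N^{1/3+\e}$ for a small but fixed $\e>0$ with $\e\gg\e_{\mathrm{step}}\gg\e_{\mathrm{ap}}$ (these sit just above the diffusive threshold and are compatible with the hypotheses $\tau\lesssim N^{-4/3}$, $|\mathbb{L}|\lesssim N^{1/3+\e_{\mathrm{reg}}}$ of Proposition \ref{prop:kv} and Lemma \ref{lemma:proban}), and then bound the averaged object by the Kipnis-Varadhan inequality of Section \ref{subsection:kv} as packaged in Lemma \ref{lemma:proban}. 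Two structural inputs from Lemma \ref{lemma:canexp} applied with $\mathfrak{k}=2$ drive the argument: first, the deterministic pointwise bound $|\wt{\mathsf{R}}_\k[\tau_\y\eta_\s;\overline{\mathfrak{q}}]|\lesssim N^{3\e_{\mathrm{ap}}}N^{-3\k\e_{\mathrm{step}}/2}$ valid on $[0,\mathfrak{t}_{\mathrm{stop}}]$; and second, the canonical-measure vanishing $\E^{\sigma,\mathbb{L}_\k}\wt{\mathsf{R}}_\k=0$ for any block $\mathbb{L}_\k$ containing the $\mathrm{O}(N^{(\k+1)\e_{\mathrm{step}}})$-size support of $\wt{\mathsf{R}}_\k$. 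The $\mathfrak{k}=2$ case is legitimate for $\overline{\mathfrak{q}}$ thanks to \eqref{eq:assumequad} and the subtractions in \eqref{eq:overlineqterm}.

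Decompose
$$\int_0^\t\mathrm{e}^{[\t-\s]\mathscr{T}_N}\!\bigl[N^{1/2}\wt{\mathsf{R}}_\k[\tau_\cdot\eta_\s;\overline{\mathfrak{q}}]\mathbf{Y}^N_{\s,\cdot}\bigr]_\x\d\s=\mathcal{A}_\k+\mathcal{E}^{\mathbf{T}}_\k+\mathcal{E}^{\mathbf{X}}_\k,$$
where $\mathcal{A}_\k$ is obtained by replacing $\wt{\mathsf{R}}_\k[\tau_\y\eta_\s;\overline{\mathfrak{q}}]$ by its $(\tau_{\mathrm{av}},\ell_{\mathrm{av}})$-space-time-average around $(\s,\y)$. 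For $\mathcal{E}^{\mathbf{T}}_\k$, a change of variable $\s\mapsto\s+\r$ in the $\d\s$-integral transfers the time shift onto $\mathbf{H}^N$ and $\mathbf{Y}^N$; using time-regularity of the heat kernel from Proposition \ref{prop:hk} together with \eqref{eq:tregT}, the a priori pointwise bound on $\wt{\mathsf{R}}_\k$, and $\tau_{\mathrm{av}}\ll N^{-1}$, one obtains $\|\mathcal{E}^{\mathbf{T}}_\k\|_{\mathrm{L}^\infty}\lesssim N^{-\beta'}$ on $\{\t\leq\mathfrak{t}_{\mathrm{stop}}\}$. For $\mathcal{E}^{\mathbf{X}}_\k$, the analogous spatial shift handles the $\mathbf{H}^N$-contribution via Proposition \ref{prop:hk} and \eqref{eq:tregX}, while the marginal piece is the difference $\mathbf{Y}^N_{\s,\y+\z}-\mathbf{Y}^N_{\s,\y}$; here we use that $\mathbf{Z}^N_{\s,\y+1}/\mathbf{Z}^N_{\s,\y}=\exp(-N^{-1/2}\eta_{\s,\y+1})$, so gradients of $\mathbf{Y}^N$ are explicit polynomials in the $\eta$-variables, and the resulting correction decomposes into lower-order terms of the same $\wt{\mathsf{R}}$-type, handled either by the same mechanism or by the space-only Lemma \ref{lemma:probanspace}.

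For the main piece $\mathcal{A}_\k$, partition the $\ell_{\mathrm{av}}$ spatial translates $\{\tau_{\y+\z}\wt{\mathsf{R}}_\k\}_{\z=0}^{\ell_{\mathrm{av}}-1}$ into $\mathrm{O}(N^{(\k+1)\e_{\mathrm{step}}})$ subfamilies with pairwise disjoint supports, each of size $n\sim\ell_{\mathrm{av}}/N^{(\k+1)\e_{\mathrm{step}}}$. Apply Lemma \ref{lemma:proban} to each subfamily with $|\mathbb{L}|\sim\ell_{\mathrm{av}}$, $\tau=\tau_{\mathrm{av}}$, $|\mathbb{I}_i|\sim N^{(\k+1)\e_{\mathrm{step}}}$, $\|\mathfrak{f}_i\|_\infty\lesssim N^{3\e_{\mathrm{ap}}}N^{-3\k\e_{\mathrm{step}}/2}$ (the mean-zero hypothesis is the canonical-measure vanishing above), and recombine via triangle inequality on the outer $1/N^{(\k+1)\e_{\mathrm{step}}}$-average. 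The key numerical point is that the combination $|\mathbb{I}_i|^{3/2}\|\mathfrak{f}_i\|_\infty\sim N^{3\e_{\mathrm{step}}/2+3\e_{\mathrm{ap}}}$ appearing in the second term of \eqref{eq:probanIb} is \emph{independent of $\k$}, so the square-root cancellation in $\tau_{\mathrm{av}}$ and $\ell_{\mathrm{av}}$ closes the estimate uniformly in $\k$ and yields $\E\|\mathcal{A}_\k\|_{\mathrm{L}^\infty}\lesssim N^{-\beta''}$. The worst case for the first term of \eqref{eq:probanIb} is $\k=0$, where one needs $3\e/2+4\e_{\mathrm{ap}}+\e_{\mathrm{step}}/2<1/3$; this is arranged by the scale hierarchy. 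Taking $\beta_{\mathrm{BG}}:=\min(\beta',\beta'')>0$ then concludes.

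The main obstacle, as flagged in Section \ref{section:outline}, is the spatial averaging step: bare H\"older-$1/2$ regularity of $\mathbf{Y}^N$ is marginally insufficient, and the explicit algebraic structure of space-gradients of $\mathbf{Z}^N$ in terms of the $\eta$-variables is essential in extracting the additional small factor needed to close the estimate. Everything else is careful but routine bookkeeping once the scale hierarchy $\e\gg\e_{\mathrm{step}}\gg\e_{\mathrm{ap}}$ and $\e_{\mathrm{reg}}\geq C\e_{\mathrm{ap}}$ is in place.
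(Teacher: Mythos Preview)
Your approach is essentially the paper's: introduce a space average, then a time average, then apply Lemma \ref{lemma:proban}. Two points need repair.

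First, the step where you feed $\|\mathfrak{f}_i\|_\infty\lesssim N^{3\e_{\mathrm{ap}}}N^{-3\k\e_{\mathrm{step}}/2}$ into Lemma \ref{lemma:proban} is not justified. The bound from Lemma \ref{lemma:canexp} holds only at configurations $\tau_\y\eta_\s$ with $\s\leq\mathfrak{t}_{\mathrm{stop}}$, whereas $\|\cdot\|_\infty$ in Lemma \ref{lemma:proban} is the supremum over \emph{all} $\eta\in\{\pm1\}^{\mathbb{T}_N}$; moreover the proof of Lemma \ref{lemma:proban} passes through the localized process $\eta^{\mathbb{L}[\e]}$ and canonical measures $\E^{\sigma,\mathbb{L}[\e]}$, which sample configurations never visited by the original dynamics. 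You cannot cure this by carrying the indicator $\mathbf{1}_{\s\leq\mathfrak{t}_{\mathrm{stop}}}$, since that is not a function of the instantaneous configuration. The paper inserts an explicit preliminary step (Lemma \ref{lemma:bgprcut}) replacing $\wt{\mathsf{R}}_\k$ by a modification $\mathsf{R}_\k$ that is \emph{uniformly} bounded by $N^{10\e_{\mathrm{ap}}}N^{-3\k\e_{\mathrm{step}}/2}$ while preserving the canonical-measure mean-zero property; this is what makes the $\|\mathfrak{f}_i\|_\infty$ input to Lemma \ref{lemma:proban} legitimate.

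Second, your handling of $\mathcal{E}^{\mathbf{X}}_\k$ is incomplete. Writing $\grad^{\mathbf{X}}\mathbf{Y}^N=\mathfrak{z}_\k\cdot\mathbf{Y}^N$ as you indicate produces a term $N^{1/2}\mathsf{R}_\k\cdot\mathfrak{z}_\k\cdot\mathbf{Y}^N$ where $\mathfrak{z}_\k$ has support of size $\mathrm{O}(\ell_{\mathrm{av}})$ and size $\mathrm{O}(N^{3\e_{\mathrm{ap}}}N^{-1/2}\ell_{\mathrm{av}}^{1/2})$. A direct bound gives only $N^{1/6+\mathrm{O}(\e)}$, and applying Lemma \ref{lemma:proban} or \ref{lemma:probanspace} to the product $\mathsf{R}_\k\mathfrak{z}_\k$ fails because the relevant $|\mathbb{I}_i|$ is now $\mathrm{O}(\ell_{\mathrm{av}})$, not $\mathrm{O}(N^{(\k+1)\e_{\mathrm{step}}})$. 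The paper handles exactly this via the product-form Kipnis--Varadhan estimate Lemma \ref{lemma:probanfluc} (built on Proposition \ref{prop:kv2}), which exploits that only the $\mathsf{R}_\k$ factor carries the fluctuation and has small support, while $\mathfrak{z}_\k$ is merely bounded with disjoint support; this yields the $|\mathbb{I}_1|^{3/2}$ dependence on the \emph{small} support. Your proposal does not invoke this ingredient, and ``same mechanism or Lemma \ref{lemma:probanspace}'' does not close the estimate.

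A minor point: $\tau_{\mathrm{av}}=N^{-4/3+\e}$ with $\e>0$ does not satisfy the stated hypothesis $\tau\lesssim N^{-4/3}$ of Lemma \ref{lemma:proban}; the paper simply takes $\tau_{\mathrm{av}}=N^{-4/3}$.
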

\begin{proof}[Proof of Theorem \ref{theorem:bgp} assuming Proposition \ref{prop:bgpr}]
Recall $\Upsilon^{\mathrm{BG}}$ from \eqref{eq:upsilonbg}; this is what we want to control. Recall $\mathfrak{l}_{\mathrm{reg}}:=N^{1/3+\e_{\mathrm{reg}}}=N^{K_{\mathrm{step}}\e_{\mathrm{step}}}$ from \eqref{eq:tregX}. By the triangle inequality, we have
\begin{align}
\E\left[\sup_{\t\in[0,1]}\sup_{\x\in\mathbb{T}_{N}}|\Upsilon^{\mathrm{BG}}_{\t,\x}|\right]&\lesssim\sum_{\k=0}^{\mathrm{K}_{\mathrm{step}}-1}\E\left[\sup_{\t\in[0,1]}\sup_{\x\in\mathbb{T}_{N}}|{\textstyle\int_{0}^{\t}}\mathrm{e}^{[\t-\s]\mathscr{T}_{N}}[N^{\frac12}\wt{\mathsf{R}}_{\k}[\tau_{\cdot}\eta_{\s};\overline{\mathfrak{q}}]\mathbf{Y}^{N}_{\s,\cdot}]_{\x}\d\s|\right]\\
&+\E\left[\sup_{\t\in[0,1]}\sup_{\x\in\mathbb{T}_{N}}|{\textstyle\int_{0}^{\t}}\mathrm{e}^{[\t-\s]\mathscr{T}_{N}}[N^{\frac12}\mathsf{E}^{\mathrm{can},\mathfrak{l}_{\mathrm{reg}}}[\tau_{\cdot}\eta_{\s};\overline{\mathfrak{q}}]\mathbf{Y}^{N}_{\s,\cdot}]_{\x}\d\s|\right].
\end{align}
Because $\mathrm{K}_{\mathrm{step}}$ depends only on $\e_{\mathrm{step}},\e_{\mathrm{reg}}$, which are fixed and independent of $N$, we know $\mathrm{K}_{\mathrm{step}}\lesssim1$. Thus, Proposition \ref{prop:bgpr} controls the first term on the RHS. For the second term on the RHS, we know $\mathbf{Y}^{N}$ vanishes after time $\mathfrak{t}_{\mathrm{stop}}$ (see \eqref{eq:tst}). And, until $\mathfrak{t}_{\mathrm{stop}}$, we also know $\mathbf{Y}^{N}\lesssim N^{\e_{\mathrm{ap}}}$ uniformly and deterministically, and we also know {\small$|\mathsf{E}^{\mathrm{can},\mathfrak{l}_{\mathrm{reg}}}[\tau_{\cdot}\eta_{\s};\overline{\mathfrak{q}}]|\lesssim N^{3\e_{\mathrm{ap}}}N^{-1/2-\e_{\mathrm{reg}}}$} as discussed before Definition \ref{definition:multiscale}. With this and contractivity of the heat semigroup on $\mathrm{L}^{\infty}(\mathbb{T}_{N})$ (its kernel is a probability measure on $\mathbb{T}_{N}$), we get that the second line above is $\lesssim N^{-\e_{\mathrm{reg}}+4\e_{\mathrm{ap}}}\lesssim N^{-\beta}$ with $\beta>0$ universal, since $\e_{\mathrm{reg}}$ is a very big multiple of $\e_{\mathrm{ap}}$, so we are done.
\end{proof}
Our proof of Proposition \ref{prop:bgpr} has a number of steps. The ``zero-th" (preliminary) step is to replace the function {\small$\eta\mapsto\wt{\mathsf{R}}_{\k}[\eta;\overline{\mathfrak{q}}]$} with something, denoted by $\eta\mapsto\mathsf{R}_{\k}[\eta;\overline{\mathfrak{q}}]$, with the following suitable properties:
\begin{enumerate}
\item If $\mathbb{L}$ is any set such that $\mathsf{R}_{\k}[\eta;\overline{\mathfrak{q}}]$ depends only on $\eta_{\x}$ for $\x\in\mathbb{L}$, then $\mathsf{R}_{\k}[\eta;\overline{\mathfrak{q}}]$ vanishes in expectation with respect to $\E^{\sigma,\mathbb{L}}$ for any $\sigma$. This property is already true for $\wt{\mathsf{R}}_{\k}[\eta;\overline{\mathfrak{q}}]$; see the proof of Lemma 2 in \cite{GJ15}.
\item The function $\eta\mapsto\mathsf{R}_{\k}[\eta;\overline{\mathfrak{q}}]$ is uniformly bounded by $N^{10\e_{\mathrm{ap}}}|\mathfrak{l}|^{-3/2}$ with {\small$\mathfrak{l}=N^{\k\e_{\mathrm{step}}}$}. (The constant $10$ is unimportant.) Let us clarify this. Lemma \ref{lemma:canexp} tells us that if we evaluate {\small$\wt{\mathsf{R}}_{\k}[\eta;\overline{\mathfrak{q}}]$} at $\tau_{\y}\eta_{\s}$ for any $\s\leq\mathfrak{t}_{\mathrm{stop}}$, then we have this bound. We want to modify $\wt{\mathsf{R}}_{\k}$ to make it true \emph{for all} $\eta$. (Of course, because we only care about $\wt{\mathsf{R}}_{\k}[\eta;\overline{\mathfrak{q}}]$ at $\tau_{\y}\eta_{\s}$, we can always modify $\wt{\mathsf{R}}_{\k}$ to ``forget about" other configurations $\eta$. And, if we are careful, we can keep property (1). This step is purely for convenience, and it is already done in \cite{Y23}.)
\end{enumerate}
Next, we replace {\small${\mathsf{R}}_{\k}$} by a space-average, which we then replace by a time-average. Finally, we use Lemma \ref{lemma:proban}. 
\subsection{Replacing $\wt{\mathsf{R}}_{\k}$ by $\mathsf{R}_{\k}$}
The goal of this subsection is the following preliminary replacement.
\begin{lemma}\label{lemma:bgprcut}
\fsp Fix any $0\leq\k<\mathrm{K}_{\mathrm{step}}$. With notation explained afterwards, we have the deterministic estimate
\begin{align*}
\wt{\mathsf{R}}_{\k}[\tau_{\y}\eta_{\s};\overline{\mathfrak{q}}]\mathbf{Y}^{N}_{\s,\y}&=\mathsf{R}_{\k}[\tau_{\y}\eta_{\s};\overline{\mathfrak{q}}]\mathbf{Y}^{N}_{\s,\y}+\mathrm{O}(N^{-100}).
\end{align*}
The function $\eta\mapsto\mathsf{R}_{\k}[\eta;\overline{\mathfrak{q}}]$ satisfies the following constraints:
\begin{itemize}
\item $\E^{\sigma,\mathbb{L}}\mathsf{R}_{\k}[\eta;\overline{\mathfrak{q}}]=0$ for all $\sigma$ and sub-intervals $\mathbb{L}$ such that $\mathsf{R}_{\k}[\eta;\overline{\mathfrak{q}}]$ depends only on {\small$\eta_{\x}$} for $\x\in\mathbb{L}$, and {\small$|\mathsf{R}_{\k}[\eta;\overline{\mathfrak{q}}]|\lesssim N^{10\e_{\mathrm{ap}}}N^{-3\k\e_{\mathrm{step}}/2}$};
\item For any $\eta$, the value $\mathsf{R}_{\k}[\eta;\overline{\mathfrak{q}}]$ depends only on $\eta_{\x}$ for $\x\in\{-N^{(\k+1)\e_{\mathrm{reg}}},\ldots,0\}$.
\end{itemize}
\end{lemma}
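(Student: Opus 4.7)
The plan is a cutoff-and-recenter of $\wt{\mathsf{R}}_{\k}$, in the spirit of Lemma 8 of \cite{Y23}. First recall that $\wt{\mathsf{R}}_{\k}$ has support in $\mathbb{L} := \{-N^{(\k+1)\e_{\mathrm{step}}}+1,\ldots,0\} \subseteq \{-N^{(\k+1)\e_{\mathrm{reg}}},\ldots,0\}$ (using $\e_{\mathrm{step}} \leq \e_{\mathrm{reg}}$), and it satisfies $\E^{\sigma,\mathbb{L}'}\wt{\mathsf{R}}_{\k} = 0$ for any sub-interval $\mathbb{L}' \supseteq \mathbb{L}$ and any $\sigma$; this is the telescoping-canonical-expectation argument in the proof of Lemma 2 of \cite{GJ15}, exploiting that each $\mathsf{E}^{\mathrm{can},m}[\cdot;\overline{\mathfrak{q}}]$ is annihilated by larger canonical ensembles by the tower property.

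I would then set
\[
\mathsf{R}_{\k}[\eta;\overline{\mathfrak{q}}] := \wt{\mathsf{R}}_{\k}[\eta;\overline{\mathfrak{q}}]\chi_{\k}(\eta) - \mathsf{C}_{\k}\!\bigl[\sigma_{\mathbb{L}}(\eta)\bigr],
\]
with $\chi_{\k}(\eta) := \mathbf{1}[|\wt{\mathsf{R}}_{\k}[\eta;\overline{\mathfrak{q}}]| \leq C N^{10\e_{\mathrm{ap}}} N^{-3\k\e_{\mathrm{step}}/2}]$ for an $N$-independent constant $C$ to be chosen, $\sigma_{\mathbb{L}}(\eta) := |\mathbb{L}|^{-1}\sum_{\x\in\mathbb{L}}\eta_{\x}$, and $\mathsf{C}_{\k}[\sigma] := \E^{\sigma,\mathbb{L}}[\wt{\mathsf{R}}_{\k}\chi_{\k}]$. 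The uniform bound $|\mathsf{R}_{\k}| \lesssim N^{10\e_{\mathrm{ap}}} N^{-3\k\e_{\mathrm{step}}/2}$ is immediate from the threshold (the centering is an average of such values), the support condition is inherited from $\wt{\mathsf{R}}_{\k}$ and $\sigma_{\mathbb{L}}$, and $\E^{\sigma,\mathbb{T}_{N}}\mathsf{R}_{\k} = 0$ follows from the tower property: since $\wt{\mathsf{R}}_{\k}\chi_{\k}$ is supported in $\mathbb{L}$, conditioning on $\sigma_{\mathbb{L}}$ gives $\E^{\sigma,\mathbb{T}_{N}}[\wt{\mathsf{R}}_{\k}\chi_{\k}] = \E^{\sigma,\mathbb{L}}[\wt{\mathsf{R}}_{\k}\chi_{\k}] = \mathsf{C}_{\k}[\sigma]$.

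The deterministic identity is the heart of the matter. On $\{\mathbf{Y}^{N}_{\s,\y} \neq 0\}$ we have $\s \leq \mathfrak{t}_{\mathrm{stop}}$, so Lemma \ref{lemma:canexp} (applicable to $\overline{\mathfrak{q}}$ with $\mathfrak{k}=2$ by \eqref{eq:assumequad}) yields $|\wt{\mathsf{R}}_{\k}[\tau_{\y}\eta_{\s};\overline{\mathfrak{q}}]| \lesssim N^{3\e_{\mathrm{ap}}} N^{-3\k\e_{\mathrm{step}}/2}$, forcing $\chi_{\k}(\tau_{\y}\eta_{\s}) = 1$ for $C$ large enough. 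One then needs $|\mathsf{C}_{\k}[\sigma_{\mathbb{L}}(\tau_{\y}\eta_{\s})]| = \mathrm{O}(N^{-100})$. Using $\sigma_{\mathbb{L}}(\tau_{\y}\eta_{\s}) = N^{1/2}|\mathbb{L}|^{-1}[\mathbf{h}^{N}_{\s,\y} - \mathbf{h}^{N}_{\s,\y-|\mathbb{L}|}]$ and the H\"older-$\frac12$ regularity of $\mathbf{h}^{N}$ enforced by \eqref{eq:tap}--\eqref{eq:tregX}, exactly as in the proof of Lemma \ref{lemma:canexp}, we obtain $|\sigma_{\mathbb{L}}(\tau_{\y}\eta_{\s})| \lesssim N^{3\e_{\mathrm{ap}}}|\mathbb{L}|^{-1/2}$. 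For $\sigma$ in this small range, I would Taylor-expand $\wt{\mathsf{R}}_{\k}$ as a polynomial in block averages at scales $N^{\k\e_{\mathrm{step}}}$ and $N^{(\k+1)\e_{\mathrm{step}}}$ plus a deterministic $\mathrm{O}(|\mathbb{L}|^{-2})$ remainder (as in the proof of Lemma \ref{lemma:canexp}), and appeal to the sub-Gaussian concentration of block averages around $\sigma$ under $\mathbb{P}^{\sigma,\mathbb{L}}$ (Lemma \ref{lemma:azuma}) to deduce that $\mathbb{P}^{\sigma,\mathbb{L}}[\chi_{\k} = 0]$ is stretched-exponentially small in a positive power of $N$. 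Then $|\mathsf{C}_{\k}[\sigma]| \leq \|\wt{\mathsf{R}}_{\k}\|_{\infty}\mathbb{P}^{\sigma,\mathbb{L}}[\chi_{\k}=0] = \mathrm{O}(N^{-100})$ since $\|\wt{\mathsf{R}}_{\k}\|_{\infty} \lesssim \|\overline{\mathfrak{q}}\|_{\infty} = \mathrm{O}(1)$.

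The main obstacle will be the concentration step at small $\sigma$: we must match the cubic-in-density bound on $\wt{\mathsf{R}}_{\k}$ (which uses the vanishing of $\partial_{\sigma}^{k}\E^{\sigma}\overline{\mathfrak{q}}|_{\sigma=0}$ for $k = 0,1,2$, with the $k=2$ case supplied by \eqref{eq:assumequad}) against the Azuma fluctuation scale for block averages at the various block sizes appearing in $\wt{\mathsf{R}}_{\k}$, so that the threshold $N^{10\e_{\mathrm{ap}}}N^{-3\k\e_{\mathrm{step}}/2}$ genuinely dominates the deviations of $\wt{\mathsf{R}}_{\k}$ under $\mathbb{P}^{\sigma,\mathbb{L}}$ for all densities $\sigma$ that the process actually visits before $\mathfrak{t}_{\mathrm{stop}}$. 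Everything else is bookkeeping inherited directly from $\wt{\mathsf{R}}_{\k}$.
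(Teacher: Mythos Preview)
Your proposal is correct and follows essentially the same cutoff-and-recenter scheme that the paper invokes by deferring to Lemma 10.1 of \cite{Y23} (note: not Lemma 8 as you cite). The paper's own proof is just a pointer to that reference, together with the remark that the only change is the exponent $3/2$ in place of $1$ coming from Lemma~\ref{lemma:canexp}; you have reconstructed the underlying argument accurately, including the key concentration step for block densities under $\mathbb{P}^{\sigma,\mathbb{L}}$ at the relevant small $\sigma$.
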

\begin{proof}
The argument is exactly the same as the proof of Lemma 10.1 in \cite{Y23} (where $\mathsf{R}_{\delta}$ therein is our $\wt{\mathsf{R}}_{\k}$ and $\mathsf{R}^{\mathrm{cut}}$ therein is our $\mathsf{R}_{\k}$, and where $\delta$ therein is defined so that $N^{\delta}=N^{(\k+1)\e_{\mathrm{step}}}$). The only difference is that in this paper, Lemma \ref{lemma:canexp} gives a bound on canonical measure expectations {\small$|\mathsf{E}^{\mathrm{can},\mathfrak{l}}[\tau_{\s}\eta_{\y};\overline{\mathfrak{q}}]|$} of $\lesssim N^{3\e_{\mathrm{ap}}}|\mathfrak{l}|^{-3/2}$, whereas \cite{Y23} uses a bound of $\lesssim N^{3\e_{\mathrm{ap}}}|\mathfrak{l}|^{-1}$, which is why the bound on $\mathsf{R}^{\mathrm{cut}}_{\delta}$ in Lemma 10.1 of \cite{Y23} is only $N^{10\e_{\mathrm{ap}}}N^{-\delta}$, not $N^{10\e_{\mathrm{ap}}}N^{-3\delta/2}$. We do not reproduce the argument here, because it does not make reference to any dynamics (and in particular, it does not make reference to the choice of driving function $\mathfrak{d}[\cdot]$).
\end{proof}
\subsection{Introducing a space-average}
The goal of this subsection is to prove the following. (Before we state it, let us describe it intuitively. We want to replace {\small$\mathsf{R}_{\k}[\tau_{\cdot}\eta_{\s};\overline{\mathfrak{q}}]$} by an average of $n_{\k}$-many spatial shifts. We want these shifts to have length given by integer multiples of {\small$2N^{(k+1)\e_{\mathrm{step}}}$} for the following reason, where $2$ is just some constant ``for safety". In view of Lemma \ref{lemma:proban}, we want the shifts to depend on $\eta_{\x}$ for $\x$ in mutually disjoint sets. This is guaranteed if we shift by multiplies of {\small$2N^{(k+1)\e_{\mathrm{step}}}$}, since {\small$\mathsf{R}_{\k}[\tau_{\cdot}\eta_{\s};\overline{\mathfrak{q}}]$} depends on $\eta_{\s,\x}$ for $\x$ in an interval of size $N^{(\k+1)\e_{\mathrm{step}}}$ to the left of $\cdot$. Now, to satisfy the space-time size constraints in Lemma \ref{lemma:proban}, we want to ensure the average of $n_{\k}$-many shifts of {\small$\mathsf{R}_{\k}[\tau_{\cdot}\eta_{\s};\overline{\mathfrak{q}}]$} depends only on $\eta_{\s,\x}$ for $\x$ in a set of size of order {\small$\mathfrak{l}_{\mathrm{reg}}=N^{1/3+\e_{\mathrm{reg}}}$}. This determines $n_{\k}$ to be order given by the set size $\mathfrak{l}_{\mathrm{reg}}$ divided by the shift-length $N^{(\k+1)\e_{\mathrm{step}}}$.)  
\begin{lemma}\label{lemma:bgprx}
\fsp Fix any $0\leq\k<\mathrm{K}_{\mathrm{step}}$. With notation explained afterwards, we have the deterministic estimate
\begin{align}
{\int_{0}^{\t}}\mathrm{e}^{[\t-\s]\mathscr{T}_{N}}[N^{\frac12}\mathsf{R}_{\k}[\tau_{\cdot}\eta_{\s};\overline{\mathfrak{q}}]\mathbf{Y}^{N}_{\s,\cdot}]_{\x}\d\s&={\int_{0}^{\t}}\mathrm{e}^{[\t-\s]\mathscr{T}_{N}}\left\{N^{\frac12}\left(\tfrac{1}{n_{\k}}\sum_{j=0}^{n_{\k}-1}\mathsf{R}_{\k}[\tau_{\cdot-2jN^{(\k+1)\e_{\mathrm{step}}}}\eta_{\s};\overline{\mathfrak{q}}]\right)\mathbf{Y}^{N}_{\s,\cdot}\right\}_{\x}\d\s\nonumber\\
&+{\textstyle\int_{0}^{\t}}\mathrm{e}^{[\t-\s]\mathscr{T}_{N}}[N^{\frac12}\mathsf{R}_{\k}[\tau_{\cdot}\eta_{\s};\overline{\mathfrak{q}}]\mathfrak{z}_{\k}[\tau_{\cdot}\eta_{\s}]\mathbf{Y}^{N}_{\s,\cdot}]_{\x}\d\s+\mathrm{O}(N^{-\beta}). \label{eq:bgprx}
\end{align}
%
\begin{itemize}
\item Above, $n_{\k}$ is the unique integer for which $n_{\k}N^{(\k+1)\e_{\mathrm{step}}}=N^{\mathrm{K}_{\mathrm{step}}\e_{\mathrm{step}}}=\mathfrak{l}_{\mathrm{reg}}$ (i.e. {\small$n_{\k}=N^{(K_{\mathrm{step}}-\k-1)\e_{\mathrm{step}}}$}).
\item We have $|\mathfrak{z}_{\k}[\eta]|\lesssim N^{3\e_{\mathrm{ap}}}N^{-1/2}\mathfrak{l}_{\mathrm{reg}}^{1/2}=N^{3\e_{\mathrm{ap}}}N^{-1/3+\e_{\mathrm{reg}}/2}$ deterministically. Second, $\mathfrak{z}_{\k}[\eta]$ depends only on $\eta_{\x}$ for $\x=1,\ldots,\mathrm{O}(\mathfrak{l}_{\mathrm{reg}})$ (whereas $\mathsf{R}_{\k}[\eta;\overline{\mathfrak{q}}]$ depends only on $\eta_{\x}$ for $\x\leq0$).
\item The constant $\beta>0$ is universal.
\end{itemize}
\end{lemma}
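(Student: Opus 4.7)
The plan is to write the difference between the LHS of \eqref{eq:bgprx} and the spatial-average term on the RHS as a telescoping sum of ``gradients'' and then split each gradient via a Leibniz identity. Setting $a_j := 2jN^{(\k+1)\e_{\mathrm{step}}}$, the identity
\begin{align*}
\mathsf{R}_{\k}[\tau_{\x}\eta_{\s};\overline{\mathfrak{q}}] - \tfrac{1}{n_{k}}\sum_{j=0}^{n_{k}-1}\mathsf{R}_{\k}[\tau_{\x-a_j}\eta_{\s};\overline{\mathfrak{q}}] = \tfrac{1}{n_{k}}\sum_{j=0}^{n_{k}-1}\{\mathsf{R}_{\k}[\tau_{\x}\eta_{\s};\overline{\mathfrak{q}}]-\mathsf{R}_{\k}[\tau_{\x-a_j}\eta_{\s};\overline{\mathfrak{q}}]\}
\end{align*}
reduces the problem to controlling, for each $j$, the heat-kernel convolution of $\{\mathsf{R}_{\k}[\tau_{\x}\eta_{\s}]-\mathsf{R}_{\k}[\tau_{\x-a_j}\eta_{\s}]\}\mathbf{Y}^N_{\s,\x}$. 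Multiplying out, I would split each summand as
\begin{align*}
\{\mathsf{R}_{\k}[\tau_{\x}\eta_{\s}]\mathbf{Y}^N_{\s,\x} - \mathsf{R}_{\k}[\tau_{\x-a_j}\eta_{\s}]\mathbf{Y}^N_{\s,\x-a_j}\} + \mathsf{R}_{\k}[\tau_{\x-a_j}\eta_{\s}]\{\mathbf{Y}^N_{\s,\x-a_j}-\mathbf{Y}^N_{\s,\x}\},
\end{align*}
where the first bracket is a genuine discrete gradient of length $a_j$ of the product $\mathsf{R}_{\k}[\tau_\cdot\eta_\s]\mathbf{Y}^N_{\s,\cdot}$.

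For the ``gradient of product'' piece, I would substitute it into the heat-kernel convolution and perform summation-by-parts in $\y$, transferring the length-$a_j$ spatial gradient onto the heat kernel. Using the gradient estimates for $\mathbf{H}^N$ from Proposition \ref{prop:hk}, one obtains a factor $|a_j|N^{-1}|\t-\s|^{-1/2}$ in the operator norm (with time-integrability gained by first splitting the $\d\s$-integral at some short time-scale $N^{-\zeta}$, which is handled crudely). Combined with the deterministic bounds $|\mathsf{R}_{\k}|\lesssim N^{10\e_{\mathrm{ap}}}N^{-3\k\e_{\mathrm{step}}/2}$ from Lemma \ref{lemma:bgprcut}, $|\mathbf{Y}^N|\lesssim N^{\e_{\mathrm{ap}}}$, the budget $|a_j|\leq 2\mathfrak{l}_{\mathrm{reg}}$, and the prefactor $N^{1/2}$ from \eqref{eq:bgprx}, I expect a bound of the form $N^{1/2+C\e_{\mathrm{ap}}}\cdot\mathfrak{l}_{\mathrm{reg}} N^{-1}\lesssim N^{-1/6+\e_{\mathrm{reg}}+C\e_{\mathrm{ap}}}$, which is $\leq N^{-\beta}$ provided $\e_{\mathrm{reg}}$ and $\e_{\mathrm{ap}}$ are chosen small enough.

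For the second piece, I would use the identity $\mathbf{Z}^N_{\s,\cdot}=\exp(-\mathbf{h}^N_{\s,\cdot}+\mathscr{R}_N\s)$ and $\mathbf{h}^N_{\s,\x}-\mathbf{h}^N_{\s,\x-a_j}=N^{-1/2}\sum_{\z=1}^{a_j}(\tau_{\x-a_j}\eta_\s)_{\z}$ from \eqref{eq:hf}--\eqref{eq:gartner} to write
\begin{align*}
\mathbf{Y}^N_{\s,\x-a_j}-\mathbf{Y}^N_{\s,\x} = \mathbf{Y}^N_{\s,\x-a_j}\,\wt{\mathfrak{z}}_j[\tau_{\x-a_j}\eta_\s], \qquad \wt{\mathfrak{z}}_j[\wt\eta]:=1-\exp\bigl(-N^{-\frac12}{\textstyle\sum_{\z=1}^{a_j}}\wt\eta_\z\bigr).
\end{align*}
Substituting this into the heat-kernel convolution, I would then shift the summation variable $\y\to\y+a_j$ so that $\mathsf{R}_{\k}$, $\wt{\mathfrak{z}}_j$, and $\mathbf{Y}^N$ are all evaluated at the same point; the error $\mathbf{H}^N_{\s,\t,\x,\y+a_j}-\mathbf{H}^N_{\s,\t,\x,\y}$ from this shift is controlled by the same heat-kernel gradient argument used above, producing another $O(N^{-\beta})$ error. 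Finally I define $\mathfrak{z}_{\k}[\wt\eta]:=n_k^{-1}\sum_{j=0}^{n_k-1}\wt{\mathfrak{z}}_j[\wt\eta]$ and then truncate it at the threshold $N^{3\e_{\mathrm{ap}}}N^{-1/3+\e_{\mathrm{reg}}/2}$; on the event $\s\leq\mathfrak{t}_{\mathrm{stop}}$, the a priori gradient bound from \eqref{eq:tregX} applied to $\mathbf{Z}^N$ forces $|\wt{\mathfrak{z}}_j[\tau_\cdot\eta_\s]|\lesssim N^{-1/2}|a_j|^{1/2}N^{3\e_{\mathrm{ap}}}$, so the truncation is invisible there (same strategy as the cutoff in Lemma \ref{lemma:bgprcut}), while the truncation makes the stated bound genuinely deterministic. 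The supports of $\mathsf{R}_{\k}$ (indices in $\{-N^{(\k+1)\e_{\mathrm{step}}},\ldots,0\}$) and $\mathfrak{z}_{\k}$ (indices in $\{1,\ldots,O(\mathfrak{l}_{\mathrm{reg}})\}$) are disjoint by construction. The main obstacle is bookkeeping: tracking powers of $\e_{\mathrm{ap}}$, $\e_{\mathrm{step}}$, $\e_{\mathrm{reg}}$ through all heat-kernel gradient bounds so that each error really is $O(N^{-\beta})$ for some universal $\beta>0$, and carefully designing the $\mathfrak{z}_{\k}$ truncation so that the replacement error it produces is absorbed into the same $O(N^{-\beta})$.
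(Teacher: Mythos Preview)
Your proposal is correct and follows essentially the same approach as the paper. Both arguments write the difference as an average of discrete gradients, use a Leibniz-type splitting to separate a piece where the gradient lands on the heat kernel (controlled by Proposition \ref{prop:hk}) from a piece where it lands on $\mathbf{Y}^{N}$, and then express the $\mathbf{Y}^{N}$-gradient via the exponential formula \eqref{eq:gartner} to produce $\mathfrak{z}_{\k}$, with an indicator/truncation inserted to make the bound deterministic (legitimate before $\mathfrak{t}_{\mathrm{stop}}$).

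Two minor remarks. First, the paper applies summation-by-parts to move $\grad^{\mathbf{X}}_{-a_{j}}$ off $\mathsf{R}_{\k}$ and then uses Leibniz on the product $\mathbf{H}^{N}\cdot\mathbf{Y}^{N}$; this yields $\mathsf{R}_{\k}[\tau_{\y}]$ at the unshifted point directly, so your extra ``shift $\y\mapsto\y+a_{j}$ and correct the heat kernel'' step is avoidable. Second, the short-time splitting of the $\d\s$-integral you propose is not needed: $\int_{0}^{\t}|\t-\s|^{-1/2}\d\s\lesssim 1$, so the heat-kernel gradient bound integrates without any cutoff (and indeed the paper uses only the crude estimate $\mathsf{R}_{\k}=\mathrm{O}(1)$ there, not the sharper $N^{-3\k\e_{\mathrm{step}}/2}$).
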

\begin{proof}
Replacing $\mathsf{R}_{\k}$ by its space-average means controlling error terms via summation-by-parts. In particular, we first note the identity below obtained by writing $\mathsf{R}_{\k}$ minus its space-shift as a space-gradient:
\begin{align*}
&={\int_{0}^{\t}}\mathrm{e}^{[\t-\s]\mathscr{T}_{N}}\left\{N^{\frac12}\left(\mathsf{R}_{\k}[\tau_{\cdot}\eta_{\s}]-\tfrac{1}{n_{\k}}\sum_{j=0}^{n_{\k}-1}\mathsf{R}_{\k}[\tau_{\cdot-2jN^{(\k+1)\e_{\mathrm{step}}}}\eta_{\s};\overline{\mathfrak{q}}]\right)\mathbf{Y}^{N}_{\s,\cdot}\right\}_{\x}\d\s\\
&=-\tfrac{1}{n_{\k}}\sum_{j=0}^{n_{\k}-1}{\int_{0}^{\t}}\sum_{\y\in\mathbb{T}_{N}}\mathbf{H}^{N}_{\s,\t,\x,\y}N^{\frac12}\left\{\grad^{\mathbf{X}}_{-2jN^{(k+1)\e_{\mathrm{step}}}}\mathsf{R}_{\k}[\tau_{\y}\eta_{\s}]\right\}\mathbf{Y}^{N}_{\s,\y}\d\s.
\end{align*}
An elementary summation-by-parts calculation shows that the adjoint of {\small$\grad^{\mathbf{X}}_{\mathfrak{l}}$} with respect to uniform measure on $\mathbb{T}_{N}$ is equal to {\small$\grad^{\mathbf{X}}_{-\mathfrak{l}}$}. We also note the discrete Leibniz rule {\small$\grad^{\mathbf{X}}_{\mathfrak{l}}(\varphi\psi)_{\x}=\varphi_{\x}\grad^{\mathbf{X}}_{\mathfrak{l}}\psi_{\x}+\psi_{\x+\mathfrak{l}}\grad^{\mathbf{X}}_{\mathfrak{l}}\varphi_{\x}$}. Thus,
\begin{align*}
&{\int_{0}^{\t}}\sum_{\y\in\mathbb{T}_{N}}\mathbf{H}^{N}_{\s,\t,\x,\y}N^{\frac12}\left\{\grad^{\mathbf{X}}_{-2jN^{(k+1)\e_{\mathrm{step}}}}\mathsf{R}_{\k}[\tau_{\y}\eta_{\s}]\right\}\mathbf{Y}^{N}_{\s,\y}\d\s\\
&={\int_{0}^{\t}}\sum_{\y\in\mathbb{T}_{N}}N^{\frac12}\mathsf{R}_{\k}[\tau_{\y}\eta_{\s}]\grad^{\mathbf{X}}_{2jN^{(k+1)\e_{\mathrm{step}}}}\left\{\mathbf{H}^{N}_{\s,\t,\x,\y}\mathbf{Y}^{N}_{\s,\y}\right\}\d\s\\
&={\int_{0}^{\t}}\sum_{\y\in\mathbb{T}_{N}}\mathbf{H}^{N}_{\s,\t,\x,\y}N^{\frac12}\mathsf{R}_{\k}[\tau_{\y}\eta_{\s}]\grad^{\mathbf{X}}_{2jN^{(k+1)\e_{\mathrm{step}}}}\mathbf{Y}^{N}_{\s,\y}\d\s\\
&+{\int_{0}^{\t}}\sum_{\y\in\mathbb{T}_{N}}\grad^{\mathbf{X}}_{2jN^{(k+1)\e_{\mathrm{step}}}}\mathbf{H}^{N}_{\s,\t,\x,\y}N^{\frac12}\mathsf{R}_{\k}[\tau_{\y}\eta_{\s}]\mathbf{Y}^{N}_{\s,\y+2jN^{(k+1)\e_{\mathrm{step}}}}\d\s,
\end{align*}
where the gradient acts either on $\x$ or $\y$ in the heat kernel since the heat kernel depends only on the difference $\x-\y$ (its generator is space-homogeneous). We first control the last line above. By gradient estimates for the heat kernel (see Proposition \ref{prop:hk}), the gradient {\small$\grad^{\mathbf{X}}_{\mathfrak{l}}$} of $\exp\{[\t-\s]\mathscr{T}_{N}\}$ has $\mathrm{L}^{\infty}(\mathbb{T}_{N})\to\mathrm{L}^{\infty}(\mathbb{T}_{N})$-norm bounded by {\small$\lesssim N^{-1}\mathfrak{l}|\t-\s|^{-1/2}$}. Using this with $|\mathbf{Y}^{N}|\lesssim N^{\e_{\mathrm{ap}}}$ (see \eqref{eq:tst}) gives 
\begin{align*}
&{\int_{0}^{\t}}\sum_{\y\in\mathbb{T}_{N}}\grad^{\mathbf{X}}_{2jN^{(k+1)\e_{\mathrm{step}}}}\mathbf{H}^{N}_{\s,\t,\x,\y}N^{\frac12}\mathsf{R}_{\k}[\tau_{\y}\eta_{\s}]\mathbf{Y}^{N}_{\s,\y+2jN^{(k+1)\e_{\mathrm{step}}}}\d\s\\
&\lesssim {\int_{0}^{\t}}N^{-\frac12}jN^{(\k+1)\e_{\mathrm{step}}}N^{11\e_{\mathrm{ap}}}|\t-\s|^{-\frac12}\d\s\lesssim N^{11\e_{\mathrm{ap}}}N^{-\frac12}jN^{(\k+1)\e_{\mathrm{step}}}\lesssim N^{-\beta},
\end{align*}
where the last bound follows since {\small$jN^{(\k+1)\e_{\mathrm{step}}}\leq n_{\k}N^{(\k+1)\e_{\mathrm{step}}}=\mathfrak{l}_{\mathrm{reg}}=N^{1/3+\e_{\mathrm{reg}}}$} by construction of $n_{\k}$, and because $\e_{\mathrm{ap}},\e_{\mathrm{reg}}$ are small. (We also used the bound {\small$\mathsf{R}_{\k}=\mathrm{O}(N^{10\e_{\mathrm{ap}}})$} from Lemma \ref{lemma:bgprcut}, which is sub-optimal but enough.) By the previous three displays, it now suffices to show that 
\begin{align}
\tfrac{1}{n_{\k}}\sum_{j=0}^{n_{\k}-1}\grad^{\mathbf{X}}_{2jN^{(k+1)\e_{\mathrm{step}}}}\mathbf{Y}^{N}_{\s,\y}&=\mathfrak{z}_{\k}[\tau_{\cdot}\eta_{\s}]\mathbf{Y}^{N}_{\s,\y}\label{eq:bgprx1}
\end{align}
for some function $\mathfrak{z}_{\k}$ satisfying the constraints of Lemma \ref{lemma:bgprx}. If $\s>\mathfrak{t}_{\mathrm{stop}}$, both sides are $0$, and if $\s\leq\mathfrak{t}_{\mathrm{stop}}$, we have {\small$\mathbf{Y}^{N}_{\s,\cdot}=\mathbf{Z}^{N}_{\s,\cdot}$}; see \eqref{eq:tst}. So, we restrict to $\s\leq\mathfrak{t}_{\mathrm{stop}}$, in which case we can use \eqref{eq:gartner} to get
\begin{align*}
\tfrac{1}{n_{\k}}\sum_{j=0}^{n_{\k}-1}\grad^{\mathbf{X}}_{2jN^{(k+1)\e_{\mathrm{step}}}}\mathbf{Y}^{N}_{\s,\y}&=\tfrac{1}{n_{\k}}\sum_{j=0}^{n_{\k}-1}\grad^{\mathbf{X}}_{2jN^{(k+1)\e_{\mathrm{step}}}}\mathbf{Z}^{N}_{\s,\y}\\
&=\left\{\tfrac{1}{n_{\k}}\sum_{j=0}^{n_{\k}-1}\left(\exp\left\{-N^{-\frac12}\sum_{\ell=1,\ldots,2jN^{(k+1)\e_{\mathrm{step}}}}\eta_{\s,\y+\ell}\right\}-1\right)\right\}\mathbf{Z}^{N}_{\s,\y}\\
&=\left\{\tfrac{1}{n_{\k}}\sum_{j=0}^{n_{\k}-1}\left(\exp\left\{-N^{-\frac12}\sum_{\ell=1,\ldots,2jN^{(k+1)\e_{\mathrm{step}}}}\eta_{\s,\y+\ell}\right\}-1\right)\right\}\mathbf{Y}^{N}_{\s,\y}.
\end{align*}
As shown in the proof of Lemma \ref{lemma:canexp}, since $jN^{(\k+1)\e_{\mathrm{reg}}}\leq n_{\k}N^{(\k+1)\e_{\mathrm{reg}}}=\mathfrak{l}_{\mathrm{reg}}$ (see the statement of Lemma \ref{lemma:bgprx}) and since $\s\leq\mathfrak{t}_{\mathrm{stop}}$, the sum over $\ell$ in the previous display has square-root cancellations. In particular, we are allowed to insert into the last line the indicator function
\begin{align*}
\mathbf{1}\left[\bigcap_{j=0,\ldots,n_{\k}-1}\left\{\sum_{\ell=1,\ldots,2jN^{(k+1)\e_{\mathrm{step}}}}\eta_{\s,\y+\ell}=\mathrm{O}(N^{3\e_{\mathrm{ap}}}\mathfrak{l}_{\mathrm{reg}}^{\frac12})\right\}\right]
\end{align*}
We now claim that 
\begin{align*}
\mathfrak{z}_{\k}[\eta]&:=\tfrac{1}{n_{\k}}\sum_{j=0}^{n_{\k}-1}\left(\exp\left\{-N^{-\frac12}\sum_{\ell=1,\ldots,2jN^{(k+1)\e_{\mathrm{step}}}}\eta_{\y+\ell}\right\}-1\right)\\
&\times\mathbf{1}\left[\bigcap_{j=0,\ldots,n_{\k}-1}\left\{\sum_{\ell=1,\ldots,2jN^{(k+1)\e_{\mathrm{step}}}}\eta_{\ell}=\mathrm{O}(N^{3\e_{\mathrm{ap}}}\mathfrak{l}_{\mathrm{reg}}^{\frac12})\right\}\right]
\end{align*}
satisfies both of the constraints from the lemma. The dependence on $\eta_{\x}$ for only $\x\in\{1,\ldots,\mathrm{O}(\mathfrak{l}_{\mathrm{reg}})\}$ holds immediately by inspection and {\small$\mathfrak{l}_{\mathrm{reg}}=n_{\k}N^{(k+1)\e_{\mathrm{step}}}\geq jN^{(k+1)\e_{\mathrm{step}}}$}. The necessary bound is automatic from the indicator and Taylor expansion. Indeed, the indicator function implies that each summand above is of the form {\small$\exp\{\mathrm{O}(N^{-1/2}N^{3\e_{\mathrm{ap}}}\mathfrak{l}_{\mathrm{reg}}^{1/2})\}-1$}; since $\mathfrak{l}_{\mathrm{reg}}=N^{1/3+\e_{\mathrm{reg}}}$ and $\e_{\mathrm{ap}},\e_{\mathrm{reg}}$ are small (see \eqref{eq:tap} and \eqref{eq:tregX}), Taylor expansion shows that {\small$\exp\{\mathrm{O}(N^{-1/2}N^{3\e_{\mathrm{ap}}}\mathfrak{l}_{\mathrm{reg}}^{1/2})\}-1=\mathrm{O}(N^{-1/2}N^{3\e_{\mathrm{ap}}}\mathfrak{l}_{\mathrm{reg}}^{1/2})$}.  This finishes the proof.
\end{proof}
\subsection{Introducing a time-average}
Note that Lemma \ref{lemma:bgprx} lets us introduce a space-average, but at the cost of an error term (given by the second line in \eqref{eq:bgprx}) which needs its own probabilistic analysis. In this step, we will therefore want to replace both the space-average and the $\mathsf{R}_{\k}\mathfrak{z}_{\k}$ in \eqref{eq:bgprx} by time-averages. 
\begin{lemma}\label{lemma:bgprt}
\fsp Fix any integer $0\leq\k<\mathrm{K}_{\mathrm{step}}$. Let $\tau_{\mathrm{av}}=N^{-4/3}$. There exists $\beta>0$ universal such that 
\begin{align}
&{\int_{0}^{\t}}\mathrm{e}^{[\t-\s]\mathscr{T}_{N}}\left\{N^{\frac12}\left(\tfrac{1}{n_{\k}}\sum_{j=0}^{n_{\k}-1}\mathsf{R}_{\k}[\tau_{\cdot-2jN^{(\k+1)\e_{\mathrm{step}}}}\eta_{\s};\overline{\mathfrak{q}}]\right)\mathbf{Y}^{N}_{\s,\cdot}\right\}_{\x}\d\s\label{eq:bgprtI}\\
&={\int_{0}^{\t}}\mathrm{e}^{[\t-\s]\mathscr{T}_{N}}\left\{N^{\frac12}\left(\tfrac{1}{\tau_{\mathrm{av}}}{\textstyle\int_{0}^{\tau_{\mathrm{av}}}}\tfrac{1}{n_{\k}}\sum_{j=0}^{n_{\k}-1}\mathsf{R}_{\k}[\tau_{\cdot-2jN^{(\k+1)\e_{\mathrm{step}}}}\eta_{\s+\r};\overline{\mathfrak{q}}]\d\r\right)\mathbf{Y}^{N}_{\s,\cdot}\right\}_{\x}\d\s+\Theta_{\t,\x},\nonumber
\end{align}
where $\E\|\Theta\|_{\mathrm{L}^{\infty}([0,1]\times\mathbb{T}_{N})}\lesssim N^{-\beta}$. We have the similar estimate
\begin{align}
&{\int_{0}^{\t}}\mathrm{e}^{[\t-\s]\mathscr{T}_{N}}[N^{\frac12}\mathsf{R}_{\k}[\tau_{\cdot}\eta_{\s};\overline{\mathfrak{q}}]\mathfrak{z}_{\k}[\tau_{\cdot}\eta_{\s}]\mathbf{Y}^{N}_{\s,\cdot}]_{\x}\d\s\label{eq:bgprtII}\\
&={\int_{0}^{\t}}\mathrm{e}^{[\t-\s]\mathscr{T}_{N}}[N^{\frac12}\left(\tfrac{1}{\tau_{\mathrm{av}}}{\int_{0}^{\tau_{\mathrm{av}}}}\mathsf{R}_{\k}[\tau_{\cdot}\eta_{\s+\r};\overline{\mathfrak{q}}]\mathfrak{z}_{\k}[\tau_{\cdot}\eta_{\s+\r}]\d\r\right)\mathbf{Y}^{N}_{\s,\cdot}]_{\x}\d\s+\Phi_{\t,\x}.\nonumber
\end{align}
\end{lemma}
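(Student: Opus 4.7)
The plan is to write the difference $\Phi_{\t,\x}$ as a double-integral against a time increment of $\mathsf{F}$ and then, by a change of variables in the outer time-integral, to decompose the resulting error into three natural pieces. To start, set $\mathsf{F}_\s(\y):=\tfrac{1}{n_k}\sum_{j=0}^{n_k-1}\mathsf{R}_\k[\tau_{\y-2jN^{(\k+1)\e_{\mathrm{step}}}}\eta_\s;\overline{\mathfrak{q}}]$, so that
\[
\Phi_{\t,\x}=\tau_{\mathrm{av}}^{-1}\int_0^{\tau_{\mathrm{av}}}\!\!\int_0^{\t}\exp\{[\t-\s]\mathscr{T}_N\}\bigl[N^{1/2}(\mathsf{F}_\s-\mathsf{F}_{\s+\r})\mathbf{Y}^N_{\s,\cdot}\bigr]_\x\d\s\d\r.
\]
Substituting $\s\mapsto\s-\r$ inside the $\mathsf{F}_{\s+\r}$ piece, $\Phi$ splits as $\Phi=\Phi_{\mathrm{bdry}}+\Phi_{\mathrm{HK}}+\Phi_{\mathbf{Y}}$, where $\Phi_{\mathrm{bdry}}$ collects boundary contributions from $\s\in[0,\r]\cup[\t,\t+\r]$, $\Phi_{\mathrm{HK}}$ contains the heat-kernel time-shift $\exp\{[\t-\s+\r]\mathscr{T}_N\}-\exp\{[\t-\s]\mathscr{T}_N\}$ acting on $N^{1/2}\mathsf{F}_\s\mathbf{Y}^N_{\s-\r}$, and $\Phi_{\mathbf{Y}}$ contains the $\mathbf{Y}^N$ time-shift $N^{1/2}\mathsf{F}_\s(\mathbf{Y}^N_{\s-\r}-\mathbf{Y}^N_\s)$ integrated against $\exp\{[\t-\s]\mathscr{T}_N\}$.

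The first two pieces are handled deterministically and uniformly in $(\t,\x)$. For $\Phi_{\mathrm{bdry}}$, combining the pointwise bounds $\|\mathsf{F}_\s\|_\infty\lesssim N^{O(\e_{\mathrm{ap}})}$ from Lemma \ref{lemma:bgprcut} and $\|\mathbf{Y}^N\|_\infty\lesssim N^{\e_{\mathrm{ap}}}$ from \eqref{eq:tap} with $\|\exp\{[\t-\s]\mathscr{T}_N\}\|_{\mathrm{L}^\infty\to\mathrm{L}^\infty}\lesssim1$ gives $|\Phi_{\mathrm{bdry}}|\lesssim N^{1/2+O(\e_{\mathrm{ap}})}\tau_{\mathrm{av}}\lesssim N^{-5/6+O(\e_{\mathrm{ap}})}$. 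For $\Phi_{\mathrm{HK}}$, as in the proof of Lemma \ref{lemma:timeregbgpterms}, the $\mathrm{L}^1(\mathbb{T}_N)$ heat-kernel time-regularity from Proposition \ref{prop:hk} gives $\|\mathbf{H}^N_{\s-\r,\t,\x,\cdot}-\mathbf{H}^N_{\s,\t,\x,\cdot}\|_{\mathrm{L}^1}\lesssim_\e \r^{1-\e}|\t-\s|^{-1+\e}$; integrating in $\s$ against the integrable singularity $|\t-\s|^{-1+\e}$ yields $|\Phi_{\mathrm{HK}}|\lesssim_\e N^{1/2+O(\e_{\mathrm{ap}})}\tau_{\mathrm{av}}^{1-\e}\lesssim N^{-5/6+O(\e)}$.

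The main obstacle is $\Phi_{\mathbf{Y}}$; the naive H\"older-$1/4$ time-regularity of $\mathbf{Y}^N$ from \eqref{eq:tregT} produces only $N^{1/2+O(\e_{\mathrm{ap}})}\tau_{\mathrm{av}}^{1/4}=N^{1/6+O(\e_{\mathrm{ap}})}$, which is insufficient. To improve, the plan is to substitute the SDE for $\mathbf{Y}^N=\mathbf{Z}^N$ on $[0,\mathfrak{t}_{\mathrm{stop}}]$ from Lemma \ref{lemma:mshe}, expanding
\[
\mathbf{Y}^N_\s-\mathbf{Y}^N_{\s-\r}=\int_{\s-\r}^{\s}\mathscr{T}_N\mathbf{Y}^N_u\d u+\int_{\s-\r}^{\s}\mathbf{Y}^N_u\d\xi^N_u+\int_{\s-\r}^{\s}\mathsf{F}^{\mathrm{low}}_u\d u,
\]
where $\mathsf{F}^{\mathrm{low}}_u$ collects the forcing terms $-N^{1/2}\overline{\mathfrak{q}}\mathbf{Y}^N$, $(\mathfrak{g}-\mathfrak{s})\mathbf{Y}^N$, $N^{-1/2}\mathfrak{b}_1\mathbf{Y}^N$, and $N^{1/2}\grad^\mathbf{X}_{-2\mathfrak{l}_\mathfrak{d}}\{\mathfrak{b}_2\mathbf{Y}^N\}$ from \eqref{eq:msheIa}-\eqref{eq:msheIb}. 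The $\mathscr{T}_N\mathbf{Y}^N$ piece is handled by summation-by-parts in $\y$, transferring $\mathscr{T}_N$ onto $\mathbf{H}^N_{\s,\t,\x,\y}\mathsf{F}_\s(\y)$; the main contribution $\mathscr{T}_N^\y\mathbf{H}^N$ equals a time-derivative of the heat kernel and satisfies $\sum_\y|\partial_u\mathbf{H}^N|\lesssim_\e|\t-\s|^{-1+\e}$, giving after integration a bound $\lesssim N^{1/2+O(\e_{\mathrm{ap}})}\tau_{\mathrm{av}}\lesssim N^{-5/6+O(\e)}$, while the bilinear cross-terms from the discrete product rule contribute strictly less through the extra spatial-gradient factor $\grad^\mathbf{X}_1\mathbf{H}^N\lesssim N^{-1}|\t-\s|^{-1/2}\|\mathbf{H}^N\|$. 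The martingale piece is controlled by a Burkholder--Davis--Gundy / Ito-isometry estimate as used in Section \ref{section:mainproof}, giving an $\mathrm{L}^{2p}$-bound of order $N^{1/2+O(\e_{\mathrm{ap}})}\tau_{\mathrm{av}}^{1/2}=N^{-1/6+O(\e_{\mathrm{ap}})}$; the expected sup-bound then follows by Chebyshev and a union bound on a fine $N^{-100}$-net in $(\t,\x)$ plus short-time continuity, as in the proof of Proposition \ref{prop:uq}. The $\mathsf{F}^{\mathrm{low}}$-piece is bounded pointwise: the $-N^{1/2}\overline{\mathfrak{q}}\mathbf{Y}^N$ drift, after integration over $[\s-\r,\s]$, gives $N^{1+O(\e_{\mathrm{ap}})}\tau_{\mathrm{av}}=N^{-1/3+O(\e_{\mathrm{ap}})}$, and the other forcing terms are strictly smaller due to their explicit $N^{-1/2}$ or gradient prefactors. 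Combining all contributions yields $\E\|\Phi\|_{\mathrm{L}^\infty}\lesssim N^{-\beta}$ for a universal $\beta>0$. The argument for \eqref{eq:bgprtII} is entirely parallel (and simpler, as there is no sum over $j$ to unwind, and the factor $|\mathfrak{z}_\k|\lesssim N^{-1/3+\e_{\mathrm{reg}}/2+O(\e_{\mathrm{ap}})}$ from Lemma \ref{lemma:bgprx} supplies additional beneficial smallness).
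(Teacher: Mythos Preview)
Your decomposition $\Phi=\Phi_{\mathrm{bdry}}+\Phi_{\mathrm{HK}}+\Phi_{\mathbf{Y}}$ and your handling of $\Phi_{\mathrm{bdry}},\Phi_{\mathrm{HK}}$ match the paper's. The gap is in your treatment of $\Phi_{\mathbf{Y}}$ for \eqref{eq:bgprtI}. When you transfer $\mathscr{T}_N$ from $\mathbf{Y}^N$ onto $\mathbf{H}^N\mathsf{F}_\s$ by summation-by-parts, the discrete product rule $\Delta(fg)=f\Delta g+g\Delta f+(\grad_1 f)(\grad_1 g)+(\grad_{-1}f)(\grad_{-1}g)$ produces three pieces, not two: besides $(\mathscr{T}_N^{*,\y}\mathbf{H}^N)\mathsf{F}_\s$ and the bilinear gradient terms you mention, there is also $\mathbf{H}^N\cdot(\mathscr{T}_N^*\mathsf{F}_\s)$. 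Since $\mathsf{F}_\s(\y)$ depends on the particle configuration through $\tau_\y\eta_\s$, it has no spatial regularity whatsoever, so $|\mathscr{T}_N^*\mathsf{F}_\s|\sim N^{2}N^{O(\e_{\mathrm{ap}})}$, and this piece contributes $N^{1/2}\cdot N^{2+O(\e_{\mathrm{ap}})}\cdot\tau_{\mathrm{av}}\sim N^{7/6}$ after the $\d u$ and $\d\s$ integrations. (If instead you avoid the integration by parts and bound $|\mathscr{T}_N\mathbf{Y}^N|$ directly via $|\Delta\mathbf{Z}^N|\lesssim N^{-1}|\mathbf{Z}^N|$, you get $|\mathscr{T}_N\mathbf{Y}^N|\lesssim N^{1+\e_{\mathrm{ap}}}$ and hence $N^{1/2+O(\e_{\mathrm{ap}})}\cdot N\cdot\tau_{\mathrm{av}}=N^{1/6+O(\e_{\mathrm{ap}})}$, i.e.\ no improvement over the H\"older-$1/4$ route you rejected.)

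The paper's point is that $N^{1/6}$ \emph{is} sufficient, once one exploits the structure of $\mathsf{F}_\s=\mathbf{Av}^{\mathsf{R}_\k}_{n_k}$ as a spatial average of fluctuating terms with disjoint supports. After using the a priori time-regularity of $\mathbf{Y}^N$ (from $\mathfrak{t}^{\mathrm{time}}_{\mathrm{reg}}$ in \eqref{eq:tregT}) to reduce to $\int_0^\t\sum_\y\mathbf{H}^N N^{1/6}|\mathbf{Av}^{\mathsf{R}_\k}_{n_k}[\tau_\y\eta_\s]|\d\s$, the paper applies Lemma \ref{lemma:probanspace} (a space-only averaging estimate built on entropy production and Azuma), which extracts $n_k^{-3/4}\|\mathsf{R}_\k\|_\infty^{3/2}\lesssim N^{-1/4-3\e_{\mathrm{reg}}/4+O(\e_{\mathrm{ap}}+\e_{\mathrm{step}})}$ from the identity $n_k N^{(\k+1)\e_{\mathrm{step}}}=\mathfrak{l}_{\mathrm{reg}}$ and the bound $\|\mathsf{R}_\k\|_\infty\lesssim N^{-3\k\e_{\mathrm{step}}/2}$; this beats the leftover $N^{1/4+\e}$ in \eqref{eq:probanspace}. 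Your purely deterministic SDE expansion cannot supply this probabilistic cancellation. For \eqref{eq:bgprtII} your remark is correct: the extra factor $|\mathfrak{z}_\k|\lesssim N^{-1/3+O(\e)}$ already makes the direct $N^{1/6}$ bound small, and the paper proceeds exactly that way.
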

\begin{proof}
We first prove \eqref{eq:bgprtI}. For convenience, we use the notation below for the space-average on the LHS:
\begin{align*}
\mathbf{Av}^{\mathsf{R}_{\k}}_{n_{\k}}[\tau_{\cdot}\eta_{\s}]:=\tfrac{1}{n_{\k}}\sum_{j=0}^{n_{\k}-1}\mathsf{R}_{\k}[\tau_{\cdot-2jN^{(\k+1)\e_{\mathrm{step}}}}\eta_{\s};\overline{\mathfrak{q}}].
\end{align*}
(This notation omits the dependence on $\overline{\mathfrak{q}}$ and $\k$; since this notation is localized to this proof, hopefully this omission does not introduce any confusion.) Recall the time-gradients $\grad^{\mathbf{T}}$ from Section \ref{section:notation}. We first compute
\begin{align*}
&{\textstyle\int_{0}^{\t}}\mathrm{e}^{[\t-\s]\mathscr{T}_{N}}[N^{\frac12}(\mathbf{Av}^{\mathsf{R}_{\k}}_{n_{\k}}[\tau_{\cdot}\eta_{\s}]-\tfrac{1}{\tau_{\mathrm{av}}}{\textstyle\int_{0}^{\tau_{\mathrm{av}}}}\mathbf{Av}^{\mathsf{R}_{\k}}_{n_{\k}}[\tau_{\cdot}\eta_{\s+\r}]\d\r)\mathbf{Y}^{N}_{\s,\cdot}]_{\x}\d\s\\
&=\tfrac{1}{\tau_{\mathrm{av}}}{\int_{0}^{\tau_{\mathrm{av}}}}\d\r\int_{0}^{\t}\sum_{\y\in\mathbb{T}_{N}}\mathbf{H}^{N}_{\s,\t,\x,\y}N^{\frac12}\left(\mathbf{Av}^{\mathsf{R}_{\k}}_{n_{\k}}[\tau_{\y}\eta_{\s}]-\mathbf{Av}^{\mathsf{R}_{\k}}_{n_{\k}}[\tau_{\y}\eta_{\s+\r}]\right)\mathbf{Y}^{N}_{\s,\y}\d\s\\
&=-\tfrac{1}{\tau_{\mathrm{av}}}{\int_{0}^{\tau_{\mathrm{av}}}}\d\r\sum_{\y\in\mathbb{T}_{N}}\int_{\R}\mathbf{1}_{\s\in[0,\t]}\mathbf{H}^{N}_{\s,\t,\x,\y}N^{\frac12}\left(\grad^{\mathbf{T}}_{\r}\mathbf{Av}^{\mathsf{R}_{\k}}_{n_{\k}}[\tau_{\y}\eta_{\s}]\right)\mathbf{Y}^{N}_{\s,\y}\d\s.
\end{align*}
The reason for using the indicator of $\s\in[0,\t]$ will be clarified shortly. Elementary change-of-variables computation shows that the adjoint of {\small$\grad^{\mathbf{T}}_{\r}$} with respect to uniform measure on $\R$ is {\small$\grad^{\mathbf{T}}_{-\r}$} (this is essentially integration-by-parts). We also have the readily-verifiable Leibniz rule {\small$\grad^{\mathbf{T}}_{-\r}(\mathbf{1}_{\s\in[0,\t]}\mathbf{H}^{N}_{\s,\t,\x,\y}\mathbf{Y}^{N}_{\s,\y})=\mathbf{1}_{\s\in[0,\t]}\mathbf{H}^{N}_{\s,\t,\x,\y}\grad^{\mathbf{T}}_{-\r}\mathbf{Y}^{N}_{\s,\y}+\mathbf{Y}^{N}_{\s-\r,\y}\grad^{\mathbf{T}}_{-\r}(\mathbf{1}_{\s\in[0,\t]}\mathbf{H}^{N}_{\s,\t,\x,\y})=\mathbf{1}_{\s\in[0,\t]}\mathbf{H}^{N}_{\s,\t,\x,\y}\grad^{\mathbf{T}}_{-\r}\mathbf{Y}^{N}_{\s,\y}+\mathbf{1}_{\s\in[0,\t]}\mathbf{Y}^{N}_{\s-\r,\y}\grad^{\mathbf{T}}_{-\r}\mathbf{H}^{N}_{\s,\t,\x,\y}+\mathbf{H}^{N}_{\s-\r,\t,\x,\y}\mathbf{Y}^{N}_{\s-\r,\y}\grad^{\mathbf{T}}_{-\r}\mathbf{1}_{\s\in[0,\t]}$}; here, $\mathbf{Y}$ evaluated at a negative time means evaluating it at $0$, and time-gradients always act on $\s$ (even in $\mathbf{H}^{N}$). In particular, we have 
\begin{align*}
&\sum_{\y\in\mathbb{T}_{N}}\int_{\R}\mathbf{1}_{\s\in[0,\t]}\mathbf{H}^{N}_{\s,\t,\x,\y}N^{\frac12}\left(\grad^{\mathbf{T}}_{\r}\mathbf{Av}^{\mathsf{R}_{\k}}_{n_{\k}}[\tau_{\y}\eta_{\s}]\right)\mathbf{Y}^{N}_{\s,\y}\d\s\\
&=\sum_{\y\in\mathbb{T}_{N}}\int_{\R}\mathbf{1}_{\s\in[0,\t]}\mathbf{H}^{N}_{\s,\t,\x,\y}N^{\frac12}\mathbf{Av}^{\mathsf{R}_{\k}}_{n_{\k}}[\tau_{\y}\eta_{\s}]\grad^{\mathbf{T}}_{-\r}\mathbf{Y}^{N}_{\s,\y}\d\s\\
&+\sum_{\y\in\mathbb{T}_{N}}\int_{\R}\mathbf{1}_{\s\in[0,\t]}\{\grad^{\mathbf{T}}_{-\r}\mathbf{H}^{N}_{\s,\t,\x,\y}\}N^{\frac12}\mathbf{Av}^{\mathsf{R}_{\k}}_{n_{\k}}[\tau_{\y}\eta_{\s}]\mathbf{Y}^{N}_{\s-\r,\y}\d\s\\
&+\sum_{\y\in\mathbb{T}_{N}}\int_{\R}\{\grad^{\mathbf{T}}_{-\r}\mathbf{1}_{\s\in[0,\t]}\}\mathbf{H}^{N}_{\s-\r,\t,\x,\y}N^{\frac12}\mathbf{Av}^{\mathsf{R}_{\k}}_{n_{\k}}[\tau_{\y}\eta_{\s}]\mathbf{Y}^{N}_{\s-\r,\y}\d\s.
\end{align*}
For the last line, we note that {\small$\grad^{\mathbf{T}}_{-\r}\mathbf{1}_{\s\in[0,\t]}=0$} except for $\s\in[0,\r]\cup[\t,\t+\r]$. If we use this with contractivity of the heat semigroup on $\mathrm{L}^{\infty}(\mathbb{T}_{N})\to\mathrm{L}^{\infty}(\mathbb{T}_{N})$ (it is a Markov semigroup) and {\small$\mathbf{Av}^{\mathsf{R}_{\k}}_{n_{\k}}=\mathrm{O}(N^{10\e_{\mathrm{ap}}})$} (it is an average of terms that satisfy this estimate by Lemma \ref{lemma:bgprcut}) and $\mathbf{Y}^{N}=\mathrm{O}(N^{\e_{\mathrm{ap}}})$ (see \eqref{eq:tst}), then for $\r\in[0,\tau_{\mathrm{av}}]$,
\begin{align*}
\sum_{\y\in\mathbb{T}_{N}}\int_{\R}\{\grad^{\mathbf{T}}_{-\r}\mathbf{1}_{\s\in[0,\t]}\}\mathbf{H}^{N}_{\s-\r,\t,\x,\y}N^{\frac12}\mathbf{Av}^{\mathsf{R}_{\k}}_{n_{\k}}[\tau_{\y}\eta_{\s}]\mathbf{Y}^{N}_{\s-\r,\y}\d\s&\lesssim\int_{[0,\r]\cup[\t,\t+\r]}N^{\frac12+11\e_{\mathrm{ap}}}\d\s\\
&\lesssim N^{\frac12+11\e_{\mathrm{ap}}}\tau_{\mathrm{av}}\lesssim N^{-\beta} 
\end{align*}
with $\beta\gtrsim1$ since $\tau_{\mathrm{av}}=N^{-4/3}$ and $\e_{\mathrm{ap}}>0$ is small. Next, by Proposition \ref{prop:hk}, we know that the $\mathrm{L}^{\infty}(\mathbb{T}_{N})\to\mathrm{L}^{\infty}(\mathbb{T}_{N})$ operator norm of {\small$\grad^{\mathbf{T}}_{-\r}\exp\{[\t-\s]\mathscr{T}_{N}\}$}, where the time-gradient acts on $\s$, is $\lesssim \r^{1-\e}|\t-\s|^{-1+\e}$ for any $\e>0$. If we again use {\small$\mathbf{Av}^{\mathsf{R}_{\k}}_{n_{\k}}=\mathrm{O}(N^{10\e_{\mathrm{ap}}})$} and $\mathbf{Y}^{N}=\mathrm{O}(N^{\e_{\mathrm{ap}}})$, we deduce that for $\r\in[0,\tau_{\mathrm{av}}]$, we have
\begin{align*}
\sum_{\y\in\mathbb{T}_{N}}\int_{\R}\mathbf{1}_{\s\in[0,\t]}\{\grad^{\mathbf{T}}_{-\r}\mathbf{H}^{N}_{\s,\t,\x,\y}\}N^{\frac12}\mathbf{Av}^{\mathsf{R}_{\k}}_{n_{\k}}[\tau_{\y}\eta_{\s}]\mathbf{Y}^{N}_{\s-\r,\y}\d\s\lesssim\int_{0}^{\t}N^{\frac12+11\e_{\mathrm{ap}}}\r^{1-\e}|\t-\s|^{-1+\e}\d\s\lesssim_{\e}N^{-\beta}
\end{align*}
again with $\beta\gtrsim1$, since $\r\leq\tau_{\mathrm{av}}=N^{-4/3}$, and $\e_{\mathrm{ap}},\e$ are small. Now, we again use the discrete Leibniz rule to unfold further {\small$\grad^{\mathbf{T}}_{-\r}\mathbf{Y}^{N}_{\s,\y}=\grad^{\mathbf{T}}_{-\r}(\mathbf{Z}^{N}_{\s,\y}\mathbf{1}_{\s\leq\mathfrak{t}_{\mathrm{stop}}})=\mathbf{1}_{\s\leq\mathfrak{t}_{\mathrm{stop}}}\grad^{\mathbf{T}}_{-\r}\mathbf{Z}^{N}_{\s,\y}+\grad^{\mathbf{T}}_{-\r}\mathbf{1}_{\s\leq\mathfrak{t}_{\mathrm{stop}}}\mathbf{Z}^{N}_{\s-\r,\y}$}. By \eqref{eq:tap}, \eqref{eq:tregT}, and \eqref{eq:tst}, we know {\small$|\mathbf{1}_{\s\leq\mathfrak{t}_{\mathrm{stop}}}\grad^{\mathbf{T}}_{-\r}\mathbf{Z}^{N}_{\s,\y}|\lesssim N^{10\e_{\mathrm{ap}}}\{N^{-2}\vee\r\}^{1/4}$}, for example (the $10$ is unimportant). Also note that {\small$|\grad^{\mathbf{T}}_{-\r}\mathbf{1}_{\s\leq\mathfrak{t}_{\mathrm{stop}}}\mathbf{Z}^{N}_{\s-\r,\y}|\lesssim\mathbf{1}_{\s\in[\mathfrak{t}_{\mathrm{stop}},\mathfrak{t}_{\mathrm{stop}}+\r]}|\mathbf{Z}^{N}_{\s-\r,\y}|\lesssim N^{\e_{\mathrm{ap}}}\mathbf{1}_{\s\in[\mathfrak{t}_{\mathrm{stop}},\mathfrak{t}_{\mathrm{stop}}+\r]}$}; see \eqref{eq:tap}. Putting all this together gives
\begin{align*}
&\sum_{\y\in\mathbb{T}_{N}}\int_{\R}\mathbf{1}_{\s\in[0,\t]}\mathbf{H}^{N}_{\s,\t,\x,\y}N^{\frac12}\mathbf{Av}^{\mathsf{R}_{\k}}_{n_{\k}}[\tau_{\y}\eta_{\s}]\grad^{\mathbf{T}}_{-\r}\mathbf{Y}^{N}_{\s,\y}\d\s\\
&\lesssim N^{10\e_{\mathrm{ap}}}\{N^{-2}\vee\r\}^{\frac14}\int_{0}^{\t}\sum_{\y\in\mathbb{T}_{N}}\mathbf{H}^{N}_{\s,\t,\x,\y}N^{\frac12}|\mathbf{Av}^{\mathsf{R}_{\k}}_{n_{\k}}[\tau_{\y}\eta_{\s}]|\d\s\\
&+ N^{\e_{\mathrm{ap}}}{\textstyle\int_{\mathfrak{t}_{\mathrm{stop}}}^{\mathfrak{t}_{\mathrm{stop}}+\r}}\sum_{\y\in\mathbb{T}_{N}}\mathbf{H}^{N}_{\s,\t,\x,\y}N^{\frac12}|\mathbf{Av}^{\mathsf{R}_{\k}}_{n_{\k}}[\tau_{\y}\eta_{\s}]|\d\s.
\end{align*}
We use contractivity of the heat kernel on $\mathrm{L}^{\infty}(\mathbb{T}_{N})$ and {\small$\mathbf{Av}^{\mathsf{R}_{\k}}_{n_{\k}}=\mathrm{O}(N^{10\e_{\mathrm{ap}}})$} as before to get that the last line is {\small$\lesssim N^{1/2+11\e_{\mathrm{ap}}}\r\lesssim N^{-\beta}$} with $\beta\gtrsim1$ since $\r\leq\tau_{\mathrm{av}}=N^{-4/3}$. Moreover, since we only take $\r\leq \tau_{\mathrm{av}}=N^{-4/3}$, we have $\{N^{-2}\wedge\r\}^{1/4}\lesssim N^{-1/3}$, so the effective factor of $N$ in the second line above is $N^{1/6}N^{10\e_{\mathrm{ap}}}$. Thus, since $\e_{\mathrm{ap}}>0$ is small, if we combine every display in this proof thus far, to prove \eqref{eq:bgprtI}, it suffices to show
\begin{align}
\tfrac{1}{\tau_{\mathrm{av}}}{\int_{0}^{\tau_{\mathrm{av}}}}\E\left[\sup_{\t\in[0,\t]}\sup_{\x\in\mathbb{T}_{N}}\int_{0}^{\t}\sum_{\y\in\mathbb{T}_{N}}\mathbf{H}^{N}_{\s,\t,\x,\y}N^{\frac16}|\mathbf{Av}^{\mathsf{R}_{\k}}_{n_{\k}}[\tau_{\y}\eta_{\s}]|\d\s\right]\lesssim N^{-\beta}\label{eq:bgprtI1}
\end{align}
with $\beta\gtrsim1$ independent of $\e_{\mathrm{ap}}$. We want to use Lemma \ref{lemma:probanspace} with:
\begin{itemize}
\item $n=n_{\k}$ and {\small$\mathfrak{f}_{j}[\eta]:=\mathsf{R}_{\k}[\tau_{-2jN^{(\k+1)\e_{\mathrm{step}}}}\eta;\overline{\mathfrak{q}}]$} for $j=1,\ldots,n_{\k}$.
\end{itemize}
Recall from Lemma \ref{lemma:bgprcut} that $\mathfrak{f}_{j}[\eta]$  vanishes with respect to any canonical measure expectation and depends only on $\eta_{\x}$ for {\small$\x\in\mathbb{I}_{j}=\{-N^{(\k+1)\e_{\mathrm{step}}},\ldots,0\}-2jN^{(\k+1)\e_{\mathrm{step}}}$}. These are disjoint sub-intervals, and their union is contained in $\{-Cn_{\k}N^{(\k+1)\e_{\mathrm{step}}},\ldots,0\}$ for some $C=\mathrm{O}(1)$. So, in our application of Lemma \ref{lemma:proban}, we can take $\mathbb{L}$ therein to satisfy {\small$|\mathbb{L}|\asymp n_{\k}N^{(k+1)\e_{\mathrm{ap}}}=\mathfrak{l}_{\mathrm{reg}}=N^{1/3+\e_{\mathrm{reg}}}$} (see Lemma \ref{lemma:bgprx}), where $\asymp$ means $\gtrsim$ and $\lesssim$ with different implied constants. Thus, the LHS of \eqref{eq:bgprtI1} is $\lesssim$ the $2/3$-th power of 
\begin{align*}
&N^{-\frac74}|\mathbb{L}|^{3}n_{\k}^{-\frac34}\sup_{i=1,\ldots,n}\|\mathfrak{f}_{i}\|_{\infty}^{\frac32}+N^{\frac14+\e}n_{\k}^{-\frac34}\sup_{i=1,\ldots,n}\|\mathfrak{f}_{i}\|_{\infty}^{\frac32}\\
&\lesssim N^{-\frac74}N^{1+3\e_{\mathrm{reg}}}n_{\k}^{-\frac34}\|\mathsf{R}_{k}[\cdot;\overline{\mathfrak{q}}]\|_{\infty}^{\frac32}+N^{\frac14+\e}n_{\k}^{-\frac34}\|\mathsf{R}_{k}[\cdot;\overline{\mathfrak{q}}]\|_{\infty}^{\frac32}.
\end{align*}
We now use Lemma \ref{lemma:bgprcut} to get {\small$n_{\k}^{-3/4}\|\mathsf{R}_{\k}[\cdot;\overline{\mathfrak{q}}]\|_{\infty}^{3/2}\lesssim N^{10\e_{\mathrm{ap}}}n_{\k}^{-3/4}N^{-9\k\e_{\mathrm{step}}/4}$}. We again use $n_{\k}N^{(\k+1)\e_{\mathrm{step}}}=\mathfrak{l}_{\mathrm{reg}}=N^{1/3+\e_{\mathrm{reg}}}$ and extend it to {\small$n_{\k}^{-3/4}\|\mathsf{R}_{\k}[\cdot;\overline{\mathfrak{q}}]\|_{\infty}^{3/2}\lesssim N^{10\e_{\mathrm{ap}}}n_{\k}^{-3/4}N^{-9\k\e_{\mathrm{step}}/4}=N^{10\e_{\mathrm{ap}}}(n_{\k}^{-1}N^{-3\k\e_{\mathrm{step}}})^{3/4}\lesssim N^{10\e_{\mathrm{ap}}+\e_{\mathrm{step}}}N^{-1/4-3\e_{\mathrm{reg}}/4}$}. Hence, the last line in the previous display is 
\begin{align*}
&\lesssim N^{-\frac74}N^{1+3\e_{\mathrm{reg}}}N^{-\frac14}+N^{\frac14+\e}N^{10\e_{\mathrm{ap}}+\e_{\mathrm{step}}}N^{-\frac14-\frac34\e_{\mathrm{reg}}}\lesssim N^{-\beta}
\end{align*}
with $\beta\gtrsim1$ since $\e_{\mathrm{ap}},\e_{\mathrm{step}}>0$ are small constants times $\e_{\mathrm{reg}}>0$, which itself is also small. The previous two displays give \eqref{eq:bgprtI1}, so the proof of \eqref{eq:bgprtI} is complete. We now prove \eqref{eq:bgprtII}. We claim that it suffices to prove the following analog of \eqref{eq:bgprtI1}, where $\beta\gtrsim1$ is independent of $\e_{\mathrm{ap}}$:
\begin{align}
\tfrac{1}{\tau_{\mathrm{av}}}{\int_{0}^{\tau_{\mathrm{av}}}}\E\left[\sup_{\t\in[0,\t]}\sup_{\x\in\mathbb{T}_{N}}\int_{0}^{\t}\sum_{\y\in\mathbb{T}_{N}}\mathbf{H}^{N}_{\s,\t,\x,\y}N^{\frac16}|\mathsf{R}_{\k}[\tau_{\cdot}\eta_{\s};\overline{\mathfrak{q}}]\mathfrak{z}_{\k}[\tau_{\cdot}\eta_{\s}]|\d\s\right]\lesssim N^{-\beta}.\label{eq:bgprtII1}
\end{align}
Indeed, the sufficiency of this estimate uses only the crude bound {\small$\mathbf{Av}^{\mathsf{R}_{\k}}_{n_{\k}}[\tau_{\cdot}\eta]=\mathrm{O}(N^{10\e_{\mathrm{ap}}})$}, and we also have {\small$\mathsf{R}_{\k}[\eta;\overline{\mathfrak{q}}]\mathfrak{z}_{\k}[\eta]=\mathrm{O}(N^{10\e_{\mathrm{ap}}})$} as well by Lemmas \ref{lemma:bgprcut} and \ref{lemma:bgprx}. (We actually have much more from $\mathfrak{z}_{\k}$ from Lemma \ref{lemma:bgprx}; we use this shortly.) To prove \eqref{eq:bgprtII1}, we can proceed with a direct estimate. By Lemmas \ref{lemma:bgprcut} and \ref{lemma:bgprx}, we know {\small$\mathsf{R}_{\k}[\eta;\overline{\mathfrak{q}}]=\mathrm{O}(N^{10\e_{\mathrm{ap}}})$} and {\small$\mathfrak{z}_{\k}[\eta]=\mathrm{O}(N^{3\e_{\mathrm{ap}}+\e_{\mathrm{reg}}}N^{-1/3})$}. Thus, {\small$\mathsf{R}_{\k}[\tau_{\y}\eta_{\s};\overline{\mathfrak{q}}]\mathfrak{z}_{\k}[\tau_{\y}\eta_{\s}]=\mathrm{O}(N^{C\e_{\mathrm{ap}}+\e_{\mathrm{reg}}}N^{-1/3})$}. Plug this estimate into the LHS of \eqref{eq:bgprtII1} and use contractivity of the heat kernel (as the kernel for an operator {\small$\mathrm{L}^{\infty}(\mathbb{T}_{N})\to\mathrm{L}^{\infty}(\mathbb{T}_{N})$}) to get \eqref{eq:bgprtII1}. (No expectation is even needed in \eqref{eq:bgprtII1}.)
\end{proof}
\subsection{Final estimates}
We have introduced space and time averages. Now, we use Lemmas \ref{lemma:proban} and \ref{lemma:probanfluc}.
\begin{lemma}\label{lemma:bgprfinal}
\fsp Fix any integer $0\leq\k<\mathrm{K}_{\mathrm{step}}$. There exists $\beta>0$ universal such that 
\begin{align}
&\E\left[\sup_{\t\in[0,1]}\sup_{\x\in\mathbb{T}_{N}}\left|{\int_{0}^{\t}}\mathrm{e}^{[\t-\s]\mathscr{T}_{N}}\left\{N^{\frac12}\left(\tfrac{1}{\tau_{\mathrm{av}}}{\textstyle\int_{0}^{\tau_{\mathrm{av}}}}\tfrac{1}{n_{\k}}\sum_{j=0}^{n_{\k}-1}\mathsf{R}_{\k}[\tau_{\cdot-2jN^{(\k+1)\e_{\mathrm{step}}}}\eta_{\s+\r};\overline{\mathfrak{q}}]\d\r\right)\mathbf{Y}^{N}_{\s,\cdot}\right\}_{\x}\d\s\right|\right]\label{eq:bgprfinal}\\
&+\E\left[\sup_{\t\in[0,1]}\sup_{\x\in\mathbb{T}_{N}}\left|{\int_{0}^{\t}}\mathrm{e}^{[\t-\s]\mathscr{T}_{N}}[N^{\frac12}\left(\tfrac{1}{\tau_{\mathrm{av}}}{\int_{0}^{\tau_{\mathrm{av}}}}\mathsf{R}_{\k}[\tau_{\cdot}\eta_{\s+\r};\overline{\mathfrak{q}}]\mathfrak{z}_{\k}[\tau_{\cdot}\eta_{\s+\r}]\d\r\right)\mathbf{Y}^{N}_{\s,\cdot}]_{\x}\d\s\right|\right]\lesssim N^{-\beta}.\nonumber
\end{align}
\end{lemma}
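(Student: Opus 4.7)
The plan is to prove each of the two estimates in \eqref{eq:bgprfinal} by a direct application of the probabilistic-analytic tools already assembled. For the first estimate, I would invoke Lemma \ref{lemma:proban} with the choices $\mathfrak{f}_j[\eta] := \mathsf{R}_{\k}[\tau_{-2jN^{(\k+1)\e_{\mathrm{step}}}}\eta; \overline{\mathfrak{q}}]$ for $j=0,\ldots,n_{\k}-1$, $n = n_{\k}$, and $\tau = \tau_{\mathrm{av}} = N^{-4/3}$. Lemma \ref{lemma:bgprcut} tells us each $\mathfrak{f}_j$ depends only on $\eta_{\x}$ for $\x \in \mathbb{I}_j := \{-N^{(\k+1)\e_{\mathrm{step}}}+1,\ldots,0\} - 2jN^{(\k+1)\e_{\mathrm{step}}}$, the $\mathbb{I}_j$ are pairwise disjoint, their union sits in a sub-interval $\mathbb{L}$ of length $\asymp n_{\k}N^{(\k+1)\e_{\mathrm{step}}} = \mathfrak{l}_{\mathrm{reg}} = N^{1/3+\e_{\mathrm{reg}}}$, each $\mathfrak{f}_j$ has canonical-mean-zero with respect to $\E^{\sigma,\mathbb{L}}$, and $\|\mathfrak{f}_j\|_{\infty} \lesssim N^{10\e_{\mathrm{ap}}}N^{-3\k\e_{\mathrm{step}}/2}$. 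The diffusive-scaling hypothesis $N^{2}\tau_{\mathrm{av}} \lesssim |\mathbb{L}|^{2}$ is automatic.

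Substituting these parameters into \eqref{eq:probanIb}, the first summand inside the $2/3$-power is, up to $N^{\delta+C\e_{\mathrm{ap}}}$ factors, a multiple of $N^{-5/4}\mathfrak{l}_{\mathrm{reg}}^{3}n_{\k}^{-3/4}\|\mathfrak{f}_{j}\|_{\infty}^{3/2}$; using $\mathfrak{l}_{\mathrm{reg}} = N^{1/3+\e_{\mathrm{reg}}}$, $n_{\k}^{-3/4} \lesssim N^{-1/4 - 3\e_{\mathrm{reg}}/4 + 3(\k+1)\e_{\mathrm{step}}/4}$, and the $\mathfrak{f}_j$ sup-norm bound, this collapses to a strictly negative power of $N$ once we recall that $\e_{\mathrm{ap}},\e_{\mathrm{step}}$ are taken much smaller than $\e_{\mathrm{reg}}$. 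The second summand becomes, analogously, $N^{-3/4}\tau_{\mathrm{av}}^{-3/4}|\mathbb{I}_{j}|^{3/2}n_{\k}^{-3/4}\|\mathfrak{f}_{j}\|_{\infty}^{3/2}$, whose exponent is $-3\e_{\mathrm{reg}}/4 + 9\e_{\mathrm{step}}/4 + C\e_{\mathrm{ap}} < 0$ under the same hierarchy. This gives the claimed $N^{-\beta}$ bound on the first line.

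For the second estimate, I would instead use Lemma \ref{lemma:probanfluc} in the product form with $\mathfrak{f}_{1}[\eta] := \mathsf{R}_{\k}[\eta;\overline{\mathfrak{q}}]$ and $\mathfrak{f}_{2}[\eta] := \mathfrak{z}_{\k}[\eta]$. By Lemma \ref{lemma:bgprcut}, $\mathfrak{f}_{1}$ is canonical-mean-zero on any sub-interval containing its support and depends only on $\eta_{\x}$ for $\x \in \mathbb{I}_{1} \subseteq \{-N^{(\k+1)\e_{\mathrm{step}}}+1,\ldots,0\}$; by Lemma \ref{lemma:bgprx}, $\mathfrak{z}_{\k}$ depends only on $\eta_{\x}$ for $\x \in \mathbb{I}_{2} \subseteq \{1,\ldots,\mathrm{O}(\mathfrak{l}_{\mathrm{reg}})\}$, so $\mathbb{I}_{1}\cap\mathbb{I}_{2}=\emptyset$, and crucially $\|\mathfrak{z}_{\k}\|_{\infty} \lesssim N^{3\e_{\mathrm{ap}}}N^{-1/3+\e_{\mathrm{reg}}/2}$. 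Choosing $\mathbb{L}$ to be a sub-interval of length $\asymp \mathfrak{l}_{\mathrm{reg}}$ containing $\mathbb{I}_{1}\cup\mathbb{I}_{2}$, and again $\tau = \tau_{\mathrm{av}} = N^{-4/3}$, the hypotheses of Lemma \ref{lemma:probanfluc} hold. Plugging the sup-norm estimates into \eqref{eq:probanflucIb}, both summands yield exponents in $N$ that become strictly negative by essentially the same book-keeping as above, once one uses the extra $N^{-1/3+\e_{\mathrm{reg}}/2}$ gain from $\mathfrak{z}_{\k}$ to absorb the $|\mathbb{L}|^{3}$ and $\tau_{\mathrm{av}}^{-3/4}|\mathbb{I}_{1}|^{3/2}$ factors.

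The only real obstacle is the careful arithmetic of exponents, since all of $\e_{\mathrm{ap}}$, $\e_{\mathrm{step}}$, $\e_{\mathrm{reg}}$ must be simultaneously balanced to make every error truly decay as $N^{-\beta}$ with $\beta > 0$ \emph{uniform in} $\k \in \{0,\ldots,\mathrm{K}_{\mathrm{step}}-1\}$ (this uniformity is what then allows the telescoping sum in the proof of Theorem \ref{theorem:bgp}). The decisive input is the sharpened $|\mathfrak{l}|^{-3/2}$ decay from Lemma \ref{lemma:canexp}, which propagates via Lemma \ref{lemma:bgprcut} into the sup-norm of $\mathsf{R}_{\k}$ and exactly cancels the growth of the support $|\mathbb{I}_{j}|^{3/2}$ against the square-root-cancellation factor $n_{\k}^{-3/4}$; without the assumption \eqref{eq:assumequad} (which upgrades $|\mathfrak{l}|^{-1}$ to $|\mathfrak{l}|^{-3/2}$) the $\k$-uniform negativity of the exponent would fail, and this is the only subtle structural point. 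Everything else is a mechanical substitution, so once the exponent computation above is verified, the proof is complete.
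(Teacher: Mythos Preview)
Your proposal is correct and follows essentially the same approach as the paper: apply Lemma \ref{lemma:proban} to the first term with $\mathfrak{f}_j[\eta]=\mathsf{R}_{\k}[\tau_{-2jN^{(\k+1)\e_{\mathrm{step}}}}\eta;\overline{\mathfrak{q}}]$, $n=n_k$, $\tau=\tau_{\mathrm{av}}$, $|\mathbb{L}|\asymp\mathfrak{l}_{\mathrm{reg}}$, and Lemma \ref{lemma:probanfluc} to the second term with $\mathfrak{f}_1=\mathsf{R}_{\k}$, $\mathfrak{f}_2=\mathfrak{z}_{\k}$, then carry out the exponent arithmetic exactly as you outline. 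Your identification of the key cancellation $|\mathbb{I}_j|^{3/2}n_k^{-3/4}\|\mathsf{R}_{\k}\|_{\infty}^{3/2}\lesssim N^{C\e_{\mathrm{step}}+C\e_{\mathrm{ap}}}\mathfrak{l}_{\mathrm{reg}}^{-3/4}$ (uniformly in $\k$, driven by the $|\mathfrak{l}|^{-3/2}$ decay from \eqref{eq:assumequad}) is precisely the structural point the paper isolates.
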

\begin{proof}
Take the first term on the LHS of \eqref{eq:bgprfinal}. With explanation given shortly, we use Lemma \ref{lemma:proban} with:
\begin{itemize}
\item $n=n_{\k}$ and $\tau=\tau_{\mathrm{av}}=N^{-4/3}$ and {\small$\mathfrak{f}_{j}[\eta]:=\mathsf{R}_{\k}[\tau_{-2jN^{(\k+1)\e_{\mathrm{step}}}}\eta;\overline{\mathfrak{q}}]$} for $j=1,\ldots,n_{k}$.  
\end{itemize}
Recall from Lemma \ref{lemma:bgprcut} that $\mathfrak{f}_{j}[\eta]$  vanishes with respect to any canonical measure expectation and depends only on $\eta_{\x}$ for {\small$\x\in\mathbb{I}_{j}=\{-N^{(\k+1)\e_{\mathrm{step}}},\ldots,0\}-2jN^{(\k+1)\e_{\mathrm{step}}}$}. These are disjoint sub-intervals, and their union over $j$ is contained in $\{-Cn_{\k}N^{(\k+1)\e_{\mathrm{step}}},\ldots,0\}$ for some $C=\mathrm{O}(1)$. Thus, in our application of Lemma \ref{lemma:proban}, we can take $\mathbb{L}$ therein to satisfy {\small$|\mathbb{L}|\asymp n_{\k}N^{(k+1)\e_{\mathrm{ap}}}=\mathfrak{l}_{\mathrm{reg}}=N^{1/3+\e_{\mathrm{reg}}}$} (see Lemma \ref{lemma:bgprx}), where $\asymp$ means $\gtrsim$ and $\lesssim$. One can readily check $\tau_{\mathrm{av}}|\mathbb{L}|\lesssim N^{-4/3}\mathfrak{l}_{\mathrm{reg}}\lesssim1$ and $\tau_{\mathrm{av}}=N^{-4/3}\lesssim N^{-1}$ and $N^{2}\tau_{\mathrm{av}}=N^{2}N^{-4/3}=N^{2/3}\lesssim N^{2/3+2\e_{\mathrm{reg}}}\lesssim|\mathbb{L}|^{2}$, so that Lemma \ref{lemma:proban} actually applies. In particular, we deduce that for any $\delta>0$ small, the first term on the LHS of \eqref{eq:bgprfinal} is $\lesssim_{\delta}$ the $2/3$-th power of
\begin{align*}
&N^{\frac32\e_{\mathrm{ap}}}N^{\delta}N^{-\frac54}|\mathbb{L}|^{3}n_{\k}^{-\frac34}\|\mathsf{R}_{k}[\cdot;\overline{\mathfrak{q}}]\|_{\infty}^{\frac32}+N^{\frac32\e_{\mathrm{ap}}}N^{\delta}N^{-\frac34}\tau_{\mathrm{av}}^{-\frac34}n_{\k}^{-\frac34}\sup_{j=1,\ldots,n_{\k}}|\mathbb{I}_{j}|^{\frac32}\|\mathsf{R}_{k}[\cdot;\overline{\mathfrak{q}}]\|_{\infty}^{\frac32}.
\end{align*}
Now, we plug in $|\mathbb{L}|\lesssim N^{1/3+\e_{\mathrm{reg}}}$ and $\tau_{\mathrm{av}}=N^{-4/3}$ and {\small$|\mathbb{I}_{j}|\lesssim N^{(\k+1)\e_{\mathrm{step}}}$} and {\small$n_{\k}N^{(\k+1)\e_{\mathrm{step}}}=\mathfrak{l}_{\mathrm{reg}}=N^{1/3+\e_{\mathrm{reg}}}$} and $|\mathsf{R}_{\k}[\cdot;\overline{\mathfrak{q}}]|\lesssim N^{10\e_{\mathrm{ap}}}N^{-3\k\e_{\mathrm{step}}/2}$ from Lemma \ref{lemma:bgprcut}. This shows that the previous display is 
\begin{align*}
&\lesssim N^{C\e_{\mathrm{ap}}}N^{\delta}\left\{N^{-\frac54}N^{1+3\e_{\mathrm{reg}}}n_{\k}^{-\frac34}N^{-\frac94\k\e_{\mathrm{step}}}+N^{-\frac34}Nn_{\k}^{-\frac34}N^{\frac{3(\k+1)}{2}\e_{\mathrm{step}}}N^{-\frac94\k\e_{\mathrm{step}}}\right\}\\
&\lesssim N^{C\e_{\mathrm{ap}}+C\e_{\mathrm{step}}}N^{\delta}\left\{N^{-\frac54}N^{1+3\e_{\mathrm{reg}}}\mathfrak{l}_{\mathrm{reg}}^{-\frac34}+N^{\frac14}\mathfrak{l}_{\mathrm{reg}}^{-\frac34}\right\}\lesssim N^{C\e_{\mathrm{ap}}+C\e_{\mathrm{step}}}N^{\delta}\left\{N^{-\frac14+C\e_{\mathrm{reg}}}+N^{-\frac34\e_{\mathrm{reg}}}\right\},
\end{align*}
where $C=\mathrm{O}(1)$. Since $\e_{\mathrm{ap}},\e_{\mathrm{step}},\delta$ are small multiplies of $\e_{\mathrm{reg}}$, and because $\e_{\mathrm{reg}}\gtrsim1$ is also small, the previous display is $\lesssim N^{-\beta}$ for $\beta\gtrsim1$. It remains to show that the second term on the LHS of \eqref{eq:bgprfinal} is $\lesssim N^{-\beta}$. For this, we now use Lemma \ref{lemma:probanfluc} with the following choices (with explanation given shortly):
\begin{itemize}
\item $\mathfrak{f}_{1}[\eta]=\mathsf{R}_{\k}[\eta;\overline{\mathfrak{q}}]$ and $\mathfrak{f}_{2}[\eta]=\mathfrak{z}_{\k}[\eta]$ (see the second term on the LHS of \eqref{eq:bgprfinal}).
\end{itemize}
Note that $\mathfrak{f}_{1}[\eta],\mathfrak{f}_{2}[\eta]$ depend only on $\eta_{\x}$ for $\x\in\mathbb{L}$ with the same choice of $\mathbb{L}$ from our application of Lemma \ref{lemma:proban}. Moreover, by Lemmas \ref{lemma:bgprcut} and \ref{lemma:bgprx}, $\mathfrak{f}_{1}[\eta]$ depends only on $\eta_{\x}$ for $\x\leq0$ and $\mathfrak{f}_{2}[\eta]$ depends only on $\eta_{\x}$ for $\x\geq1$. More precisely, $\mathfrak{f}_{1}[\eta]$ depends only on $\eta_{\x}$ for a $\x$ in a sub-interval $\mathbb{I}_{1}$ of length $\mathrm{O}(N^{(\k+1)\e_{\mathrm{step}}})$ as noted in our application of Lemma \ref{lemma:proban} above. Thus, since we again take $\tau=\tau_{\mathrm{av}}$, all the constraints for Lemma \ref{lemma:probanfluc} are satisfied (since they were satisfied in our application of Lemma \ref{lemma:proban}, and none of the parameters $|\mathbb{L}|$ or $\tau$ have changed). This impliies that the second term on the LHS of \eqref{eq:bgprfinal} is $\lesssim$ the $2/3$-th power of
\begin{align*}
&\lesssim_{\delta} N^{\frac32\e_{\mathrm{ap}}}N^{\delta}\left\{N^{-\frac54}|\mathbb{L}|^{3}\prod_{i=1,2}\|\mathfrak{f}_{i}\|_{\infty}^{\frac32}+N^{-\frac34}\tau_{\mathrm{av}}^{-\frac34}|\mathbb{I}_{1}|^{\frac32}\prod_{i=1,2}\|\mathfrak{f}_{i}\|_{\infty}^{\frac32}\right\}.
\end{align*}
Now use {\small$\tau_{\mathrm{av}}=N^{-4/3}$} and {\small$|\mathbb{I}_{1}|\lesssim N^{(\k+1)\e_{\mathrm{step}}}$} and {\small$|\mathbb{L}|\lesssim \mathfrak{l}_{\mathrm{reg}}=N^{1/3+\e_{\mathrm{reg}}}$} and {\small$\|\mathfrak{f}_{1}\|_{\infty}\lesssim N^{10\e_{\mathrm{ap}}}N^{-3\k\e_{\mathrm{step}}/2}$} (see Lemma \ref{lemma:bgprcut}) and {\small$\|\mathfrak{f}_{2}\|_{\infty}\lesssim N^{-1/3+\e_{\mathrm{reg}}/2}$} (see Lemma \ref{lemma:bgprx}). This shows that the previous display is 
\begin{align*}
&\lesssim N^{C\e_{\mathrm{ap}}}N^{\delta}\left\{N^{-\frac54}N^{1+3\e_{\mathrm{reg}}}N^{-\frac94\k\e_{\mathrm{step}}}N^{-\frac12+\frac34\e_{\mathrm{reg}}}+N^{-\frac34}NN^{\frac32(\k+1)\e_{\mathrm{step}}}N^{-\frac94\k\e_{\mathrm{step}}}N^{-\frac12+\frac34\e_{\mathrm{reg}}}\right\}\\
&\lesssim N^{C\e_{\mathrm{ap}}+C\e_{\mathrm{reg}}+C\e_{\mathrm{step}}}N^{\delta}\left\{N^{-\frac54+1-\frac12}+N^{-\frac34+1-\frac12}\right\}\lesssim N^{-\beta}
\end{align*}
for $C=\mathrm{O}(1)$ and $\beta\gtrsim1$, since $\e_{\mathrm{ap}},\e_{\mathrm{reg}},\e_{\mathrm{step}},\delta>0$ are small. This controls the second term on the LHS of \eqref{eq:bgprfinal} by $\lesssim N^{-\beta}$ with $\beta\gtrsim1$, so this completes the proof.
\end{proof}
\subsection{Proof of Proposition \ref{prop:hl}}
The same argument (which we just explained in detail for the proof of Theorem \ref{theorem:bgp}) works. Indeed, the only fact we used about $\overline{\mathfrak{q}}$ (versus $\mathfrak{s}$ or $\mathfrak{g}$) is {\small$|\mathsf{E}^{\mathrm{can},\mathfrak{l}}[\tau_{\y}\eta_{\s};\overline{\mathfrak{q}}]|\lesssim N^{3\e_{\mathrm{ap}}}\mathfrak{l}^{-3/2}$} for $\mathfrak{l}\lesssim N^{1/3+\e_{\mathrm{reg}}}$ (see Lemma \ref{lemma:canexp}). However, since $\E^{0}\mathfrak{s}=0$, we have {\small$|\mathsf{E}^{\mathrm{can},\mathfrak{l}}[\tau_{\y}\eta_{\s};\mathfrak{s}]|\lesssim N^{3\e_{\mathrm{ap}}}\mathfrak{l}^{-1/2}$} by Lemma \ref{lemma:canexp}. This is worse by $\mathfrak{l}^{-1}$, but $\mathfrak{s}$ does not come with a factor of $N^{1/2}$, unlike $\overline{\mathfrak{q}}$ (see \eqref{eq:upsilonbg} and \eqref{eq:upsilonhl}). So, we gain $N^{-1/2}$. The total factor we must now include is $N^{-1/2}\mathfrak{l}$. Since $\mathfrak{l}\lesssim N^{1/3+\e_{\mathrm{reg}}}$, this is small. This also holds for $\mathfrak{g}$ instead of $\mathfrak{s}$. \qed
\appendix
\section{General estimates}
\subsection{Heat kernel estimates}
We recall $\mathbf{H}^{N}$ and its $\mathscr{T}_{N}$-semigroup from Definition \ref{definition:heat}. The following result is standard and can be found as Proposition A.3 in \cite{Y23}.
\begin{prop}\label{prop:hk}
\fsp 
Fix $0\leq\s\leq\t\lesssim1$ and $\x,\y\in\mathbb{T}_{N}$. Fix any $\mathfrak{l}\in\Z$ and $\mathfrak{t}\geq0$ and $\upsilon\in(0,1)$. We have
\begin{align}
0\leq\mathbf{H}^{N}_{\s,\t,\x,\y}&\lesssim N^{-1}|\t-\s|^{-\frac12}\\
|\mathbf{H}^{N}_{\s,\t,\x+\mathfrak{l},\y}-\mathbf{H}^{N}_{\s,\t,\x,\y}|+|\mathbf{H}^{N}_{\s,\t,\x,\y+\mathfrak{l}}-\mathbf{H}^{N}_{\s,\t,\x,\y}|&\lesssim N^{-1-\upsilon}|\t-\s|^{-\frac12-\frac12\upsilon}|\mathfrak{l}|^{\upsilon}\\
|\mathbf{H}^{N}_{\s,\t+\mathfrak{t},\x,\y}-\mathbf{H}^{N}_{\s,\t,\x,\y}|&\lesssim N^{-1}|\t-\s|^{-\frac12-\upsilon}|\mathfrak{t}|^{\upsilon}.
\end{align}
We also have the averaged regularity estimates as well:
\begin{align}
\|\mathbf{H}^{N}_{\s,\t,\x,\cdot}\|_{\mathrm{L}^{1}(\mathbb{T}_{N})}&\lesssim1,\\
\|\mathbf{H}^{N}_{\s,\t,\x+\mathfrak{l},\cdot}-\mathbf{H}^{N}_{\s,\t,\x,\cdot}\|_{\mathrm{L}^{1}(\mathbb{T}_{N})}+\|\mathbf{H}^{N}_{\s,\t,\x,\cdot+\mathfrak{l}}-\mathbf{H}^{N}_{\s,\t,\x,\cdot}\|_{\mathrm{L}^{1}(\mathbb{T}_{N})}&\lesssim N^{-\upsilon}|\t-\s|^{-\frac12\upsilon}|\mathfrak{l}|^{\upsilon}\\
\|\mathbf{H}^{N}_{\s,\t+\mathfrak{t},\x,\cdot}-\mathbf{H}^{N}_{\s,\t,\x,\cdot}\|_{\mathrm{L}^{1}(\mathbb{T}_{N})}&\lesssim |\t-\s|^{-\upsilon}|\mathfrak{t}|^{\upsilon}.
\end{align}
Next, we have the operator norm estimates below:
\begin{align}
\|\exp\{[\t-\s]\mathscr{T}_{N}\}\|_{\mathrm{L}^{\infty}(\mathbb{T}_{N})\to\mathrm{L}^{\infty}(\mathbb{T}_{N})}&\lesssim1\\
\|\grad^{\mathbf{X}}_{\mathfrak{l}}\exp\{[\t-\s]\mathscr{T}_{N}\}\|_{\mathrm{L}^{\infty}(\mathbb{T}_{N})\to\mathrm{L}^{\infty}(\mathbb{T}_{N})}&\lesssim N^{-\upsilon}|\t-\s|^{-\frac12\upsilon}|\mathfrak{l}|^{\upsilon},\\
\|\grad^{\mathbf{T}}_{\mathfrak{t}}\exp\{[\t-\s]\mathscr{T}_{N}\}\|_{\mathrm{L}^{\infty}(\mathbb{T}_{N})\to\mathrm{L}^{\infty}(\mathbb{T}_{N})}&\lesssim |\t-\s|^{-\upsilon}|\mathfrak{t}|^{\upsilon}.
\end{align}
\end{prop}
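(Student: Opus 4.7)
The plan is to exploit the probabilistic interpretation stated in the last paragraph of the proposition: since $\mathscr{T}_N = \tfrac12 N^2 \Delta - \overline{\mathfrak{d}} N \grad^{\mathbf{X}}_{-1}$ is the generator of the sum of an independent symmetric simple random walk $\mathsf{X}$ of speed $N^2/2$ and a totally asymmetric walk $\mathsf{Y}$ of speed $\overline{\mathfrak{d}} N$, we have $\mathbf{H}^N_{\s,\t,\x,\y} = \mathbb{P}[\mathsf{X}_\t + \mathsf{Y}_\t = \y \mid \mathsf{X}_\s + \mathsf{Y}_\s = \x]$. I would establish this identity first by verifying that both sides satisfy the same Kolmogorov equation in Definition \ref{definition:heat} with the same initial data. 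Positivity and the $\mathrm{L}^1(\mathbb{T}_N)$ bound of $\lesssim 1$ are then immediate since $\mathbf{H}^N_{\s,\t,\x,\cdot}$ is a probability measure on $\mathbb{T}_N$.

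For the pointwise bound $\mathbf{H}^N_{\s,\t,\x,\y} \lesssim N^{-1}|\t-\s|^{-1/2}$, I would appeal to the local central limit theorem for the sum $\mathsf{X}+\mathsf{Y}$: the dominant diffusive part has variance $\asymp N^2|\t-\s|$, so at the lattice scale $N^{-1}$ the pointwise density is $\lesssim N^{-1}|\t-\s|^{-1/2}$ uniformly; the drift from $\mathsf{Y}$ only shifts the mean and can be absorbed for $\t-\s \lesssim 1$ since $\overline{\mathfrak{d}} N \cdot |\t-\s| \lesssim N|\t-\s|^{1/2} \cdot (N|\t-\s|^{1/2})$ stays within the diffusive scale (for $|\t-\s| \gtrsim N^{-2}$), and for $|\t-\s| \lesssim N^{-2}$ one uses the trivial bound combined with concentration near the starting point. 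Alternatively, one uses classical estimates for continuous-time random walks on $\mathbb{T}_N$ (e.g. Nash-type inequalities or Fourier analysis on $\mathbb{T}_N$).

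The space and time gradient estimates follow from this pointwise bound by interpolation with the trivial estimates $|\grad^{\mathbf{X}}_{\mathfrak{l}} \mathbf{H}^N| \lesssim 2\|\mathbf{H}^N\|_\infty$ and $|\grad^{\mathbf{T}}_{\mathfrak{t}} \mathbf{H}^N| \lesssim 2\|\mathbf{H}^N\|_\infty$: for small $|\mathfrak{l}|$ (respectively $|\mathfrak{t}|$) one uses a discrete-gradient version of the standard Gaussian derivative estimate $|\grad^{\mathbf{X}}_1 \mathbf{H}^N| \lesssim N^{-1}|\t-\s|^{-1}$ obtained by differencing the LCLT, then interpolates between this and the pointwise bound. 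The averaged ($\mathrm{L}^1$) regularity estimates follow from the same interpolation, but starting from the probabilistic fact $\|\mathbf{H}^N_{\s,\t,\x+\mathfrak{l},\cdot} - \mathbf{H}^N_{\s,\t,\x,\cdot}\|_{\mathrm{L}^1(\mathbb{T}_N)} \lesssim 1$ (total variation distance is $\leq 2$) together with a coupling bound at diffusive scale $\|\cdot\|_{\mathrm{L}^1} \lesssim |\mathfrak{l}|/(N|\t-\s|^{1/2})$, then interpolating with exponent $\upsilon$. The time $\mathrm{L}^1$ bound uses the same scheme with $|\mathfrak{t}|^{1/2}$ in place of $|\mathfrak{l}|/N$.

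Finally, the operator norm bounds on $\mathrm{L}^\infty(\mathbb{T}_N) \to \mathrm{L}^\infty(\mathbb{T}_N)$ follow by duality from the $\mathrm{L}^1$ estimates on the kernel: $\|\mathrm{e}^{[\t-\s]\mathscr{T}_N} \varphi\|_{\infty} \leq \|\mathbf{H}^N_{\s,\t,\x,\cdot}\|_{\mathrm{L}^1(\mathbb{T}_N)} \|\varphi\|_\infty$, and similarly for the gradients since space-gradients commute with $\mathscr{T}_N$ and the kernel of $\grad^{\mathbf{X}}_\mathfrak{l} \mathrm{e}^{[\t-\s]\mathscr{T}_N}$ in the first variable is exactly $\grad^{\mathbf{X}}_\mathfrak{l} \mathbf{H}^N$. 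The main obstacle in practice is not conceptual but rather bookkeeping the interpolation exponents and uniformity over $\s,\t$, but since everything reduces to classical LCLT-type estimates for a drift-diffusion random walk on a torus of size $N$, and since this proposition appears essentially verbatim as Proposition A.3 in \cite{Y23}, I would cite those details rather than reproduce them.
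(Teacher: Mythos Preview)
Your proposal is correct and aligns with the paper's treatment: the paper does not give a proof at all, stating only that ``the last paragraph is a standard duality, and everything else is standard and in Proposition A.3 in \cite{Y23}.'' You supply more detail than the paper itself (the LCLT argument, interpolation scheme, and duality for operator norms), but your final move of citing Proposition A.3 in \cite{Y23} is exactly what the paper does.
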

\subsection{General entropy inequality}
For a proof of the results below, see Appendix 1.8 in \cite{KL}.
\begin{lemma}\label{lemma:entropyinequality}
\fsp Suppose that $\mu$ is a probability measure on a probability space $\Omega$, and $\mathfrak{p}$ is a probability density with respect to $\mu$. For any function $\mathfrak{f}:\Omega\to\R$ and any $\kappa>0$, we have 
\begin{align*}
\int_{\Omega}\mathfrak{f}(\omega)\mathfrak{p}(\omega)\mu(\d\omega)&\leq\kappa^{-1}\int_{\Omega}\mathfrak{p}(\omega)\log\mathfrak{p}(\omega)\mu(\d\omega)+\kappa^{-1}\log\int_{\Omega}\mathrm{e}^{\kappa\mathfrak{f}(\omega)}\mu(\d\omega).
\end{align*}
\end{lemma}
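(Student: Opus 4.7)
The plan is to derive the inequality by reducing it to Jensen's inequality for the convex function $x \mapsto x\log x$ after a suitable change of measure. Setting $Z := \int_{\Omega} \mathrm{e}^{\kappa\mathfrak{f}(\omega)}\mu(\d\omega)$ (assumed finite, otherwise the right-hand side is infinite and the bound is trivial), I would introduce the tilted probability measure $\nu$ on $\Omega$ defined by $\tfrac{\d\nu}{\d\mu}(\omega) := Z^{-1}\mathrm{e}^{\kappa\mathfrak{f}(\omega)}$. Then pointwise, $\kappa\mathfrak{f}(\omega) = \log Z + \log\tfrac{\d\nu}{\d\mu}(\omega)$, so multiplying by $\mathfrak{p}(\omega)$ and integrating against $\mu$ gives
\begin{align*}
\kappa\int_{\Omega}\mathfrak{f}(\omega)\mathfrak{p}(\omega)\mu(\d\omega) \;=\; \log Z + \int_{\Omega}\mathfrak{p}(\omega)\log\tfrac{\d\nu}{\d\mu}(\omega)\,\mu(\d\omega).
\end{align*}
Thus the claim reduces to showing $\int_{\Omega}\mathfrak{p}\log\tfrac{\d\nu}{\d\mu}\,\d\mu \leq \int_{\Omega}\mathfrak{p}\log\mathfrak{p}\,\d\mu$.

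To establish this residual inequality, I would set $q := \mathfrak{p}\big/\tfrac{\d\nu}{\d\mu}$ on $\{\tfrac{\d\nu}{\d\mu}>0\}$, noting that $\mathfrak{p}\,\d\mu = q\,\d\nu$ so that $q$ is itself a probability density with respect to $\nu$ (since $\int q\,\d\nu = \int\mathfrak{p}\,\d\mu = 1$). The difference of interest rewrites as $\int\mathfrak{p}\log\mathfrak{p}\,\d\mu - \int\mathfrak{p}\log\tfrac{\d\nu}{\d\mu}\,\d\mu = \int q\log q\,\d\nu$, and Jensen's inequality applied to the convex function $x\mapsto x\log x$ yields
\begin{align*}
\int_{\Omega} q(\omega)\log q(\omega)\,\nu(\d\omega) \;\geq\; \Bigl(\int_{\Omega} q\,\d\nu\Bigr)\log\Bigl(\int_{\Omega} q\,\d\nu\Bigr) \;=\; 0.
\end{align*}
Rearranging and dividing by $\kappa$ gives the asserted bound.

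There is essentially no technical obstacle; the only mild bookkeeping concerns are adopting the convention $0\log 0 = 0$ and restricting the integrals to $\{\mathfrak{p}>0\}\cap\{\tfrac{\d\nu}{\d\mu}>0\}$ to handle null sets, and noting that if $\int\mathfrak{p}\log\mathfrak{p}\,\d\mu = +\infty$ then the inequality holds trivially.
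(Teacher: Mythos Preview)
Your proof is correct and is precisely the standard argument: the paper itself does not prove this lemma but simply cites Appendix~1.8 of \cite{KL}, where the same tilted-measure-plus-Jensen computation appears. There is nothing to add.
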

%
%
%

\end{document}